\DeclareSymbolFont{cyrletters}{OT2}{wncyr}{m}{n}
\DeclareMathSymbol{\Sha}{\mathalpha}{cyrletters}{"58}
\newtheorem{theorem}{Theorem}[section]
\newtheorem{lemma}[theorem]{Lemma}
\newtheorem{corollary}[theorem]{Corollary}
\newtheorem{conjecture}[theorem]{Conjecture}
\newtheorem{definition}[theorem]{Definition}
\newtheorem{remark}[theorem]{Remark}
\newtheorem{proposition}[theorem]{Proposition}
\begin{document}
\title{\textsc{Iwasawa theory for $\mathrm{U}(r,s)$, Bloch-Kato conjecture and Functional Equation}}

\author{\textsc{Xin Wan}}
\date{}
\maketitle
\begin{abstract}
In this paper we develop a new method to study Iwasawa theory and Eisenstein families for unitary groups $\mathrm{U}(r,s)$ of general signature over a totally real field $F$. As a consequence we prove that for a motive corresponding to a regular algebraic cuspidal automorphic representation $\pi$ on $\mathrm{U}(r,s)_{/F}$ which is ordinary at $p$, twisted by a Hecke character, if its Selmer group has rank $0$, then the corresponding central $L$-value is nonzero. This generalizes a result of Skinner-Urban in their ICM 2006 report in the special case when $F=\mathbb{Q}$ and the motive is conjugate self-dual. Along the way we also obtain $p$-adic functional equations for the corresponding $p$-adic $L$-functions and $p$-adic families of Klingen Eisenstein series. Our method does not involve computing Fourier-Jacobi coefficients (as opposed to previous work which only work in low rank cases, e.g. $\mathrm{U}(1,1)$, $\mathrm{U}(2,0)$ and $\mathrm{U}(1,0)$) whose automorphic interpretation is unclear in general.
\end{abstract}
\section{Introduction}
One of the central problems in number theory is to study the relation between special values of $L$-functions and arithmetic objects. A classical example is the class number formula for number fields, relating the residue of the Dedekind zeta function of a number field $K$ at $s=1$ to the class number $h_K$ of it. Another famous example is the Birch-Swinnerton-Dyer conjecture about the relation between $L$-functions of elliptic curves and its arithmetic invariants (Shafarevich-Tate group and Mordell-Weil group).

This philosophy had been generalized by Bloch-Kato \cite{BK} to general ``motives'' in an explicit formulation, which we briefly recall. Let $M$ be a motive with a (hypothetical) $L$-function denoted as $L(M,s)$. In particular it has a $p$-adic realization which is a Galois representation of $G_\mathbb{Q}$, potentially semi-stable in the sense of Fontaine. In favorable cases, this converges to a holomorphic function when $\mathrm{Re}(s)>>0$, has analytic continuation to the complex plane and satisfies a functional equation. In practice this is often ensured by relating $M$ to automorphic representations. Suppose the functional equation is
$$L(M,s)=L(M^\vee,1-s),$$
(the center is $s=\frac{1}{2}$, which we suppose is a critical value in the sense of Deligne.) In this paper we only consider central $L$-values, although Bloch-Kato theory is valid for all critical values, since normally only central values can vanish.
On the arithmetic side, Bloch-Kato defined a $p$-adic Selmer group $\mathrm{Sel}_{p^\infty}(M)$ of $M$ using the degree one Galois cohomology of $M$ satisfying certain local Selmer conditions (using Fontaine's $p$-adic Hodge theory). Then the Bloch-Kato conjecture is the following.
\begin{conjecture}
The vanishing order of $L(M^\vee(1),s)$ at $s=\frac{1}{2}$ equals the rank of the Selmer group $\mathrm{Sel}_{p^\infty}(M)$.
\end{conjecture}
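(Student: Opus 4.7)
\emph{Proof proposal.}

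The plan is to split the desired equality $\mathrm{rank}\,\mathrm{Sel}_{p^\infty}(M) = \mathrm{ord}_{s=1/2}L(M^\vee(1),s)$ into two opposite inequalities and prove each by a separate Iwasawa-theoretic mechanism: the Eisenstein congruence machinery of this paper yields the lower bound $\mathrm{rank}\,\mathrm{Sel}_{p^\infty}(M) \ge \mathrm{ord}_{s=1/2}L(M^\vee(1),s)$, while an Euler system yields the matching upper bound. The common bridge between $L$-values and Selmer ranks is the cyclotomic Iwasawa Main Conjecture for $M$, which packages the central $L$-value along a $\mathbb{Z}_p$-tower as a single $\Lambda$-adic object that one can then specialize back to $s = 1/2$ via a control theorem.

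For the lower bound, I would interpolate the values $L(M^\vee(1),\cdot)$ along the cyclotomic $\mathbb{Z}_p$-extension $F_\infty / F$ into a $p$-adic $L$-function $\mathcal{L}_p \in \Lambda := \mathbb{Z}_p[[T]]$ using the ordinary Klingen Eisenstein family for $\mathrm{U}(r,s)$ constructed in this paper. The Eisenstein congruence argument then matches every prime factor of $\mathcal{L}_p$ with a factor in the characteristic ideal of $X_\infty := \mathrm{Sel}_{p^\infty}(M/F_\infty)^\vee$, producing the divisibility $\mathcal{L}_p \mid \mathrm{char}_\Lambda(X_\infty)$. A standard control theorem then gives $\mathrm{rank}_{\mathbb{Z}_p}\mathrm{Sel}_{p^\infty}(M)^\vee \ge \mathrm{ord}_{T=0}\mathcal{L}_p$, and the $p$-adic functional equation for $\mathcal{L}_p$ established in this paper symmetrizes the two sides of $s = 1/2$ so that $\mathrm{ord}_{T=0}\mathcal{L}_p$ matches $\mathrm{ord}_{s=1/2}L(M^\vee(1),s)$ once the $p$-adic Euler multipliers at the central point are checked to be nonzero.

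For the upper bound, I would construct an Euler system $\{c_F\}$ for $M^\vee(1)$ over abelian extensions of $F$ whose bottom class is linked to $L(M^\vee(1),1/2)$ by a Bloch--Beilinson type regulator. Kolyvagin's derivative argument converts the $k$-th derivative of the system into $k$ independent Galois cohomology classes annihilating $\mathrm{Sel}_{p^\infty}(M)$, yielding the reverse divisibility $\mathrm{char}_\Lambda(X_\infty) \mid \mathcal{L}_p$, hence $\mathrm{rank}_{\mathbb{Z}_p}\mathrm{Sel}_{p^\infty}(M)^\vee \le \mathrm{ord}_{T=0}\mathcal{L}_p$ after the same control/interpolation comparison. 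Combined with the lower bound, this produces the full equality.

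The principal obstacle is the Euler system input in positive rank. For $\mathrm{U}(r,s)$ with $r,s \ge 2$ there is no ready-made analogue of Heegner points: one would need to build global cycles on the Shimura variety for $\mathrm{U}(r,s)$ or on an auxiliary product such as $\mathrm{U}(r-1,s) \times \mathrm{U}(1,0)$, verify the Euler system norm relations in general signature, and prove a $p$-adic Gross--Zagier formula expressing $\mathcal{L}_p'(0)$ (and its higher-order analogues) in terms of the $p$-adic Abel--Jacobi image of these cycles, so that derivatives of $\mathcal{L}_p$ can be read off from cohomological regulators. Producing such cycles in arbitrary signature, where the ambient Shimura variety may fail to be of PEL type and may carry no obvious CM points, and proving the accompanying derivative formula, is the key geometric and analytic bottleneck separating the rank-zero statement of this paper from the full Bloch--Kato conjecture.
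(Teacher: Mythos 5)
The statement you were asked about is labelled as a \emph{conjecture} in the paper, and the paper does not prove it; there is therefore no ``paper's own proof'' to compare against. What the paper actually proves (Theorem \ref{main theorem}) is far weaker: under the hypotheses (QS), (Irred), the Hodge--Tate condition, and ordinarity, if the central $L$-value vanishes then $\mathrm{Sel}_{p^\infty}(M)$ has \emph{positive} rank. That is a single-sign, rank-one-lower-bound statement, not an equality of the vanishing order with the Selmer rank. Your write-up is a strategy sketch for the full conjecture, and you are candid in your final paragraph about which inputs are missing; but calling it a ``proof proposal'' overstates what is established.

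Concretely, the gaps in your plan are not technicalities that the paper fills in elsewhere; they are open problems. (i) The Eisenstein-congruence mechanism here yields, via Lemma \ref{8.1} and the lattice construction, a \emph{single} nontrivial Selmer class at the distinguished point $\phi_0$; it does not give the divisibility $\mathcal{L}_p \mid \mathrm{char}_\Lambda(X_\infty)$ in full, precisely because of the interference between $\mathrm{char}(X^\Sigma_{\mathbf{f},\chi,\mathcal{K}})$ and $\mathrm{char}(X^\Sigma_{\chi'\chi^a_\mathcal{K},F})$ that the paper flags just after the lattice-construction proposition. (ii) Even granting that divisibility, passing from $\mathrm{ord}_{T=0}\mathcal{L}_p$ to $\mathrm{ord}_{s=1/2}L(M^\vee(1),s)$ needs a non-degeneracy statement for the $p$-adic regulator or height pairing (the absence of ``extra zeros''), which is unknown for $\mathrm{U}(r,s)$ in general and is not addressed by the $p$-adic functional equation established here. (iii) The Euler system you invoke for the upper bound does not exist for $\mathrm{U}(r,s)$ in general signature, and the $p$-adic Gross--Zagier formula needed to relate derivatives of $\mathcal{L}_p$ to cycle classes is likewise unproved. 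You correctly identify these as bottlenecks; but that means your argument proves nothing beyond what the paper already proves, and in fact less, since you would need to reproduce the paper's careful non-vanishing argument (Sections \ref{Feq}, \ref{Nv}) just to obtain the rank-$\geq 1$ statement in the $L(M^\vee(1),0)=0$ case. A more accurate framing would be: the paper proves the implication ``central $L$-value zero $\Rightarrow$ Selmer rank $\geq 1$''; upgrading this to the full Bloch--Kato equality would require the additional ingredients you list, all of which lie outside current technology.
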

In this paper, we consider the motive considered by M. Harris in the book project, which is associated to a cusp form on unitary groups, twisted by a Hecke character. Thus the Langlands correspondence and its properties are well understood. Let $\mathcal{K}$ be a CM field which is quadratic over its maximal totally real subfield $F$. Suppose $p$ is an odd prime splitting completely in $\mathcal{K}$. (This is just for simplicity: we expect no difficulty only assuming $p$ to be unramified in $F$ and split in $\mathcal{K}/F$.) In this paper we consider a cuspidal automorphic representation $\pi$ of $\mathrm{U}(r,s)_{/F}$ which is unramified and ordinary at all primes above $p$ (we make it precise in the text). We focus on motives $M$ associated to $\pi$, twisted by a Hecke character of the corresponding CM field $\mathcal{K}$. We write $n=r+s$ and let $L$ (finite extension of $\mathbb{Q}_p$) be the coefficient field of its $p$-adic Galois representation.
Throughout this paper we assume
\begin{itemize}
\item[(Irred)] The residual Galois representation $\bar{M}$ of $G_\mathcal{K}$ is absolutely irreducible.
\end{itemize}

In Skinner-Urban's 2006 ICM report \cite{SU2}, they proved
\begin{theorem}(Skinner-Urban)
Suppose $F=\mathbb{Q}$, (Irred) and $M$ satisfies $M^c\simeq M^\vee(1)$. Suppose moreover that $0$ and $1$ are not Hodge-Tate weights of $M$. If $L(M^\vee(1),\frac{1}{2})=0$ ,then the Selmer group $\mathrm{Sel}_{p^\infty}(M)$ has positive rank.
\end{theorem}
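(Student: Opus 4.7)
The proof will follow the Eisenstein congruence method pioneered by Ribet and developed by Mazur--Wiles, Wiles, and Skinner--Urban: produce a nontrivial Bloch--Kato Selmer class from the vanishing of the central $L$-value by exploiting congruences between Klingen Eisenstein series and cuspidal Hida families on a larger unitary group. Under the hypothesis $M^c\simeq M^\vee(1)$, $M$ is realized as the Galois representation of $\pi\otimes\chi$ for a conjugate self-dual $\pi$ on $\mathrm{U}(r,s)_{/\mathbb{Q}}$ and a Hecke character $\chi$ of $\mathcal{K}$, which is the natural input for an Eisenstein construction on the Klingen parabolic.

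\textbf{Plan.} First I would embed $\mathrm{U}(r,s)$ into the Levi factor $\mathrm{GL}_1\times\mathrm{U}(r,s)$ of the Klingen parabolic $P$ of $G=\mathrm{U}(r+1,s+1)$ and form the Klingen Eisenstein series $E(\pi,\chi;s)$ by inducing $\pi\boxtimes\chi$ with carefully chosen section data. The central analytic input is Langlands' constant-term formula: after suitable normalization, the constant term of $E$ along $P$ factors as a product of local zeta integrals, archimedean terms, and the automorphic $L$-function $L(\pi\otimes\chi,s)$, whose central value is a nonzero scalar multiple of $L(M^\vee(1),\tfrac{1}{2})$. Next I would interpolate the $E(\pi,\chi;s)$ into a $p$-adic family $\mathbf{E}$ over the appropriate Iwasawa algebra using Hida theory for $G$, so that this constant term becomes (up to $p$-adic units) the Hida/Harris/Eischen $p$-adic $L$-function $L_p(\pi,\chi)$.

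\textbf{Congruence and lattice.} The assumption $L(M^\vee(1),\tfrac{1}{2})=0$ forces $L_p(\pi,\chi)$ to vanish at the corresponding classical specialization, so the constant term of $\mathbf{E}$ vanishes modulo a height-one divisor. Since (Irred) guarantees that $\mathbf{E}$ is not itself a cusp family (its non-constant Hecke data survives modulo $p$), a pigeonhole argument in the space of $p$-adic ordinary forms on $G$ produces a cuspidal Hida family $\mathbf{F}$ congruent to $\mathbf{E}$ to an order controlled by the vanishing of the constant term. The Galois representation $\rho_{\mathbf{F}}$ is globally irreducible (by (Irred) and the usual lattice/Ribet lemma argument), while its residual semisimplification contains $M$ as a direct summand; choosing a Galois-stable lattice whose reduction is a non-split extension yields a nonzero class
\[
c\in H^1(G_{\mathbb{Q}},M).
\]

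\textbf{Selmer verification and main obstacle.} It remains to check that $c$ lies in the Bloch--Kato Selmer group: the unramifiedness of $\mathbf{F}$ outside $p$ (inherited from $\pi$ and $\chi$) handles primes $\ell\neq p$, while at $p$ the ordinary filtration on $\rho_{\mathbf{F}}$ restricts on the summand $M$ to its unit-root Hida filtration, and the hypothesis that $0,1$ are not Hodge--Tate weights of $M$ is exactly what ensures that the induced extension lands in the crystalline/Bloch--Kato local condition rather than in a larger semistable piece. I expect the principal obstacle to be the joint control of two steps: producing the cuspidal congruence without recourse to Fourier--Jacobi coefficients (since in general signature their automorphic interpretation is obscure, which is precisely the difficulty this paper addresses), and the local-at-$p$ verification, where matching the ordinary filtration on $\rho_{\mathbf{F}}$ with the Bloch--Kato condition on $M$ requires the Hodge--Tate gap hypothesis to rule out the forbidden weights $0$ and $1$.
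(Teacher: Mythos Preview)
This theorem is not proved in the paper; it is quoted as a result of Skinner--Urban from \cite{SU2}, and the paper's own contribution is the generalization in Theorem~\ref{main theorem}. The paper does, however, summarize the Skinner--Urban argument in its ``Comparison to Literature'' paragraph, and your proposal differs from that summary in a structural way.

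According to the paper, Skinner--Urban do \emph{not} use Iwasawa theory or ordinary Hida families. They construct a \emph{critical slope} Eisenstein series directly at the central point (which, under their hypotheses, has classical weight) and deform it in a cuspidal family on an eigenvariety; the lattice construction then runs at that single point. The feature singled out as making this work when $F=\mathbb{Q}$ and $M^c\simeq M^\vee(1)$ is that the auxiliary Selmer group $H^1_f(\mathbb{Q},\chi(1))$ for the Dirichlet character on the Levi has rank zero, so it does not interfere with extracting a class in $\mathrm{Sel}_{p^\infty}(M)$. Your outline never isolates this interaction, yet it is exactly what breaks for general $F$ and motivates the present paper.

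Your sketch is instead much closer to \emph{this paper's} method for the generalization: ordinary Klingen Eisenstein families over an Iwasawa algebra, with constant terms divisible by a $p$-adic $L$-function. If you pursue that route, there is a genuine gap at the step ``$L(M^\vee(1),\tfrac12)=0$ forces $L_p$ to vanish at the corresponding classical specialization, hence a cuspidal congruence.'' The point $\phi_0$ where the $p$-adic $L$-function interpolates the central value is \emph{not} an arithmetic point of classical weight for the ordinary Klingen family; it is a pure $p$-adic limit. The congruence argument needs $\phi_0(\mathbf{E}_{\mathrm{Kling}})\neq 0$ (some Fourier--Jacobi coefficient must survive), and establishing this is the entire content of Sections~\ref{Diff}--\ref{Nv}: one applies a $p$-adic Maass--Shimura operator and a $p$-adic functional equation to identify $\delta^{1+j_{\phi_0}}\phi_0(\mathbf{E}_{\mathrm{Kling}})$ with a genuinely classical Eisenstein series on the other side of the critical line, whose nonvanishing can be checked by automorphic computations (including Casselman--Shahidi theory at bad places). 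Without that ingredient, your pigeonhole step does not go through in the ordinary framework.
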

Skinner-Urban also proved that if moreover the global sign of $M$ is $+1$, then the rank of the corresponding Selmer group is at least two.

In the above theorem, if one assumes in addition that the global sign of $M$ is $-1$, then this result is also proved by Bellaiche-Chenevier in \cite{BC} by a different approach.

One main result of our paper is to remove the assumption $F=\mathbb{Q}$, and $M$ satisfies $M^c\simeq M^\vee(1)$ of the above theorem. We need the following
\begin{itemize}
\item[(QS)] We assume that for each prime $v$ of $F$, if $L_v(\tilde{\pi}, \bar{\tau}^c, -\frac{1}{2})$ has a pole, then $\mathrm{U}(r,s)(F_v)$ is quasi-split.
\end{itemize}

\begin{theorem}\label{main theorem}
Assume (QS), (Irred) and that $0$ and $1$ are not Hodge-Tate weights of $M$. Suppose moreover $\pi$ is unramified and ordinary at all primes above $p$. If $L(M^\vee(1),0)=0$, then the rank of the Selmer group $\mathrm{Sel}_{p^\infty}(M)$ is positive.
\end{theorem}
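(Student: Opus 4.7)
The plan is to prove the theorem by the Eisenstein congruence method originating with Ribet and systematically developed by Mazur--Wiles, Wiles, and Skinner--Urban, now extended to the general $\mathrm{U}(r,s)_{/F}$ setting. The strategy is to build a $p$-adic family of Klingen Eisenstein series on $\mathrm{U}(r+1,s+1)$, compute its constant term along the Klingen parabolic $P$ (whose Levi is $\mathrm{Res}_{\mathcal{K}/F}\mathrm{GL}_1\times \mathrm{U}(r,s)$), and identify it with local factors times the $p$-adic $L$-function $\mathcal{L}_p$ attached to $(\pi,\tau)$. The hypothesis $L(M^\vee(1),0)=0$ then forces the Eisenstein family, specialized appropriately, to be congruent modulo $\mathcal{L}_p$ to a cuspidal $p$-adic family, and the Galois representation of the cusp form will supply the nonzero Selmer class.

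Executing this requires four main ingredients. First, I would construct the Klingen Eisenstein family $\mathbf{E}$ over a multi-variable Iwasawa algebra $\Lambda$ via the doubling method, with carefully chosen local sections at ramified places and at primes above $p$ (where the ordinary refinement of $\pi$ determines a Panchishkin filtration on the local Galois representation). Second, a constant-term computation identifies the leading coefficient of $\mathbf{E}$ along $P$ as a product of local $L$-factors times $\mathcal{L}_p(\pi,\tau)$. Third, invoking (QS) at precisely the places where the local $L$-factors $L_v(\tilde\pi,\bar\tau^c,-\tfrac12)$ would otherwise obstruct the doubling-method calculation, together with (Irred) and ordinarity, one shows $\mathbf{E}$ is not identically divisible by $\mathcal{L}_p$, so that cusp forms congruent to $\mathbf{E}$ modulo $\mathcal{L}_p$ genuinely exist. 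Fourth, a lattice argument of Urban type, applied to the associated $(n+2)$-dimensional Galois representation $\rho$ — whose semisimplification modulo $\mathcal{L}_p$ contains $M$ as a Jordan--H\"older constituent — extracts a nonzero cohomology class which must be shown to witness positive rank of $\mathrm{Sel}_{p^\infty}(M)$: unramifiedness away from $p$ comes from the construction, the Panchishkin condition at primes above $p$ from the ordinarity of $\pi$, and ramification bounds at the finite bad places from (QS).

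The principal obstacle — and, as the abstract advertises, the main novelty of the paper — is proving nondegeneracy of $\mathbf{E}$ without computing Fourier--Jacobi coefficients, which have no clear automorphic interpretation beyond the low-rank cases $\mathrm{U}(1,1)$, $\mathrm{U}(2,0)$, $\mathrm{U}(1,0)$ treated in earlier work. The substitute I envisage is to exploit the $p$-adic functional equation for $\mathcal{L}_p$ (established earlier in the paper) together with a comparison of constant terms along two carefully chosen parabolics, reducing the global nonvanishing of $\mathbf{E}$ to a purely local computation at one auxiliary place where explicit sections can be written down. Verifying the Bloch--Kato local conditions for the extracted Selmer class is the other delicate point; here (QS) keeps one in the quasi-split regime where the doubling-method bookkeeping carries through uniformly, and the assumption that $0$ and $1$ are not Hodge--Tate weights of $M$ is what guarantees that the ordinary filtration on $\rho|_{G_{\mathcal{K}_v}}$ at places $v\mid p$ splits in exactly the manner needed to pass from a Greenberg-style to the genuine Bloch--Kato Selmer condition.
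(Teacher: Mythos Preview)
Your high-level architecture (Klingen Eisenstein family on $\mathrm{U}(r+1,s+1)$, constant-term divisibility by the $p$-adic $L$-function, Eisenstein congruence, lattice construction) matches the paper. However, several of the key technical steps are either misidentified or missing, and these are precisely where the new content of the paper lies.

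\textbf{The non-vanishing step.} You describe the goal as showing ``$\mathbf{E}$ is not identically divisible by $\mathcal{L}_p$'' via ``a comparison of constant terms along two carefully chosen parabolics.'' This is not what the paper does, and a constant-term argument alone cannot work: the constant term is exactly what is divisible by $\mathcal{L}_p$. The paper proves something more specific and by a different mechanism: that the specialization $\phi_0(\mathbf{E}_{\mathrm{Kling}})$ at the single non-arithmetic point $\phi_0$ (where the central $L$-value sits) is nonzero. The difficulty is that $\phi_0$ is not of classical weight, so $\phi_0(\mathbf{E}_{\mathrm{Kling}})$ is a pure $p$-adic limit with no direct automorphic interpretation. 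The resolution (Theorem~\ref{Theorem 1.5}) is a $p$-adic functional equation for the \emph{Eisenstein family itself}, not merely for $\mathcal{L}_p$: applying a specific Maass--Shimura differential operator $\delta^{1+j_{\phi_0}}_{r+1,s+1}$ to $\phi_0(\mathbf{E}_{\mathrm{Kling}})$ yields a \emph{classical} Klingen Eisenstein series attached to the dual datum $\tilde{\mathbf{D}}^{(2)}_{\phi_0}$, whose non-vanishing can then be checked by explicit pullback computations (Proposition~\ref{proposition 6.17}). The auxiliary prime is used to kill degenerate Fourier coefficients so that the Kudla--Sweet functional equation for Siegel sections applies, not to reduce non-vanishing to a single local computation.

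\textbf{The roles of the hypotheses.} You say (QS) ``keeps one in the quasi-split regime where the doubling-method bookkeeping carries through.'' In fact (QS) is invoked only to apply the Casselman--Shahidi theory (Lemmas~\ref{Irreducibility} and~\ref{poles}) showing that when $L_v(\tilde\pi,\bar\tau^c,-\tfrac12)$ has a pole, the intertwining operator $A(\rho_v,z)$ has a compensating pole at $z=-\tfrac12$; this is needed in the non-vanishing proof for the classical Eisenstein series, not in the Selmer-class construction. You also say the Hodge--Tate hypothesis on $0,1$ is what allows passage from Greenberg to Bloch--Kato local conditions. The paper works throughout with Greenberg's Selmer group (which agrees with Bloch--Kato in the ordinary case); the Hodge--Tate hypothesis is used instead to guarantee that the relevant Eisenstein series and the dual-family construction land in classical weights (see Remark after Theorem~\ref{main theorem} and the proof of Theorem~\ref{Construction}).

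\textbf{A missing ingredient.} The constant term is divisible not just by $\mathcal{L}^\Sigma_{\mathbf{f},\tau_0}$ but by the product $\mathcal{L}^\Sigma_{\mathbf{f},\tau_0}\cdot\mathcal{L}^\Sigma_{\bar\tau_0'\chi^a_{\mathcal{K}}}$, the second factor being a Dirichlet $p$-adic $L$-function over $F$. To extract a lower bound on the Selmer group for $M$ rather than for the character, the paper invokes Wiles' Iwasawa main conjecture for totally real fields to control $\mathrm{char}(X^\Sigma_{\chi'\chi^a_{\mathcal{K}},F})$ by $\mathcal{L}^\Sigma_{\chi'\chi^a_{\mathcal{K}}}$, and then applies the lattice construction in the form of \cite[Proposition~4.17]{SU}. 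Your outline does not account for this second $L$-function, and without Wiles' input the lattice argument could produce a class in the wrong Selmer group.
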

The assumption (QS) is put to apply theory of Casselman-Shahidi on intertwining operators to prove non-vanishing of certain $p$-adic limit of some Eisenstein family. They only proved results in quasi-split cases, although they are expected to be true in general. It also seems plausible to allow general finite slope cases instead of just the ordinary cases. But this requires a lot more work (e.g. construct finite slope Eisenstein families using the techniques in the recent work of Andretta-Iovita \cite{AI}, and triangulations of Galois representations along local eigenvarieties), which we leave to the future.
\begin{remark}
The assumption on the Hodge-Tate weight is to ensure certain Eisenstein series has classical weight as needed. It seems difficult to get $p$-integral results of the Bloch-Kato conjecture in this generality. Even the  formulation (e.g. defining the appropriate period) seems quite non-trivial.
\end{remark}
\begin{remark}
Our construction uses results in \cite{KMSW} about Arthur conjectures. We need to know that if the base change of $\pi$ to $\mathrm{GL}(n)_{/\mathcal{K}}$ is cuspidal, then $\pi$ appears in the space of cusp forms of $\mathrm{U}(r,s)$ with multiplicity one. We also use the local-global compatibility of this base change map. As explained in the introduction of \emph{loc.cit.}, at the moment these depend on ongoing work of Moglin-Waldspurger on the stabilization of trace formulas. But these are certainly provable and will come out in near future.
\end{remark}

\noindent\underline{Comparison to Literature}\\
Skinner-Urban's idea for proof is more conceptual and does not use Iwasawa theory: they construct critical slope Eisenstein series, and deform it in a cuspidal family. The resulting congruences between this Eisenstein series and cusp forms enable them to deduce the lower bound for Selmer groups. However in their construction (the Galois theoretic ``lattice construction'') there is a Selmer group for Dirichlet character $H^1_f(\mathbb{Q}, \chi(1))$ interacting with the elements of the Selmer group for $M$. This interaction makes trouble in proving lower bound for the Selmer group of $M$. This Selmer group has rank $0$ if $F=\mathbb{Q}$ and $\chi$ is the trivial character. However in general it is not the case. For example in the case when $\chi$ is trivial, by Kummer theory $H^1(F, L(1))$ is just the $p$-adic completion of the unit group $\mathcal{O}^\times_F$, which has positive rank if $F\not=\mathbb{Q}$.

In this paper, instead of working with critical slope Eisenstein series, we study \emph{Iwasawa theory} using the \emph{ordinary} Hida family of Klingen Eisenstein series, constructed in \cite{WAN}.
Previous works on Eisenstein congruences for unitary group Iwasawa theory include the work of Skinner-Urban \cite{SU} on $\mathrm{U}(2,2)/\mathbb{Q}$, Hsieh on $\mathrm{U}(2,1)$ \cite{Hsieh CM}, the author on $\mathrm{U}(2,2)/F$ \cite{WAN1} and $\mathrm{U}(3,1)$ \cite{WAN2} over $\mathbb{Q}$. These results had important consequences on BSD conjecture for elliptic curves and its generalizations to modular forms. The key ingredient and main difficulty of all such work, is to study $p$-adic properties of the families of Klingen Eisenstein series. More precisely, to prove the Klingen Eisenstein families are co-prime to the $p$-adic $L$-function to study. In those low rank cases there are various tools from the theory of automorphic forms that we can employ to study the Fourier-Jacobi coefficients, and special value formulas for $L$-functions (e.g Waldspurger formula) for showing the primitivity. Unfortunately it seems one can hardly expect to generalize them to unitary groups of general signature. In fact our strategy of study is \emph{completely different} from previous literature.\\

\noindent\underline{Our Idea}\\
Our main goal is to prove that if we specialize the Klingen Eisenstein family to the arithmetic point $\phi_0$ where the $p$-adic $L$-function takes the central value we study, then it is non-vanishing. This specialization is not of classical weight, and is purely a $p$-adic limit form, which makes it difficult to study $p$-adic properties. (In contrast, the Eisenstein series construct by Skinner-Urban in \cite{SU2} does have classical weight at the point of study). Our idea is to relate the image under certain Maass-Shimura differential operator of this $p$-adic limit form to a classical Eisenstein series using a \emph{$p$-adic functional equation}, and prove the latter is nonzero by explicit computations.\\

\noindent\underline{An example}\\
To illustrate how it works, we first discuss a toy example for Katz $p$-adic $L$-functions in \cite{Katz}. We consider the family of Eisenstein series on $\mathrm{GL}_2$ whose $q$-expansion is given by
$$E_k=\sum^\infty_{n=1}a_{n,k}q^n$$
for $a_{n,k}=\sum_{d|n}d^{k-1}$ if $(n, Np)=1$ and $a_n=0$ otherwise. Clearly these coefficients are interpolated in a $p$-adic family.  Incorporating the Maass-Shimura differential operators $\delta$, we get a $2$-variable family interpolating
$$\delta^jE_k=\sum_n\sum_d n^jd^{k-1}$$
for $j$ and $k$ varying, which we denote as $\mathbf{E}$.

For any integers $a$ and $b$, we have the simple identity of formal $q$-expansions
\begin{equation}\label{toy}
\sum_{n, (n, Np)=1}\sum_{d|n} d^a(\frac{n}{d})^bq^n=\sum_{n, (n, Np)=1}\sum_{d|n} d^b(\frac{n}{d})^aq^n.
\end{equation}
We can express the left hand side as
$$\sum d^{a-b}n^bq^n=\delta^b(\sum d^{a-b}q^n)=\delta^b E_{a-b+1}$$
where $\delta$ is the Maass-Shimura differential operator whose action on $q$ expansion is $q\frac{d}{dq}$.
Similarly the right hand side is
$$\sum d^{b-a}n^aq^n=\delta^a(\sum d^{b-a}q^n)=\delta^a E_{b-a+1}.$$
So (\ref{toy}) becomes
\begin{equation}\label{baby}
\delta^b E_{a-b+1}=\delta^a E_{b-a+1}
\end{equation}
As the CM $L$-values are expressed as certain linear combinations of values of Eisenstein series at CM points, Katz constructed the CM $p$-adic $L$-function from evaluating the family $\mathbf{E}$ at CM points. The interpolation formula is proved to the right side of the central line. Then one uses (\ref{baby}) to extend the interpolation formula to all critical values.

The same idea is also used in the construction of Rankin-Selberg $p$-adic $L$-functions by Hida in \cite{Hida91}.\\

\noindent\underline{Unitary Group Case}\\
In the recent work of Eischen-Harris-Li-Skinner \cite{EHLS}, they constructed $p$-adic $L$-functions from the doubling method of Piatetski-Shapiro and Rallis. The idea is to construct a family of Siegel Eisenstein series on $\mathrm{U}(n,n)$ ($n=r+s$) and then pullback under the embedding
$$\mathrm{U}(r,s)\times\mathrm{U}(s,r)\hookrightarrow \mathrm{U}(n,n).$$
Projecting to the $\pi\boxtimes\pi^\vee$-component on $\mathrm{U}(r,s)\times\mathrm{U}(s,r)$, one gets the desired special $L$-value.
The interpolation formula is proved to the right half of the central critical line.

In \cite[Section 2.3]{Eischen2} Eischen proposed the question that if it is possible to do the same thing as Katz in the setting of $\mathrm{U}(r,s)$. We give an affirmative answer in this paper.
In order to extend the interpolation formula to all critical values, we use again a $p$-adic functional equation on formal $q$-expansions for Siegel Eisenstein series as above. The key ingredient is
\begin{itemize}
\item a functional equation for Whittaker coefficients of degenerate principal series. (Equivalently this is the functional equation for local Siegel series). For example in the above toy example, this boils down to the simple identity
$$\ell^{ak}\sum^a_{i=0} \ell^{-ki}=\sum^a_{i=0}\ell^{ki}.$$
The general case is a deeper result of Kudla-Sweet \cite{KS}.
\item We look at the difference of the Siegel Eisenstein series on the left and right hands sides of the functional equation, and do a computation of $p$-adic Maass-Shimura differential operators (see Section \ref{Diff}) which is extensively used in \cite{EHLS}. Using the pullback formula of doubling method, we find this difference is in the image of some differential operators on the smaller group $1\times\mathrm{U}(s,r)$, which is killed by Hida's ordinary projector on it. Thus we get the functional equation for the $p$-adic $L$-function.  Replace the doubling method by Shimura's pullback formula, similarly we get also the $p$-adic functional equation for $p$-adic Klingen Eisenstein series.
\end{itemize}

In subsection \ref{Inter} we define the notion of non-arithmetic point $\phi$ and the corresponding ``dual'' arithmetic Eisenstein datum $\tilde{\mathbf{D}}^{(1)}_\phi$, $\tilde{\mathbf{D}}^{(2)}_\phi$ and an associated integer $j_\phi$. These $\phi$'s are not ``arithmetic'' in the following sense. For constructing $p$-adic $L$-functions, these points are on the left side of the center of the critical strip, while in \cite{EHLS}, the arithmetic points do not include these points. In the construction of Klingen Eisenstein family, these points do not correspond to classical weight. However the $\tilde{\mathbf{D}}^{(1)}_\phi$ and $\tilde{\mathbf{D}}^{(2)}_\phi$ are indeed arithmetic Eisenstein datum as in \cite{EHLS} or equivalently, correspond to classical Klingen Eisenstein series. We define Siegel Eisenstein sections $f_{\mathrm{sieg}}$ on $\mathrm{GU}(n+1,n+1)$ and $f'_{\mathrm{sieg}}$ on $\mathrm{GU}(n,n)$ respectively. We also define the Siegel Eisenstein sections $f_{\mathrm{sieg}}^{\mathrm{fteq}}$ on $\mathrm{GU}(n+1,n+1)$ and $f_{\mathrm{sieg}}^{\mathrm{fteq},\prime}$ on $\mathrm{GU}(n,n)$ for the right side of the functional equations. Throughout we fix a finite set $\Sigma$ of places of $F$ containing all places where $\pi$ or $\tau$ or $\mathcal{K}$ is ramified.

The Theorems on functional equations we prove are the following (proved in Section \ref{Feq}).
\begin{theorem}\label{Theorem 1.4}
For each non-arithmetic point $\phi$ and the corresponding arithmetic Eisenstein datum $\tilde{\mathbf{D}}^{(1)}_\phi$ We have
$$\mathcal{L}^\Sigma_{\mathbf{D}_\phi}=\mathcal{L}^\Sigma_{\tilde{\mathbf{D}}_\phi^{(1)},f^{\mathrm{fteq,\prime}}_{\mathrm{sieg}}}.$$
\end{theorem}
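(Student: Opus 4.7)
The plan is to realize both sides as specializations of doubling-method pullback integrals on $\mathrm{GU}(n,n)$ and identify them via the Kudla-Sweet local functional equation together with a vanishing-by-ordinary-projection argument. First I would unwind the definitions: $\mathcal{L}^\Sigma_{\mathbf{D}_\phi}$ arises by pulling back the Siegel Eisenstein series attached to $f'_{\mathrm{sieg}}$ under the doubling embedding $\mathrm{U}(r,s)\times\mathrm{U}(s,r)\hookrightarrow\mathrm{U}(n,n)$ and projecting onto the $\pi\boxtimes\pi^\vee$-isotypic component, while the right-hand side is the analogous construction starting from $f^{\mathrm{fteq},\prime}_{\mathrm{sieg}}$. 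At every finite place $v\notin\Sigma$ the two local sections are, by design, intertwined by the standard long-element intertwining operator on the degenerate principal series; this is exactly the content of the local functional equation of Kudla-Sweet \cite{KS}, so outside $\Sigma$ the local doubling zeta factors on both sides match and produce the same Euler factors. At places in $\Sigma$ the local sections are chosen in compatible pairs so that their doubling integrals against the chosen vectors coincide up to a common normalizing constant, which is absorbed into the definition of $\mathcal{L}^\Sigma$.

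Next I would address the Archimedean and $p$-adic components, where the non-arithmeticity of $\phi$ intervenes through the integer $j_\phi$. The point $\phi$ sits to the left of the center of the critical strip, and $j_\phi$ records its distance from the arithmetic range. I would apply the $p$-adic Maass-Shimura differential operator $\delta^{j_\phi}$ (in the correct vector-valued normalization for $\mathrm{GU}(n,n)$ developed in Section \ref{Diff}) to move $f^{\mathrm{fteq},\prime}_{\mathrm{sieg}}$ specialized at the dual datum $\tilde{\mathbf{D}}^{(1)}_\phi$ to a section of classical scalar weight matching the weight occurring in $\mathbf{D}_\phi$. Compatibility of $\delta$ with the doubling pullback (a Leibniz-type identity) decomposes $\delta^{j_\phi}$ applied to the pullback into a main term equal to the pullback of the required classical Siegel Eisenstein section, plus a sum of cross terms, each of which is a differential operator of positive order acting purely on one of the two factors $\mathrm{U}(r,s)$ or $\mathrm{U}(s,r)$.

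The main obstacle, and the real content of the argument, is to kill these cross terms. I would show that each cross term factors through a raising operator on the $\mathrm{U}(s,r)$-factor, so that after projecting onto the $\pi^\vee$-component it lies in the kernel of Hida's ordinary idempotent $e_{\mathrm{ord}}$; this is precisely the mechanism exploited in \cite{EHLS} and requires the ordinarity hypothesis on $\pi$ at every prime above $p$. Since the doubling projection onto $\pi\boxtimes\pi^\vee$ factors through $e_{\mathrm{ord}}\otimes e_{\mathrm{ord}}$, all cross terms vanish and only the main term contributes. Matching the normalizing constants on the two sides (which reduces to bookkeeping of the $p$-adic periods and the local factors appearing in the Kudla-Sweet identity), the main term equals $\mathcal{L}^\Sigma_{\mathbf{D}_\phi}$, yielding the claimed equality. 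The hard part will be the detailed verification that every cross term really lies in the image of such a raising operator and that no boundary or error term survives at places in $\Sigma$; this is essentially a careful combinatorial expansion of $\delta^{j_\phi}$ against the doubling pullback, carried out in Section \ref{Feq}.
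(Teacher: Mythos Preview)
Your proposal is essentially correct and follows the same strategy as the paper: the Kudla--Sweet local functional equation (Proposition~\ref{KudlaS}) supplies the identity between the two Siegel sections at the level of Fourier coefficients up to a factor of $(\det\beta)^{j_\phi}$, that factor is reinterpreted as a $p$-adic Maass--Shimura operator on the big group, and the representation-theoretic computation (Proposition~\ref{proposition 5.6}) shows that the unwanted terms in the pullback are killed by $e^{\mathrm{ord}}$ on the $\mathrm{U}(s,r)$ factor.

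Two small points where the paper's organization differs from yours. First, the paper does not split the Kudla--Sweet step into ``good'' and ``bad'' places: the sections $f^{\mathrm{fteq},\prime}_v$ at $v\in\Sigma$ are \emph{defined} as (normalizations of) $M(f'_{\mathrm{sieg},v},-z)_z$, so Proposition~\ref{KudlaS} applies uniformly at all finite $v$ as an identity of $\beta$-th Whittaker coefficients; there is no separate ``compatible pairs'' argument at $\Sigma$. Second, rather than phrasing the differential-operator step as a Leibniz expansion of $\delta^{j_\phi}$ applied to the pullback, the paper works entirely with formal $q$-expansions: multiplication of the $\beta$-th coefficient by $(\det\beta)^{j_\phi}$ is identified with pairing $D^{\mathsf{d}}f_\kappa$ against $\det(\underline{X})^{j_\phi}\cdot f^{\mathrm{hw}}$ (this is $\delta'_2$), and the alternative pairing against the projection of $\det(\underline{X})^{j_\phi}$ to the $(\underline{X}_2,\underline{X}_3)$-block (this is $\delta'_1$) gives the other side; the difference involves only monomials containing an $\underline{X}_4$-factor, hence lies in the image of the Gauss--Manin connection on $X_{\mathrm{U}(s,r)}$ and is annihilated by $e^{\mathrm{ord}}$. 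This $q$-expansion viewpoint is what makes the argument go through cleanly, exactly as in the toy example (\ref{baby}) for $\mathrm{GL}_2$.
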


\begin{theorem}\label{Theorem 1.5}
For each non-arithmetic point and the corresponding arithmetic Eisenstein datum $\tilde{\mathbf{D}}^{(2)}_\phi$. We have
$$\delta^{1+j_\phi}_{r+1,s+1}E_{\mathrm{Kling},\mathbf{D}_{\phi},f_{\mathrm{sieg}}}=E_{\mathrm{Kling}, \tilde{\mathbf{D}}^{(2)}_\phi,f^{\mathrm{fteq}}_{\mathrm{sieg}}}.$$
The $\delta_{r+1,s+1}$ is the $p$-adic differential operator defined in Definition \ref{Definition 5.4}
\end{theorem}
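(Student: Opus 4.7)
The plan is to reduce the identity for Klingen Eisenstein series on $\mathrm{U}(r+1,s+1)$ to an identity between Siegel Eisenstein series on the doubled group $\mathrm{GU}(n+1,n+1)$ via Shimura's pullback formula, apply the local functional equation of Kudla--Sweet \cite{KS} for degenerate principal series at the archimedean and $p$-adic places, and then show that the residual discrepancy is annihilated after pullback and pairing with a cuspidal vector associated to $\pi$. This is the Klingen analogue of the strategy used for Theorem \ref{Theorem 1.4}, with the doubling embedding $\mathrm{U}(r,s)\times\mathrm{U}(s,r)\hookrightarrow\mathrm{U}(n,n)$ replaced by the Shimura embedding $\mathrm{U}(r+1,s+1)\times\mathrm{U}(s,r)\hookrightarrow\mathrm{U}(n+1,n+1)$.

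First I would express each Klingen Eisenstein series as a Shimura--Piatetski--Shapiro pullback integral
$$E_{\mathrm{Kling},\mathbf{D}_\phi,f_{\mathrm{sieg}}}(g)=\int_{\mathrm{U}(s,r)(F)\backslash\mathrm{U}(s,r)(\mathbb{A}_F)} E^{f_{\mathrm{sieg}}}_{\mathrm{sieg}}((g,h))\,\varphi_\pi(h)\,dh,$$
where $\varphi_\pi$ is a cuspidal vector associated to $\pi$ as in \cite{WAN}, and similarly on the right-hand side with $f_{\mathrm{sieg}}^{\mathrm{fteq}}$ in place of $f_{\mathrm{sieg}}$. Since $\delta^{1+j_\phi}_{r+1,s+1}$ acts only in the $g$-variable of $\mathrm{U}(r+1,s+1)$, it commutes with the integration over $\mathrm{U}(s,r)$, so it suffices to prove the identity at the level of Siegel Eisenstein series on $\mathrm{GU}(n+1,n+1)$ before pullback.

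Second, using the explicit formulas of Section \ref{Diff}, the operator $\delta^{1+j_\phi}_{r+1,s+1}$ is realized on the doubled group as a combination of the $p$-adic Maass--Shimura operators of \cite{EHLS}, whose effect on the Siegel section $f_{\mathrm{sieg}}$ at the archimedean and $p$-adic places can be computed directly on $q$-expansions. Specialized at the non-arithmetic point $\phi$, this computation produces a Siegel section whose $q$-expansion agrees with that of $f_{\mathrm{sieg}}^{\mathrm{fteq}}$ by virtue of the Kudla--Sweet local functional equation for the intertwining operator; this is precisely the general-rank analogue of the toy identity $\ell^{ak}\sum_{i=0}^a\ell^{-ki}=\sum_{i=0}^a\ell^{ki}$, and it is the same analytic input that drives Theorem \ref{Theorem 1.4}.

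Third, at finite places in $\Sigma$ and at unramified finite places the local sections on the two sides are arranged to match, so any residual discrepancy between the two Siegel Eisenstein series is supported on a proper parabolic orbit of lower dimension. Upon pullback to $\mathrm{U}(r+1,s+1)\times\mathrm{U}(s,r)$ such a contribution factors through the image of a raising/lowering operator acting on the $\mathrm{U}(s,r)$ variable, and hence vanishes against the cusp form $\varphi_\pi$. The main obstacle I anticipate is the bookkeeping at $p$ and at the archimedean places: one must verify that $\delta^{1+j_\phi}_{r+1,s+1}$ applied at the non-arithmetic $\phi$, combined with the parameter shift supplied by Kudla--Sweet, produces a Klingen section whose scalar weight, nebentypus, and ordinary level structure match those attached to the classical datum $\tilde{\mathbf{D}}^{(2)}_\phi$. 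A secondary difficulty is that, unlike in Theorem \ref{Theorem 1.4} where projection to $\pi\boxtimes\pi^\vee$ cleanly kills non-cuspidal contributions in the doubling pullback, here the Klingen pullback introduces constant-term contributions from smaller parabolics that must be shown to vanish after applying $\delta^{1+j_\phi}_{r+1,s+1}$ at $\phi$.
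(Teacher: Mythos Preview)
Your outline captures the right architecture---pullback, Kudla--Sweet on Fourier coefficients, then kill a discrepancy---but the second and third steps contain a genuine gap in the mechanism, and it is exactly the content of Proposition~\ref{proposition 5.5}.

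The claim ``$\delta^{1+j_\phi}_{r+1,s+1}$ acts only in the $g$-variable, so it suffices to prove the identity at the Siegel level before pullback'' is not correct as stated. The Kudla--Sweet identity on the Siegel side is an identity of $q$-expansions: it says multiplying the $\beta$-th Fourier coefficient of the specialised Siegel family by $\prod_{v\mid\infty}(\det\beta)^{1+j_\phi}$ yields the $\beta$-th coefficient of $E_{\mathrm{sieg}}(f^{\mathrm{fteq}})$ (this is the Corollary to Proposition~\ref{KudlaS}). On the level of differential operators this is the operator $\delta_2$ of Proposition~\ref{proposition 5.5}: apply $D^{(n+1)(1+j_\phi)}$ on the big group and pair with $\det(\underline{X})^{1+j_\phi}\cdot f^{\mathrm{hw}}_{\underline{k}^{(r+1,s+1)}}$. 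But $\delta_{r+1,s+1}$ applied to the \emph{Klingen} side, i.e.\ after restriction and after the pairing that isolates the $\mathrm{U}(r+1,s+1)$-weight, corresponds to the operator $\delta_1$: first project to $\mathrm{Sym}^\bullet(\underline{X}_2,\underline{X}_3)$, then apply $D^{1+j}$ and pair with $f_{123}(\underline{X})\cdot f^{\mathrm{hw}}$. These two operators on the Siegel Eisenstein series are \emph{not} equal; their difference involves monomials containing factors from the block $\underline{X}_4$, and it is precisely this difference that must be disposed of.

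Your proposed mechanism for killing the difference---``supported on a proper parabolic orbit of lower dimension'' and ``vanishes against the cusp form $\varphi_\pi$''---is not what happens. Cuspidality of $\varphi_\pi$ plays no role here. The actual argument (Proposition~\ref{proposition 5.5}) is representation-theoretic: any monomial in the expansion of $\det(\underline{X})^{1+j_\phi}$ that contains a factor from $\underline{X}_4$ corresponds, upon restriction to $\mathrm{U}(r+1,s+1)\times\mathrm{U}(s,r)$, to a term lying in the image of a Maass--Shimura differential operator on the \emph{second} factor $\mathrm{U}(s,r)$. Such terms are annihilated by Hida's ordinary idempotent $e^{\mathrm{ord}}$ on $\mathrm{U}(s,r)$, which is part of the construction of the Klingen family (Theorem~\ref{Construction}). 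So the vanishing comes from ordinarity, not from pairing against a cusp form, and the discrepancy is identified combinatorially in the polynomial algebra $\mathrm{Sym}^\bullet(\underline{X})$ via Corollary~\ref{corollary 5.3}, not via any parabolic support condition. Once you replace your third step with this argument, the proof goes through as in the paper.
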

\begin{remark}
The formula for the right hand side of Theorem \ref{Theorem 1.4} can be easily deduced from Proposition \ref{formula intertwining} and the proof of Proposition \ref{proposition 6.17}. We omit the precise formulas.  This extends the interpolation formula of Eischen-Harris-Li-Skinner \cite{EHLS} to the left side as well. Note that due to the existence of the differential operator in Theorem \ref{Theorem 1.5}, the right hand side is not moving in a $p$-adic analytic family.
\end{remark}
Now we explain how this helps us with proving cases of the Bloch-Kato conjecture. Look at the ordinary family $\mathbf{E}_{\mathrm{Kling}}$ of Klingen Eisenstein series constructed in \cite{WAN}, whose constant terms are divisible by the $p$-adic $L$-functions of the unitary groups. Consider the arithmetic point $\phi_0$ where this $p$-adic $L$-function takes the central critical value (which we assume to be $0$). All we need to show is the $\phi_0(\mathrm{E}_{\mathrm{Kling}})$ is nonzero. However this specialization is not in a classical weight, and is purely a $p$-adic limit, which makes it difficult to study the non-vanishing. However we can apply the $p$-adic functional equation above on it: the left side is the image of $\phi_0(\mathrm{E}_{\mathrm{Kling}})$ under certain Maass-Shimura differential operator on $\mathrm{U}(r+1,s+1)$, which makes its weight in the classical range. The right side turns out to be a \emph{classical} Klingen Eisenstein series, which we have lot of tools from automorphic form theory to compute. So our goal now is to compute this Klingen Eisenstein series on the right side and prove its non-vanishing. Here for convenience of the reader we summarize the difficulties to achieve this and our ideas to solve them.
\begin{itemize}
\item Some local pullback sections for $f^{\mathrm{fteq}}$ at bad primes are difficult to compute. We use a trick of comparing global functional equations for Siegel and Klingen Eisenstein series. Such trick is used by Skinner-Urban in \cite{SU} to compute ordinary sections at $p$-adic places. We use it here to reduce the calculation at bad primes to that of good primes (see Section \ref{Nv}).
\item In order to apply the functional equation of Kudla-Sweet we need to ensure that at one prime the local Fourier coefficient of the Siegel Eisenstein series is identically $0$ (as a function of $z$). This is because the Siegel Eisenstein measure we use to construct the ordinary Klingen Eisenstein series has only non-degenerate Fourier expansion, while it is not clearly the case for the other side of the functional equation outside the absolutely convergent range. For this purpose we pick an auxiliary prime $v$ split in $\mathcal{K}$, such that the Eisenstein datum is unramified. We choose $v$ so that $\pi_v$ has pairwise distinct Satake parameters (we prove the existence) using compatible system of Galois representations). For this $v$ we need to construct a Siegel section whose degenerate Fourier coefficients are all zero, and the pullback Klingen Eisenstein section is computable.

    \ \ Such pullback section is difficult to compute directly -- having nice description for the Fourier coefficients would result in complicated description of the Siegel section itself, and thus complicated pullback sections (uncertainty principle). Our method to solve the problem is partially borrowed from the beautiful idea of Eischen-Harris-Li-Skinner \cite[Section 4.3]{EHLS} when they do the \emph{$p$-adic} computations (which is the technical core of \cite{EHLS}). It uses the Godement-Jacquet functional equation to relate pullback sections of Siegel-Weil sections whose Schwartz functions are related under Fourier transform. Our situation is more complicated however, since we are working with Klingen Eisenstein series compared to the $p$-adic $L$-function case of \emph{loc.cit.} (see Section \ref{Auxi}).
\item In the case when the local $L$-factors for bad primes at $z=-\frac{1}{2}$ do not have poles, the required non-vanishing result is directly seen from computations. However if they do have poles, then the corresponding intertwining operator at $z=-\frac{1}{2}$ are expected to have poles, and the situation is more complicated. We apply deep theory developed by Casselman and Shahidi \cite{CS} on analytic properties of intertwining operators and reducibility of standard modules, to prove that these expected poles do exist, which imply the non-vanishing of the pullback section (see Section \ref{Nv}).
\end{itemize}

\noindent This paper is organized as follows: in section 2 we fix the set up and give the detailed formulation. In section 3 we develop the Hida theory for general $\mathrm{U}(r,s)$ needed for our argument. In section 4 we summarize our construction of Siegel and Klingen Eisenstein families. In section 5 we carry out the representation theory computations for differential operators. In section 6 we interpolate everything in families, deduce our results for $p$-adic functional equations, and prove the required non-vanishing results for the Klingen Eisenstein family. In section 7 we prove the lower bound of the Selmer group rank.\\

\noindent\emph{Acknowledgement} We would like to Z. Liu, C-P. Mok, F. Shahidi, S.G. Shin, and E. Urban for helpful communications.

\section{Set Up and Formulation}
Let $d:=[F:\mathbb{Q}]$. We take a CM type of $\mathcal{K}$ denoted as $\Sigma_\infty$ (thus $\Sigma_\infty\sqcup\Sigma^c_\infty$ are all embeddings $\mathcal{K}\rightarrow\mathbb{C}$ where $\Sigma_\infty^c=\{\tau\circ c,\tau\in\Sigma_\infty\}$). Fixing throughout an isomorphism $\iota: \mathbb{C}\simeq \mathbb{C}_p$, we can associate from $\Sigma$ a set of $p$-adic places, which we still denote as $\Sigma$ and call it a $p$-adic CM type. Consider $\mathcal{O}_{\mathcal{K},p}\simeq \oplus_{v\in\Sigma\cup\Sigma^c}\mathcal{O}_{\mathcal{K},v}$. We define idempotents $e^+=e_\Sigma$ and $e^-=e_{\Sigma^c}$ to be the projections to $v\in\Sigma$ and $v\in\Sigma^c$ parts, respectively.

For any Hecke character $\chi$ of $\mathcal{K}^\times\backslash\mathbb{A}^\times_\mathcal{K}$, we write $\chi'$ for the restriction of $\chi$ to $\mathbb{A}^\times_F$. We write $\chi^c(x):=\chi(x^c)$ where $c$ is the complex conjugation and $\bar{\chi}(x)=\overline{\chi(x)}$. We write $\chi_\mathcal{K}$ for the quadratic Hecke character corresponding to the extension $\mathcal{K}/F$.

We define:
\begin{equation}\label{hermitian space}
\theta_{r,s}=\begin{pmatrix}&&1_s\\&\zeta&\\-1_s&&\end{pmatrix}
\end{equation}
\index{$\theta_{r,s}$} where $\zeta$ is a fixed diagonal matrix such that $i^{-1}\zeta$ is totally positive. Let $\mathrm{GU}(r,s)$ and $\mathrm{U}(r,s)$ be the corresponding unitary similitude group and unitary group of signature $(r,s)$ respectively (see \cite[Section 2.2]{WAN}).

As in \cite{WAN} in this paper we write $a$ and $b$ such that $r=a+b$ and $s=b$.
\begin{definition}
A weight $\underline{k}$ \index{$\underline{k}$} is defined to be an $(r+s)$-tuple
$$\underline{k}=(a_{1,v},\cdots, a_{r,v}; b_{1,v},\cdots, b_{s,v})_{v\in\Sigma_\infty}\in\Sigma^{r+s}$$ with $a_{1,v}\geq \cdots \geq a_{r,v}\geq -b_{1,v}\geq \cdots -b_{s,v}$. We often omit the subscript $v$ when writing the weights for a given Archimedean place $v$.
\end{definition}
In this paper we also allow the case that the $a_{i,v}$ and $b_{j,v}$ are all half integers for all $i$, $j$ and $v$. It means that if we twist the $\pi$ by a Hecke character of $\mathcal{K}^\times\backslash\mathbb{A}^\times_\mathcal{K}$ of infinity type $(-\frac{1}{2},\frac{1}{2})$ at all Archimedean places, the resulting representation is of integer weight defined above. The reason for introducing this is we sometimes study special $L$-values at $z$ which is a half integer, such that the corresponding Galois representation has integral Hodge-Tate weights. We will introduce a scalar $\kappa$ later in Section \ref{Diff} when discussing Siegel Eisenstein series. We require that if $\kappa$ is odd, then all the $a_{i,v}$ and $b_{j,v}$ are half integers; If $\kappa$ is even, then these $a_{i,v}$ and $b_{j,v}$ are integers. This is to ensure that the $a'_i$ and $b'_i$ in Section \ref{Diff} are integers, and that the Hodge-Tate weights of the motive $M$ are integers.

We refer to \cite[Section 3.1]{Hsieh CM} for the definition of the algebraic representation $V_{\underline{k}}$ (denoted $L_{\underline{k}}$ there) of $H$ with the action denoted by $\rho_{\underline{k}}$ (note the different index for weight) and define a model $V^{\underline{k}}$ of the representation $H$ with the highest weight $\underline{k}$ as follows. The underlying space of $V^{\underline{k}}$ is $V_{\underline{k}}$ and the group action is defined by
$$\rho^{\underline{k}}(h)=\rho_{\underline{k}}({}^t\!h^{-1}),h\in H.$$
Let $n=r+s$. Suppose $\pi$ is an irreducible cuspidal automorphic representation with algebraic weight
$\underline{k}$.
Then by work of Harris-Taylor, Shin, Morel, etc, there is Galois representation
$$\rho_\pi: G_K\rightarrow \mathrm{GL}_n(L)=\mathrm{GL}(V)$$
associated to the base change of $\pi$ to $\mathcal{K}$. More precisely, by the identification
$$L(\mathrm{BC}(\pi),\frac{1}{2})=L(\rho_\pi,0)$$
normalized by the geometric Frobenius.

Now suppose $\pi$ is unramified and ordinary at all primes above $p$. The notion of being ordinary is defined using the Satake parameters at $p$-adic places and the weight $\underline{k}$, which basically says that the eigenvalues of $U_p$ operators are $p$-adic units. We discuss this in next section in details. Let $v=v_0\bar{v}_0$ be a place above $p$ with $v_0$ in our $p$-adic CM type. Then $\rho_\pi$ satisfies
$$\rho_\pi|_{G_{{v}_0}}\simeq \begin{pmatrix}\xi_{1,v_0}\epsilon^{-\kappa'_{n,v_0}}&*&*\\0&\cdots&*\\0&0&\xi_{n,v_0}\epsilon^{-\kappa'_{1,v_0}}\end{pmatrix},$$
and
$$\rho_\pi|_{G_{\bar{v}_0}}\simeq \begin{pmatrix}\xi_{1,\bar{v}_0}\epsilon^{-\kappa'_{n,\bar{v}_0}}&*&*\\0&\cdots&*\\0&0&\xi_{1,\bar{v}_0}
\epsilon^{-\kappa'_{1,\bar{v}_0}}
\end{pmatrix}.$$
Here $\xi$'s are unramified characters. The Hodge-Tate weights $\kappa_{i,v_0}$'s are defined as follows. Let $\kappa'_{i,v_0}=\frac{n}{2}+s-i+b_{1+s-i}$ for $1\leq i\leq s$ and $\kappa'_{s+i,v_0}=-a_{r+1-i}+s+r-i+\frac{n}{2}$ for $1\leq i \leq r$. We also let $\kappa_j=-\kappa'_{r+s+1-j}$ for every $1\leq j\leq r+s$. They depends on $\underline{k}$ and satisfy
$\kappa_{1,v_0}>\kappa_{2,v_0}>\cdots>\kappa_{n,v_0}$. One similarly has the sequence of decreasing Hodge-Tate weights $\kappa_{1,\bar{v}_0}>\kappa_{2,\bar{v}_0}>\cdots>\kappa_{n,\bar{v}_0}$.
It is well known that there is an $\mathcal{O}_L$-lattice $T$ of $V$ stable under $G_K$.

Recall we made the following assumption:
\begin{itemize}
\item[(Irred)] There is a Galois stable lattice $T$ such that the resulting residual Galois representation $\bar{\rho}_\pi$ is absolutely irreducible.
\end{itemize}
Under this assumption, the Galois stable lattice $T$ is unique up to scalar.

Let $\chi$ be a Hecke character of $K^\times\backslash\mathbb{A}^\times_K$ such that the corresponding Galois character has Hodge-Tate weight $(k_{v_0},k_{\bar{v}_0})_{v_0}$. We assume $2k_{v_0}$ and all the $2\kappa_{v_0,i}$ have the same parity, and $2k_{\bar{v}_0}$ and all the $2\kappa_{\bar{v}_0,i}$ have the same parity.

Suppose $L(\tilde{\rho_\pi}\otimes\chi,1)$ corresponds to critical value of $L$-function (following Deligne). Then there is some $i$ such that
$$\kappa_{i+1,v_0}\leq k_{v_0}<\kappa_{i,v_0},\ \kappa_{n-i+1,\bar{v}_0}\leq k_{\bar{v}_0} <\kappa_{n-i,\bar{v}_0}.$$
In this paper we assume $i=r$. The reason is that these critical values are realized via doubling method of $$\mathrm{U}(r,s)\times\mathrm{U}(s,r)\hookrightarrow \mathrm{U}(r+s,r+s).$$
This is used by \cite{EHLS} to construct the corresponding $p$-adic $L$-functions.

Now we turn to the arithmetic side. The following definition of Selmer group is due to Greenberg \cite{Gr94}.
Fix a finite set of primes $\Sigma$ including all bad primes and primes above $p$. Let $\mathcal{K}_\infty$ be the extension over $\mathcal{K}$ which is the composition of the cyclotomic $\mathbb{Z}_p$-extension and the anti-cyclotomic extension whose Galois group is isomorphic $\mathbb{Z}^d_p$. So $\Gamma_\mathcal{K}:=\mathrm{Gal}(\mathcal{K}_\infty/\mathcal{K})\simeq \mathbb{Z}^{d+1}_p$. Write $\Gamma^+_\mathcal{K}$ for the subgroup of $\Gamma_\mathcal{K}$ such that the complex conjugation acts by $+1$. Then $\Gamma^+_\mathcal{K}\simeq \mathbb{Z}_p$. Let $\Lambda_\mathcal{K}:=\mathcal{O}_L[[\Gamma_\mathcal{K}]]$. We define the Selmer group of $\rho_\pi\otimes\chi^{-1}$ over $\mathcal{K}_n$ between $\mathcal{K}$ and $\mathcal{K}_\infty$:
\begin{align*}
&\mathrm{Sel}(\mathcal{K}_n, V/T\otimes\chi^{-1}):=\mathrm{Ker}\{H^1(\mathcal{K}^\Sigma_n, V/T\otimes\chi^{-1})
\rightarrow \prod_{v\in\Sigma}\frac{H^1(\mathcal{K}_{n,v}, V/T\otimes\chi^{-1})}{H^1_f(\mathcal{K}_{n,v}, V/T\otimes\chi^{-1})}\},&
\end{align*}
where the $H^1_f$ are defined as follows.
\begin{itemize}
\item For primes $v\nmid p$, we define
\begin{align*}
&H^1_f(\mathcal{K}_{n,v}, V\otimes\chi^{-1}):=\mathrm{ker}\{H^1(\mathcal{K}_{n,v}, V\otimes\chi^{-1})\rightarrow H^1(I_{n,v}, V\otimes\chi^{-1})\},&\end{align*}
and $H^1_f(\mathcal{K}_{n,v}, V/T\otimes\chi^{-1})$ is defined to be the image of $H^1_f(\mathcal{K}_{n,v}, V\otimes\chi^{-1})$.
\item For primes above $p$, recall the local Galois representation $T$ is upper-triangular. There is a co-torsion free rank $r$ submodule $T^+_{v_0}\subseteq T$ corresponding to the upper $r$ rows at $v_0$ which is stable under $G_{v_0}$. Similarly there is a rank $s$ co-torsion free submodule $T^+_{\bar{v}_0}\subseteq T$ corresponding to the upper $s$ rows at $\bar{v}_0$. We define $H^1_f(\mathcal{K}_{n,v_0}, V/T\otimes\chi^{-1})$ as the image of $H^1(\mathcal{K}_{n,v_0}, V^+/T^+\otimes\chi^{-1})$, and similarly for $\bar{v}_0$.
\end{itemize}
We define
$$\mathrm{Sel}(\mathcal{K}_\infty, V/T\otimes\chi^{-1})=\varinjlim_{\mathcal{K}_n}\mathrm{Sel}(\mathcal{K}_n, V/T\otimes\chi^{-1}),$$
and
$X_{\pi,\chi,\mathcal{K}}$ being its Pontryagin dual. This is a finitely generated module over $\mathcal{O}_L[[\Gamma_\mathcal{K}]]$.

For a Hida family $\mathbf{f}$ containing an ordinary vector in $\pi$ as specialization with coefficient ring $\mathbb{I}$ a Noetherian normal domain, we can still construct the corresponding family of Galois representation $\rho_\mathbf{f}$, thanks to the assumption that the residual representation $\bar{\rho}_\pi$ is absolutely irreducible. We can similarly define its dual Selmer module $X_{\mathbf{f},\chi,\mathcal{K}}$. This is a finitely generated module over $\mathbb{I}[[\Gamma_\mathcal{K}]]$. We also define $\Sigma$-imprimitive versions $X^\Sigma_{\pi,\chi,\mathcal{K}}$ and $X^\Sigma_{\mathbf{f},\chi,\mathcal{K}}$ of them.

\begin{conjecture} (Bloch-Kato)\\
The vanishing order of $L(\tilde{\rho}_\pi\otimes\chi, s)$ at $s=1$ is equal to the rank of the Selmer group $\mathrm{Sel}(\mathcal{K}, \rho_\pi\otimes\chi^{-1})$.
\end{conjecture}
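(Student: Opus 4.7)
The plan is to approach the full equality via a two-variable Iwasawa main conjecture for the motive $\rho_\pi \otimes \chi^{-1}$ over $\mathcal{K}_\infty/\mathcal{K}$. One first formulates a main conjecture asserting that the characteristic ideal $\mathrm{char}_{\Lambda_\mathcal{K}}(X_{\pi,\chi,\mathcal{K}})$ is generated by a $p$-adic $L$-function $\mathcal{L}_{\pi,\chi} \in \Lambda_\mathcal{K}$ whose interpolation property at finite order characters $\psi$ of $\Gamma_\mathcal{K}$ recovers normalized complex values $L(\tilde{\rho}_\pi \otimes \chi\psi, 1)$. Given such a main conjecture as an equality of ideals, a control theorem comparing $X_{\pi,\chi,\mathcal{K}}$ with $\mathrm{Sel}(\mathcal{K}, V/T \otimes \chi^{-1})$ would translate the multiplicity of the augmentation prime in $\mathrm{char}(X_{\pi,\chi,\mathcal{K}})$ simultaneously into the Selmer rank and into the order of vanishing of $\mathcal{L}_{\pi,\chi}$, which under the (unramified, ordinary, non-exceptional) hypotheses matches the vanishing order of the complex $L$-function at $s=1$.

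For the divisibility $\mathcal{L}_{\pi,\chi} \mid \mathrm{char}(X_{\pi,\chi,\mathcal{K}})$ (``analytic divides algebraic''), I would extend the Eisenstein congruence machinery of the present paper. Theorem \ref{main theorem} implements the order-one case via the ordinary Hida family of Klingen Eisenstein series from \cite{WAN} together with the $p$-adic functional equations of Theorems \ref{Theorem 1.4} and \ref{Theorem 1.5}; iterating the Urban-style lattice construction and exploiting higher-order congruences between the Eisenstein family and cuspidal Hida families should promote this to a divisibility of ideals in $\Lambda_\mathcal{K}$, not merely a rank bound. The key new input is a refined non-vanishing statement for the Klingen Eisenstein family modulo powers of the augmentation ideal, again reduced via the $p$-adic functional equation to a classical Klingen Eisenstein series whose local Fourier-type data can be computed.

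The reverse divisibility ``algebraic divides analytic'' is the principal obstacle, and is what prevents the present method from yielding the full conjecture. It calls for an Euler system for $\rho_\pi \otimes \chi^{-1}$ over $\mathcal{K}_\infty$ whose derived classes bound the Selmer group from above via the Kolyvagin--Rubin framework. For $\mathrm{U}(r,s)$ of general signature with an arbitrary Hecke twist, no such Euler system is presently available. A plausible route is to adapt Loeffler--Skinner--Zerbes-type constructions (motivic classes from pushforwards on subgroups of larger unitary Shimura varieties), using embeddings $\mathrm{U}(r',s') \times \mathrm{U}(r'',s'') \hookrightarrow \mathrm{U}(r,s)$ compatible with the motive, and then to prove explicit reciprocity laws identifying the bottom classes with $\mathcal{L}_{\pi,\chi}$ through the doubling integral representation used in \cite{EHLS}. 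Executing this is a substantial independent project, and I expect this to be by far the hardest step.

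Finally, one must ensure that the interpolation factors relating $\mathcal{L}_{\pi,\chi}(1)$ to $L(\tilde{\rho}_\pi \otimes \chi, 1)$ at the trivial character of $\Gamma_\mathcal{K}$ introduce no spurious zeros or cancellations. Under the ordinarity and unramifiedness at $p$ assumed throughout, these factors are explicit products of modified Euler factors and Deligne periods that are non-zero at $s=1$, so no exceptional-zero analysis is required and the orders of vanishing on both sides of the interpolation match. Combining the Eisenstein divisibility of paragraph two with a hypothetical Euler system divisibility of paragraph three, and specializing through the unobstructed control theorem, would yield the conjectured equality of the vanishing order of $L(\tilde{\rho}_\pi \otimes \chi, s)$ at $s=1$ with the rank of $\mathrm{Sel}(\mathcal{K}, \rho_\pi \otimes \chi^{-1})$.
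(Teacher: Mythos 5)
This statement is a \emph{conjecture}, not a theorem: the paper does not prove it, and there is no ``paper's own proof'' to compare against. What the paper actually establishes is the much weaker Theorem~\ref{main theorem}, a one-directional implication (under (QS), (Irred), a Hodge--Tate weight hypothesis, and ordinarity at $p$): if $L(M^\vee(1),0)=0$ then $\mathrm{Sel}_{p^\infty}(M)$ has positive rank. That is a qualitative lower bound on the Selmer rank when the $L$-value vanishes, not an equality of the vanishing order with the rank. So presenting a ``proof'' of the conjecture is already a mismatch with what is provable here.

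Your proposal is in fact an honest research program rather than a proof, and you say so yourself. Your second paragraph is the closest to what the paper does: the Klingen Eisenstein family, its $p$-adic functional equation (Theorems~\ref{Theorem 1.4}, \ref{Theorem 1.5}), the non-vanishing at $\phi_0$, and the lattice construction give exactly the ``order one'' statement, which is Theorem~\ref{main theorem}. But ``iterating the lattice construction'' to obtain the full divisibility $\mathcal{L}_{\pi,\chi}\mid\mathrm{char}(X_{\pi,\chi,\mathcal{K}})$ is not carried out in this paper and is itself nontrivial; the paper deliberately stops at a rank bound, and one complication it flags (Lemma~\ref{8.1} and the surrounding discussion) is possible common divisors between $\mathrm{char}(X^\Sigma_{\mathbf{f},\chi,\mathcal{K}})$ and $\mathrm{char}(X^\Sigma_{\chi'\chi^a_\mathcal{K},F})$. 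Your third paragraph (the Euler-system upper bound) is, as you acknowledge, not available for $\mathrm{U}(r,s)$ of general signature with arbitrary Hecke twist; without it the equality of ideals, and hence the conjectured equality of vanishing order and Selmer rank, cannot be concluded. And even granting both divisibilities, your fourth paragraph's control-theorem step would convert the order at the augmentation prime into the Selmer corank, but relating that to the \emph{complex} order of vanishing beyond order one requires additional input (Greenberg's ``semisimplicity'' or Bloch--Kato/Beilinson-type nondegeneracy of heights) that is not addressed. In short: the gap is not a flaw in your outline so much as the fact that the statement is open; the paper only proves the special case stated as Theorem~\ref{main theorem}.
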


\section{Hida Theory for $\mathrm{U}(r,s)$}
\subsection{Notations and Conventions}
We are going to fix some basis of the various Hermitian spaces. We let $$y^1,...,y^s,w^1,...,w^{r-s},x^1,...,x^s$$
be the standard basis of the Hermitian space $V$ such that the Hermitian form is given by \ref{hermitian space}. Let $W$ be the span over $\mathcal{K}$ of $w^1,...,w^{r-s}$. Let $X^\vee=\mathcal{O}_\mathcal{K}x^1\oplus ...\oplus \mathcal{O}_{\mathcal{K}}x^s$ and $Y=\mathcal{O}_\mathcal{K}y^1\oplus...\oplus \mathcal{O}_\mathcal{K}y^s$. Let $L$ be an $\mathcal{O}_\mathcal{K}$-maximal lattice such that $L_p:=L\otimes_\mathbb{Z}\mathbb{Z}_p=\sum_{i=1}^{r-s}(\mathcal{O}_\mathcal{K}\otimes_\mathbb{Z}\mathbb{Z}_p)w^i$. We define a $\mathcal{O}_\mathcal{K}$-lattice $M$ of $V$ by
$$M:=Y\oplus L\oplus X^\vee.$$
Let $M_p=M\otimes_\mathbb{Z}\mathbb{Z}_p$. A pair of sublattice $\mathrm{Pol}_p=\{N^{-1}, N^0\}$ of $M_p$ is called an ordered polarization of $M_p$ if $N^{-1}$ and $N^0$ are maximal isotropic direct summands in $M_p$ and they are dual to each other with respect to the Hermitian pairing. Moreover we require that for each $v=ww^c$, $w\in \Sigma$,
$\mathrm{rank}N_w^{-1}=\mathrm{rank}N^0_{w^c}=r$ and $\mathrm{rank}N_{w^c}^{-1}=\mathrm{rank}N_w^0=s$. The standard polarization of $M_p$ is given by:
$M^{-1}_v=Y_w\oplus L_w\oplus Y_{w^c}$ and $M^0_v=X_{w^c}\oplus L_{w^c}\oplus X_{w}$.\\

\noindent\underline{Shimura Varieties}\\
Fix a neat open compact subgroup $K$ of $\mathrm{GU}(r,s)(\mathbb{A}_f)$ whose $p$-component is $\mathrm{GU}(r,s)(\mathbb{Z}_p)$, we refer to \cite[Section 2]{Hsieh CM} for the definitions and arithmetic models of Shimura varieties over the reflex field $E$ which we denote as $S_G(K)$. It parameterizes isomorphism classes of the quadruples $(A,\lambda, \iota, \bar{\eta}^{(\Box)})_{/S}$ where $\Box$ is a finite set of primes, $(A,\lambda)$ is a polarized abelian variety over some base ring $S$, $\lambda$ is an orbit (see \cite[Definition 2.1]{Hsieh CM}) of prime to $\Box$ polarizations of $A$, $\iota$ is an embedding of $\mathcal{O}_\mathcal{K}$ into the endomorphism ring of $A$ and $\bar{\eta}^{(\Box)}$ is some prime to $\Box$ level structure of $A$. To each point $(\tau,g)\in X^+\times G(\mathbb{A}_{F,f})$ we attach the quadruple as follows:
\begin{itemize}
\item The abelian variety $\mathcal{A}_g(\tau):= V\otimes_\mathbb{Q}\mathbb{R}/M_{[g]} (M_{[g]}:=H_1(\mathcal{A}_g(\tau),\hat{\mathbb{Z}}^p))$.
\item The polarization of $\mathcal{A}$ is given by the pullback of $-\langle,\rangle_{r,s}$ on $\mathbb{C}^{r,s}$ to $V\otimes_\mathbb{Q}\mathbb{R}$ via $p(\tau)$.
\item The complex multiplication $\iota$ is the $\mathcal{O}_\mathcal{K}$-action induced by the action on $V$.
\item The prime to $p$ level structure: $\eta_g^{(p)}: M\otimes\hat{\mathbb{Z}}^p\simeq M_{[g]}$ is defined by $\eta_g^{(p)}(x)=g *x$ for $x\in M$.
\end{itemize}

\noindent Now we recall briefly the notion of Igusa schemes over $\mathcal{O}_{v_0}$ (the localization of the integer ring of the reflex field at the $p$-adic place $v_0$ determined by $\iota_p:\mathbb{C}\simeq \mathbb{C}_p$) in \cite[Section 2]{Hsieh CM}. Recall $M$ is the standard lattice of $V$ and $M_p=M\otimes_{\mathbb{Z}}\mathbb{Z}_p$. Let $\mathrm{Pol}_p=\{N^{-1}, N^0\}$ be a polarization of $M_p$.
The Igusa variety $I_G(K^n)$ of level $p^n$ is the scheme representing the usual quadruple for Shimura variety together with a
$$j:\mu_{p^n}\otimes_\mathbb{Z}N^0\hookrightarrow A[p^n]$$
where $A$ is the abelian variety in the quadruple. Note that the existence of $j$ implies that if $p$ is nilpotent in the base ring then $A$ must be ordinary. For any integer $m>0$ let $\mathcal{O}_m:=\mathcal{O}_L/p^m$.
\\

\noindent\underline{Igusa Schemes over $\bar{S}_G(K)$}:\\
To define $p$-adic automorphic forms one needs Igusa Schemes over $\bar{S}_G(K)$. We fix such a toroidal compactification and refer to \cite[2.7.6]{Hsieh CM} for the construction. We still denote it as $I_G(K^n)$. Then over $\mathcal{O}_m$ the $I_G(K^n)$ is a Galois covering of the ordinary locus of the Shimura variety with Galois group $\prod_{v|p}\mathrm{GL}_r(\mathcal{O}_{F,v}/p^n)\times\mathrm{GL}_s(\mathcal{O}_{F,v}/p^n)$. If we write $g_p=\begin{pmatrix}A&B\\C&D\end{pmatrix}$ for the $p$-component of $g$, then we define
$$K^n=\{g\in K|g_p\equiv \begin{pmatrix}1_r&*\\0&1_s\end{pmatrix}\mathrm{mod}p^n\},$$
$$K^n_1=\{g\in K|A\in N_r(\mathbb{Z}_p)\ \mathrm{mod}p^n, D\in N_s^-(\mathbb{Z}_p)\mathrm{mod} p^n, C=0\},$$
$$K^n_0=\{g\in K|A\in B_r(\mathbb{Z}_p)\ \mathrm{mod}p^n, D\in B_s^-(\mathbb{Z}_p)\mathrm{mod} p^n, C=0\}.$$
Here the $N_r$ is the unipotent radical of the upper triangular Borel group $B_r$ of $\mathrm{GL}_r$ and $N^-_r$ is the opposite unipotent group of it, and similarly for $N_s$ and $B_s$. We write $I_G(K_0^n)=I_G(K^n)^{K_0^n}$ and $I_G(K_1^n)=I_G(K^n)^{K_1^n}$ over $\mathcal{O}_m$.\\

\noindent\underline{Igusa Schemes for Unitary Groups}\\
We refer to \cite[2.5]{Hsieh CM} for the notion of Igusa Schemes for the unitary groups $\mathrm{U}(r,s)$ (not the similitude group). It parameterizes quintuples $(A,\lambda,\iota,\bar{\eta}^{(p)},j)_{/S}$ similar to the Igusa Schemes for unitary similitude groups but requiring $\lambda$ to be a prime to $p$-polarization of $A$ (instead of an orbit). In order to use the pullback formula algebraically we need a map of Igusa schemes given by:
$$i([(A_1,\lambda_1,\iota_1,\eta_1^pK_1,j_1)],[(A_2,\lambda_2,\iota_2,\eta_2^pK_2,j_2)])=[(A_1\times A_2,\lambda_1\times\lambda_2,\iota_1,\iota_2,(\eta_1^p\times \eta_2^p)K_3,j_1\times j_2)].$$

We discuss the complex uniformization. Recall the following Hermitian symmetric domains for $\mathrm{U}(r,s)$
$$ X^+=X_{r,s}=\{\tau=\begin{pmatrix}x\\y \end{pmatrix}|x\in M_s(\mathbb{C}),y\in M_{(r-s)\times s}(\mathbb{C}),i(x^*-x)>iy^*\zeta^{-1}y\}.$$

For $z=\begin{pmatrix}x\\y\end{pmatrix}$ on it, let $B(z)=\begin{pmatrix}x^*&y^*&x\\0&-\zeta&y\\I_s&0&I_s\end{pmatrix}$. We write the complex vector space
$\mathbb{C}^{r,s}=\mathbb{C}(\Sigma^c)^s\oplus\mathbb{C}(\Sigma^c)^{r-s}\oplus \mathbb{C}(\Sigma)^s$, regarded as row vectors. We define a morphism
$$c_{r,s}: (u_1, u_2, u_3)c_{r,s}=(\bar{u}_1, \bar{u}_2, u_3).$$
Define the $\mathbb{R}$-linear map $p(z)$ by $p(z)v=vB(z)c_{r,s}$. Define the lattice $M_{[g]}(z)=p(z)M_{[g]}$. The Abelian variety at the point $(z,g)$ is defined by $\mathbb{C}^{r,s}/p(z)M_{[g]}$, and the complex multiplication is induced by the action of $V_{r,s}$ via $p(z)$.
We similarly define
$$c_{s,r}: (u_1, u_2, u_3)c_{r,s}=(u_1, u_2, \bar{u}_3).$$
For the moduli problem for $\mathrm{U}(s,r)$, we use the $p'(z)v=vB(z)c_{s,r}$, and define the Abelian variety and complex multiplication similarly, with $c_{r,s}$ replaced by $c_{s,r}$.

We discuss the pullback of Hermitian spaces. Let $z=\begin{pmatrix}x\\y\end{pmatrix}$ and $w=\begin{pmatrix}u\\v\end{pmatrix}$ be points in the symmetric domains of $\mathrm{U}(r+1,s+1)$ and $\mathrm{U}(r,s)$ respectively. As in \cite[6.10, 6.11]{Shimura97}, we define
$$R=\begin{pmatrix}1_{s+1}&&&&&\\&\frac{1}{2}1_{r-s}&&&-\frac{1}{2}1_{r-s}&\\&&&-1_s&&
\\&&1_{s+1}&&&\\&-\zeta^{-1}&&&-\zeta^{-1}&\\&&&&&1_s\end{pmatrix}$$
and
$$L=\begin{pmatrix}1_{r+1}&&&\\&&&1_{s+1}\\&&1_r&\\&1_s&&\end{pmatrix}.$$
Then by \cite[(6.11.3)]{Shimura97}, if $Z=\iota(z,w)$, then
$$R.\mathrm{diag}[B(z), B(w)]=B(Z)\mathrm{diag}[\overline{M(w)}, N(z)]L^{-1}.$$
From this one seems that $\mathrm{diag}[\overline{M(w)}, N(z)]L^{-1}$ induces isomorphism
$$M_{g,h}(Z)\simeq M_g(z)\oplus M_h(w).$$

With the above formulas, similar to \cite[Section 2.6]{Hsieh CM}, we know that taking the change of polarization into consideration
\begin{equation}\label{Upsilon}
i([z,g],[w,h])=[\iota(z,w),(g,h)\Upsilon]
\end{equation}\label{Upsilon}
where $\Upsilon\in \mathrm{U}(n+1,n+1)(F_p)$ is defined such that for each $v|p$ such that $v=ww^c$ where $w$ is in our $p$-adic CM type $\Sigma_p$, $\Upsilon_w=S_w^{-1}$ (the $S_w$ is the image of $S$ defined in (\ref{(1)}) in $\mathbb{Q}_p$).\\

\noindent\underline{$p$-adic Cusp Labels}\\
For those $v|p$, we define $\Gamma_{0,v}(p^n)\subset G(\mathcal{O}_{F,v})$ consisting of block matrices (with respect to $r+s$) $\begin{pmatrix}a&b\\c&d\end{pmatrix}$ with $c\equiv 0$ modulo $p^n$ under the standard basis. As in \cite{Rosso}, we define $\mathscr{C}_M$ to be the set of cotorsion-free isotropic submodules of $M$ with an action of $G(\mathcal{O}_F)$. Let $\Gamma=K\cap \mathrm{GU}(r,s)(\mathcal{O}_{F})$. For simplicity we assume $\Gamma$ is of the form of the principal congruence subgroup $\Gamma(N)$ of level $N$. The quotient of it by $\Gamma$ is called the set of cusp labels.

In this paper we are mainly interested in cusp labels of corank $1$. Write $\mathscr{C}_{M,1}$ for the set of cusp labels of codimension $1$. As in \emph{loc.cit.} we define the set of ``ordinary cusp labels'' $\mathscr{C}_{M,p^n,1}$ to be the orbit of the $1$-dimensional space spanned by $x^1$, under the action of $\Gamma\cap\prod_v\Gamma_v(p^n)$. This can be viewed as the set of cusp labels on the Igusa variety. Then there are natural surjective maps
$$\mathfrak{p}_{\mathscr{C},n}: \mathscr{C}_{M,p^n,1}/\Gamma\cap\prod_v\Gamma_{0,v}(p^n)\rightarrow \mathscr{C}_{M,1}/\Gamma.$$
We consider cusp labels of level $K^n_0$ at $p$-adic places.
Then for a given $V\in \mathscr{C}_{M,1}$, we have
$$\mathfrak{p}^{-1}_{\mathscr{C},n}(V)\simeq P_{r-1,1}(\mathbb{Z}_p/p^n\mathbb{Z}_p)\times P_{s-1,1}(\mathbb{Z}_p/p^n\mathbb{Z}_p)\backslash\mathrm{GL}_r(\mathbb{Z}_p/p^n\mathbb{Z}_p)\times\mathrm{GL}_s(\mathbb{Z}_p/p^n)
/B_r(\mathbb{Z}_p/p^n\mathbb{Z}_p)\times B_s(\mathbb{Z}_p/p^n\mathbb{Z}_p).$$
We define the $\mathcal{I}^0_{m,n}$ ($\mathcal{I}^1_{m,n}$) to be the ideal sheaf of the Shimura varieties or Igusa varieties of functions vanishing at the boundary components (boundary components of co-rank at least two), respectively.

Let $\Gamma_V$ be the intersection of $\Gamma$ with the stabilizer of $V$.
We let $P^\circ_{n,V}(\mathbb{Z}/p^n)$ be the image of
$$\Gamma_{V}\cap\Gamma_0(p^n)\rightarrow \mathrm{GL}_r(\mathcal{O}_{F,p})\times\mathrm{GL}_s(\mathcal{O}_{F,p}),$$
which at each $v|p$, is given by
$$g=\prod_v\begin{pmatrix}A_v&B_v\\ &D_v\end{pmatrix}\in\Gamma_{0,v}(p^n)\mapsto A_v\times D_v\ \mathrm{mod}\ p^n.$$
Note that since $g\in\Gamma_V$, we have $P^\circ_{n,V}(\mathbb{Z}/p^n)$ consists of matrices $$\begin{pmatrix}\mathrm{im}(\mathrm{GL}_1(\mathcal{O}^\times_\mathcal{K}))&*\\0&\mathrm{GL}_{r-1}(\mathcal{O}_{F,p})\end{pmatrix}
\times \begin{pmatrix}\mathrm{im}(\mathrm{GL}_1(\mathcal{O}^\times_\mathcal{K}))&0\\ *&\mathrm{GL}_{s-1}(\mathcal{O}_{F,p})\end{pmatrix}$$
such that upper left entries of the two matrices are conjugate inverse to each other.

\subsection{Hida Theory}
In this section we develop Hida theory for $\mathrm{U}(r,s)$ using the framework of \cite{Rosso}. The advantage is two fold. First it makes the definition of Hida's ordinary projector $e^{\mathrm{ord}}$ on non-cuspidal families more clear. Second, it uses only scalar valued forms, which avoids some geometric complications. The main difference here is we do not have Fourier-expansions for unitary groups of general signature (as opposed to $\mathrm{Gsp}(2n)$ of \emph{loc.cit.}), thus we need different arguments to prove certain compatibility of $U_p$-operators with respect to restricting to boundary maps. We will be brief for standard results of Hida theory and refer to \emph{loc.cit.} for details.

Let $H=\mathrm{GL}_r\times \mathrm{GL}_s$ and $T$ be the diagonal torus. Write $\mathbf{H}=H(\mathcal{O}_{F,p})$.
Let $R$ be a $p$-adic $\mathbb{Z}_p$-algebra and let $R_m:=R/p^m$. Let $T_{n,m}:=I_G(K^n)_{/R_m}$. Define:
$$V_{n,m}=H^0(T_{n,m},\mathcal{O}_{T_{n,m}}),$$
$$V_{\underline{k}}(K_\bullet^n,R_m)=H^0({T_{n,m}}_{/R_m},\omega_{\underline{k}})^{K_\bullet^n}.$$
Let $V_{\infty,m}=\varinjlim_nV_{n,m}$ and $V_{\infty,\infty}=\varprojlim_mV_{\infty,m}$. Define $V_p(G,K):=V_{\infty,\infty}^N$ ($N=N_r(\mathcal{O}_{F,p})\times N^-_s(\mathcal{O}_{F,p})\subset\mathbf{H}$) the space of $p$-adic modular forms.
We define $V_{n,m}^0$, etc, to be the cuspidal part of the corresponding spaces.\\

As in \cite[3.4, 3.5]{Hsieh CM} for $n\geq m$ we have
\begin{equation}\label{trivialization}
H^0(T_{m,n},\omega_{\underline{k}})\simeq V_{m,n}\otimes V_{\underline{k}}.
\end{equation}
\begin{definition}
Let $f$ be a $p$-adic automorphic form of weight $V_{\underline{k}}$, and let $v^*$ be a vector in $V^{\underline{k}}$. Then using (\ref{trivialization}) we can define the $v^*$-entry of $f$ to be the $p$-adic automorphic form $\langle f, v^*\rangle$ of trivial weight.
\end{definition}

\noindent\underline{Weight Space}\\
\noindent We let $\Lambda_{r,s}=\Lambda$ be the completed group algebra $\mathbb{Z}_p[[T(1+p\mathbb{Z}_p)]]$. This is a formal power series ring with $r+s$ variables. There is an action of ${T}(\mathbb{Z}_p)$ given by the action on the $j:\mu_{p^n}\otimes_\mathbb{Z} N^0\hookrightarrow  A[p^n]$. (see \cite[3.4]{Hsieh CM}) This gives the space of $p$-adic modular forms a structure of $\Lambda$-algebra. A $\bar{\mathbb{Q}}_p$-point $\phi$ of $\mathrm{Spec}\Lambda$ is called arithmetic if it is determined by a character $[\underline{k}]\cdot[\zeta]$ of $T(1+p\mathbb{Z}_p)$ where $\underline{k}$ is a weight and $\zeta=(\zeta_1,\cdots,\zeta_r;\zeta_1,\cdots,\zeta_s)$ for $\zeta_i\in \mu_{p^\infty}$.
\begin{proposition}
$$0\rightarrow \pi_{\mathcal{I},*}\mathcal{I}^0_{m,n}\rightarrow \pi_{\mathcal{I},*}\mathcal{I}^1_{m,n}\rightarrow\oplus_{\tilde{V}, \mathrm{corank}\tilde{V}=1}\iota^*_{\tilde{V},*}\mathcal{I}^{0}_{\mathrm{U}(r-1,s-1),m,n}\rightarrow 0$$
where the $\iota$'s are closed embeddings of the boundary components into the compactified Igusa variety.
\end{proposition}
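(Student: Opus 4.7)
The plan is to reduce the statement to a purely local computation on the toroidal compactification, after identifying the codimension--one boundary strata with Igusa varieties for the smaller unitary group $\mathrm{U}(r-1,s-1)$. The key input is that the boundary divisor of the chosen toroidal compactification is a reduced simple normal crossing divisor whose irreducible components $D_{\tilde V}$ are indexed by the corank-one cusp labels $\tilde V$, and that higher-codimension boundary strata are the pairwise (and higher) intersections $D_{\tilde V} \cap D_{\tilde V'}$. Under this description, $\pi_{\mathcal{I},*}\mathcal{I}^0_{m,n}$ is the ideal of the full reduced boundary $Z:=\bigcup_{\tilde V} D_{\tilde V}$ and $\pi_{\mathcal{I},*}\mathcal{I}^1_{m,n}$ is the ideal of $W:=\bigcup_{\tilde V\neq \tilde V'}D_{\tilde V}\cap D_{\tilde V'}$.

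First I would recall, following the construction of the compactified Igusa variety in \cite[Section 2]{Hsieh CM} and the general theory of toroidal compactifications of PEL Shimura varieties, that each corank--one cusp label $\tilde V$ has stabilizing parabolic with Levi of the form $\mathrm{GL}_1\times \mathrm{U}(r-1,s-1)$, and that the corresponding boundary stratum is canonically isomorphic to a toroidal compactification of the Igusa variety for $\mathrm{U}(r-1,s-1)$ at the appropriate level structure inherited from $K^n$. Under this identification, the codimension-$\geq 1$ boundary of $D_{\tilde V}$ (viewed inside the ambient compactification) is exactly the trace $W\cap D_{\tilde V}$, so that the pullback $\iota_{\tilde V}^{*}(\mathcal{I}^1_{m,n})$ is nothing but the cuspidal ideal $\mathcal{I}^{0}_{\mathrm{U}(r-1,s-1),m,n}$ on $D_{\tilde V}$. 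This produces the right-hand map of the sequence as the direct sum of restrictions $f\mapsto (\iota_{\tilde V}^{*}f)_{\tilde V}$.

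Next I would verify exactness by a local computation on the toroidal compactification. Exactness at $\mathcal{I}^0_{m,n}$ is tautological: $\mathcal{I}^0_{m,n}\subset \mathcal{I}^1_{m,n}$, and a section of $\mathcal{I}^1$ pulls back to zero on every $D_{\tilde V}$ iff it vanishes set-theoretically on $Z$, iff it lies in $\mathcal{I}^0$. The main point is surjectivity at the right-hand term. Working \'etale locally at a point lying on exactly $D_{\tilde V_1},\dots, D_{\tilde V_k}$ with local equations $z_1,\dots,z_k$ (and remaining coordinates $w_*$), one has
\begin{equation*}
\mathcal{I}^0 = (z_1\cdots z_k), \qquad \mathcal{I}^1 = (z_iz_j : i\neq j),
\end{equation*}
and a direct computation shows
\begin{equation*}
\mathcal{I}^1/\mathcal{I}^0 \;=\; \bigoplus_{i=1}^{k}\, (z_1\cdots\widehat{z_i}\cdots z_k)\cdot \mathcal{O}_{D_{\tilde V_i}},
\end{equation*}
which matches term-by-term with the cuspidal ideal of the boundary on each $D_{\tilde V_i}$ after the identification of the previous paragraph. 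Globalizing via a standard partition-of-unity / \v{C}ech argument on the cover by such charts gives surjectivity.

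Finally, I would push the exact sequence forward along $\pi_{\mathcal{I}}$. Since $\pi_{\mathcal{I}}$ is affine on an affine cover of the base and the sequence of sheaves is exact on the compactified Igusa variety, the higher direct image contributions $R^1\pi_{\mathcal{I},*}\mathcal{I}^{0}_{m,n}$ that might obstruct surjectivity vanish on the corresponding open subsets, yielding exactness of the sequence of pushforwards as stated. The main obstacle I expect is not the local algebra (which is elementary) but the careful matching of the Igusa level structure on $D_{\tilde V}$ with the Igusa variety for $\mathrm{U}(r-1,s-1)$ -- this requires using the explicit description of the $p$-adic cusp labels $\mathscr{C}_{M,p^n,1}$ and the action of the parabolic stabilizing $\tilde V$, exactly as set up in the previous subsection, and ensuring compatibility with the polarization data and the trivialization $j$ that cuts out the Igusa tower.
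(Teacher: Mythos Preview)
Your overall strategy---establish the short exact sequence of ideal sheaves on the toroidal compactification via a local simple-normal-crossings computation, then push forward to the minimal compactification---is exactly what the paper has in mind (it simply cites \cite[Proposition~1.6.1]{Rosso} and invokes affineness of the minimal Igusa variety). However, two steps in your execution do not go through as written.

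First, your local formula $\mathcal{I}^1=(z_iz_j:i\neq j)$ is wrong once $k\geq 4$ boundary divisors meet. For example with $k=4$, the monomial $z_3z_4$ does not vanish on the stratum $\{z_1=z_2=0\}\subset W$, so it cannot lie in $I(W)$. The correct local ideal is $\mathcal{I}^1=\bigl(\prod_{j\neq i}z_j:\,1\leq i\leq k\bigr)$, generated by the $k$ codimension-one products; with this correction your identification $\mathcal{I}^1/\mathcal{I}^0\cong\bigoplus_i(\prod_{j\neq i}z_j)\,\mathcal{O}_{D_i}$ is valid, and the restriction map now visibly lands in the cuspidal ideal on each $D_i$.

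Second, and more seriously, the map $\pi_{\mathcal{I}}$ from the toroidal to the minimal compactification is \emph{not} an affine morphism: its fibres over boundary cusps are proper toric varieties. So your stated reason for the vanishing of $R^1\pi_{\mathcal{I},*}\mathcal{I}^0_{m,n}$ fails. That vanishing does hold, but it is a nontrivial input from the theory of toroidal compactifications (the higher direct images of the structure sheaf and of the boundary ideal under toroidal~$\to$~minimal vanish by a toric computation on the formal boundary charts; see e.g.\ Lan or Faltings--Chai in the Siegel case, and \cite{Rosso} for the Igusa variant). The affineness of the minimal Igusa variety that the paper highlights is what then allows passage from the exact sequence of pushforward sheaves to the exact sequence of global sections used downstream; it does not by itself yield $R^1\pi_{\mathcal{I},*}=0$.
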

This follows from that the minimally compactified Igusa varieties are affine. See \cite[Proposition 1.6.1]{Rosso} for details.

Let $Z_V$ be the co-rank one boundary component corresponding to the space $V\subset M$ and $Z^{\mathrm{ord}}_V$ be the ordinary locus. We define a subscheme $\mathcal{I}^\flat_{Z^{\mathrm{ord}}_V,m,n}\subseteq\mathcal{I}_{Z^{\mathrm{ord}}_V,m,n}$ to be the subset of $\mathfrak{p}^{-1}_{\mathscr{C},n}(V)$ corresponding to the double coset including the element
$$\begin{pmatrix}0&1\\1_{r-1}&0\end{pmatrix}\times\begin{pmatrix}0&1\\1_{s-1}&0\end{pmatrix}.$$
We also define the space $V^{1,\flat}_{m,n}$ to be the subspace of $V^1_{m,n}$ whose restriction to $\mathcal{I}_{Z^{\mathrm{ord}}_V,m,n}$ vanishes outside $\mathcal{I}^\flat_{Z^{\mathrm{ord}}_V,m,n}$.
We write this double set as $\mathfrak{p}^{-1,\flat}_{\mathscr{C},n}(\tilde{V})$. We need some further description of this coset:
$$\mathfrak{p}^{-1,\flat}_{\mathscr{C},n}(\tilde{V})=\begin{pmatrix}&1\\1_{r-1}&\end{pmatrix}
\begin{pmatrix}1_{r-1}&\\&\mathrm{GL}_1(\mathcal{O}_{F,p})/\mathrm{im}(\mathrm{GL}_1(\mathcal{O}_F))\end{pmatrix}\ \mathrm{mod}\ p^n.$$
So this is isomorphic to $\mathrm{GL}_1(\mathcal{O}_{F,p})/\mathrm{im}(\mathrm{GL}_1(\mathcal{O}_F))$.
It is expected from the Leopoldt conjecture that the $\mathrm{GL}_1(\mathcal{O}_{F,p})/\mathrm{im}(\mathrm{GL}_1(\mathcal{O}_F))$ should be rank one. This means in order to get Hida control theorem for non-cuspidal families, we should work with a smaller weight space where some weight (in fact $a_r+b_1$) is parallel.
\begin{definition}
We define the parallel weight space $\mathcal{W}^{\mathrm{par}}$ to parameterize characters $$\chi=(\chi_1,\cdots,\chi_r;\chi_{r+1},\cdots,\chi_{r+s})$$ of $T(\mathcal{O}_{F,p})$, such that the $(\chi_{r+1}/\chi_r)_v$'s for all $v|p$ are the same characters of $\mathbb{Z}^\times_p$. Clearly it is trivial on $\mathrm{im}(\mathrm{GL}_1(\mathcal{O}_F))$.
\end{definition}
From now on we write superscript $\mathrm{par}$ for the subspace of forms whose nebentypus correspond to points in $\mathcal{W}^{\mathrm{par}}$.
\begin{proposition}\label{exa}
We have the following fundamental exact sequence
$$0\rightarrow V^{0,\mathrm{par}}_{m,n}\rightarrow V^{1,\flat,\mathrm{par}}_{m,n}\rightarrow \oplus_{V\in\mathscr{C}_M/\Gamma,\mathrm{rk}V=1}\mathbb{Z}_p[[T_{\mathrm{U}(r-1,s-1)}\otimes \mathbb{Z}^\times_p]]\otimes_{\mathbb{Z}_p
[[T_{\mathrm{U}(r-1,s-1)}]]} V^0_{V,m,n}\rightarrow 0.$$
\end{proposition}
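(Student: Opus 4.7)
The plan is to bootstrap from the sheaf-level short exact sequence stated just above the proposition, take global sections on the Igusa tower, and then cut out the $\flat$-subspace and the parallel-weight subspace, carefully tracking which boundary contributions survive and how the extra copy of $\mathbb{Z}_p^\times$ enters.

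First I would take global sections of
$$0 \to \pi_{\mathcal{I},*}\mathcal{I}^0_{m,n} \to \pi_{\mathcal{I},*}\mathcal{I}^1_{m,n} \to \bigoplus_{\tilde{V}} \iota^*_{\tilde{V},*}\mathcal{I}^{0}_{\mathrm{U}(r-1,s-1),m,n} \to 0$$
on the minimally compactified Igusa variety. Since that compactification is affine, higher cohomology of the kernel sheaf vanishes, producing an honest short exact sequence of $\mathcal{O}_m$-modules at each finite level $n$. Taking the direct limit in $n$, the inverse limit in $m$, and then $N$-invariants yields a short exact sequence at the level of $p$-adic automorphic forms. Exactness on the right is preserved through these operations because affineness persists at each finite level, and the $N$-invariants functor is exact on the boundary contribution since the Hida ordinary projector annihilates the degenerate strata lying outside the distinguished double coset.

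Next I would cut down to the $\flat$-subspace. By definition $V^{1,\flat}_{m,n}$ vanishes on $\mathcal{I}_{Z^{\mathrm{ord}}_V,m,n}$ outside $\mathcal{I}^\flat_{Z^{\mathrm{ord}}_V,m,n}$, so on the right only the component indexed by the distinguished coset survives, and for each corank-one $V \in \mathscr{C}_M/\Gamma$ this contribution is a space of $V^0_{V,m,n}$-valued functions on
$$\mathfrak{p}^{-1,\flat}_{\mathscr{C},n}(\tilde{V}) \simeq \mathrm{GL}_1(\mathcal{O}_{F,p})/\mathrm{im}(\mathrm{GL}_1(\mathcal{O}_F)).$$
Imposing the parallel-weight condition from $\mathcal{W}^{\mathrm{par}}$ forces the dependence of a form along this quotient to factor through its maximal one-dimensional quotient isomorphic to $\mathbb{Z}_p^\times$ — this being precisely the direction in which the parallel constraint $\chi_{r+1}/\chi_r$ is nontrivially coupled at the different $v \mid p$. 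Rearranging the resulting module of functions as an induced representation yields exactly
$$\mathbb{Z}_p[[T_{\mathrm{U}(r-1,s-1)} \otimes \mathbb{Z}_p^\times]] \otimes_{\mathbb{Z}_p[[T_{\mathrm{U}(r-1,s-1)}]]} V^0_{V,m,n}$$
at each $V$, which is the right-hand side in the claimed exact sequence.

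Finally I would verify exactness at both ends. Injectivity on the left is inherited directly from the sheaf-level sequence. For surjectivity on the right, a boundary section supported on the distinguished coset and lying in the parallel-weight subspace lifts by affineness to a section of $\pi_{\mathcal{I},*}\mathcal{I}^1_{m,n}$; averaging under the parallel-weight projector — which is a direct summand of the full weight decomposition — produces an element of $V^{1,\flat,\mathrm{par}}_{m,n}$ with the prescribed boundary restriction. The main obstacle I anticipate is the intermediate step: showing that the combination of $\flat$-truncation and parallel-weight projection cancels exactly the excess degrees of freedom in $\mathfrak{p}^{-1,\flat}_{\mathscr{C},n}(\tilde{V})$ so that precisely one copy of $\mathbb{Z}_p^\times$ remains, neither more nor less. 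This requires carefully identifying the action of $\Gamma_V \cap \Gamma_0(p^n)$ on the boundary double coset through the map to $P^\circ_{n,V}(\mathbb{Z}/p^n) \subset \mathrm{GL}_r(\mathcal{O}_{F,p}) \times \mathrm{GL}_s(\mathcal{O}_{F,p})$ recorded immediately before the proposition, and matching its coinvariants on the parallel-weight subspace with a single factor of $\mathbb{Z}_p[[\mathbb{Z}_p^\times]]$.
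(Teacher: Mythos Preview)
Your approach is essentially the correct one and matches what the paper's cited reference (Liu--Rosso, Proposition 1.7.1) does: pass from the sheaf-level exact sequence to global sections via affineness of the minimal compactification, then restrict to the $\flat$-support and the parallel-weight subspace. A couple of small points are worth tightening. First, the proposition is stated at fixed finite level $(m,n)$, so there is no need to take direct or inverse limits, and in particular your invocation of the ordinary projector to justify exactness of $N$-invariants is both premature (Hida's idempotent is only introduced later in the paper) and unnecessary here. Second, your description of how the single copy of $\mathbb{Z}_p^\times$ arises is slightly off: it is not that the parallel-weight condition forces the boundary dependence to factor through a quotient of $\mathrm{GL}_1(\mathcal{O}_{F,p})/\mathrm{im}(\mathrm{GL}_1(\mathcal{O}_F))$, but rather that the parallel condition on $(\chi_{r+1}/\chi_r)_v$ means the nebentypus character of $\prod_{v\mid p}\mathbb{Z}_p^\times$ is pulled back from a single $\mathbb{Z}_p^\times$ via the diagonal, which is automatically trivial on global units; this is exactly what produces the extra tensor factor $\mathbb{Z}_p[[\mathbb{Z}_p^\times]]$ in the boundary term. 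With these adjustments your argument goes through, and indeed the paper does not spell any of this out beyond the citation.
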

The proof is the same as \cite[Proposition 1.7.1]{Rosso}.

Let $v$ be a $p$-adic place of $F$ splitting as $w\bar{w}$ in $\mathcal{K}$. We first give a description of some power of $U_{p,i}$-operators associated to $\begin{pmatrix}p1_{i}&\\&1_{n-i}\end{pmatrix}$. We refer to \cite[Section 1.9]{Rosso} for details, and \cite[Section 3.7]{Hsieh CM} for the case of unitary groups. We fix an integer $b$ throughout this paper, such that the following is possible. We require that there is an element $\mathsf{k}_p\in\mathcal{O}_\mathcal{K}$ whose divisor is $\varpi^b_w$ and is congruent to $1$ modulo $N$. We also require that there is an element $\mathsf{k}'_p\in\mathcal{O}_\mathcal{K}$, whose divisor is $\varpi^b_w\varpi^{-b}_w$, and is congruent to $1$ modulo $N$. We define $\gamma_i$ as $\mathrm{diag}(\mathsf{k}_p,\cdots,\mathsf{k}_p,1,\cdots, 1, \mathsf{k}^{-c}_p,\cdots,\mathsf{k}^{-c}_p)$ if $i\leq s$, as $\mathrm{diag}(\mathsf{k}_p,\cdots,\mathsf{k}_p, \mathsf{k}'_p, \cdots, \mathsf{k}'_p, 1, \dots, 1, \mathsf{k}^{-c}_p, \cdots, \mathsf{k}^{-c}_p)$ for $s\leq i\leq r$. We make similar definition for other cases. We use these to express some power of the $U_{p,i}$ operators in (\ref{(50)}) below.

For $i\leq r+1$, we define set $\mathfrak{Y}'_{i}$ as the set of matrices $\begin{pmatrix}1_i&Nx\\&1_{n-i}\end{pmatrix}$ with $x$ running over $$M_{i\times(n-i)}(\mathbb{Z}_p/p^b\mathbb{Z}_p).$$ Define $\mathfrak{Y}_{i,v}$ as a set of unipotent elements in $\mathrm{U}(r,s)(\mathcal{O}_\mathcal{K})$ which are congruent to identity modulo $N$, congruent to identity modulo $p^n$ at all $p$-adic places outside $v$, and at the place $v$ are representatives of $\mathfrak{Y}'_i$. It clearly exists. For $i\leq s+1$, then $U^b_{p,i}$ is given by the following
\begin{equation}\label{(50)}
U^b_{p,i}f=\mu_{r,s}(\alpha_i)^{-1}\sum_{y\in\mathfrak{Y}_{i,v}} f|(y\gamma_i)^{-1}.
\end{equation}
For other $i$ we have similar definitions. (Note that we only consider $f$'s of trivial weight).

For given $m$ and $n$, for any $g\in\mathrm{GL}_r(\mathbb{Z}_p)\times\mathrm{GL}_s(\mathbb{Z}_p)$, we define $i_{\mathrm{gl},v}(g)$ to be an element in $\mathrm{U}_{r,s}(\mathcal{O}_F)$ which is congruent to identity modulo $N$, is congruent to $p^n$ at all $p$-adic places outside $v$, and such that $e^+i_{\mathrm{gl},v}(g)$ is congruent to $g$ modulo $p^n$ at the place $v$.

The $\mathcal{I}^\flat_{Z^{\mathrm{ord}}_V,m,n}$ can also be defined as the relative positions between the filtration of the $p$-divisible group $\mathcal{A}[p^\infty]^{\circ}$ determined by the universal family and the one defined via the semi-Abelian variety from the Mumford construction as in \cite[Section 1.7]{Rosso}. More precisely, we consider the standard basis $(x^*_{1,+},\cdots, x^*_{r,+}; x^*_{1,-}, x^*_{s,-})$ for the maximal anisotropic subspace $N^0_v\simeq \mathbb{Z}_p^{r+s}$ of $M_v$. Then the filtration from the universal family over the Igusa variety is given by
$$0\subset \mathbb{Z}_px^*_{1,+}\otimes\mu_{p^\infty}\subset\cdots\subset \mathbb{Z}_px^*_{1,+}\otimes\mu_{p^\infty}+\cdots+\mathbb{Z}_px^*_{r,+}\otimes\mu_{p^\infty},$$
$$0\subset \mathbb{Z}_px^*_{1,-}\otimes\mu_{p^\infty}\subset\cdots\subset \mathbb{Z}_px^*_{1,-}\otimes\mu_{p^\infty}+\cdots+\mathbb{Z}_px^*_{s,-}\otimes\mu_{p^\infty}.$$
We have the following Lemma.
\begin{lemma}
A $\tilde{V}$ belongs to $\mathcal{I}^\flat_{Z^{\mathrm{ord}}_V,m,n}$ if and only if $e^+\tilde{V}$ does not contain a primitive vector in $\mathbb{Z}_px^*_{1,+}+\cdots+\mathbb{Z}_px^*_{r-1,+}+p\mathbb{Z}_px^*_{r,+}$ (by primitive vector we mean a vector in $\mathbb{Z}_px^*_{1,+}+\cdots+\mathbb{Z}_px^*_{r-1,+}+\mathbb{Z}_px^*_{r,+}$ which is not divisible by $p$ in this space), and $e^-\tilde{V}$ does not contain a primitive vector in $\mathbb{Z}_px^*_{1,-}+\cdots+\mathbb{Z}_px^*_{s-1,-}+p\mathbb{Z}_px^*_{s,-}$.
\end{lemma}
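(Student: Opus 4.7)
The plan is to show that the two descriptions of $\mathcal{I}^\flat_{Z^{\mathrm{ord}}_V,m,n}$ pick out the same subset of $\mathfrak{p}^{-1}_{\mathscr{C},n}(V)$. First I would unpack the double-coset description: in the quotient $P_{r-1,1}\backslash \mathrm{GL}_r / B_r$ (and analogously on the $\mathrm{GL}_s$ factor), the element $w_r=\begin{pmatrix}0&1\\1_{r-1}&0\end{pmatrix}$ represents the unique open Bruhat cell. Concretely this cell consists of those $g$ for which the line $g\cdot\langle x^*_{r,+}\rangle$ is in general position with respect to the reference flag $F_\bullet$, $F_i=\mathbb{Z}_p x^*_{1,+}+\cdots+\mathbb{Z}_p x^*_{i,+}$; equivalently its primitive generator has nonzero image in $F_r/F_{r-1}$ modulo $p$. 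The analogous statement holds for the $\mathrm{GL}_s$ factor using the basis $x^*_{j,-}$.

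Next I would use the moduli interpretation of the Igusa variety near the boundary $Z^{\mathrm{ord}}_V$. The filtration on $\mathcal{A}[p^\infty]^\circ$ stated just before the lemma arises, via the Mumford construction, from the standard trivialization of the maximal anisotropic summand $N^0_v\simeq\mathbb{Z}_p^{r+s}$ in the basis $(x^*_{i,\pm})$. A lift of the cusp label $V$ (the line spanned by $x^1$) to level $p^n$ is the datum of the isotropic line $\tilde V$, and the Shimura-variety-level identification
$$\mathfrak{p}^{-1}_{\mathscr{C},n}(V)\simeq P_{r-1,1}(\mathbb{Z}/p^n)\times P_{s-1,1}(\mathbb{Z}/p^n)\,\backslash\, \mathrm{GL}_r(\mathbb{Z}/p^n)\times\mathrm{GL}_s(\mathbb{Z}/p^n)\,/\, B_r(\mathbb{Z}/p^n)\times B_s(\mathbb{Z}/p^n)$$
matches the class $[g_+,g_-]$ of $\tilde V$ with the pair of lines $e^{\pm}\tilde V = g_\pm\cdot\langle x^*_{r,+}\rangle$ (respectively $g_-\cdot\langle x^*_{s,-}\rangle$): the $B_r\times B_s$-ambiguity is the choice of reference flag on the $p$-divisible group, while the $P_{r-1,1}\times P_{s-1,1}$-ambiguity encodes the automorphisms of the boundary datum fixing $V$.

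Combining the two steps, $\tilde V$ lies in $\mathcal{I}^\flat_{Z^{\mathrm{ord}}_V,m,n}$ iff its class meets the open double coset, iff the primitive generator of $e^+\tilde V$ has unit last coordinate in the $(x^*_{i,+})$-basis, equivalently $e^+\tilde V\not\subset F_{r-1}+pF_r$. Reading this submodule back in the original coordinates gives precisely $\mathbb{Z}_p x^*_{1,+}+\cdots+\mathbb{Z}_p x^*_{r-1,+}+p\mathbb{Z}_p x^*_{r,+}$, which is the condition stated in the lemma. The parallel argument on the $e^-$-side yields the condition for the $x^*_{j,-}$, completing the verification.

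The main obstacle is the middle step: matching the purely group-theoretic Bruhat parametrization of lifts of the cusp label (coming from the description of $\mathfrak{p}^{-1}_{\mathscr{C},n}(V)$ as a double coset space) with the $p$-divisible-group filtration given by the Mumford construction. This requires a careful tracking of how the isotropic line $\tilde V\subset M$ transports, via the $j$-structure on the universal semi-abelian variety, to a line in the split multiplicative part equipped with its Mumford filtration; once the dictionary is fixed, the remaining identification of the open Bruhat cell with the "general position" condition is a standard fact about $(P,B)$-double cosets in $\mathrm{GL}_r$.
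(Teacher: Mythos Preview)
The paper does not give its own proof of this lemma; it simply writes ``The proof is the same as \cite[Proposition 1.8.2]{Rosso}.'' So there is no argument in the paper to compare against beyond that citation.

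Your outline is essentially what the Liu--Rosso argument does in the symplectic setting and what the paper is invoking here: identify the distinguished $\flat$ double coset as the open $(P_{r-1,1},B_r)$-Bruhat cell (and likewise on the $\mathrm{GL}_s$ factor), and then translate membership in the open cell into the condition that the primitive generator of $e^+\tilde V$ has unit coefficient on $x^*_{r,+}$, i.e.\ does not lie in $\mathbb{Z}_p x^*_{1,+}+\cdots+\mathbb{Z}_p x^*_{r-1,+}+p\mathbb{Z}_p x^*_{r,+}$. You correctly flag that the only real content is the dictionary between the double-coset parameter and the line $e^\pm\tilde V$ inside $N^0_v$; in the paper's conventions this is made explicit later in the proof of Proposition~\ref{compa}, where one takes $\tilde V^+ = g_+^{-1}\begin{pmatrix}0_{r-1}\\1\end{pmatrix}$ for a representative $g_+\in\mathrm{GL}_r(\mathbb{Z}_p)$, which is exactly the identification you are using. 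One small point to be careful about: the element $\begin{pmatrix}0&1\\1_{r-1}&0\end{pmatrix}$ in the paper's convention is a cyclic permutation rather than the longest Weyl element, but in the quotient $W_{P_{r-1,1}}\backslash W$ it does represent the open cell, since modulo $W_{P_{r-1,1}}\cong S_{r-1}$ only the position of the last coordinate matters. With that check, your argument is correct.
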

The proof is the same as \cite[Proposition 1.8.2]{Rosso}.
\begin{proposition}\label{support}
Let $V^{\flat,+}_{m,n}$ be the subspace of $V^1_{m,n}$ vanishing at boundary components $\tilde{V}$ such that $e^+\tilde{V}$ contains a primitive vector in $\mathbb{Z}_px^*_{1,+}+\cdots+\mathbb{Z}_px^*_{r-1,+}+p\mathbb{Z}_px^*_{r,+}$. We similarly define $V^{\flat,-}_{m,n}$.
If $a\geq n\geq m$, then
$$U_{p,r-1}^{2ab}V^1_{m,n}\subseteq V^{\flat,+}_{m,n},$$
$$U_{p,r+s-1}^{2ab}V^1_{m,n}\subseteq V^{\flat,-}_{m,n}.$$
\end{proposition}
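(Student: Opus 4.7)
My plan is to adapt the strategy of Rosso's Proposition 1.8.2 in \cite{Rosso} (already invoked in the preceding Lemma) from the symplectic to the present unitary setting. By the evident symmetry between the positive and negative parts of the polarization $\mathrm{Pol}_p=\{N^{-1},N^0\}$, it suffices to establish the first containment; the second follows from an identical analysis applied to $U_{p,r+s-1}$ with the roles of $e^+$, $e^-$ and the indices $r$, $s$ interchanged. Fix $f\in V^1_{m,n}$ and a corank-one boundary datum $\tilde V$ for which $e^+\tilde V$ contains a primitive vector $v^+=\sum_{i=1}^{r-1}c_i x^*_{i,+}+pc_r x^*_{r,+}$, at least one $c_i$ ($i\le r-1$) being a unit. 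The goal is to show $(U_{p,r-1}^{2ab}f)|_{\tilde V}\equiv 0\pmod{p^m}$.

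I would then unpack formula \eqref{(50)}: up to the scalar $\mu_{r,s}(\alpha_{r-1})^{-1}$, one application of $U_{p,r-1}^{b}$ is an average of $f|(y\gamma_{r-1})^{-1}$ over the unipotent set $\mathfrak Y_{r-1,v}$. The unipotent translations $y$ preserve the primitivity type of $\tilde V$ (they lie in the stabilizer of $\tilde V$ at the cusp, up to the action identified in the discussion of $\mathscr C_{M,p^n,1}$), so the only structural change to the boundary parameter is produced by $\gamma_{r-1}^{-1}$ acting on $e^+\tilde V$. Using the explicit shape
\[
\gamma_{r-1}=\mathrm{diag}(\mathsf k_p,\ldots,\mathsf k_p,\mathsf k'_p,\ldots,\mathsf k'_p,1,\ldots,1,\mathsf k_p^{-c},\ldots,\mathsf k_p^{-c}),
\]
whose effect on the $x^*_{1,+},\ldots,x^*_{r-1,+}$-entries is multiplication by a uniformizer of $w$-adic valuation $b$, one computes that after a single $b$-fold application the primitive representative of $e^+(\tilde V')$ has had the coefficient of $x^*_{r,+}$ its $w$-adic valuation increased by at least $b$, so the primitive vector is forced into $\mathbb Z_p x^*_{1,+}+\cdots+\mathbb Z_p x^*_{r-1,+}+p^{1+b}\mathbb Z_p x^*_{r,+}$.

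Iterating $k$ times pushes this valuation past $1+kb$; once it exceeds $n$, the corresponding cusp label ceases to be visible modulo $p^n$, and so the restriction to that boundary component vanishes modulo $p^m$ since $m\le n$. Because $a\ge n$, the first $ab$ factor of the exponent already suffices to annihilate the positive-primitivity contribution, while the extra factor of two in $2ab$ absorbs the simultaneous bookkeeping at $\bar w$ coming from the $\mathsf k'_p$-entries, whose divisor $\varpi_w^b\varpi_{\bar w}^{-b}$ contributes with opposite sign at $\bar w$ and must also be shifted past $p^n$ before the vanishing can be read off in a single lattice model.

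I expect the hardest step to be rigorously tracking the effect of $\gamma_{r-1}^{-1}$ on the filtration of $\mathcal A[p^\infty]^{\circ}$ in the sense of the preceding Lemma: one must verify that the movement of $e^+\tilde V$ across the pieces $\mathbb Z_p x^*_{1,+}+\cdots+p^j\mathbb Z_p x^*_{r,+}$ is exactly governed by the valuation shift computed above, and that the normalizing scalar $\mu_{r,s}(\alpha_{r-1})^{-b}$ is a $p$-adic unit in our ordinary setting and does not spoil the mod-$p^m$ vanishing. Once this cusp-label bookkeeping is in place, the remainder of the argument is a direct transcription of \cite[Proposition 1.8.3]{Rosso}.
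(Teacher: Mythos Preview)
Your outline misses the actual vanishing mechanism, and several of your intermediate claims are false. First, the unipotent translations $y\in\mathfrak Y_{r-1,v}$ do \emph{not} in general stabilize $\tilde V$; only those in one specific direction do, and identifying that direction is precisely the point. Second, the assertion that once the $x^*_{r,+}$-valuation exceeds $n$ ``the cusp label ceases to be visible modulo $p^n$'' has no content: pushing the last coordinate deep into $p^n\mathbb Z_p$ still gives a perfectly good corank-one boundary stratum, and there is no reason its restriction map should be zero. Third, $\mu_{r,s}(\alpha_{r-1})=p^{(r-1)bs}$ is certainly not a $p$-adic unit, so you cannot dismiss it; the argument must produce an honest extra power of $p$ to overcome this normalization.

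The paper's proof proceeds in two genuinely different steps. The first application of $U_{p,r-1}^{ab}$ does exactly what you say: the matrix $\begin{pmatrix}p^{ab}I_{r-1}&Nx\\&1\end{pmatrix}^{-1}$ moves any $\tilde V$ with $e^+\tilde V$ primitive in $\mathbb Z_p x^*_{1,+}+\cdots+\mathbb Z_p x^*_{r-1,+}+p\mathbb Z_p x^*_{r,+}$ to one lying in $\mathbb Z_p x^*_{1,+}+\cdots+\mathbb Z_p x^*_{r-1,+}+p^{ab}\mathbb Z_p x^*_{r,+}$. The second step is the one you are missing. Write $e^+v={}^t(b_1,\dots,b_{r-1},b_r)$ with $p^{ab}\mid b_r$ and some $b_j\in\mathbb Z_p^\times$, and set $x_1={}^t(b_1,\dots,b_{r-1})$. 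Because $b_r\equiv 0$ to high order, the unipotent elements $i_{\mathrm{gl},v}\!\left(\begin{pmatrix}1_{r-1}&N\alpha x_1\\&1\end{pmatrix}^{-1},1\right)$ stabilize $\tilde V$, so by the fact $\Phi_{\tilde V}(f|g)=\Phi_{\tilde V}(f)$ for $g\in N_{P'}(F)$ they act trivially on the restriction. Since $x_1$ is primitive one can split $M_{(r-1)\times 1}(\mathbb Z/p^{ab})=(\mathbb Z/p^{ab})x_1\oplus C$; the sum over the first summand is constant of length $p^{ab}$, producing a bare factor $p^{ab}$ in front of a quantity already lying in $V^1_{m,n}$. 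Since $a\ge m$ this is zero in $\mathcal O_m$. This is where the factor $2$ in $2ab$ comes from: one $ab$ to deepen the valuation, one $ab$ to extract $p^{ab}$ from the unipotent sum; it has nothing to do with bookkeeping at $\bar w$.
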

\begin{proof}
The proof is an analogue of \cite[Proposition 1.9.4]{Rosso}. Without loss of generality we prove the first inclusion. Suppose $\tilde{V}$ is a one dimensional space over $\mathcal{O}_\mathcal{K}$ generated by a vector $v$ such that $e^+v$ is a primitive vector in $\mathbb{Z}_px^*_{1,+}+\cdots + \mathbb{Z}_px^*_{r-1,+}+p\mathbb{Z}_px^*_{r,+}$. Write
$$X^*_r=\mathbb{Z}_px^*_{1,+}+\cdots+\mathbb{Z}_px^*_{r,+}.$$
Then it is easy to check that
$$\mathbb{Q}\begin{pmatrix}p^{ab} I_{r-1}& Nx\\ &1\end{pmatrix}^{-1}e^+\tilde{V}\cap X^*_r\subset \mathbb{Z}_px^*_{1,+}+\cdots + \mathbb{Z}_px^*_{r-1,+}+p^{ab}\mathbb{Z}_px^*_{r,+}.$$
So it is enough to show that
$$\Phi_{\tilde{V}}((U_{p,r})^{ab}f)=0$$
for each $f\in V^1_{m,n}$ and $\tilde{V}$ generated by a vector $v$ with $e^+v$ a primitive vector in $\mathbb{Z}_px^*_{1,+}+\cdots + \mathbb{Z}_px^*_{r-1,+}+p^{ab}\mathbb{Z}_px^*_{r,+}$. Suppose $e^+v$ is ${}^t\!(b_1,\cdots, b_{r-1}, b_r)$ with $p^a|b_r$ and $p\nmid b_j$ for some $j$. Write $x_1=\begin{pmatrix}b_1\\ \cdots\\ b_{r-1}\end{pmatrix}$.

We note the following fact: suppose $P'$ is a parabolic subgroup of $\mathrm{U}(r,s)$ conjugate to $P$ stabilizing $\tilde{V}$. Then for any $g\in N_{P'}(F)$, we have
\begin{equation}\label{fac}\Phi_{\tilde{V}}(f|g)=\Phi_{\tilde{V}}(f)
\end{equation}
as a form on $\mathrm{U}(r-1,s-1)$.
It is easy to see that any $i_{\mathrm{gl}}(\begin{pmatrix}1_{r-1}& \begin{matrix}Nx_1\\0\end{matrix}\\ &1\end{pmatrix}^{-1},1)$ stabilizes $\tilde{V}$. Then we have the follow expression for the $U_{p,r-1}^{ab}$ action:
$$\mu_{r,s}(\alpha_{r-1})^{-1}\sum_{x,Z}\Phi_{\tilde{V}}(f|\gamma^{-1}_{r-1}|i_{\mathrm{gl},v}
(\begin{pmatrix}1_r&\begin{matrix}Z\\0\end{matrix}\\&1_s
\end{pmatrix})^{-1}|i_{\mathrm{gl},v}({\begin{pmatrix}1&Nx\\&1\end{pmatrix}}^{-1},1))$$
where $Z$ runs over matrices in $M_{(r-1)\times s}(\mathbb{Z}/p^{ab}\mathbb{Z})$, and $x$ runs over matrices in $M_{(r-1)\times 1}(\mathbb{Z}/p^{ab}\mathbb{Z})$. Now we can write
$$M_{(r-1)\times 1}(\mathbb{Z}/p^{ab}\mathbb{Z})=(\mathbb{Z}/p^{ab}\mathbb{Z}){}^t\!(b_1,\cdots,b_{r-1})\oplus C$$
for some subgroup $C$ of $M_{(r-1)\times 1}(\mathbb{Z}/p^{ab}\mathbb{Z})$.
The above expression is
$$p^{ab}U^{ab}_{p,r-1}\sum_{x\in C}\Phi_{\tilde{V}}(f|i_{\mathrm{gl},v}({\begin{pmatrix}1&Nx\\&1\end{pmatrix}}^{-1},1).$$
Therefore $\Phi_{\tilde{V}}((U_{p,r-1})^{ab}f)$ is a multiple of $p^{ab}$, thus is $0$ since $a\geq m$.
\end{proof}
\begin{proposition}\label{stab}
The space $V^{1,\flat}_{m,n}$ is stable under the $U_p$ operators.
\end{proposition}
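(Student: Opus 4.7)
The plan is to verify directly, using the explicit formula (\ref{(50)}) for $U^b_{p,i}$, that for any $f\in V^{1,\flat}_{m,n}$ and any corank-one boundary component $\tilde{V}$, the restriction $\Phi_{\tilde{V}}(U^b_{p,i}f)$ vanishes on the complement of $\mathcal{I}^\flat_{Z^{\mathrm{ord}}_{\tilde{V}},m,n}$. Since $U^b_{p,i}f = \mu_{r,s}(\alpha_i)^{-1}\sum_{y\in\mathfrak{Y}_{i,v}} f|(y\gamma_i)^{-1}$ is a finite sum, it suffices to analyze each term $\Phi_{\tilde{V}}(f|(y\gamma_i)^{-1})$ separately and show that the total collection of contributions lies in the flat subspace.

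The key tool is the identity (\ref{fac}) established in the proof of Proposition \ref{support}: if $g$ lies in the unipotent radical of a parabolic stabilizing $\tilde{V}$, then $\Phi_{\tilde{V}}(f|g)=\Phi_{\tilde{V}}(f)$. I would split the sum over $\mathfrak{Y}_{i,v}$ according to the orbit of $\tilde{V}$ under $y$: for summands where $y$ stabilizes $\tilde{V}$, the unipotent factor drops out by (\ref{fac}) and we are reduced to $\Phi_{\tilde{V}}(f|\gamma_i^{-1})$; for summands where $y$ moves $\tilde{V}$ to a distinct corank-one component $\tilde{V}'$, we obtain a contribution to $\Phi_{\tilde{V}'}(f|\gamma_i^{-1})$ after reindexing. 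In either case, the flat hypothesis on $f$ applied at $\tilde{V}$ or $\tilde{V}'$ combined with the behavior of $\gamma_i$ at the boundary is what we need.

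The heart of the argument is then the claim that $\gamma_i^{-1}$ preserves the flat condition. By the description of $\mathfrak{p}^{-1}_{\mathscr{C},n}(V)$ as a double coset space $P_{r-1,1}\times P_{s-1,1}\backslash \mathrm{GL}_r\times\mathrm{GL}_s / B_r\times B_s$, and the description of $\mathfrak{p}^{-1,\flat}_{\mathscr{C},n}(\tilde{V})$ as the orbit of the specific representative $\begin{pmatrix}&1\\1_{r-1}&\end{pmatrix}\times\begin{pmatrix}&1\\1_{s-1}&\end{pmatrix}$, I would verify that the image of $\gamma_i$ in $\mathrm{GL}_r(\mathbb{Z}/p^n)\times\mathrm{GL}_s(\mathbb{Z}/p^n)$, constructed from the scalars $\mathsf{k}_p,\mathsf{k}'_p$ congruent to $1$ modulo $N$, lies in the diagonal torus and thus normalizes both $B_r\times B_s$ on the right and the Levi factors of $P_{r-1,1}\times P_{s-1,1}$ on the left. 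Consequently multiplication by $\gamma_i^{-1}$ sends the flat double coset to itself, so the flat vanishing condition is preserved.

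The main obstacle will be keeping track of how $\gamma_i$ acts on the factor $\mathrm{GL}_1(\mathcal{O}_{F,p})/\mathrm{im}(\mathrm{GL}_1(\mathcal{O}_F))$ that parameterizes $\mathfrak{p}^{-1,\flat}$: one must check that although $\gamma_i$ rescales the parameter, it does so by a unit in $\mathcal{O}_\mathcal{K}^\times$ whose image in this quotient is trivial (because of the congruence conditions imposed on $\mathsf{k}_p$, $\mathsf{k}'_p$), so membership in $\mathcal{I}^\flat$ is neither created nor destroyed. For the remaining $U_{p,i}$ with $i>s+1$ one argues by the entirely parallel formula, replacing the role of $e^+$ by $e^-$ and using the second half of the standard flat representative. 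This completes the reduction to the combinatorial check that is immediate from the explicit form of $\gamma_i$ given just before the statement of the proposition.
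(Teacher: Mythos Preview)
Your approach has a genuine gap in the treatment of $\gamma_i$. You claim that ``the image of $\gamma_i$ in $\mathrm{GL}_r(\mathbb{Z}/p^n)\times\mathrm{GL}_s(\mathbb{Z}/p^n)$ \dots\ lies in the diagonal torus and thus normalizes both $B_r\times B_s$ on the right and the Levi factors of $P_{r-1,1}\times P_{s-1,1}$ on the left.'' But $\gamma_i$ is built from $\mathsf{k}_p$, whose divisor is $\varpi_w^b$; at the place $w$ its entries are $p^b$ up to units, so $\gamma_i$ is \emph{not} a unit in $M_r(\mathbb{Z}_p)\times M_s(\mathbb{Z}_p)$ and has no image in $\mathrm{GL}_r(\mathbb{Z}/p^n)\times\mathrm{GL}_s(\mathbb{Z}/p^n)$. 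The Hecke matrix does not act on the finite double-coset set $\mathfrak{p}^{-1}_{\mathscr{C},n}(V)$ by multiplication; rather it acts on rank-one sublattices of $M_p$ via $\tilde V\mapsto \mathbb{Q}\cdot(y\gamma_i)^{-1}\tilde V\cap X^*_r$, which can change the primitive generator in a way not captured by torus normalization. Your separation into ``$y$ stabilizes $\tilde V$'' versus ``$y$ moves $\tilde V$'' via (\ref{fac}) is also loose: (\ref{fac}) requires $y\in N_{P'}(F)$ for the parabolic $P'$ stabilizing $\tilde V$, which is a stronger condition than merely stabilizing $\tilde V$, and the ``reindexing'' in the non-stabilizing case is not justified.

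The paper's proof bypasses all of this by using the lattice-theoretic characterization of $\mathcal{I}^\flat$ from the preceding Lemma: $\tilde V$ lies \emph{outside} the flat locus exactly when $e^+\tilde V$ contains a primitive vector in $\mathbb{Z}_p x^*_{1,+}+\cdots+\mathbb{Z}_p x^*_{r-1,+}+p\mathbb{Z}_p x^*_{r,+}$ (and similarly for $e^-$). One then checks the elementary inclusion
\[
\mathbb{Q}\begin{pmatrix}pI_i & Nx\\ & I_{r-i}\end{pmatrix}^{-1}v \;\cap\; X^*_r \;\subset\; \mathbb{Z}_p x^*_{1,+}+\cdots+\mathbb{Z}_p x^*_{r-1,+}+p\mathbb{Z}_p x^*_{r,+}
\]
for any primitive $v$ in the right-hand lattice. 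This handles the full Hecke matrix $y\gamma_i$ at once (no need to separate diagonal and unipotent parts) and immediately shows that non-flat components are sent to non-flat components, hence the vanishing condition defining $V^{1,\flat}_{m,n}$ is preserved.
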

\begin{proof}
This is similar to \cite[Proposition 1.9.2]{Rosso}. Recall
$$X^*_r=\mathbb{Z}_px^*_{1,+}+\cdots+\mathbb{Z}_px^*_{r,+}.$$
If $v$ is a primitive vector in $\mathbb{Z}_px^*_{1,+}+\cdots + \mathbb{Z}_px^*_{r-1,+}+p\mathbb{Z}_px^*_{r,+}$, then
$$\mathbb{Q}\begin{pmatrix}pI_i& Nx\\ & I_{r-i}\end{pmatrix}^{-1}v\cap X^*_r\subset \mathbb{Z}_px^*_{1,+}+\cdots + \mathbb{Z}_px^*_{r-1,+}+p\mathbb{Z}_px^*_{r,+}.$$
Thus if $f\in V^{1,\flat}_{m,n}$, then $U_{p,i}f$ also has $0$ restriction to $\tilde{V}$'s such that $e^+\tilde{V}$ is generated by a primitive vector in $\mathbb{Z}_px^*_{1,+}+\cdots + \mathbb{Z}_px^*_{r-1,+}+p\mathbb{Z}_px^*_{r,+}$. We have a similar conclusion for the $s$-part as the $r$-part. This implies the proposition.
\end{proof}
\begin{proposition}\label{compa}
If $f\in V^{1,\flat}_{m,n}$, then for $\tilde{V}\in \mathcal{I}^\flat_{Z_v^{\mathrm{ord}},m,n}$, we have
$$\Phi_{\tilde{V}}(U_{p,i}f)=U_{p,i}'\Phi_{\tilde{V}}(f)$$
where the $\prime$ in $U'_{p,i}$ means the corresponding Hecke operator on $\mathrm{U}(r-1,s-1)$.
\end{proposition}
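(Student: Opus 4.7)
The plan is to unfold the defining formula
$$U^b_{p,i}f=\mu_{r,s}(\alpha_i)^{-1}\sum_{y\in\mathfrak{Y}_{i,v}} f|(y\gamma_i)^{-1}$$
given in \eqref{(50)} and evaluate each summand under $\Phi_{\tilde{V}}$. Since $\Phi_{\tilde{V}}$ is restriction to the Levi piece of the stabilizer of $\tilde{V}$, I would partition $\mathfrak{Y}_{i,v}$ into a subset whose elements fix $\tilde{V}$ (and descend to give the Hecke operator on $\mathrm{U}(r-1,s-1)$) and a complementary subset whose elements move $\tilde{V}$ to a forbidden cusp label, and argue that the second contribution is killed by the hypothesis $f\in V^{1,\flat}_{m,n}$.

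First I would fix coordinates adapted to $\tilde{V}$. By the characterization of $\mathcal{I}^\flat_{Z^{\mathrm{ord}}_V,m,n}$ given just before Proposition~\ref{support}, I may assume $e^+\tilde{V}$ is represented, modulo the action of $\mathrm{GL}_1(\mathcal{O}_{F,p})/\mathrm{im}(\mathrm{GL}_1(\mathcal{O}_F))$, by the standard vector $x^*_{r,+}$ and symmetrically for $e^-\tilde{V}$. In particular, the last row/column in the matrix block corresponding to $X^\vee$ and the first row/column in the block corresponding to $Y$ play a distinguished role. Parameterizing $y\in\mathfrak{Y}_{i,v}$ by matrices $x\in M_{i\times(n-i)}(\mathbb{Z}_p/p^b\mathbb{Z}_p)$, I would decompose $x=(x'',x')$, where $x''$ lies in the sub-block indexing the Levi factor associated to $\mathrm{U}(r-1,s-1)$ and $x'$ is the complementary block touching the distinguished rows/columns.

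The contribution of terms with $x'=0$ is handled by equation~(\ref{fac}): such $y$ lie in the unipotent radical of a parabolic conjugate to $P$ stabilizing $\tilde{V}$, so $\Phi_{\tilde{V}}(f|y^{-1})=\Phi_{\tilde{V}}(f)$. The residual action of $\gamma_i^{-1}$ on the Levi coincides (after identifying the image of $\gamma_i$ in $\mathrm{U}(r-1,s-1)(F_p)$) with the analog $\gamma'_i$ used to define $U'_{p,i}$, and $\mu_{r,s}(\alpha_i)$ descends to the corresponding volume factor on $\mathrm{U}(r-1,s-1)$. Summing over $x''\in M_{i\times(n-i-2)}(\mathbb{Z}_p/p^b\mathbb{Z}_p)$ yields exactly $U'_{p,i}\Phi_{\tilde{V}}(f)$. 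For terms with $x'\neq 0$, I would show that the twisted label $y\gamma_i\tilde{V}$ has $e^+$-part (or $e^-$-part, depending on the entry of $x'$ that is nonzero) containing a primitive vector in $\mathbb{Z}_px^*_{1,+}+\cdots+\mathbb{Z}_px^*_{r-1,+}+p\mathbb{Z}_px^*_{r,+}$, using the same primitivity computation as in the proof of Proposition~\ref{support}: the scaling by $\mathsf{k}_p$ or $\mathsf{k}'_p$ in $\gamma_i$ combined with the nontrivial unipotent shift in $x'$ produces exactly a primitive vector of the excluded type. By definition of $V^{1,\flat}_{m,n}$ such summands vanish.

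The main obstacle I expect is the bookkeeping in the complementary case: one must verify that for each of the regimes $i\leq s$, $s<i\leq r$, and $r<i\leq n-1$, and at both ends of the flag (the $\Sigma$ and $\Sigma^c$ sides), every $x$ with $x'\neq 0$ genuinely produces a boundary component outside $\mathcal{I}^\flat$, not merely "most" of them. Unlike Proposition~\ref{support}, where an overall factor $p^{ab}$ kills the residual sum en bloc, here the vanishing has to be established termwise. This is the same style of calculation as in the proof of Proposition~\ref{stab}, and I would adapt the argument there (and in \cite[Proposition~1.9.5]{Rosso}) case by case.
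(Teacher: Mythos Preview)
Your strategy is correct in outline, and it is essentially the paper's strategy: partition the unipotent summands, kill the ``bad'' ones using the hypothesis $f\in V^{1,\flat}_{m,n}$, and recognise the surviving ones as $U'_{p,i}\Phi_{\tilde V}(f)$. However the way you assemble the pieces contains a genuine gap.

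You assert that restricting to the terms with $x'=0$ already yields $U'_{p,i}\Phi_{\tilde V}(f)$, after noting that ``$\mu_{r,s}(\alpha_i)$ descends to the corresponding volume factor on $\mathrm{U}(r-1,s-1)$''. But these factors do \emph{not} coincide: for $i<r$ one has $\mu_{r,s}(\alpha_i)=p^{ibs}$ while $\mu_{r-1,s-1}(\alpha_i)=p^{ib(s-1)}$, so your $x'=0$ sum is off by a factor of $p^{ib}$. Relatedly, your invocation of \eqref{fac} for the $x'=0$ terms is misplaced: those unipotents lie in the Levi of $P'$ (that is precisely why they assemble into the Hecke operator on the smaller group), not in $N_{P'}$, so \eqref{fac} says nothing about them.

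The paper resolves both issues by treating the two distinguished columns of your $x'$ asymmetrically. The column landing in the distinguished $s$-coordinate (the relevant column of $Z$ in the paper's notation) lies in $N_{P'}$ for \emph{every} value; by \eqref{fac} each such term acts trivially on $\Phi_{\tilde V}$, and summing freely over this column supplies exactly the missing factor $p^{ib}$. The column landing in the distinguished $r$-coordinate (the paper's $y$) does not lie in $N_{P'}$; here the paper shows that precisely one value $y=y(x)$ keeps the translated label inside $\mathcal{I}^\flat$, and the $V^{1,\flat}$ hypothesis kills the rest. So \eqref{fac} is used, but on a different part of the sum than you propose, and for a different purpose (fixing the normalisation rather than producing the descent).

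A secondary point: the paper does not reduce to the standard $\tilde V$ but works with a general $\tilde V\in\mathcal I^\flat$ via auxiliary matrices $\begin{pmatrix}A_+&B_+\\C_+&D_+\end{pmatrix}$; this is why the surviving value is a nontrivial function $y(x)$ rather than simply $0$. Your reduction is plausible but would need justification.
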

\begin{proof}
This is similar to \cite[Proposition 1.9.3]{Rosso}. Let the $\mathbb{Z}_p$-entry matrices $\begin{pmatrix}A_+&B_+\\C_+&D_+\end{pmatrix}\in \mathrm{GL}_{r}(\mathbb{Z}_p)$ and $\begin{pmatrix}A_-&B_-\\C_-&D_-\end{pmatrix}\in\mathrm{GL}_{s}(\mathbb{Z}_p)$ be such that $$\tilde{V}^+=\begin{pmatrix}A_+&B_+\\C_+&D_+\end{pmatrix}^{-1}\begin{pmatrix}0_{r-1}\\1\end{pmatrix}$$
and
$$\tilde{V}^-=\begin{pmatrix}A_-&B_-\\C_-&D_-\end{pmatrix}^{-1}\begin{pmatrix}1\\0_{s-1}\end{pmatrix}.$$
We prove the case for $i< r$ and other cases are similar. In this case $\mu_{r,s}(\alpha_i)=p^{ibs}$. It is easy to see that we can take $C_+$ is $0$ modulo $p^n$,
and that in terms of block matrices with respect to $i+(r-1-i)$,
$$A_+\equiv\begin{pmatrix}A_1&0\\0&A_2\end{pmatrix} (\mathrm{mod}\ p^n).$$
For $x\in M_{i\times (r-1-i)}(\mathbb{Z}_p/p^b\mathbb{Z}_p)$,  define $x_A:=A^{-1}_1xA_2$, and
$$y(x)=N^{-1}A^{-1}_1\begin{pmatrix}-I_{r-1}&Nx\end{pmatrix}B.$$
We check that
$$(\begin{pmatrix}p^{b}I_i&Nx&0\\0&I_{r-1-i}&0\\0&0&1\end{pmatrix}^{-1}\begin{pmatrix}A_+&B_+\\C_+&D_+\end{pmatrix}
\begin{pmatrix}p^{b}I_i&Nx_A&Ny(x)\\0&I_{r-1-i}&0\\0&0&1\end{pmatrix})^{-1}
\begin{pmatrix}A_+&B_+\\C_+&D_+\end{pmatrix}\in\Gamma(N)\cap\Gamma_1(p^s).$$
We first check that (noting the special form of the matrix $\begin{pmatrix}A_+&B_+\\C_+&D_+\end{pmatrix}$)
\begin{align*}
&\sum_Zf|{i_{\mathrm{gl},v}(\begin{pmatrix}A_+&B_+\\C_+&D_+\end{pmatrix}, \begin{pmatrix}A_-&B_-\\C_-&D_-\end{pmatrix})^{-1}}|{\gamma^{-1}_ii_{\mathrm{gl},v}
(\begin{pmatrix}1_i&Nx&\\&1_{r-i-1}&\\&&1
\end{pmatrix},1)^{-1}}|i_{\mathrm{gl},v}(\begin{pmatrix}1_r&\begin{matrix}Z\\0\end{matrix}\\&1_s\end{pmatrix})^{-1}&\\
&=
\sum_Zf|{\gamma^{-1}_ii_{\mathrm{gl,v}}(\begin{pmatrix}1_i&Nx_A&Ny(x)\\&1_{r-i-1}&\\&&1
\end{pmatrix},1)^{-1}}|i_{\mathrm{gl},v}(\begin{pmatrix}1_r&\begin{matrix}Z\\0\end{matrix}\\&1_s\end{pmatrix})^{-1}|{i_{\mathrm{gl},v}(\begin{pmatrix}A_+&B_+\\C_+&D_+\end{pmatrix}, \begin{pmatrix}A_-&B_-\\C_-&D_-\end{pmatrix})^{-1}},&\end{align*}
where $Z$ runs over $i\times s$ matrices with entries in $\mathbb{Z}/p^b\mathbb{Z}$.
Moreover since
$$\begin{pmatrix}p^b1_i&Nx_A&Ny\\&1_{r-i-1}&\\&&1
\end{pmatrix}^{-1}\begin{pmatrix}A&B\\&1\end{pmatrix}^{-1}\begin{pmatrix}0_{r-1} \\ 1\end{pmatrix}=\begin{pmatrix}
-\begin{pmatrix}p^{-b}Ny\\ 0\end{pmatrix}-\begin{pmatrix}p^{-b}I_i&-p^{-b}Nx_A\\0&1_{r-i-1}\end{pmatrix}A^{-1}B \\ 1 \end{pmatrix},$$
we see it contains no primitive vector in $\mathbb{Z}_px^*_1+\cdots\mathbb{Z}_p^*x_{r-1}+p\mathbb{Z}_px^*_r$
only when
$$Ny\equiv \begin{pmatrix}I_{r-1}&-Nx_A\end{pmatrix}A^{-1}_+B_+\ (\mathrm{mod}\ p^b),$$
which means $y\equiv y(x)\ (\mathrm{mod}\ p)$.
So
$$\Phi_{\tilde{V}_{\mathrm{std}}}(f|{\gamma^{-1}_ii_{\mathrm{gl,v}}(\begin{pmatrix}1&Nx_A&Ny\\&1_{r-i-1}&\\&&1
\end{pmatrix},1)^{-1}}|{i_{\mathrm{gl},v}(\begin{pmatrix}A_+&B_+\\C_+&D_+\end{pmatrix}, \begin{pmatrix}A_-&B_-\\C_-&D_-\end{pmatrix})^{-1}})|i_{\mathrm{gl},v}(\begin{pmatrix}1_r&\begin{matrix}Z\\0
\end{matrix}\\&1_s\end{pmatrix})^{-1}$$
can be nonzero only when $y\equiv y(x)(\mathrm{mod}\ p^b)$.
So we have
\begin{align*}
&\Phi_{\tilde{V}}(U_{p,i}f)&&=\Phi_{\tilde{V}_{\mathrm{std}}}((U_{p,i}f)|{i_{\mathrm{gl},v}(\begin{pmatrix}A_+&B^+\\C_+&D_+\end{pmatrix}, \begin{pmatrix}A_-&B_-\\C_-&D_-\end{pmatrix})^{-1}})&\\
&&&=\frac{1}{p^{ibs}}\Phi_{\tilde{V}_{\mathrm{std}}}(\sum_Z\sum_{x,y}f|{\gamma^{-1}_ii_{\mathrm{gl,v}}(\begin{pmatrix}1&Nx_A&Ny\\&1_{r-i-1}&0\\&&1
\end{pmatrix},1)^{-1}}&\\
&&&|i_{\mathrm{gl},v}(\begin{pmatrix}1_r&\begin{matrix}Z\\0\end{matrix}\\&1_s\end{pmatrix})^{-1})|{i_{\mathrm{gl},v}(\begin{pmatrix}A_+&B^+\\C_+&D_+\end{pmatrix}, \begin{pmatrix}A_-&B_-\\C_-&D_-\end{pmatrix})^{-1}}&\\
&&&=\frac{1}{p^{ibs}}\Phi_{\tilde{V}_{\mathrm{std}}}(\sum_Z\sum_xf|{\gamma^{-1}_ii_{\mathrm{gl,v}}(\begin{pmatrix}1&Nx_A&Ny(x)\\&1_{r-i-1}&0\\&&1
\end{pmatrix},1)^{-1}}&\\
&&&|{i_{\mathrm{gl},v}(\begin{pmatrix}A_+&B_+\\C_+&D_+\end{pmatrix}, \begin{pmatrix}A_-&B_-\\C_-&D_-\end{pmatrix})^{-1}}
|i_{\mathrm{gl},v}(\begin{pmatrix}1_r&\begin{matrix}Z\\0\end{matrix}\\&1_s\end{pmatrix})^{-1})&\\
&&&=\frac{1}{p^{ibs}}\Phi_{\tilde{V}_{\mathrm{std}}}(\sum_Z\sum_x f|{i_{\mathrm{gl},v}(\begin{pmatrix}A_+&B_+\\C_+&D_+\end{pmatrix}, \begin{pmatrix}A_-&B_-\\C_-&D_-\end{pmatrix})^{-1}}|\gamma^{-1}_i&\\
&&&|i_{\mathrm{gl},v}
(\begin{pmatrix}1&Nx&\\&1_{r-i}&\\&&1
\end{pmatrix},1)^{-1})|i_{\mathrm{gl},v}(\begin{pmatrix}1_r&\begin{matrix}Z\\0\end{matrix}\\&1_s\end{pmatrix})^{-1})&
\\
&&&=\frac{1}{p^{ib(s-1)}}\sum_{Z'}(\Phi_{\tilde{V}_{\mathrm{std}}}(f|i_{\mathrm{gl},v}(\begin{pmatrix}A_+&B_+\\C_+&D_+\end{pmatrix}, \begin{pmatrix}A_-&B_-\\C_-&D_-\end{pmatrix})^{-1}))
|\gamma^{-1}_i|i_{\mathrm{gl},v}(\begin{pmatrix}1_{r-1}&\begin{matrix}Z'\\0\end{matrix}\\&1_{s-1}\end{pmatrix})^{-1})&\\
&&&=U'_{p,i}(\Phi_{\tilde{V}_{\mathrm{std}}}(f|i_{\mathrm{gl},v}(\begin{pmatrix}A_+&B_+\\C_+&D_+\end{pmatrix}, \begin{pmatrix}A_-&B_-\\C_-&D_-\end{pmatrix})^{-1})).&
\end{align*}
Here we used (\ref{fac}). The $Z$ ($Z'$) runs over $i\times s$ ($i\times (s-1)$) matrices with entries in $\mathbb{Z}/p^b\mathbb{Z}$, the $x$ and $y$ run over matrices with entries in $\mathbb{Z}/p^b\mathbb{Z}$ with corresponding sizes. The proposition follows.
\end{proof}

With the above preparations, we can get the following standard results of Hida theory.
\begin{proposition}
We define for $q=0$ or $\flat$, Hida's ordinary idempotent $e^{\mathrm{ord}}$ can be well defined on the space $V^{q,\mathrm{ord},\mathrm{par}}$
$$V^{q,\mathrm{ord},\mathrm{par}}:=\mathrm{Hom}_{\mathbb{Z}_p}(\mathcal{V}^{q,\mathrm{ord},\mathrm{par}},\mathbb{Q}_p/\mathbb{Z}_p).$$
The space $\mathcal{V}^{\flat,\mathrm{ord},\mathrm{par}}$ is free of finite rank over $\mathcal{W}^{\mathrm{par}}$. We define
$$\mathrm{M}^{q,\mathrm{ord},\mathrm{par}}(K,\Lambda^{\mathrm{par}}):=\mathrm{Hom}_{\Lambda^{\mathrm{par}}}(V^{q,\mathrm{ord},\mathrm{par}},
\Lambda^{\mathrm{par}}).$$

Moreover for any arithmetic weight $\underline{k}$ in $\mathcal{W}^{\mathrm{par}}$, we have
$$\mathrm{M}^{q,\mathrm{ord},\mathrm{par}}\otimes_{\Lambda^{\mathrm{par}}}\Lambda^{\mathrm{par}}/P_{\underline{k}}\simeq V^{q,\mathrm{ord},\mathrm{par}}[P_{\underline{k}}].$$
\end{proposition}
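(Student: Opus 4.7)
The plan is to proceed in three stages, following the template of \cite{Rosso} for $\mathrm{GSp}(2n)$, but using the compatibility propositions (\ref{support}, \ref{stab}, \ref{compa}) just proved in order to handle the absence of a Fourier expansion.

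First, I would construct $e^{\mathrm{ord}}$ on $V^{q,\mathrm{par}}$ for $q\in\{0,\flat\}$ as $\lim_{n\to\infty}U_p^{n!}$, where $U_p=\prod_i U_{p,i}$. The operators $U_{p,i}$ stabilize each finite-level piece $V_{n,m}$, which is a finitely generated $\mathcal{O}_m$-module, so Fitting's lemma gives convergence of this limit on each finite-level piece. By Proposition \ref{stab}, $U_p$ preserves $V^{1,\flat}_{m,n}$, so $e^{\mathrm{ord}}$ acts on $V^{\flat}$; Proposition \ref{support} then shows that in fact $e^{\mathrm{ord}}V^{1,\mathrm{par}}_{m,n}\subseteq V^{1,\flat,\mathrm{par}}_{m,n}$, so $e^{\mathrm{ord}}$ is independent of whether we start from $V^0$, $V^{1,\flat}$, or the full $V^1$, and it is the right projector on each.

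Second, for freeness of $\mathcal{V}^{\flat,\mathrm{ord},\mathrm{par}}$ over $\Lambda^{\mathrm{par}}$, I would induct on $n=r+s$ using the fundamental exact sequence of Proposition \ref{exa}:
\begin{equation*}
0\to V^{0,\mathrm{par}}_{m,n}\to V^{1,\flat,\mathrm{par}}_{m,n}\to \bigoplus_{V}\Lambda^{\mathrm{par}}_{\mathrm{U}(r-1,s-1)}\otimes V^{0}_{V,m,n}\to 0.
\end{equation*}
By Proposition \ref{compa}, this sequence is equivariant for $U_p$ on the left/middle and the corresponding operator on the boundary group $\mathrm{U}(r-1,s-1)$ on the right. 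Applying Hida's idempotent gives an exact sequence of ordinary parts (since $e^{\mathrm{ord}}$ is exact). The cuspidal ordinary part $\mathcal{V}^{0,\mathrm{ord},\mathrm{par}}$ is well-known to be $\Lambda^{\mathrm{par}}$-free of finite rank by the standard arguments of Hida (cf.\ \cite{Hsieh CM}, §3), while the boundary contribution is $\Lambda^{\mathrm{par}}$-free by the inductive hypothesis applied to $\mathrm{U}(r-1,s-1)$ (the tensoring $\Lambda^{\mathrm{par}}_{\mathrm{U}(r-1,s-1)}\otimes_{\Lambda_{\mathrm{U}(r-1,s-1)}}(-)$ respects the parallel-weight constraint precisely because we restricted $a_r+b_1$ to be parallel in Definition of $\mathcal{W}^{\mathrm{par}}$). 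Combining these with Nakayama/Pontryagin duality yields that $\mathcal{V}^{\flat,\mathrm{ord},\mathrm{par}}$ is $\Lambda^{\mathrm{par}}$-free of finite rank.

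For the control-theorem isomorphism $\mathrm{M}^{q,\mathrm{ord},\mathrm{par}}\otimes \Lambda^{\mathrm{par}}/P_{\underline{k}}\simeq V^{q,\mathrm{ord},\mathrm{par}}[P_{\underline{k}}]$ at an arithmetic weight $\underline{k}\in\mathcal{W}^{\mathrm{par}}$, I would use the trivialization (\ref{trivialization}) to identify, after specialization at $P_{\underline{k}}$, the space of $p$-adic forms $V_{\infty,\infty}^N[P_{\underline{k}}]$ with classical automorphic forms of weight $\underline{k}$ (classicity of ordinary forms), then dualize via $\mathrm{Hom}_{\Lambda^{\mathrm{par}}}(-,\Lambda^{\mathrm{par}})$ using the freeness just established to flip specialization and dualization.

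The main obstacle, and the point where genuine new work is required relative to the cuspidal case, will be verifying that $U_p$-operators are compatible with boundary restriction in the sense needed for the exact sequence of ordinary parts --- this is exactly what Propositions \ref{stab}, \ref{support}, and \ref{compa} were set up to establish, and the role of the parallel-weight restriction $\mathcal{W}^{\mathrm{par}}$ (forced on us by the failure of Leopoldt for $F$) is essential in order to identify the boundary term with a finite direct sum of ordinary Hida modules for $\mathrm{U}(r-1,s-1)$ rather than an infinite-rank object. Once these compatibility inputs are in hand, the rest of the argument is a direct adaptation of the standard Hida-theoretic yoga.
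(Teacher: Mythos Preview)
Your proposal is correct and follows essentially the same route as the paper, which simply cites the exact sequence of Proposition~\ref{exa} together with Hida's cuspidal result \cite{HidaControl} and refers to \cite[Propositions 1.10.1--1.10.2]{Rosso} for the formal deductions; you have unpacked exactly those deductions.

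Two small remarks. First, the induction on $n=r+s$ is unnecessary: the boundary term in the exact sequence of Proposition~\ref{exa} is already built from \emph{cuspidal} forms $V^0_{V,m,n}$ on $\mathrm{U}(r-1,s-1)$ (tensored with an extra rank-one Iwasawa factor), so Hida's cuspidal freeness applies to it directly---no inductive hypothesis on the $\flat$-space of the smaller group is needed. Second, for the control isomorphism you do not need classicity of ordinary forms: once $\mathcal{V}^{q,\mathrm{ord},\mathrm{par}}$ is $\Lambda^{\mathrm{par}}$-cofree (equivalently $V^{q,\mathrm{ord},\mathrm{par}}$ is free), the identification $\mathrm{M}^{q,\mathrm{ord},\mathrm{par}}\otimes\Lambda^{\mathrm{par}}/P_{\underline{k}}\simeq V^{q,\mathrm{ord},\mathrm{par}}[P_{\underline{k}}]$ is purely formal Pontryagin/$\Lambda$-duality, as you in fact indicate in your last clause. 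Classicity is the content of the \emph{next} proposition in the paper, not this one.
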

This follows from the exact sequence in Proposition \ref{exa} and the corresponding result for $q=0$ proved by Hida \cite{HidaControl}.
The definition of ordinary idempotent is easily deduced from the exact sequences and the corresponding definition for cuspidal spaces as in  \cite[Proposition 1.10.1]{Rosso}. Other parts follow from unraveling the definitions and as in \cite[Proposition 1.10.2]{Rosso}.
We also have the classicality result for cusp forms below. In application we only need this cuspidal case results, which is proved by Hida \cite{HidaControl}.
\begin{proposition}
For any weight with nebentypus $\underline{k}$, there is a number $b_{\underline{k}}>0$ depending on $\underline{k}$, such that for any $b\geq b_{\underline{k}}$, all forms in
$$M^{0,\mathrm{ord}}_{\underline{k}+b(1,\cdots,1,0,\cdots,0)}(K,\mathcal{O}_p)$$
are classical.
\end{proposition}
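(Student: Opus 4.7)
The plan is to follow Hida's classicality argument for ordinary cuspidal $p$-adic modular forms, adapted to unitary Shimura varieties of signature $(r,s)$, and to invoke the control theorem stated in the previous proposition. The starting point is the natural Hecke-equivariant inclusion, given via the trivialization (\ref{trivialization}), of classical weight-$(\underline{k}+b(1,\cdots,1,0,\cdots,0))$ cusp forms into the corresponding space of $p$-adic cusp forms. This map is injective by $q$-expansion at the cusps together with the density of the ordinary locus in the Shimura variety. After applying $e^{\mathrm{ord}}$, the problem reduces to showing that the cokernel vanishes when $b$ is sufficiently large.

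To handle the cokernel, I would set up a dual-BGG-type resolution of the automorphic vector bundle $\omega_{\underline{k}+b(1,\cdots,1,0,\cdots,0)}$ on an integral toroidal compactification of the Igusa tower. Such a resolution is available in the PEL unitary setting by work of Faltings (generically) and Lan (integrally), expressing the obstruction to lifting a $p$-adic form to a classical form in terms of cohomologies of line bundles labeled by Weyl translates of the relevant weight shifted by $\rho$. For a cuspidal input the relevant cohomologies are coherent cohomologies on the minimal compactification, and each summand carries a natural $U_p$-action compatible with that on the $p$-adic automorphic forms.

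The crucial step is to bound the $U_p$-eigenvalues on these non-classical summands. Using the explicit description of $U_{p,i}$ via the matrix $\gamma_i$ in (\ref{(50)}), one checks that on the summand indexed by a Weyl element $w$, the operator $U_{p,i}$ acts with an extra factor $p^{\langle \gamma_i,\,\underline{k}+b(1,\cdots,1,0,\cdots,0)-w(\underline{k}+b(1,\cdots,1,0,\cdots,0))\rangle}$. Because the direction $(1,\cdots,1,0,\cdots,0)$ is strictly dominant on the $\mathrm{GL}_r$-block, this exponent grows linearly in $b$ for every simple reflection non-trivial on that block, so those summands are killed by $e^{\mathrm{ord}}$ once $b$ exceeds a suitable threshold. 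The summands coming from simple reflections in the $\mathrm{GL}_s$-block, where our translation is trivial, are controlled instead by the regularity of $\underline{k}$ itself; the constant $b_{\underline{k}}$ in the statement absorbs this initial regularity gap.

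The main obstacle is exactly this last point: the translation direction $(1,\cdots,1,0,\cdots,0)$ is not strictly dominant in every direction, so one cannot hope for a universal constant independent of $\underline{k}$, and one must carefully verify that the $U_p$-valuation argument still applies on the $\mathrm{GL}_s$-side by using the already-present regularity of $\underline{k}$. Once this is done, combining the injectivity of the classical-to-$p$-adic comparison with the vanishing of every intermediate term in the BGG resolution after $e^{\mathrm{ord}}$ yields that $M^{0,\mathrm{ord}}_{\underline{k}+b(1,\cdots,1,0,\cdots,0)}(K,\mathcal{O}_p)$ consists entirely of classical forms, as claimed.
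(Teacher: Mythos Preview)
The paper does not give a proof of this proposition at all: it simply records the statement and remarks that ``this cuspidal case result'' is ``proved by Hida \cite{HidaControl}''. So your proposal is not being compared against an argument in the paper, but against a bare citation.

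Your BGG/slope-estimate outline is the right general shape for such a classicality statement, but there is a genuine confusion in the key combinatorial step. You write that $(1,\ldots,1,0,\ldots,0)$ is ``strictly dominant on the $\mathrm{GL}_r$-block'' and that therefore the $U_p$-exponent grows in $b$ for every simple reflection nontrivial on that block. This is false: $(1,\ldots,1)$ is the determinant character of $\mathrm{GL}_r$, hence \emph{fixed} by every simple reflection inside $\mathrm{GL}_r$, so the pairing $\langle \gamma_i,\, \underline{k}+b(1,\ldots,1,0,\ldots,0) - w(\underline{k}+b(1,\ldots,1,0,\ldots,0))\rangle$ does not grow with $b$ for such $w$. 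The reflections for which the shift \emph{does} help are precisely those in $W^{P}$, the Kostant representatives for the parabolic with Levi $\mathrm{GL}_r\times\mathrm{GL}_s$; any nontrivial $w\in W^{P}$ moves at least one of the first $r$ coordinates into the last $s$, and it is there that $b$ enters linearly. Fortunately, these are exactly the Weyl elements that control the obstruction to classicality of \emph{holomorphic} (degree-$0$ coherent) forms, so once you reorganize the argument around $W^{P}$ rather than around the simple reflections in each block, the slope bound goes through. The reflections interior to $\mathrm{GL}_r$ and $\mathrm{GL}_s$ simply do not appear in the relevant BGG complex for degree $0$, so your separate discussion of the ``$\mathrm{GL}_s$-side'' is unnecessary.

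In short: right strategy, wrong bookkeeping on which Weyl elements matter; fixing that, your sketch is a plausible reconstruction of the argument the paper is content to cite.
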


Combining results in Propositions \ref{exa}, \ref{support}, \ref{stab} and \ref{compa}, we immediately get the following proposition:
\begin{proposition}\label{fes} (fundamental exact sequence)
We have \begin{equation}0\rightarrow e^{\mathrm{ord}}V^{0,\mathrm{par}}_{m,n}\rightarrow e^{\mathrm{ord}}V^{1,\mathrm{par}}_{m,n}\rightarrow \oplus_{V\in\mathscr{C}_M/\Gamma,\mathrm{rk}V=1}\mathbb{Z}_p[[T_{\mathrm{U}(r-1,s-1)}\otimes \mathbb{Z}^\times_p]]\otimes_{\mathbb{Z}_p
[[T_{\mathrm{U}(r-1,s-1)}]]} e^{\mathrm{ord}}V^0_{V,m,n}\rightarrow 0.
\end{equation}
\end{proposition}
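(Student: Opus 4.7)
The plan is to deduce this from the non-ordinary fundamental exact sequence of Proposition \ref{exa}, using Propositions \ref{support}, \ref{stab}, \ref{compa} to control the interaction of the ordinary projector $e^{\mathrm{ord}}$ with the boundary restriction map. First I would apply $e^{\mathrm{ord}}$ to the exact sequence
$$0\rightarrow V^{0,\mathrm{par}}_{m,n}\rightarrow V^{1,\flat,\mathrm{par}}_{m,n}\rightarrow \bigoplus_{V}\mathbb{Z}_p[[T_{\mathrm{U}(r-1,s-1)}\otimes\mathbb{Z}^\times_p]]\otimes_{\mathbb{Z}_p[[T_{\mathrm{U}(r-1,s-1)}]]} V^0_{V,m,n}\rightarrow 0.$$
Since $e^{\mathrm{ord}}$ is the idempotent projector attached to an invertible $U_p$-action, it is exact on each term; by Proposition \ref{stab} the space $V^{1,\flat,\mathrm{par}}_{m,n}$ is stable under $U_p$ so that $e^{\mathrm{ord}}$ makes sense on it.

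The next and key step is to identify $e^{\mathrm{ord}}V^{1,\flat,\mathrm{par}}_{m,n}$ with $e^{\mathrm{ord}}V^{1,\mathrm{par}}_{m,n}$. The inclusion $V^{1,\flat,\mathrm{par}}_{m,n}\hookrightarrow V^{1,\mathrm{par}}_{m,n}$ gives one direction. Conversely, the combined operator $U_p^{\mathrm{tot}}:=U_{p,r-1}U_{p,r+s-1}$ (a finite product of which participates in the definition of $e^{\mathrm{ord}}$) sends $V^{1,\mathrm{par}}_{m,n}$ into $V^{1,\flat,\mathrm{par}}_{m,n}$: by Proposition \ref{support}, a high enough power of $U_{p,r-1}$ kills the boundary contribution at cusp labels $\tilde{V}$ whose $e^+$-part is a primitive vector in the ``bad'' sublattice $\mathbb{Z}_px^*_{1,+}+\cdots+\mathbb{Z}_px^*_{r-1,+}+p\mathbb{Z}_px^*_{r,+}$, and similarly $U_{p,r+s-1}^{2ab}$ annihilates the bad $e^-$-cusp labels. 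Since $e^{\mathrm{ord}}$ is the limit of $(U_p^{\mathrm{tot}})^{n!}$, any ordinary form already lies in $V^{1,\flat,\mathrm{par}}_{m,n}$. Thus the two ordinary spaces coincide.

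It remains to verify that the restriction map in the exact sequence commutes with $e^{\mathrm{ord}}$ on both sides, so that after projection the sequence retains its exactness and yields the stated sequence. This is exactly Proposition \ref{compa}: for $f\in V^{1,\flat,\mathrm{par}}_{m,n}$ and a boundary component $\tilde{V}\in\mathcal{I}^\flat_{Z^{\mathrm{ord}}_V,m,n}$, one has $\Phi_{\tilde{V}}(U_{p,i}f)=U'_{p,i}\Phi_{\tilde{V}}(f)$, and hence $\Phi_{\tilde{V}}\circ e^{\mathrm{ord}}=e^{\mathrm{ord}}\circ\Phi_{\tilde{V}}$. Applying $e^{\mathrm{ord}}$ termwise therefore yields the exact sequence of the proposition, after observing that the boundary summand corresponds to the cuspidal form on $\mathrm{U}(r-1,s-1)$ with the induced parallel weight twist from $\mathbb{Z}_p[[T_{\mathrm{U}(r-1,s-1)}\otimes\mathbb{Z}_p^\times]]$.

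The main obstacle, as suggested above, is the passage from $V^{1,\mathrm{par}}_{m,n}$ to $V^{1,\flat,\mathrm{par}}_{m,n}$ on the ordinary part: one needs the lower-bound $a\geq n\geq m$ in Proposition \ref{support} to hold uniformly in $m$ and $n$, which forces the use of a suitable power $U_p^{ab}$ in the definition of $e^{\mathrm{ord}}$ rather than the naive one, and it is this uniformity that guarantees the identification on the nose. Once this is in place, the rest is formal manipulation using the exactness of the idempotent $e^{\mathrm{ord}}$ and the compatibility with $\Phi_{\tilde{V}}$.
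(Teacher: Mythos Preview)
Your proposal is correct and follows exactly the approach the paper intends: the paper simply states that the proposition follows immediately by combining Propositions \ref{exa}, \ref{support}, \ref{stab} and \ref{compa}, and you have spelled out precisely how these four ingredients fit together. The only minor imprecision is that $e^{\mathrm{ord}}$ is the limit of powers of the full product of all $U_{p,i}$'s rather than just $U_{p,r-1}U_{p,r+s-1}$, but since ordinary forms are eigenvectors with unit eigenvalues for each $U_{p,i}$ individually, your argument that any ordinary element of $V^{1,\mathrm{par}}_{m,n}$ already lies in $V^{1,\flat,\mathrm{par}}_{m,n}$ goes through unchanged.
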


\subsection{Algebraic Theory for Fourier-Jacobi Expansions}\label{Subsection FJ}
We suppose $s>0$ in this subsection. Let $X_t^\vee=\mathrm{span}_{\mathcal{O}_\mathcal{K}}\{x^1,\cdots,x^t\}$ and $Y_t=\mathrm{span}_{\mathcal{O}_\mathcal{K}}\{y^1,\cdots,y^t\}$. Let $W_t$ be the skew-Hermitian space $\mathrm{span}_{\mathcal{O}_\mathcal{K}}\{y^{t+1},\cdots, y^s,w_1,\cdots, x^{t+1},\cdots, x^s\}$. Let $G_t^0$ be the unitary similitude group of $W_t$. Let $[g]\in C_t(K)$ and $K_{G_{P_t}}=G_{P_t}(\mathbb{A}_f)\cap gKg^{-1}$ (we suppress the subscript $[g]$ so as not to make the notation too cumbersome). Let $\mathcal{A}_t$ be the universal abelian scheme over the Shimura variety $S_{G_{P_t}}(K_{G_{P_t}})$. Write $g^\vee=kg_i^\vee\gamma$ for $\gamma\in G(F)^+$ and $k\in K$. Define $X_g^\vee=X_t^\vee g_i^\vee\gamma, Y_g=Y_tg_i^\vee\gamma$. Let $X_g=\{y\in (Y_t\otimes_\mathbb{Q}\mathbb{Z})\cdot \gamma|\langle y, X_g^\vee\rangle\in\mathbb{Z}\}.$ Then we have
$$i:Y_g\hookrightarrow X_g.$$
Let $\mathcal{Z}_{[g]}$ be
$$\underline{\mathrm{Hom}}_{\mathcal{O}_\mathcal{K}}(X_g,\mathcal{A}_t^\vee)
\times_{\underline{\mathrm{Hom}}_{\mathcal{O}_\mathcal{K}}(Y_g,\mathcal{A}_t^\vee)}\underline{\mathrm{Hom}}_{
\mathcal{O}_\mathcal{K}}(Y_g,\mathcal{A}_t):=\{(c,c^t)|, c(i(y))=\lambda(c^t(y)), y\in Y_g\}.$$
Here $\underline{\mathrm{Hom}}$'s are the obvious sheaves over the big \'etale site of $S_{G_{P_t}}$, represented by Abelian schemes. Let $\mathbf{c}$ and $\mathbf{c}^\vee$ be the universal morphisms over $\underline{\mathrm{Hom}}_{\mathcal{O}_\mathcal{K}}(X_g,\mathcal{A}_t^\vee)$ and $\underline{\mathrm{Hom}}_{
\mathcal{O}_\mathcal{K}}(Y_g,\mathcal{A}_t)$. Let $N_{P_t}$ be the unipotent radical of $P_t$ and $Z(N_{P_t})$ be its center. Let $H_{[g]}:=Z(N_{P_t}(F))\cap g_iKg_i^{-1}$. Note that if we replace the components of $K$ at $v|p$ by $K_1^n$ then the set $H_{[g]}$ remain unchanged. Let $\Gamma_{[g]}:=\mathrm{GL}_\mathcal{K}(Y_t)\cap g_iKg_i^{-1}$. Let $\mathcal{P}_{\mathcal{A}_t}$ be the Poincar\'e sheaf over $\mathcal{A}_t^\vee\times\mathcal{A}_t/_{\mathcal{Z}_{[g]}}$ and $\mathcal{P}_{\mathcal{A}_t}^\times$ its associated $\mathbb{G}_m$-torsor. Let $S_{[g]}:=\mathrm{Hom}(H_{[g]},\mathbb{Z})$. For any $h\in S_{[g]}$ let $c(h)$ be the tautological map $\mathcal{Z}_{[g]}\rightarrow \mathcal{A}_t^\vee\times \mathcal{A}_t$ and $\mathcal{L}(h):=c(h)^*\mathcal{P}_{\mathcal{A}_t}^\times$ its associated $\mathbb{G}_m$ torsor over $\mathcal{Z}_{[g]}$.\\

\noindent It is well-known (see e.g. \cite[Chapter 7]{LAN}) that the minimal compactification $S^*_G(K)$ is the disjoint union of boundary components corresponding to $t$'s for all $1\leq t\leq s$. Let $\mathcal{O}_{\mathbb{C}_p}$ be the valuation ring for $\mathbb{C}_p$. The following proposition is proved in \cite[Proposition 7.2.3.16]{LAN}. Let $[g]\in C_t(K)$ and $\bar{x}$ is a $\mathcal{O}_{\mathbb{C}_p}$-point of the $t$-stratum of $S^*_G(K)(1/E)$ corresponding to $[g]$.
\begin{proposition}
Let $[g]$ and $\bar{x}$ be as above. We write the subscript $\bar{x}$ to mean formal completion along $\bar{x}$. Let $\pi$ be the map $\bar{S}_G(K)\rightarrow S^*_G(K)$. Then $\pi_*(\mathcal{O}_{\bar{S}_G(K)})_{\bar{x}}$ is isomorphic to
$$\{\sum_{h\in S_{[g]}^+} H^0(\mathcal{Z}_{[g]},\mathcal{L}(h))_{\bar{x}}q^h\}^{\Gamma_{[g]}}.$$
Here $S_{[g]}^+$ means the totally non-negative elements in $S_{[g]}$. The $q^h$ is just regarded as a formal symbol and $\Gamma_{[g]}$ acts on the set by a certain formula which we omit.
\end{proposition}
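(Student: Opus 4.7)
The plan is to deduce this proposition from the general theory of toroidal and minimal compactifications of PEL-type Shimura varieties developed by Lan, so the role of this proof is mainly to translate his general formalism into the notation fixed in this paper. The statement is essentially the Fourier--Jacobi expansion principle near a cusp: locally around a cusp of type $t$, the toroidal compactification is a (twisted) torus embedding over the boundary moduli problem, and the formal completion of its structure sheaf along a boundary point is the $\Gamma_{[g]}$-invariants of a formal power series whose coefficients live in line bundles on the moduli of degeneration data.

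First I would unpack the degeneration datum attached to $[g]\in C_t(K)$. The parabolic $P_t$ stabilizing the isotropic flag $0\subset X_t^\vee\subset V$ has Levi quotient isomorphic to $\mathrm{GL}_{\mathcal{K}}(X_t^\vee)\times G_t^0$, and the unipotent radical $N_{P_t}$ sits in a short exact sequence with center $Z(N_{P_t})$. On the moduli side this corresponds, via Mumford's construction, to semi-abelian degenerations of $\mathcal{A}_t$ by a torus whose character group is $X_g$; the pairing $i\colon Y_g\hookrightarrow X_g$ encodes the polarization, and the homomorphisms $(\mathbf{c},\mathbf{c}^\vee)$ parameterize the extension class and its dual. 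Thus $\mathcal{Z}_{[g]}$ is exactly the base over which the universal semi-abelian scheme degenerating along the $[g]$-stratum is defined, and $\mathcal{P}_{\mathcal{A}_t}$ is the biextension used to reconstruct the polarization.

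Next I would recall the local structure of the toroidal compactification $\bar{S}_G(K)$ along the $[g]$-stratum: étale locally it is the formal completion of an affine torus embedding $\mathcal{Z}_{[g]}[S_{[g]}]\hookrightarrow \mathcal{Z}_{[g]}[\sigma]$ for a rational polyhedral cone $\sigma$ inside $S_{[g]}\otimes\mathbb{R}$, with the boundary stratum cut out by the monomials $q^h$ for $h$ in the interior. The formal completion along $\bar{x}$ therefore is $\widehat{\bigoplus}_{h\in S_{[g]}^+} \mathcal{L}(h)_{\bar{x}}\cdot q^h$, where $\mathcal{L}(h)=c(h)^{*}\mathcal{P}_{\mathcal{A}_t}^{\times}$ is the $\mathbb{G}_m$-torsor (equivalently, invertible sheaf) prescribed by the biextension structure and the cone condition $h\geq 0$; this is the formal Mumford/theta expansion.

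Finally I would take into account that passing to the minimal compactification $S_G^*(K)$ contracts each toroidal cusp to a single point, so the fiber of $\pi_*\mathcal{O}_{\bar{S}_G(K)}$ at $\bar x$ is obtained from the toroidal formal expansion by taking $\Gamma_{[g]}=\mathrm{GL}_{\mathcal{K}}(Y_t)\cap g_iKg_i^{-1}$-invariants, where $\Gamma_{[g]}$ acts on $S_{[g]}$, on $\mathcal{Z}_{[g]}$, and compatibly on the torsors $\mathcal{L}(h)$; this gives the claimed formula. The main obstacle is keeping the bookkeeping straight: identifying $S_{[g]}$ with $\mathrm{Hom}(H_{[g]},\mathbb{Z})$ and the cone $S_{[g]}^+$ with the image of the positivity condition on Hermitian forms, and checking that the $\Gamma_{[g]}$-action on the formal expansion matches Lan's (which is the content of \cite[Prop.~7.2.3.16]{LAN} that we are quoting); once these identifications are made the proposition is just its restatement in the notation of this paper.
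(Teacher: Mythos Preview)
Your proposal is correct and matches the paper's approach: the paper simply states that ``the following proposition is proved in \cite[Proposition~7.2.3.16]{LAN}'' and gives no further argument, so the result is taken as a direct citation of Lan. Your write-up actually goes beyond this by sketching the underlying degeneration-theoretic picture, but the essential content---that this is a restatement of Lan's Proposition~7.2.3.16 in the paper's notation---is exactly what the paper does.
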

For each $[g]\in C_t(K)$ we fix a $\bar{x}$ corresponding to it as above.
Now we consider the diagram
\[\begin{CD}
T_{n,m}@>\pi_{n,m}>> T_{n,m}^*\\
@VVV                   @VVV\\
\bar{S}_G(K)[1/E]_{\mathcal{O}_m}@>\pi>>S_G^*(K)[1/E]_{\mathcal{O}_m}
\end{CD}\]
where $T_{n,m}\rightarrow T_{n,m}^*\rightarrow S_G^*(K)[1/E]_{\mathcal{O}_m}$ is the Stein factorization. By \cite[Corollary 6.2.2.8]{LAN1} $T_{n,m}^*$ is finite \'etale over $S_G^*(K)[1/E]_{\mathcal{O}_m}$. Taking a preimage of $\bar{x}$ in $T_{n,m}^*$ which we still denote as $\bar{x}$. (For doing this we have to extend the field of definition to include the maximal unramified extension of $L$). Then the formal completion of the structure sheaf of $T_{n,m}^*$ and $S_G^*(K)[1/E]_{\mathcal{O}_m}$ at $\bar{x}$ are isomorphic. So for any $p$-adic automorphic form $f\in\varprojlim_m\varinjlim_n H^0(T_{n,m},\mathcal{O}_{n,m})$ (with trivial coefficients) we have a Fourier-Jacobi coefficient
\begin{equation}\label{DefineFJ}
FJ(f)\in\{\prod_{h\in S_{[g]}^+}\varprojlim_m\varinjlim_n H^0(\mathcal{Z}_{[g]},\mathcal{L}(h))_{\bar{x}}\cdot q^h\}_{[g]}
\end{equation}
by considering $f$ as a global section of $\pi_{n,m}^*(\mathcal{O}_{T_{n,m}})=\mathcal{O}_{T_{n,m}^*}$ and pullback at $\bar{x}$'s. Note that if $t=s=1$ then there is no need to choose the $\bar{x}$'s and pullback since the Shimura varieties for $G_t$ is $0$-dimensional (see \cite[(2.18)]{Hsieh CM}). In application when we construct families of Klingen Eisenstein series in terms off Fourier-Jacobi coefficients, we will take $t=1$ and define
\begin{equation}\label{(5)}
\mathcal{R}_{[g],\infty}:=\prod_{h\in S_{[g]}^+}\varprojlim_m\varinjlim_n H^0(\mathcal{Z}_{[g]},\mathcal{L}(h))_{\bar{x}}\cdot q^h.
\end{equation}
We remark that the map $FJ$ is injective on the space of forms with prescribed nebentypus at $p$. This can be seen using the discussion of \cite{SU} right before Section 6.2 of \emph{loc.cit} (which in turn uses result of Hida about the irreducibility of Igusa towers for the group $\mathrm{SU}(r,s)\subset \mathrm{U}(r,s)$ (kernel of the determinant)). Note also that since the geometric fibers of the minimal compactification are normal, their irreducible componenents are also connected components. In particular to see this injectivity we need the fact that there is a bijection between the irreducible components of the generic and special fiber of $S^*_G(K)$ (see \cite[Subsection 6.4.1]{LAN}). Since the signature is $(r,s)$ for $r\geq s>0$, so by our definition there exists cusp labels in $C_t(K)$ for each $1\leq t \leq s$. Moreover, such cusp label intersects with each connected component by \cite[Theorem A.2.2]{WAN}. Since $p$ splits completely in $\mathcal{K}$ the cusps of minimal genus must be in the ordinary locus.

Now we consider the Fourier-Jacobi coefficient at cusp labels $C_t$ for $t=s$, and define the Fourier-Jacobi expansions for $\Lambda$-adic families. The reason for taking $t=s$ is, when taking the $p$-part of the level group smaller and smaller, one checks that the $p$-part of the level group of the $\theta$-part of the Fourier-Jacobi coefficient, namely for the $H^0(\mathcal{Z}_{[g]},\mathcal{L}(h))_{\bar{x}}$ is unchanged. So taking any functional $\theta$ on finite dimensional vector space $H^0(\mathcal{Z}_{[g]},\mathcal{L}(h))_{\bar{x}}$, it makes sense to define the Fourier-Jacobi coeffcient $\mathrm{FJ}_{h,\theta,g}$ of a $\Lambda$-adic family $F$, taking values in the space of $p$-adic automorphic forms on the definite unitary group $\mathrm{U}_{[g]}(r,0)$, which we denote as $\hat{A}_{[g]}^\infty$.

\begin{definition}
Let $A$ be a finite torsion free $\Lambda$-algebra. Let $\mathcal{N}_{\mathrm{ord}}(K,A)$ be the set of formal Fourier-Jacobi expansions:
$$F=\{\sum_{\beta\in\mathscr{S}_{[g]}}a(\beta, F)q^\beta, a(\beta,F)\in (A\hat{\otimes}\hat{A}_{[g]}^\infty)^\Lambda \otimes H^0(\mathcal{Z}_{[g]}^\circ,\mathcal{L}(\beta))\}_{g\in X(K)}$$
such that for a Zariski dense set $\mathcal{X}_F\subseteq \mathcal{X}_\rho$ of points $\phi\in\mathrm{Spec}A$ where the induced point in $\mathrm{Spec}\Lambda$ is some arithmetic weight $\underline{k}_\zeta$, the specialization $F_\phi$ of $F$ is the highest weight vector of the Fourier-Jacobi expansion of an ordinary modular form with tame level $K^{(p)}$, weight $\underline{k}$ and nebentype at $p$ given by $[\underline{k}][\underline{\zeta}]\omega^{-[\underline{k}]}$ as a character of $K_0(p)$. Here the superscript $\Lambda$ in $(A\hat{\otimes}\hat{A}_{[g]}^\infty)^\Lambda$ means that the $\Lambda$-action as a nebentypus character is compatible with the $\Lambda$-algebra structure of $R$
\end{definition}
Then we have the following
\begin{proposition}
$$\mathcal{M}_{\mathrm{ord}}(K,A)=\mathcal{N}_{\mathrm{ord}}(K,A).$$
\end{proposition}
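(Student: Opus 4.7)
\smallskip
\noindent\textbf{Proof plan.} The plan is to establish the two inclusions separately, with the inclusion $\mathcal{M}_{\mathrm{ord}}(K,A)\subseteq \mathcal{N}_{\mathrm{ord}}(K,A)$ being essentially formal and the reverse inclusion being the substantive content. First I would set up the Fourier--Jacobi expansion map at cusp labels in $C_s(K)$ as described around (\ref{DefineFJ}), viewed as an $A$-linear map
$$\mathrm{FJ}\colon \mathcal{M}_{\mathrm{ord}}(K,A)\longrightarrow \Bigl\{\textstyle\sum_{\beta\in\mathscr{S}_{[g]}} a(\beta)q^\beta\Bigr\}_{g\in X(K)}.$$
To see that its image lands in $\mathcal{N}_{\mathrm{ord}}(K,A)$, one specializes at an arithmetic point $\phi$ of the weight space inside the Zariski-dense locus supplied by the Hida control theorem (the second proposition after Proposition \ref{compa}); the resulting specialization is a classical ordinary cuspidal form on $\mathrm{U}(r,s)$, and its Fourier--Jacobi expansion is the one prescribed by the definition of $\mathcal{N}_{\mathrm{ord}}$.

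Next I would prove injectivity of $\mathrm{FJ}$. Since $\mathcal{M}_{\mathrm{ord}}(K,A)$ is finite free over $A$ by the control theorem, and $A$ is itself torsion-free over $\Lambda^{\mathrm{par}}$, it suffices to check injectivity after specializing at a Zariski-dense set of arithmetic points. At each such point the claim reduces to the injectivity of the Fourier--Jacobi map on the space of classical ordinary forms, which follows from the remark in the excerpt just after (\ref{(5)}): Hida's irreducibility of the Igusa tower for $\mathrm{SU}(r,s)$, combined with the bijection between irreducible components of the generic and special fiber of $S_G^*(K)$ and the existence (for our signature) of cusp labels in $C_s(K)$ meeting every connected component.

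The main work is the reverse inclusion. Given $F\in\mathcal{N}_{\mathrm{ord}}(K,A)$, I would produce an $A$-adic family whose Fourier--Jacobi expansion equals $F$ as follows. By the control theorem, $\mathcal{M}_{\mathrm{ord}}(K,A)$ is a finite $A$-module whose specialization at any arithmetic $\phi\in\mathrm{Spec}\,A$ recovers the classical ordinary eigenspace of the given weight and nebentypus. For each $\phi\in\mathcal{X}_F$, the specialization $F_\phi$ is by hypothesis the Fourier--Jacobi expansion of a classical ordinary form $f_\phi$, unique by the classical injectivity recalled in the previous paragraph; hence $F_\phi$ lies in the image of $\mathrm{FJ}\otimes_A k(\phi)$. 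I would then observe that $\mathrm{coker}(\mathrm{FJ})$, being a submodule of an $A$-module of formal Fourier--Jacobi expansions, has trivial image at every $\phi\in\mathcal{X}_F$, and since $\mathcal{X}_F$ is Zariski dense in $\mathrm{Spec}\,A$ and the relevant modules are finitely generated over the noetherian normal domain $A$, this forces $F$ to lie in the image of $\mathrm{FJ}$.

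\smallskip
\noindent\textbf{Main obstacle.} The delicate step is the last one: one must ensure that the pointwise lifts $f_\phi\in\mathcal{M}_{\mathrm{ord}}(K,A)\otimes_A k(\phi)$ glue into a single $A$-adic element whose Fourier--Jacobi expansion equals $F$ identically, not merely at the arithmetic points. This requires the finite-freeness statement of the control theorem applied to the parallel weight space $\mathcal{W}^{\mathrm{par}}$, and implicitly the injectivity of the Fourier--Jacobi map on \emph{non-cuspidal} forms, which is why the discussion after (\ref{(5)}) is phrased at the level of forms with prescribed nebentype at $p$. Once these are in place, the equality follows from a standard Zariski-density argument over the Noetherian normal domain $A$.
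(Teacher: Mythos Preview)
The paper states this proposition without proof; it is presented as a direct consequence of the preceding discussion (injectivity of the Fourier--Jacobi map, Hida's control theorem) and is the unitary-group analogue of results proved in detail in \cite[Section~6]{SU}, \cite{Hsieh CM}, and \cite{WAN}. Your proof plan follows exactly this standard template: establish the easy inclusion via the $\mathrm{FJ}$ map, prove injectivity of $\mathrm{FJ}$ by reducing to the classical case at arithmetic points (using the irreducibility-of-Igusa-tower argument recalled after~(\ref{(5)})), and then lift an $F\in\mathcal{N}_{\mathrm{ord}}$ by a Zariski-density argument.

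One technical point to be careful about: you assert that $\mathcal{M}_{\mathrm{ord}}(K,A)$ is finite \emph{free} over $A$, but the paper only proves freeness of $\mathcal{V}^{\flat,\mathrm{ord},\mathrm{par}}$ over $\Lambda^{\mathrm{par}}$, and $A$ is merely assumed finite torsion-free over $\Lambda$. What you actually need for the gluing step is only that $\mathcal{M}_{\mathrm{ord}}(K,A)$ is a finitely generated torsion-free $A$-module and that $\mathrm{FJ}$ is injective after base change to $\mathrm{Frac}(A)$; then one picks finitely many $(\beta,\theta,g)$-coefficients giving linear functionals that separate a basis of $\mathcal{M}_{\mathrm{ord}}(K,A)\otimes_A\mathrm{Frac}(A)$, solves for the coefficients $c_j\in\mathrm{Frac}(A)$ of $F$ in this basis, and checks $c_j\in A$ using that $\phi(c_j)$ is integral at a Zariski-dense set of $\phi$ together with normality (or at least the torsion-free hypothesis) of $A$. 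Your ``Main obstacle'' paragraph correctly identifies this as the crux, and the argument you sketch is the right one once freeness is weakened to finite generation plus torsion-freeness.
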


\section{Eisenstein Series and Fourier-Jacobi Coefficients}\label{Section Eisenstein Series}
The materials of this section are straightforward generalizations of parts of \cite[Section 9 and 11]{SU} and we use the same notations as \emph{loc.cit}; so everything in this section should eventually be the same as \cite{SU} when specializing to the group $\mathrm{GU}(2,2)_{/\mathbb{Q}}$.
\subsection{Klingen Eisenstein Series}\label{K E S}
Let $\mathfrak{g}\mathfrak{u}(\mathbb{R})$ be the Lie algebra of $\mathrm{GU}(r,s)(\mathbb{R})$. Let $\delta$ be a character of the Klingen parabolic subgroup $P$ such that $\delta^{a+2b+1}=\delta_P$ (the modulus character of $P$).
\subsubsection{Archimedean Picture}\label{Archimedean Setup}

\noindent Let $v$ be an infinite place of $F$ so that $F_v\simeq \mathbb{R}$. Let $\boldsymbol{i}'$ and $\boldsymbol{i}$ be the points on the Hermitian symmetric domain for $\mathrm{GU}(r,s)$ and $\mathrm{GU}(r+1,s+1)$ which are $\begin{pmatrix}i1_s\\0\end{pmatrix}$ and $\begin{pmatrix}i1_{s+1}\\0\end{pmatrix}$ respectively (here $0$ means the $(r-s)\times s$ or $(r-s)\times (s+1)$ matrix $0$). Let $\mathrm{GU}(r,s)(\mathbb{R})^+$ be the subgroup of $\mathrm{GU}(r,s)(\mathbb{R})$ whose similitude factor is positive. Let $K_\infty^+$ and $K_\infty^{+,\prime}$ be the compact subgroups of $\mathrm{U}(r+1,s+1)(\mathbb{R})$ and $\mathrm{U}(r,s)(\mathbb{R})$ stabilizing $\boldsymbol{i}$ or $\boldsymbol{i}'$ and let $K_\infty$  ($K_{\infty}'$) \index{$K_{\infty}'$} be the groups generated by $K_\infty^+$ ($K_\infty^{+,\prime}$)\index{$K_\infty^{+,'}$} and $\mathrm{diag}(1_{r+s+1},-1_{s+1})$ (resp. $\mathrm{diag}(1_{r+s},-1_s)$).

Now let $(\pi,H)$ be a unitary tempered Hilbert representation of $\mathrm{GU}(r,s)(\mathbb{R})$ with $H_\infty$ the space of smooth vectors. We define a representation of $P(\mathbb{R})$ on $H_\infty$ as follows: for $p=mn,n\in N_P(\mathbb{R}),m=m(g,a)\in M_P(\mathbb{R})$ with $a\in \mathbb{C}^\times,g\in \mathrm{GU}(r+1,s+1)(\mathbb{R}),$ put
$$\rho(p)v:=\tau(a)\pi(g)v,v\in H_\infty.$$
We define a representation by smooth induction $I(H_\infty):=\mathrm{Ind}_{P(\mathbb{R})}^{\mathrm{GU}(r+1,s+1)(\mathbb{R})}\rho$ and denote $I(\rho)$ as the space of $K_\infty$-finite vectors in $I(H_\infty)$. For $f\in I(\rho)$ we also define for each $z\in \mathbb{C}$ a function
$$f_z(g):=\delta(m)^{(a+2b+1)/2+z}\rho(m)f(k),g=mk\in P(\mathbb{R})K_\infty,$$
\index{$\delta(m)$}
and an action of $\mathrm{GU}(r+1,s+1)(\mathbb{R})$ on it by
$$(\sigma(\rho,z)(g)f)(k):=f_z(kg).$$

\noindent Let $(\pi^\vee,V)$ \index{$\pi^\vee$} be the irreducible $(\mathfrak{gu}(\mathbb{R}),K_\infty')$-module given by $\pi^\vee(x)=\pi(\eta^{-1}x\eta)$ for $\eta=\begin{pmatrix}&&1_b\\&1_a&\\-1_b&&\end{pmatrix}$ and
$x$ in $\mathfrak{gu}(\mathbb{R})$ or $K_\infty'$ (this does not mean the contragradient representation!). Denote $\rho^\vee,I(\rho^\vee), I^\vee(H_\infty)$ \index{$\rho^\vee$} and $\sigma(\rho^\vee,z),I(\rho^\vee))$ the representations and spaces defined as above but with $\pi,\tau$ replaced by $\pi^\vee\otimes(\tau\circ \det ),\bar{\tau}^c$. We are going to define an intertwining operator. Let $w=\begin{pmatrix}&&1_{b+1}\\&1_a&\\-1_{b+1}&&\end{pmatrix}$. For any $z\in\mathbb{C}$, $f\in I(H_\infty)$ and $k\in K_\infty$ consider the integral:
\begin{equation}
A(\rho,z,f)(k):=\int_{N_P(\mathbb{R})}f_z(wnk)dn.
\end{equation}
This is absolutely convergent when $\mathrm{Re}(z)>\frac{a+2b+1}{2}$ and $A(\rho,z,-)\in \mathrm{Hom}_\mathbf{C}(I(H_\infty),I^\vee(H_\infty))$ intertwines the actions of $\sigma(\rho,z)$ and $\sigma(\rho^\vee,-z)$.

Suppose $\pi$ is the holomorphic discrete series representation associated to the (scalar) weight $(0,...,0;\kappa,...,\kappa)$, then it is well known that there is a unique (up to scalar) vector $v\in\pi$ such that $k\cdot v=\det \mu(k,i)^{-\kappa}$ (here $\mu$ means the second component of the automorphic factor $J$ instead of the similitude character) for any $k\in K_{\infty}^{+,\prime}$. Then by Frobenius reciprocity law there is a unique (up to scalar) vector $\tilde v\in I(\rho)$ such that $k\cdot \tilde v=\det \mu(k,i)^{-\kappa}\tilde{v}$ for any $k\in K_\infty^+$. We fix $v$ and multiply $\tilde v$ by a constant so that $\tilde v(1)=v$. In $\pi^\vee$, $\pi(w)v$ has the action of $K_\infty^+$ given by multiplying by $\det \mu(k,i)^{-\kappa}$. We define $w'\in \mathrm{U}(r+1,s+1)$ by $w'=\begin{pmatrix}1_b&&&&\\&&&&1\\&&1_a&&\\&&&1_b&\\&-1&&&\end{pmatrix}$. There is a unique vector $\tilde v^\vee\in I(\rho^\vee)$ such that the action of $K_\infty^+$ is given by $\det \mu(k,i)^{-\kappa}$ and $\tilde v^\vee(w')=\pi(w)v$. Then by uniqueness there is a constant $c(\rho,z)$ such that $A(\rho,z,\tilde v)=c(\rho,z)\tilde v^\vee$. \index{$A(\rho,z,-)$}
\begin{definition}\label{3.1.1}
We define $F_\kappa\in I(\rho)$ to be the $\tilde{v}$ as above.
\end{definition}
\index{$F_\kappa$}
\subsubsection{Prime to $p$ Picture}
\noindent Our discussion here follows \cite[9.1.2]{SU}. Let $(\pi,V)$ be an irreducible, admissible representation of
$\mathrm{GU}(r,s)(F_v)$ which is unitary and tempered. Let $\psi$ and $\tau$ be
unitary characters of $\mathcal{K}_v^\times$ such that
$\psi$ is the central character for $\pi$. We define a representation $\rho$ of $P(F_v)$ as follows. For $p=mn,n\in N_P(F_v)$,
$m=m(g,a)\in M_P(F_v), a\in K_v^\times,g\in \mathrm{GU}(F_v)$ let
$$\rho(p)v:=\tau(a)\pi(g)v,v\in V.$$
Let $I(\rho)$ be the representation defined by admissible induction:
$I(\rho)=\mathrm{Ind}_{P(F_v)}^{\mathrm{GU}(r+1,s+1)(F_v)}\rho$.
As in the Archimedean case, for each $f\in I(\rho)$ and each
$z\in \mathbb{C}$ we define a function $f_z$ on $\mathrm{GU}(r+1,s+1)(F_v)$ by
$$f_z(g):=\delta(m)^{(a+2b+1)/2+z}\rho(m)f(k),g=mk\in P(F_v)K_v$$
and a representation $\sigma(\rho,z)$ of $\mathrm{GU}(r+1,s+1)(F_v)$ on $I(\rho)$
by
$$(\sigma(\rho,z)(g)f)(k):=f_z(kg).$$

\noindent Let $(\pi^\vee, V)$ be given by $\pi^\vee(g)=\pi(\eta^{-1}g\eta).$
This representation is also tempered and unitary. We denote by
$\rho^\vee,I(\rho^\vee)$, and $(\sigma(\rho^\vee,z),I(\rho^\vee))$
the representations and spaces defined as above but with $\pi$
and $\tau$ replaced by $\pi^\vee\otimes(\tau\circ
\det)$, and $\bar \tau^c$, respectively. \\

\noindent For $f\in I(\rho),k\in
K_v$, and $z\in \mathbb{C}$ consider the integral
\begin{equation}
A(\rho,z,v)(k):=\int_{N_P(F_v)}f_z(wnk)dn.
\end{equation}
As a consequence of our hypotheses on $\pi$ this integral converges
absolutely and uniformly for $z$ and $k$ in compact subsets of
$\{z: \mathrm{Re}(z)>(a+2b+1)/2\}\times K_v$. Moreover, for such $z$,
$A(\rho,z,f)\in I(\rho^\vee)$ and the operator $A(\rho,z,-)\in
\mathrm{Hom}_\mathbb{C}(I(\rho),I(\rho^\vee))$ intertwines the actions of
$\sigma(\rho,z)$ and $\sigma(\rho^\vee,-z).$\\\\
For any open subgroup $U\subseteq K_v$ let $I(\rho)^U\subseteq
I(\rho)$ be the finite-dimensional subspace consisting of functions
satisfying $f(ku)=f(k)$ for all $u\in U$. Then the function
$$\{z\in\mathbb{C}:\mathrm{Re}(z)>(a+2b+1)/2\}\rightarrow Hom_\mathbb{C}(I(\rho)^U, I(\rho^\vee)^U), z\mapsto A(\rho,z,-)$$
is holomorphic. This map
has a meromorphic continuation to all of $\mathbb{C}$.\\

\noindent We finally remark that when $\pi$ and $\tau$ are unramified, there is a unique up to scalar unramified vector $F_{\rho_v}\in I(\rho)$. \index{$F_{\rho_v}$}
\subsubsection{Global Picture}
We follow \cite[9.1.4]{SU}. Let $(\pi,V)$ be an irreducible cuspidal tempered automorphic representation of $\mathrm{GU}(r,s)(\mathbb{A}_F)$. It is an admissible $(\mathfrak{gu}(\mathbb{R}),K_\infty')_{v|\infty}\times \mathrm{GU}(r,s)(\mathbb{A}_f)$-module which is a restricted tensor product of local irreducible admissible representations. Let $\psi, \tau:\mathbb{A}_\mathcal{K}^\times \rightarrow \mathbb{C}^\times$ be Hecke characters such that $\psi$ is the central character of $\pi$. Let $\tau=\otimes \tau_w$ and $\psi=\otimes \psi_w$ be their local decompositions, $w$ running over places of $F$. Define a representation of $(P(F_\infty)\cap K_\infty)\times P(\mathbb{A}_{F,f})$ by putting:
 $$\rho(p)v:=\otimes(\rho_w(p_w)v_w),$$
 Let $I(\rho)$ be the restricted product $\otimes I(\rho_w)$'s with respect to the $F_{\rho_w}$'s at those $w$ at which
 $\tau_w,\psi_w,\pi_w$ are unramified. As before, for each $z\in \mathbb{C}$ and $f\in I(\rho)$ we define a function $f_z$ on $\mathrm{GU}(r+1,s+1)(\mathbb{A}_F)$ as
 $$f_z(g):=\otimes f_{w,z}(g_w)$$
 where $f_{w,z}$ are defined as before and an action $\sigma(\rho,z)$ of $(\mathfrak{gu},K_{\infty})\otimes \mathrm{GU}(r+1,s+1)(\mathbb{A}_f)$ on $I(\rho)$ by $\sigma(\rho,z):=\otimes\sigma(\rho_w,z).$
 Similarly we define $\rho^\vee,I(\rho^\vee)$, and $\sigma(\rho^\vee,z)$ but with the corresponding things replaced by their $\vee$'s and we have global versions of the intertwining operators $A(\rho,f,z)$.
\begin{definition}\label{Datum}
Then we call a quadruple $\mathcal{D}=(\pi,\tau, \kappa, \Sigma)$ an Eisenstein datum where $\pi$ is a regular algebraic cuspidal automorphic representation of $\mathrm{U}(r,s)_{/F}$ which is unramfied and ordinary at all places above $p$; the $\tau$ is a finite order Hecke character; $\kappa\geq r+s$ is an integer; $\Sigma$ is a finite set of primes of $F$ containing all the infinite places, primes dividing $p$ and places where $\pi$ or $\tau$ is ramified. We define $z_\kappa=\frac{\kappa-r-s-1}{2}$ and $z'_\kappa=\frac{\kappa-r-s}{2}$.
\end{definition}
\subsubsection{Klingen-Type Eisenstein Series on G}
 We follow \cite[9.1.5]{SU} in this subsubsection. Let $\pi,\psi,$ and $\tau$ be as above. For $f\in I(\rho),z\in
 \mathbb{C}$, there are maps from
$I(\rho)$ and $I(\rho^\vee)$ to spaces of automorphic forms on $P(\mathbb{A}_F)$ given by
$$f\mapsto (g\mapsto f_z(g)(1)).$$
In the following we often write $f_z$ for the automorphic form on $P(\mathbb{A}_F)$ given by this recipe.

If $g\in \mathrm{GU}(r+1,s+1)(\mathbb{A}_F)$ it is well known that

\begin{equation}
E(f,z,g):=\sum_{\gamma\in P(F)\setminus G(F)} f_z(\gamma g)
\end{equation}
converges absolutely and uniformly for $(z,g)$ in compact subsets of $\{z\in\mathbb{C}: \mathrm{Re}(z)>\frac{a+2b+1}{2}\}\times
\mathrm{GU}(r+1,s+1)(\mathbb{A}_F)$. Therefore we get some automorphic forms which are called Klingen Eisenstein series.

\begin{definition}\index{$E_P$} \index{$E_R$}
For any parabolic subgroup $R$ of $\mathrm{GU}(r+1,s+1)$ and an automorphic form $\varphi$ we define $\varphi_R$ to be the constant term of $\varphi$ along $R$ defined by
$$\varphi_R(g)=\int_{n\in N_R(F)\backslash N_R(\mathbb{A}_F)}\varphi(ng)dn.$$
\end{definition}
The following lemma is well-known (see \cite[Lemma 9.2]{SU}).
\begin{lemma}\label{4.1.4}
Let $R$ be a standard $F$-parabolic subgroup of $\mathrm{GU}(r+1,s+1)$ (i.e, $R\supseteq B$ where $B$ is the standard Borel subgroup). Suppose $\mathrm{Re}(z)>\frac{a+2b+1}{2}$.\\
(i) If $R\not= P$ then $E(f,z,g)_R=0$;\\
(ii) $E(f,z,-)_P=f_z+A(\rho,f,z)_{-z}$.
\end{lemma}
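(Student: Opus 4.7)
The plan is to compute $E(f,z,g)_R$ by the classical Bruhat-decomposition unfolding argument. Decompose $G(F) = \bigsqcup_{w} P(F)\,w\,R(F)$, indexed by representatives of $W_{M_P}\backslash W / W_{M_R}$. Substituting this into the defining sum for $E(f,z,g)$ and exchanging the sum with the integration over $N_R(F)\backslash N_R(\mathbb{A}_F)$ (justified by absolute convergence for $\mathrm{Re}(z)>\tfrac{a+2b+1}{2}$), one obtains, after unfolding the inner double coset sum against the unipotent integration,
$$E(f,z,g)_R \;=\; \sum_{w}\ \int_{(w^{-1}N_R w\cap N_P)(\mathbb{A}_F)\backslash N_R(\mathbb{A}_F)} f_z(w n g)\, dn,$$
and each summand can be rewritten as an integral of $f_z$ over an ``outer'' unipotent subgroup of $N_P(\mathbb{A}_F)$ times an integral of the cusp form underlying $\rho$ over the ``inner'' unipotent subgroup $U_w^{\mathrm{in}}:=M_P\cap wN_R w^{-1}$ of $M_P$.

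The crucial observation is that the cuspidality of $\pi$ (built into $\rho$) forces the contribution of any $w$ with $U_w^{\mathrm{in}}\ne 1$ to vanish, since the inner integral is then a constant term of the cusp form along a nontrivial unipotent subgroup of $M_P$. For part (i), since $P$ is the maximal (Klingen) parabolic and $R\ne P$ is a different standard parabolic, a direct case analysis of $W_{M_P}\backslash W/W_{M_R}$ using the root system of $\mathrm{GU}(r+1,s+1)$ shows that every representative $w$ produces $U_w^{\mathrm{in}}\ne 1$: intuitively, a standard $R\ne P$ must contribute at least one simple root landing inside $M_P$ after conjugation by $w$. Hence every summand is killed and $E(f,z,-)_R=0$.

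For part (ii), when $R=P$ only two double cosets in $W_{M_P}\backslash W/W_{M_P}$ satisfy $U_w^{\mathrm{in}}=1$: the identity class $w=1$, whose contribution is simply $f_z(g)$; and the class of the long element $w$ (swapping the two isotropic lines stabilized by $P$ and its opposite) defined before Definition~\ref{3.1.1}, whose contribution is
$$\int_{N_P(\mathbb{A}_F)} f_z(w n g)\, dn,$$
which by the definition of the global intertwining operator in Section~\ref{K E S} equals $A(\rho, f, z)(g)$ viewed as an element of $I(\rho^\vee)$ with parameter $-z$, namely $A(\rho,f,z)_{-z}(g)$. Summing the two surviving terms gives (ii).

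The main obstacle is the combinatorial bookkeeping of $W_{M_P}\backslash W/W_{M_R}$ and the verification that $U_w^{\mathrm{in}}\ne 1$ for every nontrivial double coset outside the identity and (for $R=P$) the long element. This can either be quoted from Langlands' general theory of constant terms of cuspidal Eisenstein series, or checked by a direct root-system computation exploiting that $P$ is a maximal parabolic with Levi $\mathrm{GL}_1(\mathcal{K})\times\mathrm{GU}(r,s)$. Once this combinatorial input is in hand, the rest of the proof is a routine unraveling of the definitions of $f_z$ and $A(\rho,f,z)$.
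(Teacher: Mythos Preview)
Your proposal is correct and follows the standard Langlands unfolding argument via the Bruhat decomposition and cuspidality of $\pi$; this is exactly how such statements are proved in the literature. The paper itself does not supply a proof but simply records the lemma as well-known, referring to \cite[Lemma~9.2]{SU}.
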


As in \cite[Section 9.5]{SU} the Galois representation associated to the Klingen Eisenstein series is the following.
\begin{equation}\label{Klingen Galois}
\rho_\pi\oplus \tau\cdot|\cdot|^{-\frac{\kappa-r-s-2}{2}}\oplus\bar{\tau}^c|\cdot|^{\frac{\kappa-r-s}{2}}.
\end{equation}

\subsection{Siegel Eisenstein Series on $G_n$}

\subsubsection{Local Picture}
Our discussion in this subsection follows \cite[11.1-11.3]{SU} closely. Let $Q=Q_n$ \index{$Q_n$} be the Siegel parabolic subgroup of $\mathrm{GU}_n$ consisting of matrices $\begin{pmatrix}A_q&B_q\\0&D_q\end{pmatrix}$. It consists of matrices whose lower-left $(n\times n)$ block is zero. \\

\noindent For a finite place $v$ of $F$ and a character $\tau$ of $\mathcal{K}_v^\times$ we let $I_n(\tau)$ be the space of smooth
$K_{n,v}$-finite functions (here $K_{n,v}$ means the open compact group $G_n(\mathcal{O}_{F,v})$)\index{$K_{n,v}$} $f: K_{n,v}\rightarrow \mathbb{C}$ such that $f(qk)=\tau(\det D_q)f(k)$ for all $q\in Q_n(F_v)\cap K_{n,v}$ (we write $q$ as block matrix $q=\begin{pmatrix}A_q&B_q\\0&D_q\end{pmatrix}$). For $z\in \mathbb{C}$ and $f\in I(\tau)$ we also define a function $f(z,-):G_n(F_v)\rightarrow \mathbb{C}$ by $$f(z,qk):=\tau(\det D_q))|\det A_q D_q^{-1}|_v^{z+n/2}f(k),$$
$q\in Q_n(F_v)$ and $k\in K_{n,v}.$\\

\noindent For $f\in I_n(\chi),z\in \mathbb{C},$ and $k\in K_{n,v}$, the intertwining integral is defined by:
$$M(z,f)(k):=\bar\tau^n(\mu_n(k))\int_{N_{Q_n}(F_v)}f(z,w_nrk)dr.$$
For $z$ in compact subsets of $\{\mathrm{Re}(z)>n/2\}$ this integral converges absolutely and uniformly, with the convergence being uniform in $k$. In this case it is easy to see that $M(z,f)\in I_n(\bar{\chi}^c).$ A standard fact from the theory of Eisenstein series says that this has a continuation to a meromorphic section on all of $\mathbb{C}$.\\

\noindent Let $\mathcal{U}\subseteq \mathbb{C}$ be an open set. By a meromorphic section of $I_n(\tau)$ on $\mathcal{U}$ we mean a function $\varphi:\mathcal{U}\mapsto I_n(\tau)$ taking values in a finite dimensional subspace $V\subset I_n(\tau)$ and such that
$\varphi:\mathcal{U}\rightarrow V$ is meromorphic.\\

\noindent For Archimedean places there is a similar picture (see \emph{loc.cit}).
\subsubsection{Global Picture}
For an idele class character $\tau=\otimes\tau_v$ of $\mathbb{A}_\mathcal{K}^\times$ we define a space $I_n(\tau)$ to be the restricted tensor product defined using the spherical vectors $f_v^{sph}\in I_n(\tau_v),f_v^{sph}(K_{n,v})=1$, at the finite places $v$ where $\tau_v$ is unramified.\\

\noindent For $f\in I_n(\tau)$ we consider the Eisenstein series
$$E(f;z,g):=\sum_{\gamma\in Q_n(F)\setminus G_n(F)} f(z,\gamma g).$$
This series converges absolutely and uniformly for $(z,g)$ in compact subsets of $\{\mathrm{Re}(z)>n/2\}\times G_n(\mathbb{A}_F)$. The automorphic form defined is called Siegel Eisenstein series. \\

\noindent Let $\varphi:\mathcal{U}\rightarrow I_n(\tau)$ be a meromorphic section, then we put $E(\varphi;z,g)=E(\varphi(z);z,g).$ This is defined at least on the region of absolute convergence and it is well known that it can be meromorphically continued to all $z\in \mathbb{C}$.\\

\noindent Now for $f\in I_n(\tau),z\in \mathbb{C}$, and $k\in \prod_{v\nmid \infty}K_{n,v}\prod_{v|\infty}K_\infty$ there is a similar intertwining integral $M(z,f)(k)$ as above but with the integration being over $N_{Q_n}(\mathbb{A}_F)$. This again converges absolutely and uniformly for $z$ in compact subsets of $\{\mathrm{Re}(z)>n/2\}\times K_n$. Thus $z\mapsto M(z,f)$ defines a holomorphic section $\{Re(z)>n/2\}\rightarrow I_n(\bar{\tau}^c)$. This has a continuation to a meromorphic section on $\mathbb{C}$. For $\mathrm{Re}(z)>n/2$, we have
$$M(z,f)=\otimes_v M(z,f_v),f=\otimes f_v.$$
\index{$M(z;-)$}

\noindent The functional equation for Siegel Eisenstein series is:
$$E(f,z,g)=\chi^n(\mu(g))E(M(z,f);-z,g)$$
in the sense that both sides can be meromorphically continued to all $z\in\mathbb{C}$ and the equality is understood as of meromorphic functions of $z\in\mathbb{C}$.
\subsubsection{The Pullback Formulas}
We define
\begin{equation}\label{(1)}
S=\begin{pmatrix}1_b&&&&&&&-\frac{1}{2}\cdot 1_b\\&1&&&&&&\\&&1_a&&&&-\frac{\zeta}{2}&\\&&&-1_b&\frac{1}{2}\cdot 1_b&&&\\&&&1_b&\frac{1}{2}\cdot1_b &&&\\&&&&&1&&\\&&-1_a&&&&-\frac{\zeta}{2}
&\\-1_b&&&&&&&-\frac{1}{2}\cdot1_b\end{pmatrix}
\end{equation}
and
\begin{equation}\label{(2)}
S'=\begin{pmatrix}1_b&&&&&-\frac{1}{2}\cdot 1_b\\&1_a&&&-\frac{\zeta}{2}&\\&&-1_b&\frac{1}{2}\cdot 1_b&&\\&&1_b&\frac{1}{2}\cdot1_b&&\\&-1_a&&&-\frac{\zeta}{2}&\\-1_b&&&&&-\frac{1}{2}\cdot1_b\end{pmatrix}.
\end{equation}
We also define
$$S_\zeta=\begin{pmatrix}1_b&&&&&&&\\&1&&&&&&\\&&1_a&&&&-\frac{\zeta}{2}&\\&&&1_b&&&&
\\&&&1_b&&&&\\&&&&&1&&\\&&-1_a&&&&-\frac{\zeta}{2}
&\\&&&&&&&1_b\end{pmatrix},\ \tilde{S}=\begin{pmatrix}1_b&&&&&&&-\frac{1}{2}\cdot 1_b\\&1&&&&&&\\&&1_a&&&&&\\&&&-1_b&\frac{1}{2}\cdot 1_b&&&\\&&&1_b&\frac{1}{2}\cdot1_b &&&\\&&&&&1&&\\&&&&&&1_a
&\\-1_b&&&&&&&-\frac{1}{2}\cdot1_b\end{pmatrix}.$$
Let $\tau$ be a unitary idele class character of $\mathbb{A}_\mathcal{K}^\times$. Given a unitary tempered cuspidal eigenform $\varphi$ on $\mathrm{GU}(r,s)$ which is a pure tensor we formally define the integral
$$F_\varphi(f;z,g):=\int_{\mathrm{U}(r,s)(\mathbb{A}_F)} f(z,S^{-1}\alpha(g,g_1h)S)\bar\tau(\det g_1g)\varphi(g_1h)dg_1,$$
$$f\in I_{r+s+1}(\tau),g\in \mathrm{GU}(r+1,s+1)(\mathbb{A}_F),h\in \mathrm{GU}(r,s)(\mathbb{A}_F),\mu(g)=\mu(h).$$
This is independent of $h$. (We suppress the $\tau$ in the notation for $F_\varphi$ since its choice is implicitly given by $f$). We also formally define
$$F_\varphi'(f;z,g):=\int_{\mathrm{U}(r,s)(\mathbb{A}_F)} f(z,S^{\prime-1}\alpha(g,g_1h)S')\bar\tau(\det g_1g)\varphi(g_1h)dg_1,$$
$$f\in I_{r+s}(\tau),g\in \mathrm{GU}(r,s)(\mathbb{A}_F),h\in \mathrm{GU}(r,s)(\mathbb{A}_F),\mu(g)=\mu(h)$$
The pullback formulas are the identities in the following proposition.
\begin{proposition}
Let $\chi$ be a unitary idele class character of $\mathbb{A}_\mathcal{K}^\times$.\\
(i) If $f\in I_{r+s}(\tau),$ then $ F_\varphi(f;z,g)$ converges absolutely and uniformly for $(z,g)$ in compact sets of $\{\mathrm{Re}(z)>r+s\}\times \mathrm{GU}(r,s)(\mathbb{A}_F)$, and for any $h\in \mathrm{GU}(r,s)(\mathbb{A}_F)$ such that $\mu(h)=\mu(g)$
\begin{equation}
\int_{\mathrm{U}(r,s)(F)\setminus \mathrm{U}(r,s)(\mathbb{A}_F)} E(f;z,S'^{-1}\alpha(g,g_1h)S')\bar{\tau}(\det g_1h)\varphi(g_1h)dg_1=F_\varphi'(f;z,g).
\end{equation}
(ii) If $f\in I_{r+s+1}(\tau)$, then $F_\varphi(f;z,g)$ converges absolutely and uniformly for $(z,g)$ in compact sets of $\{\mathrm{Re}(z)>r+s+1/2\}\times \mathrm{GU}(r+1,s+1)(\mathbb{A}_F)$ such that $\mu(h)=\mu(g)$
\begin{equation}
\begin{split}
\int_{\mathrm{U}(r,s)(F)\setminus \mathrm{U}(r,s)(\mathbb{A}_F)} E(f;z,S^{-1}\alpha(g,g_1h)S)\bar{\tau}&(\det g_1h)\varphi(g_1h)dg_1\\
&=\sum_{\gamma\in P(F)\setminus G(r+1,s+1)(F)} F_\varphi(f;z,\gamma g),
\end{split}
\end{equation}
with the series converging absolutely and uniformly for $(z,g)$ in compact subsets of $\{\mathrm{Re}(z)>r+s+1/2\}\times \mathrm{GU}(r+1,s+1)(\mathbb{A}_F).$

\end{proposition}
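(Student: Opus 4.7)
The plan is to follow the classical Garrett–Shimura pullback strategy, with the two pieces of the proposition corresponding to the ``equal rank'' and ``one larger rank'' cases respectively. First I would unfold the Eisenstein series defining expression and attempt to interchange summation and integration; formally the integral over $\mathrm{U}(r,s)(F)\backslash \mathrm{U}(r,s)(\mathbb{A}_F)$ of $E(f;z,S^{\prime-1}\alpha(g,g_1h)S')\bar{\tau}(\det g_1h)\varphi(g_1h)$ becomes, after substitution of the sum defining $E$, a sum over $\gamma\in Q_n(F)\backslash G_n(F)$ of integrals over translates. The central combinatorial input is the double coset decomposition
\[
Q_n(F)\backslash G_n(F)/\alpha(\mathrm{GU}(r,s),\mathrm{GU}(s,r))(F),
\]
conjugated by $S'$ (resp.\ $S$ in case (ii)), which one computes using elementary linear algebra over $\mathcal{K}$ exactly as in Shimura's book (cf.\ \cite[Prop.~2.7]{Shimura97}) and in \cite[\S11]{SU}. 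In case (i) there is a unique orbit contributing nontrivially after integration against the cuspidal $\varphi$: all non-open orbits are supported on parabolics whose unipotent radical meets $\mathrm{U}(r,s)$ non-trivially, and cuspidality of $\varphi$ forces these contributions to vanish; the remaining open orbit has stabilizer equal to the diagonally embedded $\mathrm{U}(r,s)$, which unfolds the quotient integral into the single integral $F'_\varphi(f;z,g)$.

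For case (ii), the analogous double coset decomposition yields orbits parameterized precisely by $P(F)\backslash \mathrm{GU}(r+1,s+1)(F)$, with stabilizer of each representative again the diagonal copy of $\mathrm{U}(r,s)$ sitting inside $\alpha(\mathrm{GU}(r+1,s+1),\mathrm{GU}(s+1,r+1))$. Non-contributing orbits are ruled out by cuspidality of $\varphi$ as above. Unfolding over each contributing orbit produces one $F_\varphi(f;z,\gamma g)$ term, summed over $\gamma\in P(F)\backslash \mathrm{GU}(r+1,s+1)(F)$. The choice of embedding matrix $S$ in (\ref{(1)}) is designed precisely so that the open orbit representative is the identity, so that the identity coset contributes $F_\varphi(f;z,g)$ itself; this is the reason for the specific shape of $S$ and $S'$.

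Convergence in the stated half-planes is the main analytic issue, and I expect it to be the genuine obstacle. For the inner integral $F_\varphi(f;z,g)$ one reduces to a standard estimate on the integrand along the Siegel set for $\mathrm{U}(r,s)$: since $\varphi$ is tempered cuspidal it is rapidly decreasing on the Siegel set, while $f(z,S^{-1}\alpha(g,g_1h)S)$ grows like a power of $\|g_1\|$ determined by $\mathrm{Re}(z)+n/2$, and one checks that the threshold for absolute convergence is $\mathrm{Re}(z)>r+s$ in case (i) and $\mathrm{Re}(z)>r+s+1/2$ in case (ii), exactly as stated. Convergence of the unfolded sum in case (ii) over $P(F)\backslash \mathrm{GU}(r+1,s+1)(F)$ is equivalent, after repackaging, to the absolute convergence of the original Eisenstein series $E(f;z,\cdot)$ on $\mathrm{GU}(r+1,s+1)(\mathbb{A}_F)$, which holds in the same half-plane $\mathrm{Re}(z)>\tfrac{a+2b+1}{2}=\tfrac{r+s+1}{2}$ combined with the extra shift coming from the pullback; this combined with Fubini justifies the interchange performed in the first step.

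Finally, I would verify that the two sides of each identity transform the same way under $\mathrm{GU}(r,s)(\mathbb{A}_F)$ or $\mathrm{GU}(r+1,s+1)(\mathbb{A}_F)$ acting on $g$ (by right translation), which together with equality on a dense open subset extends the formulas from the region of absolute convergence. The only genuine obstacle, beyond bookkeeping, is the orbit analysis: identifying the non-open orbits and showing they all fail to contribute because either (a) their stabilizer contains a nontrivial unipotent subgroup projecting into a proper parabolic of $\mathrm{U}(r,s)$, killing the integral by cuspidality of $\varphi$, or (b) the corresponding double coset is not defined over $F$. This orbit-by-orbit vanishing is the technical heart of the argument and is essentially the content of \cite[Prop.~11.2]{SU} transplanted to the general signature setting; no new idea is needed beyond care with the embeddings $S$, $S'$.
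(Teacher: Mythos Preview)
Your proposal is correct and follows the standard Garrett--Shimura unfolding argument that underlies the cited references. The paper itself does not give a proof but simply refers to \cite[Proposition~3.5]{WAN} and ultimately to \cite{Shimura97}, where the argument is exactly the one you outline: double coset decomposition of $Q_n(F)\backslash G_n(F)/\alpha(\cdot,\cdot)(F)$, vanishing of degenerate orbits by cuspidality of $\varphi$, and identification of the surviving orbit(s) with the right-hand side.
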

See \cite[Proposition 3.5]{WAN}, which summarizes results proved in \cite{Shimura97}.
\subsection{Differential Operators}\label{Diffe}
Let $S/T$ be either the Igusa or Shimura variety, and let $A/S$ be the universal Abelian variety.

Let $\pi:X\rightarrow S$ be a smooth proper morphism of schemes, and let $S$ be a smooth scheme over a scheme $T$. Then the \emph{Gauss-Manin} connection is a map
$$\Delta:H^q_{\mathrm{DR}}(X/S)\rightarrow H^q_{\mathrm{DR}}(X/S).$$
By using the chain rule, we can also define
$$\Delta: \mathrm{Sym}^\bullet(H^{1\pm}_{\mathrm{DR}}(A/S))\rightarrow
\mathrm{Sym}^\bullet(H^{1\pm}_{\mathrm{DR}}(A/S)).$$
Here $\mathrm{Sym}^\bullet$ denotes the symmetric tensored powers. As in \cite{Eischen}, $H^{1\pm}_{\mathrm{DR}}$ denotes the submodules on which $\alpha\in\mathcal{K}$ acts via multiplication by $\alpha$ or $\bar{\alpha}$ respectively.\\

As in \cite[Section 7]{Eischen}, there is an algebraic differential operator
$$D^\rho_{A/S}: H^1_{\mathrm{DR}}(A/S)^\rho\otimes\mathrm{Sym}^\bullet
(H^{1+}(A/S)\otimes H^{1-}(A/S))\rightarrow H^1_{\mathrm{DR}}(A/S)^\rho\otimes\mathrm{Sym}^{\bullet+!}
(H^{1+}(A/S)\otimes H^{1-}(A/S)),$$
which is constructed from the Gauss-Manin connection and the Kodairo-Spencer morphism.\\

\noindent\underline{$C^\infty$ Differential Operators}\\
Over $\mathbb{C}$, there is a canonical splitting
$$H^1_{\mathrm{DR}}(C^\infty)=\underline{\omega}(C^\infty)
\oplus\mathrm{Split}(C^\infty)$$
of the Hodge decomposition corresponding to the holomorphic and anti-holomorphic one-forms. Let $\rho=\rho_-\otimes\rho_+$ be a representation of $\mathrm{GL}_n\times\mathrm{GL}_n$ which is quotient of $\mathrm{Sym}^{d_1}(\rho_{\mathrm{st}})\otimes
\mathrm{Sym}^{d_2}(\rho_{\mathrm{st}})$.

There is a $C^\infty$-differential operator
$$\partial(\rho, C^\infty,d): (\underline{\omega}^-)^{\rho_-}\otimes(\underline{\omega}^+)^{\rho_+}\rightarrow
\partial(\rho, C^\infty,d): (\underline{\omega}^-)^{\rho_-}\otimes(\underline{\omega}^+)^{\rho_+}\otimes (\mathrm{Sym}^{d_1}(\underline{\omega}^+)\otimes\mathrm{Sym}^{d_2}(\underline{\omega}^-),$$
defined as in \cite[Section 8]{Eischen}.

\begin{align*}
&(\underline{\omega}^-)^{\rho_-}\otimes(\underline{\omega}^+))^{\rho_+}\hookrightarrow&& H^1_{\mathrm{DR}}(A/S)^{\mathrm{Ind}^G_Q\rho}\rightarrow H^1_{\mathrm{DR}}(A/S)^{\mathrm{Ind}^G_Q\rho}\otimes
(\mathrm{Sym}^{d_1}(H^{1+}_{\mathrm{DR}}(A/S)\otimes\mathrm{Sym}^{d_2}(H^{1-}_{\mathrm{DR}}(A/S))&\\
&&&(\underline{\omega}^-)^{\rho_-}\otimes(\underline{\omega}^+))^{\rho_+}\otimes(\mathrm{Sym}^{d_1}(\underline{\omega}^-)
\otimes\mathrm{Sym}^{d_2}(\underline{\omega}^+)).&
\end{align*}
\noindent\underline{$p$-adic Differential Operators}\\
Now let $S$ be an Igusa scheme over a $p$-adic ring. As it is over the ordinary locus there is a ``unit root splitting''
$$H^1_{\mathrm{DR}}(A/S)=\underline{\omega}\oplus\underline{U},$$
where $\underline{U}$ is the unit root subspace for Frobenius action (see \cite[Section 9]{Eischen} for details). We can define a $p$-adic differential operator $\partial(\rho, p-\mathrm{adic}, d)$
$$(\underline{\omega}^-)^{\rho_-}\otimes(\underline{\omega}^+))^{\rho_+}\rightarrow (\underline{\omega}^-)^{\rho_-}\otimes(\underline{\omega}^+))^{\rho_+}\otimes(\mathrm{Sym}^{d_1}(\underline{\omega}^-)
\otimes\mathrm{Sym}^{d_2}(\underline{\omega}^+))$$
as for the $C^\infty$ cases, but with the $C^\infty$ splitting replaced by the unit root splitting.
\subsection{Archimedean Computations}
We summarize results in \cite[Section 4.1]{WAN}.
Let $v$ be an Archimedean place of $F$. Let $\kappa>0$ be an integer. Suppose $\tau$ is a unitary character of $\mathbb{C}^\times$ of infinity type $(0,0)$.
\begin{definition}
$$f_{\kappa,n}(z,g)=J_n(g,i1_n)^{-\kappa}\det(g)^{\frac{\kappa}{2}}|J_n(g,i1_n)|^{\kappa-2z-n}.$$
\end{definition}
Now we recall \cite[Lemma 11.4]{SU}. Let $J_n(g,i1_n):=\det (C_gi1_n+D_g)$ \index{$J_n$} for $g=\begin{pmatrix}A_g&B_g\\C_g&D_g\end{pmatrix}$.

\begin{lemma}\label{Fourier-Archimedean}
Suppose $\beta\in S_n(\mathbb{R})$. Then the function $z\rightarrow f_{\kappa,\beta}(z,g)$ has a meromorphic continuation to all of $\mathbb{C}$.
Furthermore, if $\kappa\geq n$ then $f_{\kappa,n,\beta}(z,g)$ is holomorphic at $z_\kappa:=(\kappa-n)/2$ and for $y\in \mathrm{GL}_n(\mathbb{C}),f_{\kappa,n,\beta}(z_\kappa,\mathrm{diag}(y,{}^t\!\bar{y}^{-1}))=0$ if $\det\beta\leq 0$ and if $\det\beta>0$ then
$$f_{\kappa,n,\beta}(z_\kappa,\mathrm{diag}(y,{}^t\!\bar{y}^{-1}))=\frac{(-2)^{-n}(2\pi i)^{n\kappa}(2/\pi)^{n(n-1)/2}}{\prod_{j=0}^{n-1}(\kappa-j-1)!}e_v(i\mathrm{Tr}(\beta y{}^t\!\bar{y}))\det(\beta)^{\kappa-n}\det\bar{y}^\kappa.$$
\end{lemma}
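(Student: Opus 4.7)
The plan is to reduce this to a classical computation of the $\beta$-th Fourier coefficient of the scalar weight Siegel Eisenstein section, which Shimura worked out in great generality. First I would recall the definition
\[
f_{\kappa,n,\beta}(z,g) = \int_{S_n(\mathbb{R})} f_{\kappa,n}(z, w_n n(X) g)\, e_v(-\mathrm{Tr}(\beta X))\, dX,
\]
where $w_n$ is the Weyl element for the Siegel parabolic and $n(X) = \bigl(\begin{smallmatrix} 1_n & X \\ 0 & 1_n \end{smallmatrix}\bigr)$. The integral converges absolutely for $\mathrm{Re}(z) > n/2$ and defines a meromorphic section in $z$; the meromorphic continuation is a general fact for Siegel-type confluent hypergeometric integrals (cf.\ Shimura, \emph{Euler Products and Eisenstein Series}, \S3 and \S19).

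Next I would specialize $g = \mathrm{diag}(y, {}^t\!\bar y^{-1})$ and compute $w_n n(X) g$ explicitly, so that $J_n(w_n n(X) g, i 1_n) = iy + X\,{}^t\!\bar y^{-1}$ and $\det(w_n n(X) g) = 1$. After the change of variable $X \mapsto y^{-1} X\, y$ (which introduces a Jacobian factor $\det(y{}^t\!\bar y)^{-n}$ up to a power), the Fourier integral becomes, up to elementary prefactors, a scalar multiple of the classical matrix confluent hypergeometric integral
\[
\xi(y, \beta; \kappa, z) = \int_{S_n(\mathbb{R})} \det(X + iy)^{-\kappa} \,|\det(X + iy)|^{\kappa - 2z - n}\, e_v(-\mathrm{Tr}(\beta X))\, dX.
\]
This is precisely Shimura's $\xi$-function (see Shimura \S19 of the cited book, and also \cite[Lemma~11.4]{SU} whose $\mathrm{GU}(2,2)_{/\mathbb{Q}}$-case we are generalizing verbatim).

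I would then invoke Shimura's explicit evaluation of $\xi$ at the critical point $z = z_\kappa = (\kappa-n)/2$: for $\kappa \geq n$ the integral is holomorphic there, vanishes identically in $y$ unless $\beta$ is positive definite, and in the positive definite case
\[
\xi(y, \beta; \kappa, z_\kappa) = \frac{(-2)^{-n}(2\pi i)^{n\kappa}(2/\pi)^{n(n-1)/2}}{\prod_{j=0}^{n-1}(\kappa - j - 1)!}\, e_v\bigl(i \mathrm{Tr}(\beta y{}^t\!\bar y)\bigr)\, \det(\beta)^{\kappa - n} \det(\bar y)^{\kappa},
\]
up to the prefactor coming from the change of variable and the normalization of $f_{\kappa,n}$. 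The vanishing for $\det\beta \leq 0$ reflects the fact that the Siegel Eisenstein series of weight $\kappa \geq n$ is holomorphic, hence only positive definite Fourier modes survive.

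The main technical obstacle, as is typical in such computations, is bookkeeping: carefully tracking constants, signs, conventions for $J_n$, $\det g$, the factor $\det(g)^{\kappa/2}$ appearing in our definition of $f_{\kappa,n}$, and the measure normalization. The actual integral identity is Shimura's; the content of the lemma is checking that these normalizations produce exactly the stated formula. I would parallel the computation in \cite[Lemma~11.4]{SU} step by step, replacing the rank $n=2$ case there with arbitrary $n$ throughout, since no step of their argument is specific to $n=2$ once one has Shimura's evaluation of $\xi$ for general rank.
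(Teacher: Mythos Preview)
Your proposal is correct and matches the paper's approach exactly: the paper gives no proof at all but simply introduces the lemma with ``Now we recall \cite[Lemma 11.4]{SU},'' treating it as a direct quotation of the Skinner--Urban result (which in turn rests on Shimura's confluent hypergeometric evaluation that you outline). Your sketch is precisely the content behind that citation, so there is nothing to add.
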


Now we look at some conjugation maps between unitary groups over $\mathbb{R}$. Write $\tau$ for a real symmetric positive definite matrix so that $\tau\tau^*=\frac{\zeta}{2i}$. We define
$$S_1=\begin{pmatrix}1_{b+1}&&&&&\\&1_a&&&-\frac{\zeta}{2}&\\&&1_b&&&\\&&&1_{b+1}
&&\\&-1_a&&&-\frac{\zeta}{2}&\\&&&&&1_b\end{pmatrix},\
S_2=\begin{pmatrix}\frac{1}{\sqrt{2}}1_{b+1}&&&-\frac{i}{\sqrt{2}}1_{b+1}&&\\&\tau^{-1}&&&&
\\&&\frac{1}{\sqrt{2}}1_b&&&-\frac{1}{\sqrt{2}}1_b\\-\frac{1}{\sqrt{2}}1_{b+1}&&&-\frac{i}{\sqrt{2}}&&
\\&&&&\tau^{-1}&\\&&-\frac{1}{\sqrt{2}}1_b&&&-\frac{i}{\sqrt{2}}1_b\end{pmatrix}.$$
Then for any $u\in \mathrm{U}(n+1,n+1)$ (unitary group corresponding to $\begin{pmatrix}&1_{n+1}\\-1_{n+1}&\end{pmatrix}$), the $S_1 u S^{-1}_1$ is in the unitary group $\mathrm{U}_2$ of Hermitian matrix
$$\begin{pmatrix}&&&1_{b+1}&&\\& \zeta &&&&\\&&&&&1_b\\-1_{b+1}&&&&&\\&&&&-\zeta&\\&&-1_b&&&\end{pmatrix}.$$
The $S_2S_1uS^{-1}_1S^{-1}_2$ is in the unitary group $\mathrm{U}_3$ of Hermitian matrix
$$i\begin{pmatrix}1_{b+1}&&&&&\\&1_a&&&&\\&&1_b&&&\\&&&-1_{b+1}&&\\&&&&-1_a&\\&&&&&-1_b\end{pmatrix}.$$

We discuss the pullback formula. We record that
\begin{align*}
&\begin{pmatrix}1_b&&&&&&&\\&1&&&&&&\\&&\frac{1}{2}1_a&&&&-\frac{1}{2}1_a&\\&&&1_b&&&&\\&&&&1_b&&&\\
&&&&&1&&\\&&-\zeta^{-1}&&&&-\zeta^{-1}&\\&&&&&&&1_b\end{pmatrix}
\begin{pmatrix}a_1&a_2&a_3&&b_1&b_2&&\\a_4&a_5&a_6&&b_3&b_4&&\\a_7&a_8&a_9&&b_5&b_6&&\\&&&A&&&B&C
\\c_1&c_2&c_3&&d_1&d_2&&\\c_4&c_5&c_6&&d_3&d_4&&\\&&&D&&&E&F\\&&&G&&&H&J\end{pmatrix}
\begin{pmatrix}1_b&&&&&&&\\&1&&&&&&\\&&1_a&&&&-\frac{\zeta}{2}&\\&&&1_b&&&&\\&&&&1_b&&&\\&&&&&1&&\\&&-1_a&&&&
-\frac{\zeta}{2}&\\&&&&&&&1_b\end{pmatrix}&\\
&=\begin{pmatrix}a_1&a_2&a_3&&b_1&b_2&-\frac{a_3\zeta}{2}&\\a_4&a_5&a_6&&b_3&b_4&-\frac{a_6\zeta}{2}&
\\ \frac{a_7}{2}&\frac{a_8}{2}&\frac{a_9+E}{2}&-\frac{D}{2}&\frac{b_5}{2}&\frac{b_6}{2}&
-\frac{a_9\zeta}{4}+\frac{E\zeta}{4}&-\frac{F}{2}\\&&-B&-A&&&-\frac{B\zeta}{2}&C\\c_1&c_2&c_3&&d_1&d_2&-\frac{c_3\zeta}{2}
&\\c_4&c_5&c_6&&d_3&d_4&-\frac{c_6\zeta}{2}&\\-\zeta^{-1}a_7&-\zeta^{-1}a_8&-\zeta^{-1}a_9+\zeta^{-1}E&-\zeta^{-1}D
&-\zeta^{-1}b_5&-\zeta^{-1}b_6&\frac{\zeta^{-1}(a_9+E)\zeta}{2}&-\zeta^{-1}F\\&&-H&G&&&-\frac{H\zeta}{2}&J\end{pmatrix}.&
\end{align*}
Let $g=\begin{pmatrix}A_g&B_g\\C_g&D_g\end{pmatrix}$ be the last matrix above and $\boldsymbol{i}=\mathrm{diag}(i1_b, i, \frac{\zeta}{2}, i1_b)$, then
$$C_g\boldsymbol{i}+D_g=\begin{pmatrix}c_1i+d_1&c_2i+d_2&0&0\\c_4i+d_3&c_5i+d_4&0&0\\-\zeta^{-1}ia_7-\zeta^{-1}b_5
&-\zeta^{-1}ia_8-\zeta^{-1}b_6&\zeta^{-1}E\zeta&-\zeta^{-1}iD-\zeta^{-1}F\\0&0&-H\zeta&Gi+J\end{pmatrix}.$$
Taking determinant, we get the decomposition for the automorphic factor
$$J(g,\boldsymbol{i})=\det(\begin{pmatrix}c_1&c_2\\c_4&c_5\end{pmatrix}\begin{pmatrix}i\\0\end{pmatrix}+
\begin{pmatrix}d_1&d_2\\d_3&d_4\end{pmatrix})\cdot\det(\begin{pmatrix}\zeta^{-1}E\zeta&\zeta^{-1}iD+\zeta^{-1}F
\\H\zeta&Gi+J\end{pmatrix}).$$
We also record the formula for embedding of Hermitian spaces
For $z=\begin{pmatrix}x\\y\end{pmatrix}$ and $w=\begin{pmatrix}u\\v\end{pmatrix}$, we define
$$\iota(z,w)=\begin{pmatrix}x&0&0\\y&\frac{\zeta}{2}&\\-v^*\zeta^{-1}y&-v^*&-u^*\end{pmatrix}.$$
This is compatible with the embedding
$(g_1, g_2)\mapsto S^{-1}_1\alpha(g_1, g_2)S_1$.
The differential operators can be described in terms of actions of Lie algebras of $\mathrm{U}(n+1,n+1)(\mathbb{R})$ as below. Write $I_{m,n}$ for the diagonal matrix $\mathrm{diag}(1_m,-1_n)$. We identify the complexification of the Lie algebra of $\mathrm{U}_{I_{n+1,n+1}}$ with $\mathrm{gl}_{2n+2}(\mathbb{C})$. Then under the Harish-Chandra decomposition
$$\mathfrak{su}(n+1,n+1)\simeq \mathfrak{k}\oplus p^+\oplus p^-,$$
the $p^+$ corresponds to matrices of the form $\begin{pmatrix}0&*\\ &0\end{pmatrix}$ (block matrices with respect to $((n+1)+(n+1))$), and $p^-$ corresponds to matrices of the form $\begin{pmatrix}0& \\ *&0\end{pmatrix}$ (block matrices with respect to $((n+1)+(n+1))$). To relate this with the $\mathrm{GL}_{n+1}\times\mathrm{GL}_{n+1}$ representation in the definition of algebraic differential operators of $\mathrm{U}(n+1,n+1)$, we regard the space of upper right $(n+1)\times(n+1)$ matrices $\underline{X}=\begin{pmatrix}\underline{X}_1&\underline{X}_2\\ \underline{X}_3&\underline{X}_4\end{pmatrix}$ (the action of $\mathrm{GL}_{n+1}\times\mathrm{GL}_{n+1}$ given by left and right multiplications).

\begin{proposition}
For $f$ a $\rho$-valued nearly holomorphic automorphic form, and $v$ a vector in $\rho^\vee$, we have
$$\langle Df, v\otimes x\rangle=\frac{1}{2}S_1^{-1}S_2^{-1}\begin{pmatrix}0&ix\\0&0\end{pmatrix}S_2S_1\cdot\langle f,v\rangle.$$
\end{proposition}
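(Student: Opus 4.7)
The plan is to reduce the statement to the classical identification, over $\mathbb{C}$, between the algebraic Gauss--Manin/Kodaira--Spencer differential operator defined in Section \ref{Diffe} and the Maass--Shimura operator, which is expressed as the action of an element of $\mathfrak{p}^+$ in the Harish--Chandra decomposition $\mathfrak{su}(n+1,n+1)_{\mathbb{C}} = \mathfrak{k}\oplus\mathfrak{p}^+\oplus\mathfrak{p}^-$. Everything in the proposition is insensitive to the $p$-adic vs.\ $C^\infty$ distinction at a single archimedean place (both the $C^\infty$ splitting and the unit root splitting are transported to the same algebraic object on the complex fiber), so I may work entirely $C^\infty$-analytically at one archimedean place.

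First I would base-change to $\mathbb{C}$ and use the canonical Hodge splitting $H^1_{\mathrm{DR}}(A/S)(C^\infty)=\underline\omega\oplus\mathrm{Split}$. Under this splitting the Gauss--Manin connection composed with the Kodaira--Spencer identification $\mathrm{KS}:\underline\omega^+\otimes\underline\omega^-\xrightarrow{\sim}\Omega^1_{S}$ yields the differential operator $D$ as described in Section \ref{Diffe}. On the other hand, for a $\rho$-valued nearly holomorphic automorphic form $f$, the pairing $\langle f,v\rangle$ is a scalar-valued $C^\infty$ function on $\mathrm{U}(n+1,n+1)(\mathbb{A})$ that transforms appropriately under $K_\infty$, and Shimura's classical computation (equivalently \cite[\S8]{Eischen}) identifies the action of $D$ with the right-invariant differential operator associated to a matrix $X\in\mathfrak p^+$. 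The precise normalization, picking up the factor $\tfrac12 i$, arises because the Kodaira--Spencer isomorphism sends the element of $\mathfrak p^+$ corresponding to the matrix $\begin{pmatrix}0&E\\0&0\end{pmatrix}$ to the dual of $\tfrac{1}{2i}E$ in $\Omega^1_S$ under the standard Hodge-theoretic normalization on the symmetric domain.

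Second, I would put this identification in the correct coordinates. In $\mathrm{U}_3$, realized as the unitary group for the Hermitian form $i\,\mathrm{diag}(1_{n+1},-1_{n+1})$, the Harish--Chandra $\mathfrak p^+$ is precisely the subspace of matrices $\begin{pmatrix}0&X\\0&0\end{pmatrix}$ (with respect to the $(n+1)+(n+1)$ block decomposition), and the assignment $X\mapsto$ right-invariant derivation recovers the Maass--Shimura operator up to the normalizing factor $\tfrac12 i$ computed above. Thus in the $\mathrm{U}_3$-realization the proposition reads $\langle Df,v\otimes x\rangle=\tfrac12\begin{pmatrix}0&ix\\0&0\end{pmatrix}\cdot\langle f,v\rangle$. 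To return to our original unitary group $\mathrm{U}_{\begin{pmatrix}&1\\-1&\end{pmatrix}}$, I conjugate by the composition $S_2S_1$ constructed in Section 4.4: $S_1$ conjugates the skew form $\begin{pmatrix}&1\\-1&\end{pmatrix}$ to the signature form $\mathrm{diag}(1_{b+1},\zeta,1_b,-1_{b+1},-\zeta,-1_b)$, and $S_2$ further conjugates to $i\,\mathrm{diag}(1_{n+1},-1_{n+1})$. Since the Lie algebra action is intertwined by these conjugations, the matrix acting on $\langle f,v\rangle$ in the original coordinates is $S_1^{-1}S_2^{-1}\begin{pmatrix}0&ix\\0&0\end{pmatrix}S_2S_1$, which is exactly the claimed formula.

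The main obstacle is keeping track of the normalization constants: the factor $\tfrac{1}{2}$ as well as the appearance of $i$ have to be matched between (a) the algebraic Kodaira--Spencer map as given in Section \ref{Diffe}, (b) the Hodge-theoretic trivialization of $H^1_{\mathrm{DR}}$ over $X^+$, and (c) the normalization of the isomorphism $\mathfrak p^+\xrightarrow{\sim}T_{\boldsymbol{i}}X^+$. Once those normalizations are pinned down at the base point $\boldsymbol{i}$, the proposition follows from $K_\infty$-equivariance and the fact that all objects involved are real-analytic in $g$, so equality at one point propagates to the full group.
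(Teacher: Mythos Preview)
Your approach is essentially the same as the paper's: the paper does not give an independent argument but simply cites \cite[Proposition 2.4.1]{Zliu1}, and the content of that reference is precisely the identification you outline, namely that the $C^\infty$ differential operator $D$ built from Gauss--Manin plus Kodaira--Spencer agrees, after the Hodge splitting, with the right-invariant derivation by an element of $\mathfrak{p}^+$, and that the conjugation by $S_2S_1$ transports this from the $\mathrm{U}_{I_{n+1,n+1}}$ model (where $\mathfrak{p}^+$ consists of strictly upper-triangular block matrices) back to the original realization of $\mathrm{U}(n+1,n+1)$. Your sketch correctly locates the only delicate point, the normalization $\tfrac{1}{2}i$, as coming from the explicit form of the Kodaira--Spencer isomorphism at the base point $\boldsymbol{i}$; this is exactly what is worked out in the cited reference (in the symplectic setting there, but the computation is formally identical for unitary groups).
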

This is proved as in \cite[Proposition 2.4.1]{Zliu1}.

We define a differential operator. Let $\underline{X}_2:=\begin{pmatrix}A&B\\C&D\end{pmatrix}$ be a block matrix with respect to the partition $a+b$ and let $\underline{X}_3:= E$ be a matrix of size $b\times b$. Let $v_{\underline{k},\kappa}$ be the polynomial
$$(\prod_{i=1}^{r-1}\det(X_2)^{a_i-a_{i+1}}_i)\det(X_2)^{a_r-\kappa}_r\cdot\prod_{j=1}^{s-1}\det(X_3)^{b_{s-j+1}-b_{s-j}}_j
\det(X_3)^{b_1}_s.$$
We use the simple notation $D$ to denote the $C^\infty$ differential operators $\partial$ in Section \ref{Diffe}.

\begin{definition}
With above proposition, we can define an element $\delta_{\underline{k},\kappa}$ in the Lie algebra of $\mathrm{U(n+1,n+1)}$ corresponding to the map from the space of holomorphic weight $\kappa$ forms as
$$F_\kappa\mapsto \langle D^d F_\kappa, v_{\underline{k},\kappa}\rangle$$
for $d=a_1+\cdots+a_r-r\kappa+b_1+\cdots+b_r$.
We also define the Siegel section
$$f_{sieg,\underline{k},\kappa}:=\delta_{\underline{k},\kappa}f_\kappa.$$
We similarly define $\delta'_{\underline{k},\kappa}$ and the Siegel section $f'_{sieg,\underline{k},\kappa}$ on $\mathrm{U}(n,n)$.
\end{definition}
We have the following lemma.
\begin{lemma}\label{ArchiFourier}
Write $\beta$ as block matrices $\begin{pmatrix}A_\beta&B_\beta&C_\beta\\D_\beta&E_\beta&F_\beta\\G_\beta&H_\beta&J_\beta\end{pmatrix}$ with respect to $(r+1+s)\times(s+1+r)$. The $\beta$-th Fourier coefficient of the highest weight vector of the $V_{\underline{k}}$-valued form $(D^dF_\kappa)_{\underline{k}}$, which we denote as $D_{\underline{k}}F_\kappa$, is given by
$$\det(C_{\beta,1})^{a_1-a_2}\det(C_{\beta,2})^{a_2-a_3}\cdots \det(C_{\beta,r})^{a_r-a_{r+1}}\det(G_{\beta,1})^{b_{s+1}-b_s}\cdots\det(G_{\beta,s})^{b_2-b_1}
F_{\kappa,\beta}.$$
Here $C_{\beta,i}$ and $G_{\beta,i}$ are the upper left $i\times i$ minors of $C_\beta$ and $G_\beta$ respectively.
\end{lemma}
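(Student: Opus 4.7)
\medskip
\noindent\textbf{Proof proposal.} The plan is to reduce the lemma to a direct Fourier-mode computation, using the explicit description of $D$ given in the proposition right above the statement. First, recall that by that proposition, $D$ acts (on a $\rho^\vee$-contracted matrix entry) by the element
$$\tfrac{1}{2} S_1^{-1} S_2^{-1} \begin{pmatrix} 0 & ix \\ 0 & 0 \end{pmatrix} S_2 S_1$$
in the Lie algebra of $\mathrm{U}(n+1,n+1)$. Writing the Siegel Eisenstein section in its Fourier expansion $F_\kappa = \sum_\beta F_{\kappa,\beta}$ with $F_{\kappa,\beta}$ involving the exponential factor $e_v(i \mathrm{Tr}(\beta z))$ coming from Lemma \ref{Fourier-Archimedean}, each application of $D$ differentiates this exponential and therefore multiplies the $\beta$-th mode by a linear form in the entries of (a conjugate of) $\beta$. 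Consequently, after $d$ applications, $D^d F_\kappa$ on the $\beta$-mode is a homogeneous polynomial of degree $d$ in $\beta$, with values in the upper right $(n+1)\times(n+1)$ block space $\underline{X}$, times $F_{\kappa,\beta}$.

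Second, I would identify exactly which blocks of $\beta$ correspond to the variables $X_2$ (of size $r\times r$) and $X_3$ (of size $s\times s$) appearing in the polynomial $v_{\underline{k},\kappa}$. The conjugation by $S_2 S_1$ from the proposition turns the Siegel variable $Z$ on $\mathrm{U}(n+1,n+1)$ into a matrix whose off-diagonal ``Harish-Chandra'' block, when paired against $\beta$, picks out precisely the $C_\beta$ and $G_\beta$ entries of the $(r+1+s)\times(s+1+r)$ decomposition of $\beta$. Concretely, $X_2$ gets identified with the transpose of $C_\beta$ (the $r\times r$ top-right block) and $X_3$ with (the transpose of) $G_\beta$ (the $s\times s$ bottom-left block); the middle row/column, together with the blocks $B_\beta, D_\beta, E_\beta, F_\beta, H_\beta$, drop out when we take the highest weight contraction as explained below.

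Third, I would carry out the highest weight extraction. By construction, the polynomial
$$v_{\underline{k},\kappa}(X_2,X_3) = \Bigl(\prod_{i=1}^{r-1} \det(X_2)_i^{a_i - a_{i+1}}\Bigr) \det(X_2)_r^{a_r - \kappa} \cdot \prod_{j=1}^{s-1} \det(X_3)_j^{b_{s-j+1} - b_{s-j}} \det(X_3)_s^{b_1}$$
is a highest weight vector for the representation of $\mathrm{GL}_r \times \mathrm{GL}_s$ of highest weight $\underline{k}$, realized in the symmetric algebra of $X_2 \oplus X_3$. The pairing $\langle D^d F_\kappa, v_{\underline{k},\kappa}\rangle$ thus amounts to substituting the matrix $\beta$ (restricted to the $C_\beta$ and $G_\beta$ blocks) into $v_{\underline{k},\kappa}$, which yields the claimed product of determinants of principal minors. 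The only thing left is to verify that the constants (coming from the factor $\tfrac{1}{2} \cdot i$ in the Lie algebra element and from the $2\pi i$ that appears on differentiating $e_v(i\mathrm{Tr}(\beta z))$) are absorbed into the convention used to define $F_{\kappa,\beta}$, which is a routine normalization check using Lemma \ref{Fourier-Archimedean}.

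The main obstacle I expect is the bookkeeping in the second step: tracking the conjugation by $S_2 S_1$ (together with the embedding $\iota(z,w)$ used for pullback) precisely enough to confirm that the $\mathrm{GL}_r$ and $\mathrm{GL}_s$ actions on $X_2$ and $X_3$ match, block-by-block, the $C_\beta$ and $G_\beta$ blocks of $\beta$ in the stated partition. Once this identification is pinned down, everything else is a formal consequence of the fact that $v_{\underline{k},\kappa}$ is a product of principal-minor determinants, so substituting the $\beta$-matrix produces exactly the stated product of $\det(C_{\beta,i})^{a_i - a_{i+1}}$ and $\det(G_{\beta,j})^{b_{s-j+2} - b_{s-j+1}}$ factors (with the convention $a_{r+1} = \kappa$).
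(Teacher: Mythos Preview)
Your proposal is correct and follows essentially the same route as the paper. The paper's own proof is a bare citation to \cite[Proposition 5.3]{EW}, which in turn rests on \cite[Theorem 9.2 (4)]{Eischen}; what you have written is precisely an unpacking of those two ingredients --- Eischen's Theorem 9.2(4) is the statement that the differential operator acts on each Fourier mode by multiplication by the corresponding polynomial in $\beta$, and the Eischen--Wan proposition then extracts the highest weight vector as the product of principal-minor determinants, exactly as in your third step.
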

The proof is as \cite[Proposition 5.3]{EW}, which uses \cite[Theorem 9.2 (4)]{Eischen}.

We define a Weyl element
\begin{equation}\label{Weylelement}
w'_{r+1,s+1}=\begin{pmatrix}&1_{b+1}&&\\1_a&&&\\&&1_b&\\&&&1\end{pmatrix}.
\end{equation}
We consider the unitary group $\mathrm{U}_{I_{r+1,s+1}}$ corresponding to the Hermitian matrix $\mathrm{diag}(1_{r+1}, 1_{s+1})$. Then the $w'_{r+1,s+1}$ above is in the compact group $\mathrm{U}(r+1)(\mathbb{R})\times \mathrm{U}(s+1)(\mathbb{R}) \hookrightarrow \mathrm{U}_{I_{r+1,s+1}}(\mathbb{R})$.
We can write
$$S^{-1}_2(\alpha(w'_{r+1.s+1}, 1)S_2=w''_{r+1,s+1}\otimes 1\in \mathrm{U}(r+1,s+1)(\mathbb{R})\times 1.$$
We have
\begin{align*}&\alpha(w'_{r+1,s+1}, 1_{r+s})\times\begin{pmatrix}1_{n+1}&\begin{matrix}&&A&B\\&&C&D\\&&&\\E&&&\end{matrix}\\&1_{n+1}\end{pmatrix}& &\times\mathrm{diag}(w'_{r+1,s+1}, 1_{r+s})^{-1}=\begin{pmatrix}1_{n+1}&\begin{matrix}&&C&D\\&&&\\&&A&B\\E&&& \end{matrix}\\&1_{n+1}\end{pmatrix}.&\end{align*}
So we have the following proposition.
\begin{proposition}
We have
$$F_\varphi(f_{sieg,\underline{k},\kappa},w''_{r+1,s+1},z)=F'_\varphi(f'_{sieg,\underline{k},\kappa}, 1,z+\frac{1}{2}).$$
\end{proposition}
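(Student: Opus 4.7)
The plan is to reduce the identity to a compatibility between the Archimedean pullback formula for $f_\kappa$ and the action of the Maass--Shimura-type differential operators, tracing the $\tfrac{1}{2}$-shift to the difference in the Siegel normalizations on $\mathrm{GU}(n+1,n+1)$ versus $\mathrm{GU}(n,n)$. Both $f_{sieg,\underline{k},\kappa}=\delta_{\underline{k},\kappa}f_\kappa$ and $f'_{sieg,\underline{k},\kappa}=\delta'_{\underline{k},\kappa}f_\kappa$ are obtained from the basic scalar-weight-$\kappa$ holomorphic Siegel section by applying the Gauss--Manin derivative $D^d$ (with the same order $d=\sum_i a_i-r\kappa+\sum_j b_j$) and pairing with the polynomial $v_{\underline{k},\kappa}$, with $D$ interpreted as a Lie-algebra element of $\mathrm{U}(n+1,n+1)$ on the left and of $\mathrm{U}(n,n)$ on the right.

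First I would verify the scalar case, i.e.\ the identity with $f_\kappa$ in place of $f_{sieg,\underline{k},\kappa}$. The block-matrix computation displayed just before \eqref{Weylelement} shows that $\alpha(w'_{r+1,s+1},1_{r+s})$ conjugates a typical Siegel unipotent $\begin{pmatrix}1_{n+1}&\ast\\&1_{n+1}\end{pmatrix}$ into a form whose $S$-conjugate restricts along the symmetric-space embedding $\iota(z,w)$ to the analogous Siegel unipotent on $\mathrm{GU}(n,n)$ after $S'$-conjugation. The Weyl element $w''_{r+1,s+1}=S_2^{-1}\alpha(w'_{r+1,s+1},1)S_2$ is chosen precisely so that this alignment holds, and the factorization of the automorphic factor $J_{n+1}(g,\iota(z,w))$ displayed above yields the scalar pullback identity. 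Comparing the two normalizations $|\det A_qD_q^{-1}|^{z+(n+1)/2}$ and $|\det A_qD_q^{-1}|^{z'+n/2}$ then forces $z'=z+\tfrac{1}{2}$, which is the dimensional source of the shift.

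Next I would incorporate the differential operator. Using the identity $\langle Df,v\otimes x\rangle=\tfrac{1}{2}S_1^{-1}S_2^{-1}\begin{pmatrix}0&ix\\0&0\end{pmatrix}S_2S_1\cdot\langle f,v\rangle$ from the proposition preceding the definition of $\delta_{\underline{k},\kappa}$, the operator $D^d$ becomes an iterated right-invariant differential operator on $\mathrm{U}(n+1,n+1)$ whose image lies in $p^+$. Since $D$ preserves the Siegel unipotent direction, it commutes with the inner integration over $g_1$, and one may pull $D^d$ outside the pullback integral. It then remains to check that after evaluation at $\alpha(w''_{r+1,s+1}g,g_1h)$ and pairing against the highest weight vector $v_{\underline{k},\kappa}$, the components of the $\mathrm{U}(n+1,n+1)$-Lie-algebra action corresponding to the ``extra row and column'' relative to $\mathrm{U}(n,n)$ are annihilated by the highest-weight projection, so the surviving contribution matches the smaller-group operator $\delta'_{\underline{k},\kappa}$ applied to $f_\kappa$ on $\mathrm{GU}(n,n)$.

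The main obstacle is this last matching step: one must verify that under the embedding $\mathrm{U}(r,s)\hookrightarrow\mathrm{U}(r+1,s+1)$ twisted by $w'_{r+1,s+1}$, the iterated action of the upper-triangular Lie-algebra blocks projects, after pairing with $v_{\underline{k},\kappa}$, onto exactly the corresponding iterated action for the smaller group with no residual terms. This is a branching-rule computation in the style of \cite[Proposition~2.4.1]{Zliu1}. As an independent consistency check one can match Fourier coefficients on both sides using Lemma~\ref{ArchiFourier}: the minor-determinant factors $\prod_i\det(C_{\beta,i})^{a_i-a_{i+1}}\prod_j\det(G_{\beta,j})^{b_{j+1}-b_j}$ are visibly invariant under the block rearrangement induced by $\alpha(w'_{r+1,s+1},1)$, which gives an explicit coefficient-by-coefficient verification of the claimed identity modulo the $\tfrac{1}{2}$-shift already accounted for at the scalar level.
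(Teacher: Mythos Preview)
Your outline is sound in spirit—split into the scalar $f_\kappa$ case plus the differential-operator layer, and trace the $\tfrac12$-shift to the normalizations $|\det A_qD_q^{-1}|^{z+(n+1)/2}$ versus $|\cdot|^{z'+n/2}$—but you have substantially overcomplicated the key step. The paper's entire proof is the single displayed conjugation identity
\[
\alpha(w'_{r+1,s+1},1)\begin{pmatrix}1_{n+1}&\begin{smallmatrix}&&A&B\\&&C&D\\&&&\\E&&&\end{smallmatrix}\\&1_{n+1}\end{pmatrix}\alpha(w'_{r+1,s+1},1)^{-1}
=\begin{pmatrix}1_{n+1}&\begin{smallmatrix}&&C&D\\&&&\\&&A&B\\E&&&\end{smallmatrix}\\&1_{n+1}\end{pmatrix},
\]
after which the proposition is immediate. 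The point is that $v_{\underline{k},\kappa}$ is, \emph{by definition}, a polynomial only in the entries of $\underline{X}_2=\begin{pmatrix}A&B\\C&D\end{pmatrix}$ and $\underline{X}_3=E$; hence the Lie-algebra element $\delta_{\underline{k},\kappa}$ (via $\langle Df,v\otimes x\rangle=\tfrac12 S_1^{-1}S_2^{-1}\begin{pmatrix}0&ix\\0&0\end{pmatrix}S_2S_1\cdot\langle f,v\rangle$) already lives in the $A,B,C,D,E$ positions and has no component along the ``extra row and column.'' The conjugation above then carries these positions bijectively onto the upper-right block for $\mathrm{U}(n,n)$ (the zero second row being the extra slot), so $\delta_{\underline{k},\kappa}$ twisted by $w''_{r+1,s+1}$ \emph{is} $\delta'_{\underline{k},\kappa}$ on the nose.

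Thus there is no branching-rule computation and no ``residual terms'' to kill by highest-weight projection: your anticipated obstacle does not arise, because the support of $v_{\underline{k},\kappa}$ was chosen precisely to avoid the extra coordinates. Your Fourier-coefficient consistency check via Lemma~\ref{ArchiFourier} would also work as an alternative verification, but the paper's route is the direct Lie-algebra conjugation, which is one line once the block identity is displayed.
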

We also have the following lemma.
\begin{lemma}\label{Archimedean constant}
Let $\varphi$ be the lowest weight module of the holomorphic discrete series with weight $\underline{k}$. Then there is a nonzero constant $c_{\underline{k},\kappa}'$ such that
$$F'_\varphi(f_{sieg,\underline{k},\kappa}, 1, \frac{\kappa-a-2b}{2})=c'_{\underline{k},\kappa}\varphi.$$
\end{lemma}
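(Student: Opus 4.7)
The plan is to combine a $K_\infty'$-type uniqueness argument with an explicit Fourier-coefficient computation. Writing $z'_\kappa=(\kappa-n)/2=(\kappa-a-2b)/2$, the first step is to interpret $F'_\varphi(f'_{\mathrm{sieg},\underline{k},\kappa},\,\cdot\,,z'_\kappa)$ via the meromorphic continuation of the pullback integral beyond the absolutely convergent region $\{\mathrm{Re}(z)>r+s\}$ supplied by the pullback proposition, and to view the assignment $\varphi\mapsto F'_\varphi(f'_{\mathrm{sieg},\underline{k},\kappa},\,\cdot\,,z'_\kappa)$ as an $(\mathfrak{gu}(\mathbb{R}),K_\infty')$-equivariant map from the holomorphic discrete series $\pi$ of weight $\underline{k}$ into an automorphic realization of $\pi$ on the second copy of $\mathrm{U}(r,s)$ in the doubling. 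Since the minimal $K_\infty'$-type of a holomorphic discrete series of weight $\underline{k}$ has multiplicity one and is spanned by $\varphi$, any such equivariant map must act as a scalar on this isotypic line, which produces the constant $c'_{\underline{k},\kappa}$.

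Second, I would justify that the output actually lands in the minimal $K_\infty'$-type rather than a complementary isotypic piece. The differential operator $\delta'_{\underline{k},\kappa}$ and the polynomial $v_{\underline{k},\kappa}$ used to manufacture $f'_{\mathrm{sieg},\underline{k},\kappa}$ are tailored precisely to carry the bi-$K$-type on $\mathrm{U}(n,n)(\mathbb{R})$ which, upon restriction along the doubling embedding $\mathrm{U}(r,s)\times\mathrm{U}(s,r)\hookrightarrow \mathrm{U}(n,n)$, pairs nontrivially with the minimal $K_\infty'$-type of $\pi$ on one factor and with its conjugate on the other. I would verify this branching assertion directly from the explicit description of $v_{\underline{k},\kappa}$ as a product of principal-minor characters together with the Harish-Chandra decomposition $\mathfrak{su}(n,n)_\mathbb{C}=\mathfrak{k}\oplus p^+\oplus p^-$ recorded earlier, so that the action of the Lie-algebra element realising $D$ preserves the weight lattice in the expected way.

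For the non-vanishing of $c'_{\underline{k},\kappa}$, I would extract the $\beta$-th Fourier coefficient for a specific positive-definite $\beta$. By Lemma \ref{ArchiFourier}, this coefficient is the product of principal minors $\det(C_{\beta,i})^{a_i-a_{i+1}}\det(G_{\beta,j})^{b_{s+1-j}-b_{s-j}}$ times $f_{\kappa,\beta}(z'_\kappa,\mathrm{diag}(y,{}^t\!\bar{y}^{-1}))$, which by Lemma \ref{Fourier-Archimedean} is an explicit nonzero Gaussian-Gamma expression provided $\kappa\geq n$ and $\det\beta>0$. Choosing $\beta$ with all iterated principal minors strictly positive and substituting into the pullback integral collapses the latter to a Gaussian-type integral over $\mathrm{U}(r,s)(\mathbb{R})$ paired against $\varphi$, which factorises as a product of Beta-type integrals each of which is manifestly nonzero. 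The main obstacle is precisely this final combinatorial bookkeeping, namely verifying that no cancellation occurs among the $\Gamma$-factors produced by the differential operator $\delta'_{\underline{k},\kappa}$ and by the Gaussian integration, and that some admissible $\beta$ avoids the vanishing loci of all the principal minors simultaneously. This is an analogue, for vector-valued weights, of Shimura's Archimedean doubling computation in \cite{Shimura97}, upgraded using Eischen's formalism for the holomorphic projection of differential operators already employed in Lemma \ref{ArchiFourier}.
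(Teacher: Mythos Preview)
The paper's own proof is essentially a citation: it declares the scalar nature obvious (``the only non-trivial statement is about the non-vanishing''), cites \cite[Section 4.5]{EHLS} for non-vanishing as a well-known fact, and remarks that Z.~Liu's work \cite{ZLiu} makes the constant explicitly computable. Your proposal is far more ambitious in attempting a self-contained argument, so the comparison is somewhat asymmetric.

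Your multiplicity-one argument for the existence of the scalar is correct and is exactly what the paper is taking for granted. The branching check in your second step is likewise standard and harmless.

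For the non-vanishing, however, there is a genuine gap in your route. Lemma~\ref{ArchiFourier} computes the $\beta$-th Fourier coefficient of $D^d F_\kappa$ as a Siegel section on the ambient group $\mathrm{U}(n,n)$; but $F'_\varphi(f'_{\mathrm{sieg},\underline{k},\kappa},1,z'_\kappa)$ is the local Archimedean doubling zeta integral --- an integral over $\mathrm{U}(r,s)(\mathbb{R})$ of the \emph{restricted} section against a matrix coefficient of $\pi$. A nonzero Fourier coefficient of the ambient Siegel section does not by itself yield non-vanishing of this integral, and your assertion that ``substituting into the pullback integral collapses the latter to a Gaussian-type integral \ldots\ which factorises as a product of Beta-type integrals'' is precisely the content of the lemma, not a consequence of the ingredients you list. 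The actual computations in the literature (Shimura, Garrett, and in the vector-valued setting \cite{EHLS} and \cite{ZLiu}) evaluate this zeta integral directly by realizing the matrix coefficient of the holomorphic discrete series explicitly and reducing to a matrix-valued Gamma/Beta integral over a tube domain; they do not proceed via the Fourier expansion of the Siegel section. Your sketch does not supply this reduction, and the ``main obstacle'' you flag is in fact the entire substance of the argument.
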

\begin{proof}
The only non-trivial statement is about the non-vanishing of $c'_{\underline{k},\kappa}$, which is a well known fact as noted in \cite[Section 4.5]{EHLS}. We also remark that this constant is explicitly computable, thanks to a recent technique developed by Z. Liu \cite{ZLiu}.
\end{proof}

\subsection{Finite Primes, Unramified Case}
We summarize results in \cite[Section 4.2]{WAN}.
\subsubsection{Pullback Integrals}
\begin{lemma}
Suppose $\pi,\psi$ and $\tau$ are unramified and $\varphi\in\pi$ is a new vector. If $\mathrm{Re}(z)>(a+b)/2$ then the pullback integral converges and
$$F_\varphi(f_v^{sph};z,g)=
\frac{L(\tilde\pi,\bar\tau^c,z+1)}{\prod_{i=0}^{a+2b-1}L(2z+a+2b+1-i,\bar\tau'\chi_\mathcal{K}^i)}
F_{\rho,z}(g)$$
where $F_{\rho,z}$ is the spherical section taking value $\varphi$ at the identity and
$$F_\varphi(f_v^{sph};z,g)=
\frac{L(\tilde\pi,\bar\tau^c,z+\frac{1}{2})}{\prod_{i=0}^{a+2b-1}L(2z+a+2b-i,\bar\tau'\chi_\mathcal{K}^i)}
\pi(g)\varphi.$$
\end{lemma}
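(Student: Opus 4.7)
The plan is to reduce the assertion to a scalar computation at the identity via uniqueness of spherical vectors, and then to evaluate that scalar by a standard unramified zeta-integral computation of Piatetski--Shapiro--Rallis type. First I would observe that both $F_\varphi(f_v^{\mathrm{sph}};z,-)$ and $F_{\rho,z}$ (respectively $\pi(-)\varphi$) transform identically under $K_v$ on the right (since $f_v^{\mathrm{sph}}$ and $\varphi$ are both spherical and $K_v$-invariant), and both lie in the induced representation $I(\rho)$, whose spherical line is one-dimensional by the Borel--Matsumoto theorem. Hence there exists a scalar $c(z)$ so that $F_\varphi(f_v^{\mathrm{sph}};z,g)=c(z)F_{\rho,z}(g)$ for all $g$, and it suffices to compute $c(z)$ by specializing $g=1$.

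Next I would unfold the integral at $g=1$ using Iwasawa decomposition. Writing elements of $\mathrm{U}(r,s)(F_v)$ as products of elements in the unipotent radical of a parabolic, its Levi, and $K_v\cap \mathrm{U}(r,s)(F_v)$, the invariance of $f_v^{\mathrm{sph}}$ under $K_{n+1,v}$ reduces the pullback integral to an integral over the Levi and the unipotent radical. The embedding via $S$ (as in \eqref{(1)}) is arranged precisely so that this integral splits as the product of two factors: (i) a Gindikin--Karpelevich-type integral over the unipotent radical, which produces the denominator $\prod_{i=0}^{a+2b-1} L(2z+a+2b+1-i,\bar\tau'\chi_{\mathcal{K}}^i)^{-1}$ (one local factor per simple root crossed by the longest element), and (ii) a doubling zeta integral against the matrix coefficient of $\pi$ associated to $\varphi$, which produces the local doubling $L$-factor $L(\tilde\pi,\bar\tau^c,z+1)$ in the numerator. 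The convergence for $\mathrm{Re}(z)>(a+b)/2$ is the standard convergence range for the Piatetski--Shapiro--Rallis zeta integral in our signature.

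Step (i) is a formal Gindikin--Karpelevich computation on $\mathrm{GU}(n+1,n+1)$, valid for any spherical induced representation, and I would just cite Shimura's treatment in [Shimura97] (which is how \cite[Prop.~3.5]{WAN} itself is proved). Step (ii) is the central non-trivial computation: it is the unramified doubling local integral of Piatetski--Shapiro--Rallis, evaluated on the spherical new vector $\varphi$, and gives exactly $L(\tilde\pi,\bar\tau^c,z+1)\cdot \varphi$ (or $L(\tilde\pi,\bar\tau^c,z+\tfrac12)\cdot\varphi$ after the shift $z\mapsto z+\tfrac12$ coming from the different group sizes, which accounts for the second identity). These computations are carried out in detail in [Shimura97] and \cite{EHLS}, and translate directly to our setting once the embedding $S$ is fixed. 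The shift between the two identities corresponds exactly to the shift between pullback from $\mathrm{GU}(n+1,n+1)$ to $\mathrm{GU}(r+1,s+1)\times\mathrm{GU}(r,s)$ versus pullback from $\mathrm{GU}(n,n)$ to $\mathrm{GU}(r,s)\times\mathrm{GU}(s,r)$, the two parabolic inductions differing by a half-integer twist.

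The main obstacle I anticipate is purely bookkeeping: matching the normalizations of $f_v^{\mathrm{sph}}$, $F_{\rho,z}$, the modulus character $\delta$ and the shift $(a+2b+1)/2$ to obtain the precise $L$-factors (and in particular the correct indexing $i=0,\dots,a+2b-1$ and the $\chi_{\mathcal{K}}^i$ twists) rather than any genuinely new representation theory. For this I would rely on the Satake-parameter computation of the intertwining operator $M(z,f_v^{\mathrm{sph}})$ on $\mathrm{GU}(n+1,n+1)$, together with the Casselman--Shalika description of the spherical vector, both of which are already built into the statement of the Gindikin--Karpelevich formula.
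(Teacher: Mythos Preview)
Your approach is correct and is essentially the standard one. Note, however, that the paper does not actually prove this lemma: the subsection begins with ``We summarize results in \cite[Section 4.2]{WAN}'' and the lemma is stated without proof, being imported from the author's earlier paper (which in turn relies on the Piatetski--Shapiro--Rallis/Shimura computations you cite). So there is no ``paper's own proof'' to compare against; your sketch---uniqueness of the spherical line to reduce to a scalar, then unfolding into a Gindikin--Karpelevich factor times the unramified doubling zeta integral---is exactly the argument underlying the cited result, and your identification of the half-integer shift with the change from $\mathrm{GU}(n+1,n+1)$ to $\mathrm{GU}(n,n)$ is also correct.
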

The local Fourier-coefficient is given below.
\begin{lemma}\label{Fourier-Unramified}
Let $\beta\in S_n(F_v)$ and let $r:=\mathrm{rank}(\beta)$. Then for $y\in \mathrm{GL}_n(\mathcal{K}_v)$,
\begin{eqnarray}\label{urd}
f_{v,\beta}^{sph}(z,diag(y,{}^t\!\bar y^{-1}))=&\tau({\det} y)|{\det} y\bar y|_v^{-z+n/2}D_v^{-n(n-1)/4}\\
&\times \frac{\prod_{i=r}^{n-1} L(2z+i-n+1,\bar\tau'\chi_\mathcal{K}^i)}{\prod_{i=0}^{n-1}L(2z+n-i,\bar\tau'\chi_\mathcal{K}^i)}h_{v,{}^t\!\bar y\beta y}(\bar\tau'(\varpi)q_v^{-2z-n}).
\end{eqnarray}
where $h_{v,{}^t\!\bar y\beta y}\in \mathbb{Z}[X]$ is a monic polynomial depending on $v$ and ${}^t\!\bar y\beta y$ but not on $\tau$. If $\beta\in S_n(\mathcal{O}_{F,v})$ and $\det \beta\in \mathcal{O}_{F,v}^\times$, then we say that $\beta$ is $v$-primitive and in this case $h_{v,\beta}=1$.
\end{lemma}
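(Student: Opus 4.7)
The formula is a direct unramified local computation for the spherical Siegel section; the plan is to reduce the Fourier integral to the standard local Siegel series and invoke Shimura's explicit evaluation.

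First I would expand
$$f^{sph}_{v,\beta}(z,g) \;=\; \int_{S_n(F_v)} f^{sph}_v\!\left(z,\, w_n\,n(s)\,g\right)\psi_v(-\mathrm{tr}(\beta s))\,ds,$$
where $n(s)=\begin{pmatrix}1_n & s\\ 0 & 1_n\end{pmatrix}$. At $g=\mathrm{diag}(y,{}^t\!\bar y^{-1})=m(y)$ I would use the commutation rules $w_n m(y)=m({}^t\!\bar y^{-1})w_n$ and $n(s)m(y)=m(y)\,n(y^{-1}s\,{}^t\!\bar y^{-1})$, together with the transformation law of the section under $M_{Q_n}(F_v)$, to pull $m(y)$ outside. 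After changing the variable $s\mapsto y\,s\,{}^t\!\bar y$, the Jacobian and the defining formula for $f^{sph}_v(z,qk)$ combine to give the prefactor $\tau(\det y)\,|\det y\bar y|_v^{-z+n/2}$, while $\beta$ is replaced by ${}^t\!\bar y\beta y$. Thus matters reduce to the case $y=1$ with $\beta$ replaced by ${}^t\!\bar y\beta y$.

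For $y=1$, the integral is the classical \emph{local Siegel series} attached to the Hermitian form $\beta$ and the character $\bar\tau'$. I would stratify $S_n(F_v)$ by the lattice $S_n(\mathcal{O}_{F,v})$ and its fractional shifts; the Bruhat-type decomposition of $w_n n(s)$ through $Q_n\cdot K_{n,v}$ on each stratum turns the integral into a geometric series in $\bar\tau'(\varpi)q_v^{-2z-n}$ indexed by integral cosets. The Gindikin--Karpelevich-type collapse of these geometric sums supplies the full denominator $\prod_{i=0}^{n-1} L(2z+n-i,\bar\tau'\chi_\mathcal{K}^i)$ -- this is the same calculation that produces the normalizing factor of the unramified intertwining operator $M(z,f^{sph}_v)$. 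When $\mathrm{rank}(\beta)=r<n$, the integrand becomes constant along the isotropic $(n-r)$-dimensional direction and the truncation introduces the extra numerator $\prod_{i=r}^{n-1} L(2z+i-n+1,\bar\tau'\chi_\mathcal{K}^i)$. The factor $D_v^{-n(n-1)/4}$ appears from the standard normalization of $ds$ against the self-dual measure on $S_n(F_v)$.

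What remains is the polynomial $h_{v,{}^t\!\bar y\beta y}$ encoding the fine arithmetic of $\beta$ over $\mathcal{O}_{F,v}$. I would define it as the quotient of the evaluated integral by the stated $L$-factor ratio, and verify monicity and integrality using the explicit Shimura--Feit formulas for local densities of Hermitian forms over an unramified quadratic extension (treating the split and inert cases separately). The normalization $h_{v,\beta}=1$ for $v$-primitive $\beta$ is immediate: in that case only the open cell $s\in S_n(\mathcal{O}_{F,v})$ contributes nontrivially, and the integral collapses to the leading term.

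The main obstacle is the explicit evaluation of the local Siegel series as a polynomial -- this is the content of Shimura's work (cf.\ his \emph{Euler Products and Eisenstein Series}, \S19, and \cite{Shimura97}), and it is exactly the form in which the lemma is packaged. Once Shimura's formula is in hand, the lemma follows by bookkeeping the prefactors introduced in Step~1.
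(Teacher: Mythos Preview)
Your proposal is correct and matches the paper's treatment: the paper does not give a proof but simply records the lemma as part of the summary of \cite[Section 4.2]{WAN}, and the underlying computation there is precisely Shimura's evaluation of the local Siegel series in \cite[\S19]{Shimura97}, which is exactly what you invoke. Your reduction to $y=1$ via the change of variable and your identification of the remaining integral with Shimura's local series (including the $h_{v,\beta}$ polynomial and the Gindikin--Karpelevich denominator) is the standard route and is what the cited references carry out.
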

To study functional equations we need another definition
\begin{definition}
$$f^{\mathrm{fteq}}_{v,z}=\prod_{i=1}^{r+s+1}\frac
{L(z-r-s-1+i,\chi_v,\chi_{\mathcal{K}/F,v})}{L(1-z+r+s+1-i,(\chi_v\chi^i_{\mathcal{K}/F,v})^{-1})}M(f^{\mathrm{sph}}_v,-z)_z,$$
$$f^{\mathrm{fteq},\prime}_{v,z}=\prod_{i=1}^{r+s}\frac
{L(z-r-s+i,\chi_v,\chi_{\mathcal{K}/F,v})}{L(1-z+r+s-i,(\chi_v\chi^i_{\mathcal{K}/F,v})^{-1})}M(f^{\mathrm{sph}}_v,-z)_z.$$
\end{definition}
\subsection{Prime to $p$ Ramified Case}
We summarize the results in \cite[Section 4.3]{WAN}.
\subsubsection{Pullback integrals}\label{Ramified Pullback}
Again let $v$ be a prime of $F$ not dividing $p$. We fix some $x$ and $y$ in $\mathcal{K}$ which are divisible by some high power of $\varpi_v$ (can be made precise from the proof of the following two lemmas). (When we are moving things $p$-adically the $x$ and $y$ are not going to change). We define $f^\dag\in I_{n+1}(\tau)$ to be the Siegel section supported on the cell $Q(F_v)w_{a+2b+1}N_Q(\mathcal{O}_{F,v})$ where $w_{a+2b+1}=\begin{pmatrix}&1_{a+2b+1}\\-1_{a+2b+1}&\end{pmatrix}$ and the value at
$N_Q(\mathcal{O}_{F,v})$ equals $1$.  Similarly we define $f^{\dag,\prime}\in I_n(\tau)$ to be the section supported in $Q(F_v)w_{a+2b}N_Q(\mathcal{O}_{F,v})$ and takes value $1$ on $N_Q(\mathcal{O}_{F,v})$.
\begin{definition}\label{xyv}
$$f_{sieg,v}(g)=f_{x,y,v}(g):=f^\dag(g\tilde{S}_v^{-1}
\tilde{\gamma}_v)\in I_{n+1}(\tau)$$
where $\tilde{\gamma}_v$ is defined to be:
$$
\begin{pmatrix}1_b&&&&&&&\frac{1}{x}1_b\\&1&&&&&&\\&&1_a&&&&\frac{1}{y\bar{y}}1_a&\\&&&1_b&\frac{1}{\bar{x}}1_b
&&&\\&&&&1_b&&&\\&&&&&1&&\\&&&&&&1_a&\\&&&&&&&1_b\end{pmatrix}
$$ and
$$\tilde{S}_v=\begin{pmatrix}1_b&&&&&&&-\frac{1}{2}1_b\\&1&&&&&&\\&&1&&&&&\\&&&-1_b&\frac{1}{2}1_b
&&&\\&&&1_b&\frac{1}{2}1_b&&&\\&&&&&1&&\\&&&&&&1_a&\\-1_b&&&&&&&-\frac{1}{2}1_b\end{pmatrix}.$$
Similarly we define $f_{sieg,v}'(g)=f'_{x,y,v}(g):=f^{\dag,\prime}(g\tilde{S}_v^{-1}\tilde{\gamma}_v')$ for
$$\tilde{S}_v':=\begin{pmatrix}1_b&&&&&-\frac{1}{2}1_b\\&1_a&&&&\\&&-1_b&\frac{1}{2}1_b&&\\&&1_b&\frac{1}{2}1_b
&&\\&&&&1_a&\\-1_b&&&&&-\frac{1}{2}1_b\end{pmatrix}$$
and
$$\tilde{\gamma}_v=\begin{pmatrix}1_b&&&&&\frac{1}{x}1_b\\&1_a&&&\frac{1}{y\bar{y}}1_a&\\&&1_b&
\frac{1}{\bar{x}}1_b&&\\&&&1_b&&\\&&&&1_a&\\&&&&&1_b\end{pmatrix}.$$
\end{definition}
\index{$f_{sieg,v}$}

\begin{lemma}
Let $K_v^{(2)}$ be the subgroup of $G(F_v)$ of the form $\begin{pmatrix}1_b&&&&d\\a&1&f&b&c\\&&1_a&&g\\&&&1_b&e\\&&&&1\end{pmatrix}$ where $e=-{}^t\!\bar{a}$, $b={}^t\!\bar{d}$, $g=-\zeta{}^t\!\bar{f}$, $b\in M(\mathcal{O}_v)$, $c-f\zeta{}^t\!\bar{f}\in\mathcal{O}_{F,v}$, $a\in (x)$, $e\in (\bar x)$, $f\in(y\bar y)$, $g\in (2\zeta y\bar y)$. Then $F_\varphi(f_{v,sieg}; z, g)$ is supported in $PwK_v^{(2)}$ and is invariant under the action of $K_v^{(2)}$.
\end{lemma}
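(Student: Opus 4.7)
The plan is to translate both assertions into an explicit block-matrix computation, using that $f^\dag$ has a simple support and transformation law and that $f_{v,\mathrm{sieg}}(g) = f^\dag(g\tilde S_v^{-1}\tilde\gamma_v)$.

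\textbf{Right $K_v^{(2)}$-invariance.} Since $f^\dag$ is supported on $Q(F_v)w_{n+1}N_Q(\mathcal{O}_{F,v})$ and is right-invariant under $N_Q(\mathcal{O}_{F,v})$, the function $f_{v,\mathrm{sieg}}$ is right-invariant under the conjugate group $H_v := \tilde\gamma_v^{-1}\tilde S_v\, N_Q(\mathcal{O}_{F,v})\,\tilde S_v^{-1}\tilde\gamma_v$. The pullback $F_\varphi(f_{v,\mathrm{sieg}};z,g)$ is then right-invariant under any $k\in G(F_v)$ such that $S^{-1}\alpha(k,1)S \in H_v$. The first step is to expand $S^{-1}\alpha(k,1)S$ using formula (\ref{(1)}) for a general $k$ of the block shape in the statement, and then to conjugate by $\tilde S_v^{-1}\tilde\gamma_v$ on the right. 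The conditions $a\in(x)$, $e\in(\bar x)$, $f\in(y\bar y)$, $g\in(2\zeta y\bar y)$, $b\in M(\mathcal{O}_v)$, and $c-f\zeta{}^t\!\bar f\in\mathcal{O}_{F,v}$ defining $K_v^{(2)}$ are exactly the integrality conditions needed for this conjugate to lie in $N_Q(\mathcal{O}_{F,v})$: the divisibility assumptions on $a,e,f,g$ cancel the $1/x$, $1/\bar x$, $1/y\bar y$ denominators in $\tilde\gamma_v$, and the last condition on $c$ is the integrality of the Hermitian top-right entry after conjugation.

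\textbf{Support in $PwK_v^{(2)}$.} For the integrand $f_{v,\mathrm{sieg}}(S^{-1}\alpha(g,g_1h)S)$ to be nonzero, one needs
$$S^{-1}\alpha(g,g_1h)S\,\tilde S_v^{-1}\tilde\gamma_v \in Q(F_v)\,w_{n+1}\,N_Q(\mathcal{O}_{F,v}).$$
The plan is to decompose $G(F_v)$ into Klingen--Bruhat cells $P(F_v)w_i B(F_v)$ and examine each cell. Using the explicit conjugation of $\alpha(w'_{r+1,s+1},1)$ computed in Section~\ref{Diffe} together with the shape of $S$ and $S_1$, one identifies $w$ as the unique Weyl element for which a choice of $g_1\in\mathrm{U}(r,s)(F_v)$ can transport the Klingen unipotent directions of $g$ into the Siegel unipotent $N_Q$. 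In all other Bruhat cells the lower-left $(n+1)\times(n+1)$ block of $S^{-1}\alpha(g,g_1h)S\,\tilde S_v^{-1}\tilde\gamma_v$ cannot be made to vanish by any choice of $g_1$, so the integrand vanishes. On the big cell $P(F_v)wB(F_v)$, the right-invariance from the previous step reduces the problem to verifying that the Iwahori-type factor must land in $K_v^{(2)}$, which is read off from the integrality constraints on the residual matrix entries.

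\textbf{Main obstacle.} The entire argument is bookkeeping, but the bookkeeping is heavy: one tracks many $1/x$, $1/\bar x$, $1/y\bar y$ denominators through three successive conjugations ($S$, $\tilde S_v$, $\tilde\gamma_v$) and must verify that the divisibilities of $x,y$ by a sufficiently high power of $\varpi_v$ (kept unspecified in Definition~\ref{xyv}) are exactly strong enough to absorb every denominator that arises. It is this step that determines the required order of vanishing of $x$ and $y$ at $v$, and produces the precise integrality conditions appearing in the definition of $K_v^{(2)}$.
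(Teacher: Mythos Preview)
The paper does not give a proof of this lemma here; it simply states it as part of a summary of \cite[Section 4.3]{WAN}. Your outline is essentially the argument one finds there (and in the analogous computations in \cite[Section 11]{SU}): reduce invariance to a conjugation of $N_Q(\mathcal{O}_{F,v})$ and reduce the support statement to checking when $S^{-1}\alpha(g,g_1)S\,\tilde S_v^{-1}\tilde\gamma_v$ can lie in $Q(F_v)w_{n+1}N_Q(\mathcal{O}_{F,v})$ for some $g_1$.

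Two small corrections. First, your formula for $H_v$ has the conjugation reversed: from $f_{v,\mathrm{sieg}}(g'm)=f^\dag\bigl((g'\tilde S_v^{-1}\tilde\gamma_v)\cdot(\tilde\gamma_v^{-1}\tilde S_v\, m\, \tilde S_v^{-1}\tilde\gamma_v)\bigr)$ one sees that the right-invariance group is $\tilde S_v^{-1}\tilde\gamma_v\, N_Q(\mathcal{O}_{F,v})\,\tilde\gamma_v^{-1}\tilde S_v$, not what you wrote. This is harmless for the argument but should be fixed. Second, for the support part, the cell-by-cell Bruhat analysis you describe is more than what is actually done in \cite{WAN}: there one works directly with the block form of $S^{-1}\alpha(g,g_1)S$ (as in the displayed computation later in Section~\ref{Auxi} of this paper, or \cite[p.~196]{SU}) and reads off both the cell and the integrality constraints simultaneously from the requirement that the lower-left $(n+1)\times(n+1)$ block vanish and the residual upper-right block be integral. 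Your plan would work, but the direct block computation is shorter and is what the references do.
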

\begin{definition}\label{rfd}
Write $g=\begin{pmatrix}a_5&a_6&a_4\\a_8&a_9&a_7\\a_2&a_3&a_1\end{pmatrix}$. Let $\mathfrak{Y}$ be the set of $g$'s so that the entries of $a_2$ are
integers, the entries of $a_3$ are divisible by $y\bar y$, the entries of $a_1-1$ are divisible by $\bar x$, the entries of $1-a_5$ are divisible by $x$, the entries of $a_6$ are divisible by $\bar{x}y\bar{y}$ (note the typo in \cite[Section 4.3]{WAN}), the entries of $a_4$ are divisible by $x\bar x$, ${1-a_9}=y\bar y\zeta(1+y\bar yN)$ for some $N$ with integral entries, the entries of $a_8$ are divisible by $\frac{\bar{y}y\zeta}{2}$, and the entries of $a_7$ are divisible by
$\bar{y}yx\zeta$.
\end{definition}
\begin{lemma}
Let $\varphi_x=\pi(\mathrm{diag}(\bar x,1,x^{-1})\eta^{-1})\varphi$ where $\varphi$ is invariant under the action of $\mathfrak{Y}$ defined above, then
\begin{itemize}
\item[(i)] $F_{\varphi_x}(f_{sieg,v};z,w)=\tau(y\bar y x)|(y\bar y)^2x\bar x|_v^{-z-\frac{a+2b+1}{2}}\mathrm{Vol}(\mathfrak{Y})\cdot\varphi$.
\item[(ii)] $F_{\varphi_x}'(f_{sieg,v}';z,w)=\tau(y\bar y x)|(y\bar y)^2x\bar x|_v^{-z-\frac{a+2b}{2}}\mathrm{Vol}(\mathfrak{Y})\cdot\varphi$.
\end{itemize}
\end{lemma}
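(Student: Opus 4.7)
The plan is to unwind the defining pullback integral, reduce it via the support condition on $f^\dag$ to an integral over precisely the set $\mathfrak{Y}$, and then use the invariance of $\varphi$ on $\mathfrak{Y}$ to extract the value as a volume times a scalar.

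First I would use the previous lemma, which already localizes the pullback: since $F_{\varphi_x}(f_{sieg,v}; z, \cdot)$ is supported on $P w K_v^{(2)}$ and right-invariant under $K_v^{(2)}$, it suffices to compute the value at $g = w$. By definition,
$$F_{\varphi_x}(f_{sieg,v}; z, w) = \int_{\mathrm{U}(r,s)(F_v)} f^\dag\bigl(z,\, S^{-1}\alpha(w, g_1) S\, \tilde{S}_v^{-1} \tilde{\gamma}_v\bigr)\, \bar\tau(\det g_1)\, \varphi_x(g_1)\, dg_1,$$
where we have substituted the definition $f_{sieg,v}(g) = f^\dag(g\tilde{S}_v^{-1}\tilde\gamma_v)$ and chosen $h = 1$ (permissible because $\mu(w)=1$). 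Substituting $\varphi_x(g_1) = \varphi\bigl(g_1 \cdot \mathrm{diag}(\bar x, 1, x^{-1})\eta^{-1}\bigr)$ and making the change of variables $g_1 \mapsto g_1 \cdot \eta \cdot \mathrm{diag}(\bar x^{-1}, 1, x)$, we reduce to an integral whose integrand involves $\varphi(g_1)$ and a Siegel section evaluated on a conjugated matrix.

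Next, I would carry out the matrix computation needed to check when $S^{-1}\alpha(w, g_1) S \tilde{S}_v^{-1}\tilde{\gamma}_v$ lies in the big cell $Q(F_v) w_{a+2b+1} N_Q(\mathcal{O}_{F,v})$. Writing $g_1$ in the block form $\begin{pmatrix}a_5 & a_6 & a_4 \\ a_8 & a_9 & a_7 \\ a_2 & a_3 & a_1\end{pmatrix}$ as in Definition \ref{rfd}, and multiplying out using the explicit shapes of $S$, $\tilde S_v$, and $\tilde\gamma_v$, the Iwasawa decomposition into $Q w N_Q$-form becomes possible exactly when each block $a_i$ satisfies the divisibility conditions that define $\mathfrak{Y}$ (integrality of $a_2$; divisibility of $a_3$ by $y\bar y$; of $a_6$ by $\bar x y\bar y$; of $a_4$ by $x\bar x$; of $a_7$ by $\bar y y x\zeta$; of $a_8$ by $\bar y y\zeta/2$; and the congruences $a_1\equiv 1\pmod{\bar x}$, $a_5\equiv 1\pmod{x}$, $a_9\equiv 1\pmod{y\bar y\zeta}$). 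After this is checked, the integration is restricted to $g_1\in\mathfrak{Y}$, and the unipotent part of the Iwasawa decomposition automatically lies in $N_Q(\mathcal{O}_{F,v})$, so $f^\dag$ evaluates to $\tau(\det A_q) |\det A_q D_q^{-1}|_v^{z + (a+2b+1)/2}$ where $A_q, D_q$ are the diagonal blocks arising from the decomposition.

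Finally I would read off the scalar. A direct computation of the diagonal blocks shows that $|\det A_q D_q^{-1}|_v = |(y\bar y)^2 x\bar x|_v$ and $\tau(\det A_q) = \tau(y\bar y x)$ for $g_1$ in $\mathfrak{Y}$ modulo the factors absorbed by $\bar\tau(\det g_1)$ and the invariance of $\varphi$ under $\mathfrak{Y}$. Thus the integrand is constant on $\mathfrak{Y}$ and equals
$$\tau(y\bar y x)\,|(y\bar y)^2 x\bar x|_v^{-z - \frac{a+2b+1}{2}}\,\varphi,$$
producing the claimed formula in (i) upon multiplying by $\mathrm{Vol}(\mathfrak{Y})$. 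Part (ii) is entirely analogous, replacing the rank-$(n+1)$ Siegel section by the rank-$n$ one; the matrix dimensions change by one row/column and the exponent shifts from $\frac{a+2b+1}{2}$ to $\frac{a+2b}{2}$, but the structure of the computation is identical. The main obstacle is the bookkeeping for the matrix computation in the middle step: verifying that the Iwasawa-decomposition conditions on $S^{-1}\alpha(w,g_1)S\tilde{S}_v^{-1}\tilde\gamma_v$ reproduce \emph{exactly} the divisibility conditions defining $\mathfrak{Y}$, with no surplus or missing constraints, requires careful tracking of how each block of $g_1$ is conjugated and how the auxiliary factors $\tilde\gamma_v$ (which encode the shifts by $x, \bar x, y\bar y$) interact with the $\zeta$-twists in $S$. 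Everything else is formal.
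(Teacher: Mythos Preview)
Your approach is correct and is precisely the standard computation: reduce to the big-cell support of $f^\dag$, check that the resulting conditions on the blocks of $g_1$ are exactly those defining $\mathfrak{Y}$, and read off the scalar from the Iwasawa factorization. Note, however, that the paper does not actually prove this lemma in the text; the entire subsection is prefaced by ``We summarize the results in \cite[Section 4.3]{WAN},'' and the lemma is quoted from that earlier paper without argument. Your outline reconstructs what the proof in \cite{WAN} does, so there is nothing to compare against here beyond confirming that your sketch matches the intended computation. The one place to be careful is exactly the bookkeeping you flag: the change of variables $g_1\mapsto g_1\,\eta\,\mathrm{diag}(\bar{x}^{-1},1,x)$ interacts with both the $\bar\tau(\det g_1 g)$ factor and the conjugation by $S$ and $\tilde{S}_v$, and the extra $\tau$-contributions from $\det(\eta\,\mathrm{diag}(\bar{x}^{-1},1,x))$ combine with those from the Levi block to give the stated $\tau(y\bar{y}x)$; this is where the sign of the exponent $-z-\tfrac{a+2b+1}{2}$ (rather than $+z+\tfrac{a+2b+1}{2}$) is fixed.
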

The local Fourier-coefficient is given as below.
\begin{lemma}\label{Fourier-Ramified}
(i) Let $\beta=(\beta_{ij})\in S_{n+1}(F_v)$ then for all $z\in \mathbb{C}$ we have:
$$\tilde{f}_{sieg,v,\beta}(z,1)=\mathrm{Vol}(S_{n+1}(\mathcal{O}_{F,v}))e_v(\mathrm{Tr}_{\mathcal{K}_v/F_v}(\frac{\beta_{a+b+2,1}+...+\beta_{a+2b+1,b}}{x})
+\frac{\beta_{b+2,b+2}+...+\beta_{b+1+a,b+1+a}}{y\bar{y}}).$$
(ii) Let $\beta=(\beta_{ij})\in S_v(F_v)$. Then
$$\tilde{f}_{sieg,v,\beta}'(z,1)=\mathrm{Vol}(S_{n}(\mathcal{O}_{F,v}))e_v(\mathrm{Tr}_{\mathcal{K}_v/F_v}(\frac{\beta_{a+b+1,1}+...+\beta_{a+2b,b}}{x}
)+\frac{\beta_{b+1,b+1}+...+\beta_{b+a,b+a}}{y\bar{y}}).$$
\end{lemma}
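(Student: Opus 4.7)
The plan is to unwind the definition of the Fourier coefficient
$$\tilde{f}_{\mathrm{sieg},v,\beta}(z,1) = \int_{S_{n+1}(F_v)} f^\dag\bigl(z,\,n(S)\tilde{S}_v^{-1}\tilde{\gamma}_v\bigr)\,e_v(-\mathrm{Tr}(\beta S))\,dS,$$
where $n(S) = \begin{pmatrix}1_{n+1} & S \\ 0 & 1_{n+1}\end{pmatrix}$, by exploiting the support of $f^\dag$ together with the fact that $\tilde{\gamma}_v = n(\gamma_0)$ is a unipotent translation whose upper-right block $\gamma_0$ is Hermitian with entries $\tfrac{1}{x}$ at positions $(i,a+b+1+i)$, the Hermitian-conjugate $\tfrac{1}{\bar x}$ at $(a+b+1+i,i)$ for $i=1,\ldots,b$, and $\tfrac{1}{y\bar y}$ on the diagonal at $(b+1+j,b+1+j)$ for $j=1,\ldots,a$.

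The first step is to compute the product $n(S)\tilde{S}_v^{-1}\tilde{\gamma}_v$ in $2\times 2$ block form and observe that $\tilde S_v^{-1}$ is a block involution acting nontrivially only on the blocks indexed by $\{1,4,5,8\}$ in the $b\mid 1\mid a\mid b\mid b\mid 1\mid a\mid b$ partition. The nonzero entries of $\gamma_0$ are concentrated in exactly these same blocks, so one verifies by a direct block calculation that $\tilde{S}_v^{-1}\tilde{\gamma}_v = n(\gamma_0)\cdot \tilde{S}_v^{-1}\cdot q_0$ for some $q_0\in Q(\mathcal O_{F,v})$ whose contribution to $f^\dag$ is trivial at the identity level. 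Consequently $f^\dag(z, n(S)\tilde S_v^{-1}\tilde\gamma_v)$ depends on $S$ only through $S + \gamma_0$.

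A change of variable $S\mapsto S-\gamma_0$ then reduces the Fourier integral to the integrality condition $S \in S_{n+1}(\mathcal O_{F,v})$, on which $f^\dag$ takes the constant value $1$. Hence
$$\tilde f_{\mathrm{sieg},v,\beta}(z,1) \;=\; \mathrm{Vol}(S_{n+1}(\mathcal O_{F,v}))\cdot e_v\!\bigl(-\mathrm{Tr}(\beta\gamma_0)\bigr).$$
Computing $\mathrm{Tr}(\beta\gamma_0)$ using $\beta_{ij}=\overline{\beta_{ji}}$, the paired entries $\tfrac{1}{x}$ and $\tfrac{1}{\bar x}$ of $\gamma_0$ together contribute $\mathrm{Tr}_{\mathcal K_v/F_v}(\beta_{a+b+1+i,i}/x)$ for each $i=1,\ldots,b$, while the real entries $\tfrac{1}{y\bar y}$ contribute $\beta_{b+1+j,b+1+j}/(y\bar y)$ for $j=1,\ldots,a$. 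Summation yields the stated formula in (i), with the overall sign absorbed into our conventions for $e_v$. Part (ii) follows by the same strategy with $\tilde S'_v,\tilde\gamma'_v$ and $n$ in place of $\tilde S_v,\tilde\gamma_v$ and $n+1$.

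The main obstacle is establishing the commutation identity of the first step: that $\tilde S_v^{-1} n(\gamma_0)$ equals $n(\gamma_0)\tilde S_v^{-1}q_0$ modulo an element that pairs trivially with $f^\dag$. This requires a concrete block-matrix computation tracking the Hermitian-conjugate pair $\tfrac{1}{x}, \tfrac{1}{\bar x}$ through the block involution $\tilde S_v^{-1}$, and checking that the resulting $q_0$ is integral with trivial value of the modulus character at the identity; the remaining steps (change of variable and the trace computation) are routine.
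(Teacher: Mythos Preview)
Your setup for the local Whittaker integral is missing the long Weyl element: the $\beta$-th Fourier coefficient of $f_{\mathrm{sieg},v}$ at the identity is
\[
\int_{S_{n+1}(F_v)} f^\dag\bigl(z,\,w_{n+1}\,n(S)\,\tilde{S}_v^{-1}\tilde{\gamma}_v\bigr)\,e_v(-\mathrm{Tr}(\beta S))\,dS,
\]
not the integral you wrote. Without $w_{n+1}$ the integrand is $f^\dag(n(S)\cdot(\text{fixed element}))$ with $n(S)\in Q$, and since $f^\dag$ transforms on the left under $Q$ only through the modulus and $\tau$, the integrand would be literally independent of $S$; the integral could never localize to $S\in S_{n+1}(\mathcal O_{F,v})$ as you claim in your third paragraph.

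More seriously, the commutation identity you isolate as the ``main obstacle'' is in fact false. Restrict to the four block-indices $\{1,4,5,8\}$ on which $\tilde S_v$ acts nontrivially. One computes that the $(1,1)$-block of $\tilde S_v^{-1}n(\gamma_0)$ is $\tfrac{1}{2}\cdot 1_b$, whereas that of $n(\gamma_0)\tilde S_v^{-1}$ is $(\tfrac{1}{2}-\tfrac{1}{x})\cdot 1_b$. From the lower two block-rows one is forced to take $P_{11}=1_b$, $P_{41}=0$ in any putative $q_0\in Q(\mathcal O_{F,v})$, which then gives $(1,1)$-block equal to $\tfrac{1}{2}-\tfrac{1}{x}$ on the right-hand side, not $\tfrac{1}{2}$. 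Since $x$ is divisible by a high power of $\varpi_v$, no integral correction can absorb $\tfrac{1}{x}$. So $\tilde S_v^{-1}\tilde\gamma_v = n(\gamma_0)\tilde S_v^{-1}q_0$ with $q_0\in Q(\mathcal O_{F,v})$ cannot hold, and your reduction to a pure shift by $\gamma_0$ breaks down.

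The argument in \cite{WAN} (which is what the paper invokes here) keeps the $w_{n+1}$ and instead writes $w_{n+1}n(S)\tilde S_v^{-1}$ in Bruhat form $q(S)\,w_{n+1}\,n(\sigma(S))$ relative to $Q$; then right multiplication by $\tilde\gamma_v=n(\gamma_0)$ simply replaces $\sigma(S)$ by $\sigma(S)+\gamma_0$, and the support condition for $f^\dag$ becomes $\sigma(S)+\gamma_0\in S_{n+1}(\mathcal O_{F,v})$. The actual work is computing $\sigma$ and checking that the $Q$-part $q(S)$ contributes trivially to $f^\dag$ --- this is where the specific shape of $\tilde S_v$ (a partial Weyl element mixing only the $b$-blocks) enters, and it does not reduce to the naive commutation you proposed. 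Your trace computation at the end is correct, but it only becomes relevant once the Bruhat analysis is carried out properly.
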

As before we make the following definition.
\begin{definition}
$$f^{\mathrm{fteq}}_{v,z}=\prod_{i=1}^{r+s+1} \epsilon(z-r-s-1+i,\chi_v\chi^i_{\mathcal{K}/F,v},\psi_v)^{-1}\frac
{L(z,\chi_v,\chi_{\mathcal{K}/F,v})}{L(1-z,(\chi_v\chi^i_{\mathcal{K}/F,v})^{-1})}M(f_{sieg,v},-z)_z,$$
$$f^{\mathrm{fteq},\prime}_{v,z}=\prod_{i=1}^{r+s} \epsilon(z-r-s+i,\chi_v\chi^i_{\mathcal{K}/F,v},\psi_v)^{-1}\frac
{L(z,\chi_v,\chi_{\mathcal{K}/F,v})}{L(1-z,(\chi_v\chi^i_{\mathcal{K}/F,v})^{-1})}M(f'_{sieg,v},-z)_z.$$
\end{definition}
\subsection{$p$-adic Computations}\label{p adic}
\noindent Let $v|p$ be a prime of $F$ and $\mathcal{K}_v\simeq \mathbb{Q}_p\times \mathbb{Q}_p$. Let $\tau$ be character of $\mathbb{Q}_p^\times\times\mathbb{Q}_p^\times$. Suppose $\tau=(\tau_1,\tau_2^{-1})$ and let $p^{s_i}$ be the conductor of $\tau_i,i=1,2$.
Let $\chi_1,...\chi_a,\chi_{a+1},...\chi_{a+2b}$ be characters of $\mathbb{Q}_p^\times$ such that $\pi_v$ is isomorphic to $\pi(\chi_1,\chi_2,\cdots,\chi_n)$ whose conductors are $p^{t_1},...,p^{t_{a+2b}}$. Suppose the ordering of the $\chi_i$'s corresponds to the ordinary stabilization as discussed before \cite[Definition 4.42]{WAN}. Suppose we are in the:
\begin{definition}\label{generic}
(Generic case of \cite[Definition 4.21]{WAN}):
$$t_1>t_2>...>t_{a+b}>s_1>t_{a+b+1}>...>t_{a+2b}>s_2.$$
Also, let $\xi_i=\chi_i\tau_1^{-1}$ for $1\leq i\leq a+b$,
$\xi_j=\chi_j^{-1}\tau_2$ for $a+b+2\leq j\leq a+2b+1$. Let $\xi_{a+b+1}=1$.
\end{definition}
Let $w_{Borel}$ be the matrix $\begin{pmatrix}&1_{b+1}&&\\1_a&&&\\&&1_b&\\&&&1_{a+2b+1}\end{pmatrix}$ and $w'_{Borel}$ be $\begin{pmatrix}&1_{b}&&\\1_a&&&\\&&1_b&\\&&&1_{a+2b}\end{pmatrix}$.
Let
\begin{equation}\label{c_n}
c_n(\tau',z):=\left\{\begin{array}{ll}\tau'(p^{nt})p^{2ntz-tn(n+1)/2}&t>0\\p^{2nz-n(n+1)/2}&t=0.\end{array}\right.
\end{equation}
Suppose $(p^t)=\mathrm{cond}(\tau')$ for $t\geq 1$ then define $\tilde{f}_t$ to be the section supported in $Q(\mathbb{Q}_p)K_Q(p^t)$ and
$\tilde{f}_t(k)=\tau(\det d_k)$ on $K_Q(p^t)$. (The $K_Q(p^t)$ stands for the subgroup of $\mathrm{GL}_{2n}(\mathbb{Z}_p)$ consisting of elements which are block-wise ($n+n$) upper triangular modulo $p^t$).

We define the Siegel Eisenstein sections $f_{sieg,v}$ as the $f^0(z,g)$ below.
\begin{align*}
&f^0(z,g)=&&\frac{1}{c_{n+1}(\tau_p',-z-\frac{1}{2})\mathfrak{g}(\tau_p')^{n+1}}p^{-\sum_{i=1}^{a+b} it_i-\sum_{i=1}^bit_{a+b+i}}\prod_{i=1}^{a+b}\mathfrak{g}(\xi_i)\xi_i(-1)\prod_{i=1}^b\mathfrak{g}(\xi_{a+b+1+i})\xi_{a+b+1+i}(-1)&\\
&&&\times\sum_{A,B,C,D,E}\prod_{i=1}^a\bar\xi_i(\frac{\det A_{i}}{\det A_{i-1}}p^{t_i})\prod_{i=1}^b\bar\xi_{a+i,a+i}(\frac{\det D_{i}}{\det D_{i-1}}p^{t_{a+i}})
\times\prod_{i=1}^b\bar\xi_{a+b+1+i}(\frac{\det E_{i}}{\det E_{i-1}}p^{t_{a+b+i}})&\\
&&&\times \tilde{f}_t(z,gw_{Borel}^{-1}\begin{pmatrix}\begin{matrix}1_b&&&\\&1&&\\&&1_a&\\&&&1_b\end{matrix}&
\begin{matrix}&&C&D\\&&&\\&&A&B\\E&&&\end{matrix}\\&\begin{matrix}1_b&&&\\&1&&\\&&1_a&\\&&&1_b\end{matrix}\end{pmatrix}w_{Borel}).&
\end{align*}
Here $A_i$ is the $i$-th upper-left minor of $A$, $D_i$ is the $(a+i)$-th upper left minor of $\begin{pmatrix}A&B\\C&D\end{pmatrix}$, $E_i$ is the $i$-th upper-left minor of $E$. We have
$$w_{Borel}^{-1}\begin{pmatrix}\begin{matrix}1_b&&&\\&1&&\\&&1_a&\\&&&1_b\end{matrix}&
\begin{matrix}&&C&D\\&&&\\&&A&B\\E&&&\end{matrix}\\&\begin{matrix}1_b&&&\\&1&&\\&&1_a&\\&&&1_b\end{matrix}\end{pmatrix}w_{Borel}
=\begin{pmatrix}\begin{matrix}1_b&&&\\&1&&\\&&1_a&\\&&&1_b\end{matrix}&
\begin{matrix}&&A&B\\&&C&D\\&&&\\E&&&\end{matrix}\\&\begin{matrix}1_b&&&\\&1&&\\&&1_a&\\&&&1_b\end{matrix}\end{pmatrix}.$$
Note that in the last matrix, the upper-right block is with respect to $(a+b+1+b)\times (b+1+a+b)$.

We also define $f'_{sieg,v}$ by
\begin{align*}
&f^{0\prime}(z,g)=&&\frac{1}{c_n(\tau_p',-z)\mathfrak{g}(\tau_p')^n}p^{-\sum_{i=1}^{a+b} it_i-\sum_{i=1}^bit_{a+b+i}}\prod_{i=1}^{a+b}\mathfrak{g}(\xi_i)\xi_i(-1)\prod_{i=1}^b\mathfrak{g}
(\xi_{a+b+1+i})\xi_{a+b+1+i}(-1)&\\
&&&\times\sum_{A,B,C,D,E}\prod_{i=1}^a\bar\xi_i(\frac{\det A_{i}}{\det A_{i-1}})\prod_{i=1}^b\bar\xi_{a+i,a+i}(\frac{\det D_{i}}{\det D_{i-1}})
\times\prod_{i=1}^b\bar\xi_{a+b+1+i}(\frac{\det E_{i}}{\det E_{i-1}})&\\
&&&\times \tilde{f}_t(z,gw_{Borel}^{\prime-1}\begin{pmatrix}\begin{matrix}1_b&&\\&1_a&\\&&1_b\end{matrix}&\begin{matrix}&C&D\\&A&B\\E&&\end{matrix}
\\&\begin{matrix}1_b&&\\&1_a&\\&&1_b\end{matrix}\end{pmatrix}w_{Borel}').&
\end{align*}

The corresponding pullback section is the nearly ordinary section such that
$F_{\varphi'}(f^0,z,w_{Borel})$ is given by
\begin{eqnarray*}
&\bar{\tau}^c((p^{t_1+...+t_{a+b}},p^{t_{a+b+1}+...+t_{a+2b}}))|p^{t_1+...+t_{a+2b}}|^{-z-\frac{a+2b+1}{2}}\mathrm{Vol}(\tilde K')\\
&\times p^{-\sum it_i-\sum it_{a+b+i}}\prod_{i=r+1}^{r+s}\mathfrak{g}(\chi_i^{-1}\tau_2)\chi_i\tau_2^{-1}(p^{t_i})\prod_{j=1}^r\mathfrak{g}(\chi_j\tau_1^{-1})
\chi_j^{-1}\tau_1(p^{t_j})\epsilon(\pi,\tau^c,z)\varphi.\\
\end{eqnarray*}
Also we have $F_{\varphi'}'(z,\rho(\Upsilon')f^{0'},w_{Borel}')$ is given by
\begin{eqnarray*}
&\bar{\tau}^c((p^{t_1+...+t_{a+b}},p^{t_{a+b+1}+...+t_{a+2b}}))|p^{t_1+...+t_{a+2b}}|^{-z-\frac{a+2b}{2}}\mathrm{Vol}(\tilde K')\\
&\times p^{-\sum it_i-\sum it_{a+b+i}}\prod_{i=r+1}^{r+s}\mathfrak{g}(\chi_i^{-1}\tau_2)\chi_i\tau_2^{-1}(p^{t_i})\prod_{j=1}^r\mathfrak{g}(\chi_j\tau_1^{-1})
\chi_j^{-1}\tau_1(p^{t_j})\epsilon(\pi,\tau^c,z+\frac{1}{2})\varphi.\\
\end{eqnarray*}
We define the Siegel section used for the functional equation.
\begin{definition}
For $n=r+s$ or $r+s+1$, let $\tilde{f}^\dag_v$ be the Siegel section supported in $Q_{n}(\mathbb{Q}_pw_nN_{\mathbb{Q}_n})(\mathbb{Z}_p)$ taking the constant function $1$ on $w_nN_{\mathbb{Q}_n}(\mathbb{Z}_p)$. We define $f^{\mathrm{fteq}}_v$ as the definition of $f_{sieg,v}$ but replacing $\frac{1}{c_{n+1}(\tau_p',-z-\frac{1}{2})\mathfrak{g}(\tau_p')^{n+1}}\tilde{f}_t$ by $\tilde{f}^\dag_v$. We define $f^{\mathrm{fteq},\prime}_v$ similarly.
\end{definition}

We need also to study the pullback section of $f^{\mathrm{fteq}}_v$ at a special element. The following simply lemma enables us to reduce it to the computation of Harris-Eischen-Li-Skinner.
\begin{lemma}
We have
$$F(f^{\mathrm{fteq}}_v, ww_{Borel},z)=F'(f^{\mathrm{fteq},\prime}_v, w,z).$$
\end{lemma}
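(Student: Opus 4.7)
The plan is to establish the identity by a direct matrix comparison, exploiting the very restrictive support condition of the ``functional equation'' sections. Recall $f^{\mathrm{fteq}}_v$ is supported on $Q_{n+1}(\mathbb{Q}_p)\,w_{n+1}\,N_{Q_{n+1}}(\mathbb{Z}_p)$ with value the constant function $1$ on the cell $w_{n+1}N_{Q_{n+1}}(\mathbb{Z}_p)$, and similarly for $f^{\mathrm{fteq},\prime}_v$ on $\mathrm{U}(n,n)$. Thus both pullback integrals reduce to an integral over the subset of $\mathrm{U}(r,s)(F_v)$ where the argument of the Siegel section lies in this big cell, and the integrand collapses to $\bar\tau(\det g_1)\varphi(g_1 h)$ times a volume factor in that subset.

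First, I would write out $S^{-1}\alpha(ww_{\mathrm{Borel}},\,g_1h)\,S$ explicitly, using the block shapes of $S$, $\alpha$, $w_{\mathrm{Borel}}$ and the long Weyl element $w=w_{n+1}$, and compare it with $S^{\prime-1}\alpha(w,\,g_1h)\,S^\prime$ used in $F^\prime$. The element $w_{\mathrm{Borel}}$ is engineered precisely so that after the conjugation by $S$, the $(r,s)\hookrightarrow (r+1,s+1)$ enlargement contributes only in a single extra block row and block column, sitting in positions that belong either to the Levi of $Q_{n+1}$ or to $N_{Q_{n+1}}(\mathbb{Z}_p)$. The comparison is therefore compatible with passing to $w^\prime_{\mathrm{Borel}}$ and $S^\prime$ for $\mathrm{U}(n,n)$ after deleting that extra row/column.

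Second, I would verify the support conditions match term by term: $S^{-1}\alpha(ww_{\mathrm{Borel}},g_1h)S$ lies in $Q_{n+1}w_{n+1}N_{Q_{n+1}}(\mathbb{Z}_p)$ if and only if $S^{\prime-1}\alpha(w,g_1h)S^\prime$ lies in $Q_n w_n N_{Q_n}(\mathbb{Z}_p)$. For the ``if'' direction one checks that the extra block entries produced by the enlargement can be rewritten, after left-multiplication by an explicit element of $Q_{n+1}(\mathbb{Q}_p)$, as an element of $N_{Q_{n+1}}(\mathbb{Z}_p)$; on this cell $f^{\mathrm{fteq}}_v$ evaluates to $1$, matching the value $1$ of $f^{\mathrm{fteq},\prime}_v$ on the corresponding cell for $\mathrm{U}(n,n)$. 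The converse direction is analogous, since the extra row/column is forced to be integral by the constraint that the $(n+1)$-dimensional matrix lie in the Siegel big cell with integral unipotent part.

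Finally, with the integrands matching pointwise and the domain of integration the same, the two pullback integrals are equal, which yields the identity. The only genuinely delicate step is the bookkeeping in the matrix decomposition of $S^{-1}\alpha(ww_{\mathrm{Borel}},g_1h)S$; this is a finite and elementary calculation but requires one to carry through the several $S$, $S^\prime$, $w_{\mathrm{Borel}}$, $w^\prime_{\mathrm{Borel}}$, $w_{n+1}$, $w_n$ conventions consistently. Once the ``extra block'' is correctly identified as lying in $N_{Q_{n+1}}(\mathbb{Z}_p)$, the reduction to the $\mathrm{U}(n,n)$ pullback is immediate.
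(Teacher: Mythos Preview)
Your proposal rests on a misreading of the section $f^{\mathrm{fteq}}_v$. You describe it as ``supported on $Q_{n+1}(\mathbb{Q}_p)\,w_{n+1}\,N_{Q_{n+1}}(\mathbb{Z}_p)$ with value the constant function $1$''; but that is the description of $\tilde{f}^\dag_v$, not of $f^{\mathrm{fteq}}_v$. By definition $f^{\mathrm{fteq}}_v$ is obtained from the formula for $f_{\mathrm{sieg},v}=f^0$ by replacing the normalized $\tilde{f}_t$ with $\tilde{f}^\dag_v$: it is a finite sum over matrices $A,B,C,D,E$, with character twists $\prod_i\bar\xi_i(\cdots)$, of right translates of $\tilde{f}^\dag_v$ by the unipotent elements $w_{\mathrm{Borel}}^{-1}\begin{pmatrix}1_{n+1}&\ast\\&1_{n+1}\end{pmatrix}w_{\mathrm{Borel}}$. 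So the ``support comparison'' you outline is being done for the wrong section, and the character sums never enter your argument at all.

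The paper's proof goes through the structure of this sum directly. The point is that the element $w_{\mathrm{Borel}}$ appearing in the argument $ww_{\mathrm{Borel}}$ interacts with the $w_{\mathrm{Borel}}^{-1}(\cdot)w_{\mathrm{Borel}}$ conjugation that is already built into the definition of $f^{\mathrm{fteq}}_v$. Using the displayed identity
\[
w_{\mathrm{Borel}}^{-1}\begin{pmatrix}1_{n+1}&\begin{smallmatrix}&&C&D\\&&&\\&&A&B\\E&&&\end{smallmatrix}\\&1_{n+1}\end{pmatrix}w_{\mathrm{Borel}}
=\begin{pmatrix}1_{n+1}&\begin{smallmatrix}&&A&B\\&&C&D\\&&&\\E&&&\end{smallmatrix}\\&1_{n+1}\end{pmatrix},
\]
one sees that after the $w_{\mathrm{Borel}}$-translation the upper-right block has its extra row and column (those coming from the ``$+1$'' in $\mathrm{U}(n+1,n+1)$) identically zero, and the remaining $(b+a+b)\times(b+a+b)$ pattern matches exactly the unipotent elements appearing in the definition of $f^{\mathrm{fteq},\prime}_v$ via $w'_{\mathrm{Borel}}$. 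Since the character twists and the summation ranges for $A,B,C,D,E$ are identical in the two definitions, the $(n+1)$-pullback at $ww_{\mathrm{Borel}}$ collapses term by term to the $n$-pullback at $w$. That is the content of ``looking at the action of $w_{\mathrm{Borel}}$ on the Siegel section''; no separate cell-support analysis of the type you propose is needed.
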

\begin{proof}
It follows easily from looking at the action of $w_{Borel}$ on the Siegel section $f^{\mathrm{fteq}}_v$.
\end{proof}
Now we record the local Fourier coefficient.
Let $\mathfrak{X}$ be the following subset of $M_{r+s+1}(\mathbb{Q}_p)$: if the block matrix $x=\begin{pmatrix}A_x&*&B_x\\ *&*&*\\C_x&*&D_x\end{pmatrix}$ (with respect to $(s+1+r)\times (r+1+s)$), then:\\
-\ \ $x$ has entries in $\mathbb{Z}_p$;\\
-\ \ $C_x$ has the $i$-th-upper-left minors $C_i$ such that $(\det C_i)\in \mathbb{Z}_p^\times$ for $i=1,2,...,r$;\\
-\ \ and $B_x$ has $i$-upper-left minors $B_i$ so that $(\det B_i)\in\mathbb{Z}_p^\times$ for $i=1,2,...,s$.
We define a function
\begin{equation}
\Phi_\xi(x)=\left\{\begin{array}{ll} 0&x\not \in \mathfrak{X}, \\ \xi_1/\xi_2(\det C_1)...\xi_{r-1}/\xi_{r}(\det C_{r-1})\xi_{r}(C_{r})&\\ \times\xi_{a+b+2}/\xi_{a+b+3}(\det B_1)...\xi_{r+s}/\xi_{r+s+1}(\det B_{s-1})\xi_{r+s+1}(\det B_s).& x\in \mathfrak{X}.\end{array}\right.
\end{equation}
The following is \cite[Lemma 4.46]{WAN}.
\begin{lemma}\label{Fourier-p-adic}
Suppose $|\det \beta|\not=0$ then:\\
(i) If $\beta\not\in S_{a+2b+1}(\mathbb{Z}_p)$ then $f^0_\beta(z,1)=0$;\\
(ii) Let $t:=\mathrm{ord}_p(\mathrm{cond}(\tau').$ If $\beta\in S_{a+2b+1}(\mathbb{Z}_p)$, then:
$$f^0_\beta(z,1)=\bar{\tau}'(\det\beta)|\det\beta|_p^{2z}\Phi_\xi({}^t\!\beta).$$
\end{lemma}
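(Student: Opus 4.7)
The plan is to compute the Fourier coefficient
\[
f^0_\beta(z,1)=\int_{S_{n+1}(\mathbb{Q}_p)}f^0\!\left(z,\begin{pmatrix}1_{n+1}&x\\0&1_{n+1}\end{pmatrix}\right)e_v(-\mathrm{Tr}(\beta x))\,dx
\]
by unfolding the explicit expression of $f^0(z,g)$ as a finite sum of translates of $\tilde{f}_t$. Because $\tilde{f}_t$ is supported in $Q_{n+1}(\mathbb{Q}_p)K_Q(p^t)$ with the explicit transformation law under $Q_{n+1}$, the integrand in each summand localizes to an explicit subset of $S_{n+1}(\mathbb{Q}_p)$, and the $\tau$-twist on $K_Q(p^t)$ produces the factor $\bar\tau'(\det\beta)|\det\beta|_p^{2z}$ after a change of variables $x\mapsto \beta^{-1}x$ (using that the lower-right block of $n(x)g$ gets modified by $\beta$ when $g=1$ and one folds things into the Siegel parabolic).

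For part (i), I would carry out the support analysis first. The key fact is that $\tilde{f}_t(z,n(x))$ vanishes unless the matrix $\begin{pmatrix}1&x\\0&1\end{pmatrix}$ lies in $Q_{n+1}(\mathbb{Q}_p)K_Q(p^t)$, which forces $x\in S_{n+1}(\mathbb{Z}_p)$ after accounting for the unipotent translations $n_0(A,B,C,D,E)$ coming from each summand. These unipotent translations only shift $x$ by an integral matrix, hence do not change the support condition on $x$. Then, for $\beta\notin S_{n+1}(\mathbb{Z}_p)$, writing $\beta$-pairing with an integral $x$ on $\mathbb{Z}_p$-lattice as a character sum shows the integrand averages to zero by orthogonality.

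For part (ii), assuming $\beta\in S_{n+1}(\mathbb{Z}_p)$, I would substitute $x\mapsto {}^t\!\beta^{-1}x$ to move $\beta$ into the lower-right corner and create the factor $\bar\tau'(\det\beta)|\det\beta|_p^{2z}$ from the transformation law of $\tilde{f}_t$ under $Q_{n+1}$. The remaining sum over $A,B,C,D,E$, weighted by the characters $\bar\xi_i$ on successive ratios of upper-left minor determinants, then exactly reproduces the function $\Phi_\xi({}^t\!\beta)$: each minor $C_i, B_i$ of $x={}^t\!\beta$ is matched against the product $\prod (\xi_i/\xi_{i+1})(\det C_i)\cdot \prod(\xi_{a+b+1+i}/\xi_{a+b+2+i})(\det B_i)$, which is how $\Phi_\xi$ was defined. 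The factor $\frac{1}{c_{n+1}(\tau_p',-z-\frac12)\mathfrak{g}(\tau_p')^{n+1}}$ at the front of $f^0$ is precisely engineered to cancel the Gauss sums and volume factors produced by the character integration over $x$ modulo $p^t$.

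The main obstacle is bookkeeping: one has to match the indexing of minors on the $A,D,E$ blocks (with the twist by $w_{\mathrm{Borel}}$) against the minors $C_i,B_i$ of $x={}^t\!\beta$ after the change of variables. This is essentially a linear-algebra tracking of how the Weyl conjugation by $w_{\mathrm{Borel}}$ reorders the rows and columns, combined with a careful identification of which upper-left minor of the integration variable corresponds to which $A_i, D_i, E_i$ in the character weight. Once this identification is done, the rest is a routine product of evaluations of abelian Gauss sums $\mathfrak{g}(\xi_i)$, and the claimed formula follows. This is analogous to the pullback-section computations carried out in \cite{WAN} and in \cite[Section 4]{EHLS}, and I expect no new conceptual difficulty beyond the combinatorial setup.
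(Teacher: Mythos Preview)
The paper does not give its own proof of this lemma; it simply records it as \cite[Lemma 4.46]{WAN}. Your overall strategy---unfold $f^0$ as a character-weighted sum of unipotent translates of $\tilde f_t$, compute the local Fourier coefficient of $\tilde f_t$, and recognise the remaining sum over $A,B,C,D,E$ as the function $\Phi_\xi({}^t\!\beta)$---is exactly the approach taken in \cite{WAN}, so in outline you are on the right track.

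There is, however, a genuine error in your set-up. Your displayed integral for $f^0_\beta(z,1)$ is missing the long Weyl element: since $n(x)\in Q_{n+1}$ with $D_{n(x)}=1$, one has $f^0(z,n(x))=f^0(z,1)$ identically, and the integral you wrote is meaningless. The correct definition (as used throughout \cite{SU}, \cite{WAN}) is
\[
f^0_\beta(z,1)=\int_{S_{n+1}(\mathbb{Q}_p)}f^0\!\bigl(z,\,w_{n+1}\,n(x)\bigr)\,e_v(-\mathrm{Tr}(\beta x))\,dx.
\]
Once $w_{n+1}$ is inserted, the translates $w_{\mathrm{Borel}}^{-1}n_0(A,B,C,D,E)w_{\mathrm{Borel}}=n(y_0)$ commute past $n(x)$ (the unipotent radical of $Q$ is abelian), contributing the phase $e_v(\mathrm{Tr}(\beta y_0))$; the $\bar\xi_i$-weighted sum over $A,B,C,D,E$ is then a product of Gauss sums which collapses to $\Phi_\xi({}^t\!\beta)$. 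The remaining factor is the Fourier coefficient $\tilde f_{t,\beta}(z,1)$ of the level-$p^t$ section, computed explicitly in \cite[\S11.4]{SU}: it is this computation, not a change of variables $x\mapsto{}^t\!\beta^{-1}x$, that simultaneously produces the support condition $\beta\in S_{n+1}(\mathbb{Z}_p)$ and the factor $\bar\tau'(\det\beta)|\det\beta|_p^{2z}$, with the prefactor $c_{n+1}(\tau'_p,-z-\tfrac12)^{-1}\mathfrak{g}(\tau'_p)^{-(n+1)}$ in the definition of $f^0$ cancelling the normalisation constants arising there. Your substitution $x\mapsto{}^t\!\beta^{-1}x$ does not work as described, since $\beta^{-1}$ need not be integral and the support of $\tilde f_t(z,w_{n+1}n(x))$ is not $S_{n+1}(\mathbb{Z}_p)$ but rather a set determined by the congruence $x^{-1}\in p^tM_{n+1}(\mathbb{Z}_p)$.
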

\subsection{Auxiliary Prime}\label{Auxi}
We take an auxiliary prime $v$ which splits as $w\bar{w}$ in $\mathcal{K}/F$ such that our Eisenstein datum is unramified at $v$. We need to choose different sections so that the $\beta$-th local Fourier coefficient at $v$ is identically zero if $\det\beta=0$. This is important for our application of the Kudla-Sweet result to get the $p$-adic functional equation. We also need to have an explicit description of the resulting pullback sections. The key idea is to work with Siegel-Weil sections and try to reduce the computation to simpler cases using Godement-Jacquet functional equation, as in \cite[Section 4.3]{EHLS}. As promised in the introduction, we now choose $v$ so that the $n$ Satake parameters of $\pi_v$ are pairwise distinct. We first prove the following lemma.
\begin{lemma}
There exists a prime $v=w\bar{w}$ of $F$ split in $\mathcal{K}$, such that the $\pi$, $\chi$, and $\mathcal{K}$ are unramified at $v$, and the local Satake parameters for $\pi_v$ are pairwise distinct.
\end{lemma}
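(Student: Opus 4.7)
The plan is to exploit the compatible system of Galois representations attached to $\pi$, combined with Chebotarev density and a Zariski-density argument based on the Sen operator. Concretely, for an unramified split prime $v = w\bar{w}$ of $F$ outside $\Sigma$, the Satake parameters of $\pi_v$ coincide (up to the fixed normalization) with the eigenvalues of $\rho_\pi(\mathrm{Frob}_w)$, so it suffices to exhibit a prime $w$ of $\mathcal{K}$ lying above a split prime of $F$, outside $\Sigma$, such that $\rho_\pi(\mathrm{Frob}_w)$ is regular semisimple (i.e.\ has pairwise distinct eigenvalues).

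The main step is to show that the image $H := \rho_\pi(G_\mathcal{K}) \subseteq \mathrm{GL}_n(L)$ contains a regular semisimple element. Let $G \subseteq \mathrm{GL}_{n,L}$ be the Zariski closure of $H$. The regular semisimple locus is the Zariski-open complement of the discriminant hypersurface of the characteristic polynomial, so it suffices to show that $G$ is not entirely contained in the discriminant locus. For this I would use the Sen operator $\phi_{v_0}$ of $\rho_\pi|_{G_{\mathcal{K}_{v_0}}}$ at a $p$-adic place $v_0$ of $\mathcal{K}$: its eigenvalues are the Hodge-Tate weights $\kappa_{1,v_0} > \kappa_{2,v_0} > \cdots > \kappa_{n,v_0}$ recorded in the setup, which are pairwise distinct by the regularity of the weight $\underline{k}$. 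Since $\phi_{v_0}$ lies in $\mathrm{Lie}(G) \otimes_L \mathbb{C}_p$ (being contained in the Lie algebra of the Zariski closure of $\rho_\pi(I_{v_0}) \subseteq H$), the discriminant polynomial, viewed as a polynomial function on $\mathrm{Lie}(G)$, does not vanish identically after extending scalars to $\mathbb{C}_p$, hence does not vanish identically on $\mathrm{Lie}(G)$ itself. Therefore $\mathrm{Lie}(G)$ contains a regular semisimple element, and exponentiation (together with the fact that $G$ is an algebraic group whose regular semisimple locus is stable under generic conjugation) yields regular semisimple elements in $G$.

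With this in hand, the subset $H^{\mathrm{rs}} := H \cap G^{\mathrm{rs}}$ is nonempty, hence a $p$-adically open subset of the compact $p$-adic Lie group $H$ of positive Haar measure. Applying Chebotarev density to $\rho_\pi$ (in its standard form for continuous $p$-adic Galois representations), the Frobenius elements $\{\mathrm{Frob}_w : w \notin \Sigma\}$ are equidistributed with respect to Haar measure on $H$, so infinitely many $\mathrm{Frob}_w$ lie in $H^{\mathrm{rs}}$, and their eigenvalues are automatically distinct. Intersecting with the primes of $\mathcal{K}$ lying above split primes of $F$ (a set of Dirichlet density $1/2$ by Chebotarev for $\mathcal{K}/F$) leaves an infinite set, from which we select any $v = w\bar{w}$ to obtain the desired prime. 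The main potential obstacle is the step where the Sen operator, a priori an element of $\mathrm{Lie}(G) \otimes_L \mathbb{C}_p$, is used to produce a regular semisimple element in $\mathrm{Lie}(G)$; this is handled by the elementary observation that a polynomial function (the discriminant) on an $L$-vector space cannot vanish identically if it is nonzero at a $\mathbb{C}_p$-point.
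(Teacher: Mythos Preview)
Your proof is correct, but it takes a considerably more elaborate route than the paper's. The paper's argument is a two-line shortcut exploiting the ordinarity hypothesis directly: since $\pi$ is ordinary at $p$, the $p$-adic valuations of the Satake parameters at $p$ equal the (distinct) Hodge--Tate weights, so the Satake parameters at $p$ are already pairwise distinct. One then passes to an auxiliary $\ell$-adic member $\rho_{\pi,\ell}$ of the compatible system with $\ell \neq p$; there $\rho_{\pi,\ell}(\mathrm{Frob}_p)$ is a perfectly good unramified Frobenius with those same distinct eigenvalues. Chebotarev density then produces a split prime $v$ with $\rho_{\pi,\ell}(\mathrm{Frob}_v)$ $\ell$-adically close to $\rho_{\pi,\ell}(\mathrm{Frob}_p)$, and distinctness of eigenvalues is an open condition.

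By contrast, you stay entirely inside the $p$-adic realization and invoke Sen's theorem to place the Sen operator in $\mathrm{Lie}(H)\otimes\mathbb{C}_p$, then run a Zariski-density argument on the discriminant locus and exponentiate back into $H$. This is valid, and it has the mild advantage of not requiring ordinarity (only regularity of the Hodge--Tate weights), but in the present setting ordinarity is a standing assumption, so the paper's direct route---switch to $\ell\neq p$ and use $\mathrm{Frob}_p$ as a witness---avoids Sen theory, Lie algebras, and the exponential map entirely. Your approach would be the natural one in a finite-slope or non-ordinary generalization.
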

\begin{proof}
By our ordinarity assumption of $\pi$, the Satake parameters at $p$ are pairwise distinct. We take a prime $\ell$ outside $p$ and consider the $\ell$-adic Galois representation $\rho_{\pi,\ell}$ attached to $\pi$. There is a prime $v$ such that the images of $\mathrm{Frob}_p$ and $\mathrm{Frob}_{v}$ under $\rho_{\pi,\ell}$ are sufficiently close in the $\ell$-adic topology so that $\rho_{\pi,\ell}(\mathrm{Frob}_v)$ has distinct eigenvalues. This $v$ satisfies our needs.
\end{proof}

Let $\varpi_v$ be an uniformizer at $v$. We first define several Schwartz functions.
\begin{definition}\label{defSch}
For convenience of the presentation in this definition, we use the block matrices for $\mathrm{GL}_{2n+2}$ and $\mathrm{GL}_{n+1}$ with respect to the partition $(1+b+a+b+1+b+a+b)$ and $(1+b+a+b)$ respectively. This means we switch the corresponding rows and columns in the unitary groups.

We use the superscript $(n)$ or $(n+1)$ to denote Schwartz functions on the set of $n\times n$ or $(n+1)\times(n+1)$ matrices. Let $\Phi_1^{(n+1)}$ and $\Phi_1^{(n)}$ be the characteristic function of the set of matrices which are congruent to identity modulo $\varpi_v$ (which we denote as $\Gamma_{n+1}$ and $\Gamma_n$).

We define $\hat\Phi^{(n+1)}_2$ to be the characteristic function of the set of matrices of the form
$\begin{pmatrix}E&D&F\\B&A&C\\H&G&J\end{pmatrix}$ (block matrices with respect to $(1+s+r)\times(1+r+s)$) described below. The $A$, $B$ and $D$ have entries divisible by $\varpi_v$; the $E$, $F$, $H$ and $J$ has entries in $\mathcal{O}_{F,v}$; The $C$ is in $\mathrm{GL}_b(\mathcal{O}_{F,v})$ and is lower triangular modulo $\varpi_v$; the $G$ is in $\mathrm{GL}_b(\mathcal{O}_{F,v})$ and is upper triangular modulo $\varpi_v$.

Let $w_0$ be the identity Weyl element in general linear groups. Define $\hat\Phi^{(n)}_{2,w_0,w_0}$ to be the characteristic function of the set of matrices $\begin{pmatrix}A&B\\ C&D\end{pmatrix}$ (block matrices with respect to $(r+s)\times(s+r)$) such that $A$ has entries divisible by $\varpi_v$, $D$ has entries in $\mathcal{O}_{F,v}$. The matrices in $B$ are in $\mathrm{GL}_n(\mathcal{O}_{F,v})$ which are lower triangular modulo $\varpi_v$. The matrices in $C$ are in $\mathrm{GL}_n(\mathcal{O}_{F,v})$ which are upper triangular modulo $\varpi_v$.

For $1\leq j_1\leq r$, we define $w_{j_1}$ to be the Weyl element in $\mathrm{GL}_{r+1}$ corresponding to the simple switch between the 1st and $1+j_1$-th element. We define the Weyl element $w_{j_2}\in\mathrm{GL}_{s+1}$ for $1\leq j_2\leq s$ similarly.
We define $\hat{\Phi}^{(n)}_{2,w_{j_1},w_{j_2}}$ to be the characteristic function of the set of matrices $\begin{pmatrix}A&B\\C&D\end{pmatrix}$ which we describe below. The $(j_1, j_2)$-th entry of $A$ is in $\mathcal{O}_{F,v}$ while other entries are divisible by $\varpi_v$. The $D$ has entries in $\mathcal{O}_{F,v}$. The $B$ is such that the $j_1$-th row has entries in $\mathcal{O}_{F,v}$; for $j\not=j_1$, $B_{jj}\in\mathcal{O}^\times_{F,v}$; $B_{j,k}$ are divisible by $\varpi_v$ if $k>j$; $B_{j,k}$ are in $\mathcal{O}_{F,v}$ if $k<j$. The $C$ is such that the $j_2$-th column has entries in $\mathcal{O}_{F,v}$; For $j\not=j_2$, $C_{jj}$ has entries in $\mathcal{O}^\times_{F,v}$; the $C_{kj}$ are in $\mathcal{O}_{F,v}$ if $k<j$; the $C_{kj}$ are divisible by $\varpi_v$ if $k>j$. We also define $\hat{\Phi}^{(n),\prime}_{2,w_{j_1},w_{j_2}}$ by requiring the entries in $A$ are in $\mathcal{O}_{F,v}$, and the same requirement as the definition of $\hat{\Phi}^{(n)}_{2,w_{j_1},w_{j_2}}$ on the $B$, $C$ and $D$.

For computational convenience we define another Schwartz function $\hat\Phi^{(n+1),\prime}_2$ by the same definition as $\hat\Phi^{(n+1)}_2$ above except that we only require entries in $A$, $B$ and $D$ to be in $\mathcal{O}_{F,v}$, and same as for $\hat\Phi^{(n+1)}_2$ for other blocks.

We define $\Phi^{(n+1)}_2$, $\Phi^{(n+1),\prime}$, $\Phi^{(n)}_2$, and $\Phi^{(n)}_{2,w_{j_1},w_{j_2}}$ to be the inverse Fourier transform of the Schwartz functions $\hat\Phi^{(n+1)}_2$, $\hat\Phi^{(n+1),\prime}$, $\hat\Phi^{(n)}_2$, and $\hat{\Phi}^{(n)}_{2,w_{j_1},w_{j_2}}$.
\end{definition}

For $\Phi$ a Schwartz function on $M_{a+2b+1,2(a+2b+1)}(F_v)$ defined by
$$\Phi(X,Y):=\Phi_1(X)\Phi_2(Y)$$
where $\Phi_1=\Phi^{(n+1)}_1$ and $\Phi_2=\Phi^{(n+1)}_2$,
and define a Godement section (terminology of Jacquet) by:
\begin{equation}\label{fff}
f^\Phi(g)=\tau_2(\det g)|\det g|_v^{-s+\frac{a+2b+1}{2}}\times\int_{\mathrm{GL}_{a+2b+1}(\mathbb{Q}_v)}\Phi((0,X)g)\tau_1^{-1}\tau_2(\det X)|\det X|_v^{-2s+a+2b+1}d^\times X.
\end{equation}
We can also compute its $\beta$-th Fourier coefficient as
$$f^\Phi_\beta=\mathrm{Vol}(\Gamma)\hat{\Phi}_2({}^t\!\beta).$$
This is \cite[Lemma 1.10]{Eischen}.
\begin{definition}
We define Siegel Eisenstein series $f^{(n+1)}_v$, $f^{(n+1),\prime}_v$, etc by (\ref{fff}) taking the $\Phi_1$ as above and the $\Phi^{(n+1)}_2$, $\Phi^{(n+1),\prime}_2$, etc as the $\Phi_2$. We define the Siegel section at $v$
\begin{equation}\label{(15)}f_{sieg,v}(g):=\sum_E \Phi_3(E)f^{(n+1)}_v(g\begin{pmatrix}1_{n+1}&\begin{matrix}E&0&0\\0&0&0\\0&0&0\end{matrix}\\&1_{n+1}\end{pmatrix})
\end{equation}
where $\Phi_3$ is the inverse Fourier transform of the characteristic function of $\mathcal{O}^\times_{F,v}$. Recall the upper right block matrix is with respect to $(1+s+r)\times (1+r+s)$.
This is the Siegel section we use to construct families of Klingen Eisenstein series.

We define an $\Upsilon\in \mathrm{U}(n+1,n+1)(F_v)$ such that $\Upsilon_w=S_w^{-1}$ (as in (\ref{Upsilon})).
\end{definition}
\noindent\underline{Caution}: Later on when we are computing pullback sections of $f^{(n+1)}$, $f^{(n+1),\prime}$ and $f_{sieg,v}$, we mean the pullback sections of right translations by $\Upsilon_v$ of them.

It is clear that the local Fourier coefficient of $f_{sieg,v,\beta}$ can be nonzero only when $\beta$ is non-degenerate. We reduce the computation of the pullback section of $f_{sieg,v}$ to that of $f^{(n),\prime}_v$ which is relatively easier, by the lemma below.

\begin{lemma}\label{Lemma 4.29}
We have
\begin{equation*}
f^{(n+1)}_v(g)=\sum_{A,B,D} f^{(n+1),\prime}_v(g\begin{pmatrix}1_{n+1}&\begin{matrix}0&D&0\\B&A&0\\0&0&0\end{matrix}\\&1_{n+1}\end{pmatrix})
\end{equation*}
where $A$, $B$ and $D$ run over matrices with entries in $\frac{1}{\varpi_v}\mathcal{O}_{F,v}$ modulo $\mathcal{O}_{F,v}$.
\end{lemma}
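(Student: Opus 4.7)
The plan is to reduce the identity to a pointwise equality of Schwartz functions and then to invoke the Godement section formula (\ref{fff}). Both $f^{(n+1)}_v$ and $f^{(n+1),\prime}_v$ arise from (\ref{fff}) with the common first factor $\Phi_1 = \mathbf{1}_{\Gamma_{n+1}}$ and differ only through their second factors $\Phi^{(n+1)}_2$ versus $\Phi^{(n+1),\prime}_2$. Since the unipotent element $n(Y_0) = \bigl(\begin{smallmatrix}1_{n+1}&Y_0\\ &1_{n+1}\end{smallmatrix}\bigr)$ has trivial determinant, right translation by $n(Y_0)$ in (\ref{fff}) changes the integrand from $\Phi_1(X_1)\Phi^{(n+1),\prime}_2(X_2)$ to $\Phi_1(X_1)\Phi^{(n+1),\prime}_2(X_1Y_0 + X_2)$, where $(X_1,X_2) = (0,X)g$. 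The lemma therefore reduces to the pointwise Schwartz identity
\begin{equation*}
\Phi^{(n+1)}_2(X_2) \;=\; q_v^{-N}\sum_{Y_0}\Phi^{(n+1),\prime}_2(X_1Y_0 + X_2), \qquad X_1 \in \Gamma_{n+1},
\end{equation*}
with $Y_0$ ranging over the $q_v^N$ cosets of matrices supported on the $A$-, $B$-, $D$-positions with entries in $\varpi_v^{-1}\mathcal{O}_{F,v}/\mathcal{O}_{F,v}$ and $N = sr+s+r$; the overall scalar $q_v^{-N}$ is absorbed into the implicit measure normalization, as is standard in the $p$-adic computations of \cite{EHLS}.

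The first ingredient is the translation invariance of $\Phi^{(n+1),\prime}_2$ under $M_{n+1}(\mathcal{O}_{F,v})$, which holds because $\hat{\Phi}^{(n+1),\prime}_2$ is a characteristic function supported inside $M_{n+1}(\mathcal{O}_{F,v})$. A direct block computation then shows that $(X_1Y_0)_{A,B,D} \equiv Y_0 \pmod{\mathcal{O}_{F,v}}$ and $(X_1Y_0)_{C,E,F,G,H,J} \in \mathcal{O}_{F,v}$: indeed $Y_0$ vanishes outside the $A$-, $B$-, $D$-positions, $X_1 - 1$ lies in $\varpi_v M_{n+1}(\mathcal{O}_{F,v})$, and every ``cross'' contribution to the $C$-, $E$-, $F$-, $G$-, $H$-, $J$-blocks has the form $\varpi_v\mathcal{O}_{F,v} \cdot \varpi_v^{-1}\mathcal{O}_{F,v} \subset \mathcal{O}_{F,v}$. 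Hence $\Phi^{(n+1),\prime}_2(X_1Y_0 + X_2) = \Phi^{(n+1),\prime}_2(Y_0 + X_2)$ whenever $X_1 \in \Gamma_{n+1}$.

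The second ingredient is the purely Fourier-theoretic identity $\Phi^{(n+1)}_2(Z) = q_v^{-N}\sum_{Y_0}\Phi^{(n+1),\prime}_2(Z + Y_0)$. By Definition \ref{defSch}, $\hat{\Phi}^{(n+1)}_2$ and $\hat{\Phi}^{(n+1),\prime}_2$ coincide outside the $A$-, $B$-, $D$-blocks and differ inside these blocks only by imposing $A,B,D \in \varpi_v\mathcal{O}_{F,v}$ rather than $A,B,D \in \mathcal{O}_{F,v}$. Applying the one-variable orthogonality $\mathbf{1}_{\varpi_v\mathcal{O}_{F,v}}(w) = q_v^{-1}\sum_{y \in \varpi_v^{-1}\mathcal{O}_{F,v}/\mathcal{O}_{F,v}}\psi(wy)$ in each of the $N$ coordinates gives
\begin{equation*}
\hat{\Phi}^{(n+1)}_2(W) \;=\; q_v^{-N}\sum_{Y_0}\hat{\Phi}^{(n+1),\prime}_2(W)\,\psi(\mathrm{tr}(WY_0^{t})),
\end{equation*}
and inverse Fourier transform converts each character factor $\psi(\mathrm{tr}(WY_0^{t}))$ into a translation by $Y_0$. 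Combining the two steps and substituting into the Godement integral (\ref{fff}) yields the claim.

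The main obstacle is the block bookkeeping in the first ingredient. The support of $\hat{\Phi}^{(n+1),\prime}_2$ imposes not only $\mathcal{O}_{F,v}$-integrality on the $A$-, $B$-, $D$-, $E$-, $F$-, $H$-, $J$-blocks (for which translation invariance by $\mathcal{O}_{F,v}$ is automatic) but also the strict conditions that $C$ is lower triangular modulo $\varpi_v$ and $G$ is upper triangular modulo $\varpi_v$ with unit diagonal entries, which are \emph{not} preserved under arbitrary $\mathcal{O}_{F,v}$-translations of those blocks. One must verify that the shift $X_1Y_0 - Y_0$ never disturbs the $C$- and $G$-block conditions; this works precisely because $Y_0$ is supported strictly on the $A$-, $B$-, $D$-positions, forcing every contribution of $X_1Y_0$ to the $C$- and $G$-blocks to arise from the product of an off-diagonal entry of $X_1$ (in $\varpi_v\mathcal{O}_{F,v}$) with an entry of $Y_0$ (in $\varpi_v^{-1}\mathcal{O}_{F,v}$), hence landing in $\mathcal{O}_{F,v}$ where invariance holds. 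This structural constraint is exactly why the sum in the lemma is indexed only by matrices supported on the $A$-, $B$-, $D$-blocks.
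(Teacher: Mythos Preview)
Your argument is correct and is the natural one; the paper states Lemma \ref{Lemma 4.29} without proof, presumably because the computation is routine once one unwinds Definition \ref{defSch} and the Godement formula (\ref{fff}). Two small remarks: first, your ``main obstacle'' paragraph overcomplicates matters --- the triangularity conditions on the $C$- and $G$-blocks live on the Fourier side $\hat{\Phi}^{(n+1),\prime}_2$, and since $\hat{\Phi}^{(n+1),\prime}_2$ is supported in $M_{n+1}(\mathcal{O}_{F,v})$ the inverse transform $\Phi^{(n+1),\prime}_2$ is automatically $M_{n+1}(\mathcal{O}_{F,v})$-translation invariant in every block, so the observation $(X_1-1)Y_0\in M_{n+1}(\mathcal{O}_{F,v})$ already suffices without any block-by-block analysis. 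Second, your computation produces an overall factor $q_v^{-N}$ not visible in the lemma as stated; this is harmless for the application (everything is later renormalized), but it is worth flagging rather than silently absorbing.
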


We consider
$$S^{-1}\alpha(g,1)=\begin{pmatrix}a_1&a_2&a_3&&b_1&b_2&&\\a_4&a_5&a_6&&b_3&b_4&&\\a_7&a_8&a_9&&b_5&b_6&&\\&&&1_b&&&&
\\c_1&c_2&c_3&1_b&d_1&d_2&&\\c_4&c_5&c_6&&d_3&d_4&&\\a_7&a_8&a_9&&b_5&b_6&1_a&\\a_1&a_2&a_3&&b_1&b_2&&1_b\end{pmatrix}$$
where the block matrix is with respect to $b+1+a+b+b+1+a+b$. An argument as in \cite[Page 196]{SU} implies in order for this matrix to be in the support of $f^{(n+1),\prime}_v$, we must have
$$g\begin{pmatrix}&&&1&\\&&&&1\\&&1&&\\1&&&&\\&1&&&\end{pmatrix}$$
is in
$$B_{b+1,a+b+1}(F_v)\begin{pmatrix}1&\\M_{a+b+1,b+1}(\mathcal{O}_{F,v})&1\end{pmatrix}\begin{pmatrix}
1&&&&\\&1&&&\\&&&&1\\&&1&&\\&&&1&\end{pmatrix}.$$
Thus
$$g\begin{pmatrix}&&1_b&&\\&&&1&\\&&&&1_a\\1_b&&&&\\&1&&&\end{pmatrix}$$
is in $B_{b+1,a+b+1}(F_v)\begin{pmatrix}1&\\M_{a+b+1,b+1}(\mathcal{O}_{F,v})&1\end{pmatrix}$. Moreover the pullback section is right invariant under
$$\begin{pmatrix}&&1_b&&\\&&&1&\\&&&&1_a\\1_b&&&&\\&1&&&\end{pmatrix}
\begin{pmatrix}1&\\M_{a+b+1,b+1}(\mathcal{O}_{F,v})&1\end{pmatrix}
\begin{pmatrix}&&1_b&&\\&&&1&\\&&&&1_a\\1_b&&&&\\&1&&&\end{pmatrix}^{-1}.$$

We have also
\begin{lemma}\label{567}
We define $\Gamma^{\mathrm{Kling}}_0(\varpi_v)\subset \mathrm{GL}_n(\mathcal{O}_{F,v})$ to be the set of matrices $\begin{pmatrix}A&B\\C&D\end{pmatrix}$ (with respect to $(r+1)+(s+1)$ such that $A$ is upper triangular modulo $\varpi_v$, $D$ is lower triangular modulo $\varpi_v$, and $C$ has entries divisible by $\varpi_v$. The pullback section of $f^{(n+1),\prime}_v$ is right invariant under the action of $\Gamma^{\mathrm{Kling}}_0(\varpi_v)$.
\end{lemma}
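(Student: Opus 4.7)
The plan is to reduce the claim to an invariance property of the Godement--Siegel data defining $f^{(n+1),\prime}_v$. By construction of the pullback integral, right translation of $F'_\varphi(f^{(n+1),\prime}_v,z,g)$ by $k\in\Gamma^{\mathrm{Kling}}_0(\varpi_v)\subset\mathrm{GU}(r+1,s+1)(F_v)$ corresponds to right translation of $f^{(n+1),\prime}_v$ by the element $\tilde k:=\Upsilon_v^{-1}S^{-1}\alpha(k,1)S\Upsilon_v\in\mathrm{GU}(n+1,n+1)(F_v)$ (taking into account the caution about the $\Upsilon_v$-twist stated just before the lemma). So it suffices to verify $f^{(n+1),\prime}_v(h\tilde k)=f^{(n+1),\prime}_v(h)$ for every $h$ and every such $\tilde k$.

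I would then decompose $\Gamma^{\mathrm{Kling}}_0(\varpi_v)$ by the Iwahori decomposition with respect to the $(r+1)+(s+1)$ block partition into three pieces: the unipotent-radical piece $\bigl(\begin{smallmatrix}1_{r+1}&B\\0&1_{s+1}\end{smallmatrix}\bigr)$, the Levi piece $\mathrm{diag}(A,D)$ with $A$ upper triangular and $D$ lower triangular modulo $\varpi_v$, and the opposite-unipotent piece $\bigl(\begin{smallmatrix}1_{r+1}&0\\ \varpi_v C'&1_{s+1}\end{smallmatrix}\bigr)$. Invariance under the unipotent-radical piece is already established in the paragraph immediately preceding the lemma: its image in $\mathrm{GU}(n+1,n+1)(F_v)$, obtained after conjugating by the Weyl element $\tilde w$ written there, lies in the unipotent radical of the Siegel parabolic $Q_{n+1}$, under which every section of $I_{n+1}(\tau)$ is trivially right invariant. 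For the remaining two pieces, I would substitute into the Godement formula~(\ref{fff}): right translation of $f^\Phi$ by $\tilde k$ replaces $\Phi(X,Y)=\Phi_1^{(n+1)}(X)\Phi_2^{(n+1),\prime}(Y)$ by $\Phi((X,Y)\tilde k)$, and after Fourier transforming in the $Y$-variable the invariance reduces to the claim that the characteristic function $\hat\Phi_2^{(n+1),\prime}$ described in Definition~\ref{defSch} is preserved under right multiplication by the images of the Levi and opposite-unipotent pieces. The support conditions in that definition (triangularity modulo $\varpi_v$ on the $C$ and $G$ blocks, and divisibility by $\varpi_v$ on $A,B,D$) were set up precisely so that these checks succeed.

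The main obstacle is the explicit bookkeeping required to identify $\tilde k$ in the block format used in Definition~\ref{defSch} and then to verify block-by-block that the triangularity and divisibility conditions propagate correctly through conjugation by $S$ and $\Upsilon_v$. Once the matching is carried out, the Levi case reduces to the elementary fact that the characteristic function of matrices in $\mathrm{GL}_b(\mathcal{O}_{F,v})$ which are upper (resp.\ lower) triangular modulo $\varpi_v$ is right invariant under the standard Iwahori subgroup of $\mathrm{GL}_b(\mathcal{O}_{F,v})$; the opposite-unipotent case reduces to the observation that adding a matrix with entries in $\varpi_v\mathcal{O}_{F,v}$ to a block already required to lie in $\varpi_v\mathcal{O}_{F,v}$ keeps it in $\varpi_v\mathcal{O}_{F,v}$. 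Assembling the three cases gives the lemma.
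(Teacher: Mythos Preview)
The paper's own proof consists of the single sentence ``The proof is a straightforward checking,'' so there is essentially nothing to compare against. Your outline---reducing to right invariance of the Godement section via the $\Upsilon_v$-twist, Iwahori-decomposing $\Gamma^{\mathrm{Kling}}_0(\varpi_v)$ into unipotent, Levi, and opposite-unipotent pieces, and then verifying each piece against the support conditions on $\Phi_1^{(n+1)}$ and $\hat\Phi_2^{(n+1),\prime}$---is exactly the kind of direct verification the author has in mind, and it is correct in structure.

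One small caution: in the paragraph just before the lemma the author established invariance under a conjugate of $\bigl(\begin{smallmatrix}1&0\\ M_{a+b+1,b+1}(\mathcal{O}_{F,v})&1\end{smallmatrix}\bigr)$, which after unwinding the Weyl conjugation is closer to the \emph{opposite}-unipotent piece (the $C$-block with integral entries) rather than to the $B$-block unipotent piece you attribute it to. This does not affect the validity of your argument---both pieces are handled by the same mechanism you describe (the image in $\mathrm{GU}(n+1,n+1)$ lands in $N_{Q_{n+1}}$ or preserves the Schwartz data)---but you may wish to double-check which piece the preceding paragraph actually covers before citing it.
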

The proof is a straightforward checking.
\begin{corollary}\label{supportt}
Let $N'$ be the set of matrices $\begin{pmatrix}1_{r+1}&\\S&1_{s+1}\end{pmatrix}$ where $S$ has entries in $\frac{1}{\varpi_v}\mathcal{O}_{F,v}$ such that $S_{1,1}$ is in $\mathcal{O}_{F,v}$. If $F_{\varphi_v}(f^{(n+1)},g,z)\not=0$, then $g\in P(F_v)Q(F_v)N'w_n$, and for any $n'\in N'$,
$$F_{\varphi_v}(f^{(n+1)},g,z)=F_{\varphi_v}(f^{(n+1)},gn',z).$$
\end{corollary}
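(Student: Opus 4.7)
The plan is to deduce both the support and the invariance claims from Lemma~\ref{Lemma 4.29} together with the two facts recorded immediately after it: the Bruhat-cell description of the support of $F_{\varphi_v}(f^{(n+1),\prime}_v,\cdot,z)$, and the right $\Gamma_0^{\mathrm{Kling}}(\varpi_v)$-invariance of this pullback from Lemma~\ref{567}.

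First I would apply Lemma~\ref{Lemma 4.29} termwise inside the pullback integral defining $F_{\varphi_v}(f^{(n+1)},g,z)$. Because right translation by $u(A,B,D)\in N_Q(F_v)$ is independent of the integration variable $g_1\in \mathrm{U}(r,s)(F_v)$, the two operations commute and we obtain
$$F_{\varphi_v}(f^{(n+1)},g,z)=\sum_{A,B,D}F_{\varphi_v}\bigl(R_{u(A,B,D)}f^{(n+1),\prime}_v,\,g,\,z\bigr),$$
with $A,B,D$ running over classes of matrices with entries in $\tfrac{1}{\varpi_v}\mathcal{O}_{F,v}$ modulo $\mathcal{O}_{F,v}$ and $R_u$ denoting right translation.

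Next I would identify the support of each summand. Since $u(A,B,D)\in Q(F_v)$ and $f^{(n+1),\prime}_v$ is a section of $\mathrm{Ind}_Q$, the right translate $R_{u(A,B,D)}f^{(n+1),\prime}_v$ has the same $g$-support as $f^{(n+1),\prime}_v$ itself; only the value on that support is twisted. The Bruhat description preceding the corollary then translates, after accounting for the conjugation by $S$ and for the Weyl element $w_n$, into the requirement $g\in P(F_v)Q(F_v)N'w_n$. The restriction $S_{1,1}\in\mathcal{O}_{F,v}$ in the definition of $N'$ reflects the vanishing of the first row of the upper-right block of $u(A,B,D)$; equivalently, the index set in Lemma~\ref{Lemma 4.29} contains no summation parameter for this entry.

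For the invariance, each summand is individually right $\Gamma_0^{\mathrm{Kling}}(\varpi_v)$-invariant by Lemma~\ref{567}. A direct calculation of $S^{-1}\alpha(n',1)S\cdot \Upsilon_v$ for $n'\in N'$ shows that it can be decomposed as $u(A',B',D')\cdot k$ with $k\in \Gamma_0^{\mathrm{Kling}}(\varpi_v)$ and with $(A',B',D')$ a shift of $(A,B,D)$ by an element having entries in $\tfrac{1}{\varpi_v}\mathcal{O}_{F,v}$ modulo $\mathcal{O}_{F,v}$. Summing over all $(A,B,D)$ therefore yields an expression invariant under this shift, which is precisely the asserted $N'$-invariance.

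The main obstacle will be the explicit matrix bookkeeping in the last step: one has to expand $S^{-1}\alpha(n',1)S$ (using the block form of $S$ displayed in (\ref{(1)})) and decompose it cleanly as the product of a $u$-type unipotent matching the index structure of Lemma~\ref{Lemma 4.29} and an element of $\Gamma_0^{\mathrm{Kling}}(\varpi_v)$. The condition $S_{1,1}\in\mathcal{O}_{F,v}$ is essential at exactly this point; allowing $S_{1,1}\in \tfrac{1}{\varpi_v}\mathcal{O}_{F,v}$ would force $(A',B',D')$ to pick up a component lying in the coordinate for which Lemma~\ref{Lemma 4.29} provides no summation parameter, so the cancellation by re-indexing the sum would fail.
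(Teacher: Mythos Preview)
Your overall strategy is exactly what the paper intends: the corollary is stated without proof precisely because it is meant to follow from Lemma~\ref{Lemma 4.29}, the Bruhat--cell support computation that follows it, and Lemma~\ref{567}. Decomposing $F_{\varphi_v}(f^{(n+1)},g,z)$ via Lemma~\ref{Lemma 4.29} and then controlling each summand through those two lemmas is the right plan.

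There is, however, a genuine slip in the support step. You argue that because $u(A,B,D)\in Q(F_v)$ and $f^{(n+1),\prime}_v\in\mathrm{Ind}_Q$, right translation by $u(A,B,D)$ leaves the $g$-support of the pullback unchanged. That reasoning confuses left and right: sections of $\mathrm{Ind}_Q$ transform predictably under \emph{left} $Q$-translation, whereas right translation by an element of $N_Q$ is an ordinary representation-theoretic operation that can and does move the support. Concretely, the Bruhat description after Lemma~\ref{Lemma 4.29} pins the pullback of $f^{(n+1),\prime}_v$ to a cell whose unipotent part has entries in $\mathcal{O}_{F,v}$; the enlargement to entries in $\tfrac{1}{\varpi_v}\mathcal{O}_{F,v}$ (i.e.\ to $N'$) is produced precisely by the summation over $(A,B,D)\in\tfrac{1}{\varpi_v}\mathcal{O}_{F,v}/\mathcal{O}_{F,v}$, not by any single summand.

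The fix is already implicit in what you call the ``main obstacle'': the explicit decomposition of $S^{-1}\alpha(n',1)S\cdot\Upsilon_v$ (equivalently, of $u(A,B,D)$ after undoing $S$ and $\Upsilon_v$) as a product of an $\alpha(n',1)$-type factor and an element absorbed by Lemma~\ref{567}. That single matrix computation simultaneously (i) shows that each summand is supported on a translate by some $n'$ of the cell for $f^{(n+1),\prime}_v$, so the union over $(A,B,D)$ gives the $N'$-support, and (ii) gives the $N'$-invariance by re-indexing. So your two halves of the argument should be merged into one computation rather than treated as independent, and the incorrect ``same support'' shortcut should be dropped.
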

Thus we only need to compute the values of the pullback section at matrices of the form
$$\begin{pmatrix}g_1&\\&g_2\end{pmatrix}\begin{pmatrix}&&&1_b&\\&&&&1\\1_b&&&&\\&1&&&\\&&1_a&&\end{pmatrix}.$$
Combining Lemma \ref{567} we only need to consider the case when $g_1$ and $g_2$ are Weyl elements, say $w'_1$ and $w'_2$. We have
$$\begin{pmatrix}&&&1_b&\\&&&&1\\1_b&&&&\\&1&&&\\&&1_a&&\end{pmatrix}^{-1}\begin{pmatrix}w'_1&\\&w'_2
\end{pmatrix}\begin{pmatrix}&&&1_b&\\&&&&1\\1_b&&&&\\&1&&&\\&&1_a&&\end{pmatrix}=\begin{pmatrix}w_2'&\\&w'_1\end{pmatrix},$$
where $w'_2$ and $w'_1$ are Weyl elements in $\mathrm{GL}_{b+1+a}$ and $\mathrm{GL}_{b+1}$ respectively. We can write
$$\begin{pmatrix}&&&1_b&\\&&&&1\\1_b&&&&\\&1&&&\\&&1_a&&\end{pmatrix}\begin{pmatrix}w'_2&\\&w'_1\end{pmatrix}
=\begin{pmatrix}&&&1_b&\\&&&&1\\&&1_a&&\\1_b&&&&\\&1&&&\end{pmatrix}\begin{pmatrix}w''_1&\\&w''_2\end{pmatrix}$$
for Weyl elements $w''_1$ and $w''_2$ in $\mathrm{GL}_{b+1+a}$ and $\mathrm{GL}_{b+1}$ respectively. It is also clear that we only need to consider the case when $w''_1$ and $w''_2$ are of the forms $w_{j_1}$ and $w_{j_2}$ in Definition \ref{defSch}.

We let $B\subset \mathrm{GL}_n$ be the Borel subgroup consisting of matrices $\begin{pmatrix}A&B\\0&D\end{pmatrix}$ (block matrices with respect to $r+s$) where $A$ is lower triangular and $D$ is upper triangular. We also write $B_r$ and $B_s$ for the upper triangular Borel subgroup of $\mathrm{GL}_r$ and $\mathrm{GL}_s$ respectively. Let $B_{r,s}\subset \mathrm{GL}_n$ be the parabolic subgroup consisting of matrices $\begin{pmatrix}A&B\\0&D\end{pmatrix}$. We realize the $\pi_v$ as induced representation $$\mathrm{Ind}^{\mathrm{GL}_n(F_v)}_{B(F_v)}\chi_{\alpha_1}\otimes\cdots\otimes\chi_{\alpha_n}.$$
We can also realize $\pi_v$ as the induced representation
$$\mathrm{Ind}^{\mathrm{GL}_n(F_v)}_{B_{r,s}(F_v)}\pi^{\mathrm{up}}\otimes\pi^{\mathrm{low}}$$
where $$\pi^{\mathrm{up}}=\mathrm{Ind}^{\mathrm{GL}_r(F_v)}_{{}^t\!B_r(F_v)}\chi_{\alpha_1}\otimes\cdots\otimes\chi_{\alpha_r},$$
and
$$\pi^{\mathrm{low}}=\mathrm{Ind}^{\mathrm{GL}_s(F_v)}_{B_s(F_v)}\chi_{\alpha_{r+1}}\otimes\cdots\otimes\chi_{\alpha_{r+s}}.$$
\noindent\underline{Test Vectors}\\
We consider the model for the induced representation $\pi=\mathrm{Ind}^{\mathrm{GL}_n(F_v)}_{B(F_v)}\chi_{\alpha_1}\otimes\cdots\otimes\chi_{\alpha_n}$, where $\chi_{\alpha_i}$ is the unramified character of $F_v^\times$ with Satake parameter $\alpha_i$. Consider the $v$-stabilization $u$ whose $U_{v,i}$ eigenvalue is $\alpha_n\cdots\alpha_{n+1-i}$. Consider a vector $\tilde{u}$ in $\tilde{\pi}=\mathrm{Ind}^{\mathrm{GL}_n(F_v)}_{B(F_v)}\chi_{\alpha^{-1}_1}\otimes\cdots\otimes\chi_{\alpha^{-1}_n}$ which is the characteristic function of a set $K'\subset K_0(p)$. Then it is easy to see that the pairing $\langle u, \tilde{u}\rangle=\mathrm{Vol}(K')$, and $\tilde{u}$ pairs all other stabilizations of $\pi$ with $0$.

\begin{definition}\label{Definition 4.32}
Let $\Gamma_{0,n}(\varpi_v, \varpi^2_v)$ be the subgroup of $\mathrm{GL}_n(\mathcal{O}_{F,v})$ consisting of matrices which are congruent to a matrix in $B(\mathcal{O}_{F,v})$ modulo $\varpi_v$, and congruent to a matrix in $B_{r,s}(\mathcal{O}_{F,v})$ modulo $\varpi^2_v$. In practice we define $\tilde{\varphi}_v$ to correspond to the characteristic function of $\Gamma_{0,n}(\varpi_v,\varpi^2_v)$ in the above model of induced representation. We define $\varphi_v$ to correspond to the constant function $1$ in the above model of induced representation. This is a spherical vector.

We define the vector $\tilde{\varphi}^{\mathrm{up}}_v\in\pi^{\mathrm{up}}$ to be the characteristic function of ${}^t\!\Gamma_0(\varpi_v)$ in the above model of induced representation, and $\tilde{\varphi}^{\mathrm{low}}_v\in\pi^{\mathrm{low}}$ to be the characteristic function of $\Gamma_0(\varpi_v)$.
We also define $\varphi^{\mathrm{up}}_v\in \pi^{\mathrm{up}}$ and $\varphi^{\mathrm{low}}_v\in\pi^{\mathrm{low}}$ be the spherical vectors taking the constant function $1$ on $\mathrm{GL}_r(\mathcal{O}_{F,v})$ and $\mathrm{GL}_s(\mathcal{O}_{F,v})$ respectively.
\end{definition}
Let $X=(X_1, X_2, X_3)$ with respect to the partition ($n=b+a+b$). For $g\in\mathrm{GL}_n(F_v)$, let $Z_1=Xg$ and $Z_2=(X_3, X_2, X_1)$. Write $Z_1'=(X_1g, X_2g)$, $Z''_1=(X_3g)$, $Z''_1=X_3$ and $Z'_2=(X_2, X_1)$.
Let $\tilde{R}'=\begin{pmatrix}1_b&&&&&\\&1_a&&&&\\&&&&1_b&\\&&&1_a&&\\&&&&&1_b\\&&1_b&&&\end{pmatrix}$. We have
$$(0,0,0;X_1,X_2,X_3)S^{\prime,-1}\tilde{R}^{\prime,-1}=(X_3,X_2,X_1;X_1,X_2,X_3)\tilde{R}^{\prime,-1}=
(X_3,X_2,X_1;X_3,X_2,X_1).$$

Write $w''=\begin{pmatrix}&&&1_b&\\&&&&1\\&&1_a&&\\1_b&&&&\\&1&&&\end{pmatrix}$.
Then we have the zeta integral
\begin{align*}
&&&\langle F(f^{(n+1),\prime},w''\begin{pmatrix}w_{j_1}&\\&w_{j_2}\end{pmatrix},z), \tilde{\pi}(w'')\tilde{\varphi}_v\rangle&\\&=&&\mathrm{Vol}(\Gamma)^{-1}\int_{\mathrm{GL}_n(F_v)}\int_{\mathrm{GL}_n(F_v)}\chi_{2,v}(\det Z_1)\chi_{1,v}^{-1}(\det Z_2)&\\
&&&\times|\det(Z_1Z_2)|^{z+\frac{n}{2}}\Phi^{(n)}_1(Z'_1, Z''_2)\Phi^{(n)}_{2,w_{j_1},w_{j_2}}(Z''_1, Z'_2)\langle \pi(Z_1)\tilde{\varphi},\pi(Z_2)\varphi\rangle d^\times Z_1 d^\times Z_2.&
\end{align*}

We take integrals of $Z_1$ over the set of matrices
$$\begin{pmatrix}1&C_1\\&1\end{pmatrix}\begin{pmatrix}A_1&\\&D_1\end{pmatrix}\begin{pmatrix}1&B_1\\&1\end{pmatrix}$$
with measures given by $$|\det A^s_1\det D_1^{-r}| d C_1 d^\times A_1 d^\times D_1 d B_1.$$ We take integrals of $Z_2$ over matrices of the form
$$\begin{pmatrix}1&B_2\\&1\end{pmatrix}\begin{pmatrix}A_2&\\&D_2\end{pmatrix}\begin{pmatrix}1&\\C_2&1\end{pmatrix}$$
with measures given by $$|\det A^{-s}_2\det D_2^{r}|d C_2 d^\times A_2 d^\times D_2 dB_2.$$ Then we have
$$\Phi^{(n)}_1(Z'_1, Z''_2)=\Phi^{(n)}_1(\begin{pmatrix}A_1&B_2D_2\\C_1A_1&D_2\end{pmatrix});$$
$$\Phi^{(n)}_{2,w_{j_1},w_{j_2}}(Z''_1, Z'_2)=\Phi^{(n)}_{2,w_{j_1},w_{j_2}}(\begin{pmatrix}A_1B_1&A_2+B_2D_2C_2\\C_1A_1B_1+D_1&D_2 C_2\end{pmatrix}).$$
\begin{lemma}
The product $\Phi^{(n)}_1(Z'_1, Z''_2)\cdot\Phi^{(n)}_{2,w_{j_1},w_{j_2}}(Z''_1, Z'_2)$ can be nonzero only when the following conditions are met. The $A_1$ and $D_2$ are congruent to identity modulo $\varpi_v$. The entries of $C_1$ and $B_2$ are divisible by $\varpi_v$. The $C_2$ has entries in $\mathcal{O}_{F,v}$. The $B_1$, $A_2$ and $D_1$ have entries in $\frac{\mathcal{O}_{F,v}}{\varpi_v}$.
\end{lemma}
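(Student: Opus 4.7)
My plan is to handle the two Schwartz-function factors separately, then combine the resulting constraints.

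The first factor $\Phi^{(n)}_1$ is the characteristic function of the set of $n\times n$ matrices in $M_n(\mathcal{O}_{F,v})$ congruent to the identity modulo $\varpi_v$, so its nonvanishing at $\begin{pmatrix}A_1 & B_2D_2\\ C_1A_1 & D_2\end{pmatrix}$ immediately imposes four block congruences. I would read off the diagonal blocks to get $A_1\equiv I_r$ and $D_2\equiv I_s\pmod{\varpi_v}$, placing both in $\mathrm{GL}_r(\mathcal{O}_{F,v})$ and $\mathrm{GL}_s(\mathcal{O}_{F,v})$ respectively; the off-diagonal blocks $B_2D_2\equiv 0$ and $C_1A_1\equiv 0\pmod{\varpi_v}$, combined with this invertibility, then force the entries of $B_2$ and $C_1$ to be divisible by $\varpi_v$. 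This yields the first two conditions of the lemma.

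For $\Phi^{(n)}_{2,w_{j_1},w_{j_2}}$, which is the inverse Fourier transform of $\hat\Phi^{(n)}_{2,w_{j_1},w_{j_2}}$, the main tool I will invoke is the standard translation-invariance principle: if a Schwartz function on $M_n(F_v)$ is invariant under translations by a lattice $L_{\mathrm{inv}}$, then its inverse Fourier transform is supported in the dual lattice $L_{\mathrm{inv}}^\vee$ (with respect to the entry-wise pairing composed with the local additive character). By inspection of the support conditions in Definition \ref{defSch}, the function $\hat\Phi^{(n)}_{2,w_{j_1},w_{j_2}}$ is invariant under a lattice that splits block-wise for the $(r+s)\times(s+r)$ partition: the $D$-block invariance lattice is $M_{s\times r}(\mathcal{O}_{F,v})$ (self-dual under the pairing), whereas each of the $A,B,C$-blocks contains at least one coordinate direction in which only $\varpi_v\mathcal{O}_{F,v}$-invariance is guaranteed, so the dual-block conditions allow entries up to $\varpi_v^{-1}\mathcal{O}_{F,v}$. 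I will therefore conclude that the argument matrix must satisfy $M_D:=D_2C_2\in M_{s\times r}(\mathcal{O}_{F,v})$ and $M_A,M_B,M_C\in \varpi_v^{-1}M(\mathcal{O}_{F,v})$ on their respective blocks.

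Combining the two parts is a short matrix manipulation. The invertibility of $D_2$ over $\mathcal{O}_{F,v}$ yields $C_2=D_2^{-1}M_D\in M_{s\times r}(\mathcal{O}_{F,v})$, and the invertibility of $A_1$ gives $B_1=A_1^{-1}M_A\in\varpi_v^{-1}M_{r\times s}(\mathcal{O}_{F,v})$. Writing $A_2=M_B-B_2D_2C_2$, the correction term lies in $\varpi_v M_{r\times r}(\mathcal{O}_{F,v})$ (from $B_2\in\varpi_v M(\mathcal{O}_{F,v})$ and $D_2C_2\in M(\mathcal{O}_{F,v})$), which is absorbed by the $\varpi_v^{-1}M(\mathcal{O}_{F,v})$ bound on $M_B$; hence $A_2\in\varpi_v^{-1}M_{r\times r}(\mathcal{O}_{F,v})$. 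The analogous computation for $D_1=M_C-C_1A_1B_1$, using $C_1A_1B_1\in M_{s\times s}(\mathcal{O}_{F,v})$ (from $C_1\in\varpi_vM(\mathcal{O})$ and $B_1\in\varpi_v^{-1}M(\mathcal{O})$), yields $D_1\in\varpi_v^{-1}M_{s\times s}(\mathcal{O}_{F,v})$.

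The main technical point will be the block-wise Fourier-duality analysis in the middle step. The function $\hat\Phi^{(n)}_{2,w_{j_1},w_{j_2}}$ is not itself the characteristic function of a lattice, because the conditions $B_{jj}\in\mathcal{O}_{F,v}^\times$ remove positive-measure subsets; however, these open conditions are already invariant under $\varpi_v\mathcal{O}_{F,v}$-translations, so they do not shrink the invariance lattice beyond what one obtains by relaxing $\mathcal{O}_{F,v}^\times$ to $\mathcal{O}_{F,v}$, and the coarse support bound used above is therefore unaffected.
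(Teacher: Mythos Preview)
Your proof is correct and is precisely the ``straightforward'' verification the paper alludes to (the paper gives no argument beyond that word). The two steps---reading off the congruences from $\Phi^{(n)}_1$ and bounding the support of $\Phi^{(n)}_{2,w_{j_1},w_{j_2}}$ via translation-invariance of its Fourier transform, then back-substituting block by block---are exactly what is needed, and your handling of the $\mathcal{O}_{F,v}^\times$ conditions (they only refine the invariance lattice down to $\varpi_v\mathcal{O}_{F,v}$, which already gives the coarse $\varpi_v^{-1}\mathcal{O}_{F,v}$ support bound) is the right observation.
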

The proof is straightforward.

We write $\Phi^{(n),\mathrm{up}}_{2, w_{j_1}, w_{j_2}}$ for the restriction of $\Phi^{(n)}_{w_{j_1}, w_{j_2}}$ to the upper right $r\times r$ block, and $\Phi^{(n),\mathrm{low}}_{2, w_{j_1}, w_{j_2}}$ for its restriction to the lower left $s\times s$ block.
\begin{proposition}
We have the factorization of the zeta integral
$$\Phi^{(n)}_{1,v}(Z'_1, Z''_2)\Phi^{(2)}_{2,w_{j_1},w_{j_2}}(Z''_1, Z'_2)\langle \tilde\pi(Z_1)\tilde{\varphi}_v,\pi(Z_2w')\varphi_v\rangle=\mathrm{Vol}(\Gamma_{r,s})J_1J_2$$
where
$$J_1=\Phi^{(n),\mathrm{low}}_{2,w_{j_1},w_{j_2}}(D_1)|\det D_1^r|^{\frac{1}{2}}\langle\tilde\pi^{\mathrm{low}}(D_1)\tilde{\varphi}^{\mathrm{low}}_v, \varphi^{\mathrm{low}}_v\rangle d^\times D_1,$$
and
$$J_2=\Phi^{(n),\mathrm{up}}_{2,w_{j_1},w_{j_2}}(A_2)|\det A_2^s|^{\frac{1}{2}}\langle\tilde\pi^{\mathrm{up}}(A^{-1}_2)\tilde{\varphi}^{\mathrm{up}}_v, \varphi^{\mathrm{up}}_v\rangle d^\times A_2.$$
\end{proposition}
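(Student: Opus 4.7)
The plan is to combine the support conditions established in the preceding lemma with the Iwasawa-type decomposition of the two variables $Z_1,Z_2$ and the realization of $\pi_v$ as $\mathrm{Ind}_{B_{r,s}(F_v)}^{\mathrm{GL}_n(F_v)}(\pi^{\mathrm{up}}\otimes\pi^{\mathrm{low}})$, so that the matrix coefficient itself factors through $B_{r,s}$ into a product of matrix coefficients on $\mathrm{GL}_r$ and $\mathrm{GL}_s$.

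First I would restrict to the support dictated by the previous lemma: $A_1,D_2\equiv 1\pmod{\varpi_v}$, $C_1,B_2\in \varpi_v M(\mathcal{O}_{F,v})$, $C_2\in M(\mathcal{O}_{F,v})$, and $B_1,A_2,D_1\in\varpi_v^{-1}M(\mathcal{O}_{F,v})/M(\mathcal{O}_{F,v})$. On this support I would use the $B_{r,s}$-model of $\pi_v$: writing $Z_1=u_1\,\mathrm{diag}(A_1,D_1)\,u_1'$ and $Z_2=u_2\,\mathrm{diag}(A_2,D_2)\,u_2'$ with $u_i\in N_{B_{r,s}}$ and $u_i'$ in the opposite unipotent, the matrix coefficient factorizes as
\[
\langle\tilde\pi(Z_1)\tilde\varphi_v,\pi(Z_2w')\varphi_v\rangle
=\chi(\det A_1,\det D_1,\det A_2,\det D_2)\,
\langle\tilde\pi^{\mathrm{up}}(A_1)\tilde u^{\mathrm{up}},\pi^{\mathrm{up}}(A_2)u^{\mathrm{up}}\rangle\,
\langle\tilde\pi^{\mathrm{low}}(D_1)\tilde u^{\mathrm{low}},\pi^{\mathrm{low}}(D_2)u^{\mathrm{low}}\rangle,
\]
where the pairings come from Definition \ref{Definition 4.32} and the characters absorb the modulus. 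The unipotent parts $u_1,u_1',u_2,u_2'$ only contribute through their effect on the Schwartz function (because the test vectors are invariant on the relevant congruence subgroup by the invariance conditions in Definition \ref{Definition 4.32}), and the integrals over $B_1,B_2,C_1,C_2$ collapse into a volume factor $\mathrm{Vol}(\Gamma_{r,s})$ once the support restrictions on $A_1,D_2$ are used.

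Next I would integrate out $A_1$ and $D_2$ on the congruence neighborhood of the identity: here $\tilde\pi^{\mathrm{up}}(A_1)\tilde\varphi^{\mathrm{up}}_v=\tilde\varphi^{\mathrm{up}}_v$ and $\pi^{\mathrm{low}}(D_2)u^{\mathrm{low}}=u^{\mathrm{low}}$ on the support, so these integrals are absorbed into the overall volume constant. What remains is a product of two independent integrals, one over $D_1$ with integrand built from $\Phi_{2,w_{j_1},w_{j_2}}^{(n),\mathrm{low}}(D_1)$, the measure $|\det D_1|^{r}$, and $\langle\tilde\pi^{\mathrm{low}}(D_1)\tilde\varphi^{\mathrm{low}}_v,\varphi^{\mathrm{low}}_v\rangle$; and one over $A_2$ with the symmetric $\mathrm{GL}_r$ data. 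Matching the powers of $|\det|$ that appear in the pullback measure (the $|{\det A_1^s\det D_1^{-r}}|$ and $|{\det A_2^{-s}\det D_2^r}|$ from the Iwasawa measures together with the $|{\det Z_1Z_2}|^{z+n/2}$ factor and the shift produced when moving $D_1^{-1}$ inside the pairing via contragredience) will yield exactly the exponents $|{\det D_1^r}|^{1/2}$ and $|{\det A_2^s}|^{1/2}$ appearing in $J_1,J_2$, and will cancel the $z$-dependent prefactors since they occur symmetrically in $Z_1$ and $Z_2$.

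The main obstacle is bookkeeping the cancellations of the $z$-dependent characters and absolute values produced by the Iwasawa measures and the Godement section (\ref{fff}) to verify that they precisely match the shape of $J_1$ and $J_2$ (in particular the disappearance of the variable $z$ from the right-hand side). This requires careful use of the central character $\chi_1\chi_2^{-1}$-relation dictated by the factorization $\tau=(\tau_1,\tau_2^{-1})$ and the identity $\pi_v^\vee\simeq \pi_v\otimes(\mathrm{central\ character})^{-1}$; it is the same kind of calculation that appears in \cite[Section 4.3]{EHLS} in the pure $p$-adic $L$-function setup, and we adapt it verbatim to our split auxiliary prime $v$, exploiting that the Satake parameters of $\pi_v$ are pairwise distinct (so the relevant pairings between the different stabilizations vanish, as noted before Definition \ref{Definition 4.32}).
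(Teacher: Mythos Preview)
Your overall strategy---use the support lemma, strip off the unipotent pieces via invariance of the test vectors, and then factor through the parabolic induction $\mathrm{Ind}_{B_{r,s}}^{\mathrm{GL}_n}(\pi^{\mathrm{up}}\otimes\pi^{\mathrm{low}})$---is exactly the paper's approach. But two points in your write-up are off.

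First, the factors $|\det D_1|^{r/2}$ and $|\det A_2|^{s/2}$ in $J_1,J_2$ do \emph{not} come from the Iwasawa measures or from any $z$-dependent piece of the Godement section. The proposition is a pointwise identity for the product $\Phi_1\cdot\Phi_2\cdot\langle\tilde\pi(Z_1)\tilde\varphi_v,\pi(Z_2)\varphi_v\rangle$ on the support; the variable $z$ is simply not present here and there is nothing to cancel. Those half-powers arise from the modulus character $\delta_{B_{r,s}}^{1/2}$ when one restricts the $\mathrm{GL}_n$-pairing to the Levi $\mathrm{GL}_r\times\mathrm{GL}_s$: the paper computes directly that
\[
\langle\tilde\pi(\mathrm{diag}(A_2^{-1},D_1))\tilde\varphi_v,\varphi_v\rangle
=\mathrm{Vol}(\Gamma_{r,s})\,|\det A_2|^{s/2}|\det D_1|^{r/2}\,
\langle\tilde\pi^{\mathrm{up}}(A_2^{-1})\tilde\varphi^{\mathrm{up}}_v,\varphi^{\mathrm{up}}_v\rangle
\langle\tilde\pi^{\mathrm{low}}(D_1)\tilde\varphi^{\mathrm{low}}_v,\varphi^{\mathrm{low}}_v\rangle.
\]
The Iwasawa-measure powers $|\det A_1^s\det D_1^{-r}|$ etc.\ and the $|\det Z_1Z_2|^{z+n/2}$ factor enter only later, in the integrals $I_1,I_2$ after the proposition.

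Second, you skip the one place where the specific level $\Gamma_{0,n}(\varpi_v,\varpi_v^2)$ in Definition~\ref{Definition 4.32} is actually used. After moving $B_2$ across and reducing to $\langle\tilde\pi(\mathrm{diag}(A_2^{-1},D_1))\tilde\varphi_v,\varphi_v\rangle$, the translated vector $\tilde\pi(\mathrm{diag}(A_2^{-1},D_1))\tilde\varphi_v$ is supported on $B_{r,s}(F_v)$ times a lower unipotent of the form $\mathrm{diag}(A_2^{-1},D_1)\begin{pmatrix}1&\\ \varpi_v^2 M(\mathcal{O}_{F,v})&1\end{pmatrix}\mathrm{diag}(A_2,D_1^{-1})$. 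Since the support lemma gives $A_2,D_1\in\varpi_v^{-1}M(\mathcal{O}_{F,v})$, this conjugate lies in $\begin{pmatrix}1&\\ M(\mathcal{O}_{F,v})&1\end{pmatrix}$, under which the spherical vector $\varphi_v$ is invariant. This is what makes the pairing collapse to the Levi; your proposal asserts the collapse but does not isolate this mechanism, and without it the argument would not go through for a $\Gamma_0(\varpi_v)$-level test vector.
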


\begin{proof}
We observe that from the definition $\tilde{\varphi}_v$ is invariant under $\begin{pmatrix}1&B_1\\&1\end{pmatrix}$ for $B_1$ with entries in $\frac{\mathcal{O}_{F,v}}{\varpi_v}$, and $\varphi_v$ is invariant under $\begin{pmatrix}1&\\C_2&1\end{pmatrix}$ where $C_2$ has entries in $\mathcal{O}_{F,v}$. So
$$\langle \tilde\pi(Z_1)\tilde{\varphi}_v, \pi(Z_2)\varphi_v\rangle=\langle\tilde\pi(\begin{pmatrix}1&\\C_1&D_1\end{pmatrix})\tilde{\varphi}_v, \pi\begin{pmatrix}A_2&B_2\\&1\end{pmatrix}\varphi_v\rangle.$$
But $\begin{pmatrix}1&-B_2\\&1\end{pmatrix}\begin{pmatrix}1&\\ C_1&D_1\end{pmatrix}$ can be written as
$$\begin{pmatrix}1&\\C&1\end{pmatrix}\begin{pmatrix}A&\\&D\end{pmatrix}\begin{pmatrix}1&B\\&1\end{pmatrix}$$
with $A$ congruent to identity modulo $\varpi^2_v$; $C=C_1A^{-1}$, $D\in (1+\varpi^2_v M(\mathcal{O}_{F,v}))D_1$, $B\in M_{b\times b}(\mathcal{O}_{F,v})$. So the above expression is
\begin{align*}
&\langle\tilde\pi(\begin{pmatrix}1&\\C_1&1\end{pmatrix}\begin{pmatrix}1&\\&D_1\end{pmatrix})\tilde\varphi_v,
\pi(\begin{pmatrix}A_2&\\&1\end{pmatrix})\varphi_v\rangle
&&=\langle\tilde\pi(\begin{pmatrix}1&\\&D_1\end{pmatrix})\tilde\varphi_v,\pi(\begin{pmatrix}A_2&\\&1\end{pmatrix}
\begin{pmatrix}1&\\-CA_2&1\end{pmatrix})\varphi_v\rangle&\\
&=\langle\tilde\pi(\begin{pmatrix}1&\\&D_1\end{pmatrix})\tilde\varphi_v,\pi(\begin{pmatrix}A_2&\\&1\end{pmatrix})\varphi_v\rangle&
&=\langle\tilde\pi(\begin{pmatrix}A^{-1}_2&\\&D_1\end{pmatrix})\tilde\varphi_v, \varphi_v\rangle.&
\end{align*}
Then from the definition, the $\tilde\pi(\begin{pmatrix}A^{-1}_2&\\&D_1\end{pmatrix})\tilde\varphi_v$ is supported in $$B_{r,s}(F_v)(\begin{pmatrix}A^{-1}_2&\\&D_1\end{pmatrix}\begin{pmatrix}1_r&\\ \varpi^2_v M(\mathcal{O}_{F,v})&1_s\end{pmatrix}\begin{pmatrix}A^{-1}_2&\\&D_1\end{pmatrix}^{-1})$$
and is invariant under the action of $\begin{pmatrix}A^{-1}_2&\\&D_1\end{pmatrix}\begin{pmatrix}1_r&\\ \varpi^2_v M(\mathcal{O}_{F,v})&1_s\end{pmatrix}\begin{pmatrix}A^{-1}_2&\\&D_1\end{pmatrix}^{-1}$. The latter matrix is contained in $\begin{pmatrix}1_r&\\M(\mathcal{O}_{F,v})&1_s\end{pmatrix}$, under whose action the $\varphi_v$ is invariant. So $\langle\tilde\pi(\begin{pmatrix}A^{-1}_2&\\&D_1\end{pmatrix})\tilde\varphi_v, \varphi_v\rangle$ can be factorized as
$$\mathrm{Vol}(\Gamma_{r,s})|\det A_2^s\det D^r_1|^{\frac{1}{2}}\langle\tilde\pi^{\mathrm{up}}(A^{-1}_2)\tilde\varphi^{\mathrm{up}}_v,\varphi^{\mathrm{up}}_v\rangle
\cdot\langle\tilde\pi^{\mathrm{up}}(D_1)\tilde\varphi^{\mathrm{low}}_v,\varphi^{\mathrm{low}}_v\rangle.$$

Now we see the zeta integral can be factorized as in the proposition.
\end{proof}

To get a description of the pullback Klingen Eisenstein section, we just need to evaluate at the $(w_{j_1}, w_{j_2})$'s and pair with the $\tilde{\varphi}_v$ which run over all Iwahori invariant test vectors corresponding to the $n!$ stabilizations, which we denote as $\varphi_{st_i}$'s.

It is easy to see that we are reduced to computing the integrals
$$I_2=\int_{\mathrm{GL}_{a+b}(F_v)}\chi^{-1}_{1,v}(\det A_2)\Phi^{(n),\mathrm{up}}_{2,w_{j_1},w_{j_2}}(A_2)|\det A_2|^{z+\frac{r}{2}}\langle\tilde\pi^{\mathrm{up}}(A^{-1}_2)\tilde\varphi^{\mathrm{up}}_v, \varphi^{\mathrm{up}}_v\rangle d^\times A_2$$
and
$$I_1=\int_{\mathrm{GL}_b(F_v)}\chi_{2,v}(\det D_1)\Phi^{(n),\mathrm{low}}_{2,w_{j_1},w_{j_2}}(D_1)|\det D_1|^{z+\frac{s}{2}}\langle\tilde\pi^{\mathrm{low}}(D_1)\tilde\varphi^{\mathrm{low}}_v, \varphi^{\mathrm{low}}_v\rangle d^\times D_1.$$
In the following we consider a Hida family $\mathbf{f}$ with coefficient a normal domain $\mathbb{I}$ whose specialization at an arithmetic point $\phi_0$ is an ordinary form $f\in\pi$.

We record the following easy lemma.
\begin{lemma}\label{Lemma 4.35}
Suppose $\mathbb{I}$ contains all Satake parameters $\alpha_{\mathbf{f},1}, \cdots, \alpha_{\mathbf{f},n}$ of $\mathbf{f}$ (this can be ensured by taking a finite extension of $\mathbb{I}$). Take an ordering $\sigma$ of $\alpha_{\mathbf{f},1}, \cdots, \alpha_{\mathbf{f},n}$. Consider the induced representation $\pi_\phi=\mathrm{Ind}^{\mathrm{GL}_n(F_v)}_{B(F_v)}\chi_{\alpha_{\mathbf{f},1,\phi}}\otimes\cdots\chi_{\alpha_{\mathbf{f},n,\phi}}$
Then there is an $\mathrm{Frac}(\mathbb{I})$-valued (whose denominators are nonzero at $\phi_0$) function $u_\sigma$ on the Weyl group $W_n\subset \mathrm{GL}_n(F_v)$, such that for any $\phi$ outside a closed subspace of $\mathrm{Spec}\ \mathbb{I}$ of lower dimension, the specialization of $u_\sigma$ to $\phi$ is the stabilization in $\pi_\phi$ corresponding to $\sigma$ (i.e. the eigenvalues under $U_{v,i}$ are given by $\alpha_{\mathbf{f},\phi,\sigma(n)}\cdots\alpha_{\mathbf{f},\phi,\sigma(n+1-i)}$).
\end{lemma}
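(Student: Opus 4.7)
The plan is to realize the stabilization $u_\sigma$ in an explicit model of the Iwahori-fixed vectors of the family, and express its coefficients as rational functions of the Satake parameters whose denominators do not vanish at $\phi_0$.

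First, I would fix the upper-triangular Iwahori subgroup $I \subset \mathrm{GL}_n(\mathcal{O}_{F,v})$. By the Iwahori--Bruhat decomposition $\mathrm{GL}_n(F_v) = \bigsqcup_{w \in W_n} B(F_v)\,w\,I$, the space of $I$-fixed vectors in any unramified principal series $\pi_\phi$ is freely spanned by the characteristic functions $\{\phi_w\}_{w \in W_n}$, with $\phi_w$ supported on $BwI$ and normalized so that $\phi_w(w)=1$. Since this basis is independent of $\phi$, the module $M := \bigoplus_{w \in W_n} \mathbb{I}\cdot\phi_w$ is a universal space of Iwahori-invariants for the family. Next I would compute the matrices representing $U_{v,1},\ldots,U_{v,n}$ on $M$ by a standard coset decomposition against the principal-series action; the entries are universal polynomials in $\alpha_{\mathbf{f},1},\ldots,\alpha_{\mathbf{f},n}$ (cf.\ Casselman's theory of Jacquet modules for unramified principal series), hence lie in $\mathbb{I}$.

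I would then diagonalize. These commuting operators have, for generic parameters, $n!$ simultaneous one-dimensional eigenspaces indexed by $\sigma \in W_n$, with the $\sigma$-eigenspace having eigenvalue $\alpha_{\mathbf{f},\sigma(n)}\cdots\alpha_{\mathbf{f},\sigma(n+1-i)}$ under $U_{v,i}$. Solving the resulting linear system by Cramer's rule expresses the $\sigma$-eigenvector as
\[
u_\sigma \;=\; \sum_{w \in W_n} c_{\sigma,w}(\alpha_{\mathbf{f}})\,\phi_w,\qquad c_{\sigma,w}\in\mathrm{Frac}(\mathbb{I}),
\]
with denominators that are, up to units in $\mathbb{I}$, products of resolvent factors of the form $\alpha_{\mathbf{f},i}-\alpha_{\mathbf{f},j}$ and $q_v\alpha_{\mathbf{f},i}-\alpha_{\mathbf{f},j}$ for $i\neq j$. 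Viewing $u_\sigma$ as the function $w\mapsto c_{\sigma,w}(\alpha_{\mathbf{f}})$ on $W_n\subset \mathrm{GL}_n(F_v)$ (which determines the Iwahori-fixed vector by Bruhat) yields the desired object.

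Finally, I would check nonvanishing of the denominators at $\phi_0$. The paragraph preceding the lemma chose the auxiliary prime $v$ precisely so that the Satake parameters of $\pi_{v,\phi_0}$ are pairwise distinct; hence each factor $\alpha_{\mathbf{f},i,\phi_0}-\alpha_{\mathbf{f},j,\phi_0}$, and also each $q_v\alpha_{\mathbf{f},i,\phi_0}-\alpha_{\mathbf{f},j,\phi_0}$ (these being ratios of the rank-one Satake parameters to $q_v$, which are of absolute value $q_v^{-1/2}\neq 1$ for a tempered unramified $\pi_v$), is nonzero, so $(u_\sigma)_{\phi_0}$ recovers the desired stabilization of $\pi_{v,\phi_0}$. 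For arbitrary $\phi$ the same distinctness persists off the proper Zariski-closed subset of $\mathrm{Spec}\,\mathbb{I}$ cut out by the product of the resolvent factors, which is the "closed subspace of lower dimension" in the statement. The main (minor) obstacle is bookkeeping: matching the normalization of each $U_{v,i}$ (the $\delta_B^{1/2}$ twist in the induction and any power of $q_v$ built into the Hecke operator) with the precise eigenvalue formula $\alpha_{\mathbf{f},\sigma(n)}\cdots\alpha_{\mathbf{f},\sigma(n+1-i)}$ stated in the lemma; this fixes the exact form of the denominators but does not affect the structure of the argument.
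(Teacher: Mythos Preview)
Your approach is correct and essentially equivalent in spirit to the paper's, but the paper's own proof is far terser: it simply says ``The lemma follows by applying appropriate polynomials of the $U_{v,i}$ operators to the spherical vector.'' In other words, the paper starts from the spherical vector (which lies in the Iwahori-fixed space and decomposes as a sum over all stabilizations) and uses a Lagrange-type interpolation polynomial in the commuting $U_{v,i}$'s to project onto the $\sigma$-eigenspace; the denominators are then differences of the $U_{v,i}$-eigenvalue systems, which are nonzero at $\phi_0$ exactly because the Satake parameters there are pairwise distinct. Your route via the full Iwahori--Bruhat basis $\{\phi_w\}$ and Cramer's rule computes the same object and gives more explicit control over the shape of the denominators, at the cost of more bookkeeping; the paper's projection-from-the-spherical-vector argument is quicker and suffices for what is needed downstream. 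One small remark: the factors $q_v\alpha_i-\alpha_j$ you anticipate are characteristic of the Casselman intertwining-operator basis rather than of the pure $U_{v,i}$-projection, so in the paper's approach only the $\alpha_i-\alpha_j$ (or rather differences of their partial products) enter; this does not affect correctness, only the precise form of the denominator you would write down.
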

The lemma follows by applying appropriate polynomials of the $U_{v,i}$ operators to the spherical vector.

\noindent\underline{Quantitative Results}\\
We first study the zeta integral for $f^{(n+1)}_v$ (instead of $f^{(n+1),\prime}_v$) at $(w_0, w_0)$. This is relatively easier.
By the Godement-Jacquet functional equation as in \cite[Theorem 4.3.9]{EHLS}, the second integral is

$$I_2=\mathrm{Vol}(\Gamma_{b,0}(p))\frac{L(z+\frac{1}{2},\tilde{\pi}^{\mathrm{up}}\otimes\chi^{-1}_{1,v})q^z_v}{L(-z+\frac{1}{2},\pi^{\mathrm{up}}
\otimes\chi_{1,v})}\int_{\mathrm{GL}_r(F_v)}\hat{\Phi}^{(n),\mathrm{up}}_{w_0,w_0}(A_2)|A_2|^{-z+\frac{r}{2}}
\chi_{1,v}(\det A_2)\langle\tilde\pi^{\mathrm{up}}(A^{-1}_2)\tilde\varphi^{\mathrm{up}}_v,\varphi^{\mathrm{up}}_v\rangle,$$

which equals
\begin{equation}\label{**}
I_2=\mathrm{Vol}(\Gamma_{b,0}(p))\frac{L(z+\frac{1}{2},\tilde{\pi}^{\mathrm{up}}\otimes\chi^{-1}_{1,v})q^z_v}{L(-z+\frac{1}{2},\pi^{\mathrm{up}}
\otimes\chi_{1,v})}\langle\tilde\varphi^{\mathrm{up}}_{v},\varphi^{\mathrm{up}}_v\rangle.
\end{equation}

We similarly get the formula for $I_1$.

\begin{proposition}\label{notzero}
Let $z$ be an integer. Then $F_{\varphi^{\mathrm{sph}}_v}(f_{\mathrm{sieg},v},-,z)\not=0$.
\end{proposition}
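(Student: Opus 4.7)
The plan is to produce a specific $g \in G(F_v)$ and vector $\tilde u \in \tilde\pi_v$ such that $\langle F_{\varphi^{\mathrm{sph}}_v}(f_{\mathrm{sieg},v}, g, z), \tilde u\rangle \neq 0$ at the given integer $z$; this shows the pullback section is not identically zero as a function of $g$.

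First I would apply Corollary \ref{supportt} together with the invariance under $N'$ noted there to reduce the non-vanishing question to evaluation at a finite set of double coset representatives. As indicated in the paragraph after that corollary, combined with Lemma \ref{567}, these representatives are of block-diagonal form with entries given by Weyl elements, which may be further normalized to lie among the pairs $(w_{j_1}, w_{j_2})$ from Definition \ref{defSch}. I would then concentrate on the pair $(w_0, w_0)$ and pair with the test vector $\tilde{\varphi}_v$ of Definition \ref{Definition 4.32} associated to the identity stabilization. Unfolding via the factorization of the Proposition preceding (\ref{**}), the pairing becomes a product $I_1 I_2$; the Godement--Jacquet functional equation as applied in (\ref{**}) then expresses $I_2$ (and $I_1$ symmetrically) as a ratio of unramified $L$-factors times the manifestly nonzero pairings $\langle \tilde{\varphi}^{\mathrm{up}}_v, \varphi^{\mathrm{up}}_v \rangle$ and $\langle \tilde{\varphi}^{\mathrm{low}}_v, \varphi^{\mathrm{low}}_v \rangle$, while the $\Phi_3$-sum in (\ref{(15)}) decouples as a nonzero scalar.

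The hard part will be handling the exceptional integers $z$ at which the $L$-factor ratio at $(w_0, w_0)$ happens to vanish, namely when $\alpha_i \chi_{1,v}(\varpi_v) q_v^{z-1/2} = 1$ for some Satake parameter $\alpha_i$ of $\pi_v$, or the analogous relation for $\pi^{\mathrm{low}}$. I plan to use the fact that by our choice of auxiliary $v$ the $n$ Satake parameters of $\pi_v$ are pairwise distinct, so any such equation can be satisfied by at most one index $i$. Using Lemma \ref{Lemma 4.35} to vary the stabilization test vector (these span the Iwahori-invariant subspace of $\tilde{\pi}_v$ by pairwise distinctness) and running over the other Weyl positions $(w_{j_1}, w_{j_2})$, one obtains a family of pairings whose simultaneous vanishing at a fixed integer $z$ would force $n$ incompatible Satake-parameter relations. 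Thus at least one pairing is nonzero, which proves $F_{\varphi^{\mathrm{sph}}_v}(f_{\mathrm{sieg},v}, -, z) \not\equiv 0$.
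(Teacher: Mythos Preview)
Your first two paragraphs track the paper's proof exactly: reduce from $f_{\mathrm{sieg},v}$ to $f^{(n+1)}_v$ via (\ref{(15)}) and Corollary~\ref{supportt}, evaluate at the Weyl point $(w_0,w_0)$, and read off $I_1 I_2$ from (\ref{**}). That is all the paper does; its proof is one sentence.

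Your ``hard part'' paragraph, however, addresses a problem that does not arise. Recall that throughout Section~\ref{Section Eisenstein Series} the representation $\pi$ is taken to be unitary and \emph{tempered} (this is stated explicitly in the local and global set-up of Section~\ref{K E S}, and holds in our situation because the residual irreducibility hypothesis (Irred) forces the base change of $\pi$ to $\mathrm{GL}_n$ to be cuspidal and hence tempered). Temperedness gives $|\alpha_i|=1$ for every Satake parameter $\alpha_i$ of $\pi_v$, and since $\tau_v$ is unitary one has $|\chi_{1,v}(\varpi_v)|=1$ as well. Hence
\[
\bigl|\alpha_i\,\chi_{1,v}(\varpi_v)\,q_v^{\,z-\frac12}\bigr| \;=\; q_v^{\,z-\frac12},
\]
which is never equal to $1$ when $z$ is an integer. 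Consequently the denominator $L(-z+\tfrac12,\pi^{\mathrm{up}}\otimes\chi_{1,v})$ in (\ref{**}) never has a pole at integer $z$, the numerator (being a local Euler factor) never vanishes, and $I_2\neq 0$; the same reasoning applies to $I_1$. This is precisely why the paper says the non-vanishing is ``clear from the above computation on $I_1$ and $I_2$.'' Your proposed remedy of varying stabilizations and Weyl positions is therefore unnecessary, and in any case the argument you sketch (that simultaneous vanishing would force incompatible Satake relations) would need more work to make rigorous, since the formulas at the other $(w_{j_1},w_{j_2})$ in the qualitative part also involve analogous ratios.
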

\begin{proof}
Note that by (\ref{(15)}) and Corollary \ref{supportt}, it is enough to see $F_{\varphi_v^{\mathrm{sph}}}(f^{(n+1)}_v,-,z)\not=0$, which is clear from the above computation on $I_1$ and $I_2$.
\end{proof}

Now we turn to values at other $(w_{j_1}, w_{j_2})$'s. These are more complicated, and we content ourselves with showing the description of the pullback section in the following proposition, which is enough for proving part (iii) of Theorem \ref{Construction}.\\

\noindent\underline{Qualitative Results}\\
We prove the following proposition. For $1\leq i_1\leq n!$ we write $\varphi_{v,i_1}$ for the stabilization as in Definition \ref{Definition 4.32} corresponds to the $i_1$-th ordering the the Satake parameters of $\pi_v$.
\begin{proposition}
For any $i_1$, $w_{j_1}$ and $w_{j_2}$, there exists elements $G_{j_1,j_2,i_1}\in\mathrm{Frac}\ (\mathbb{I}[[\Gamma_\mathcal{K}]])$ which is non-vanishing at the arithmetic point $\phi_0$ which corresponds to the ordinary form $f\in\pi$, such that for a Zariski dense set of arithmetic points $\phi$, we have
$$F_{\varphi^{\mathrm{sph}}}(f^{(n+1),\prime}_{\mathcal{D}_\phi}, \begin{pmatrix}w_{j_1}&\\&w_{j_2}\end{pmatrix},z)=\sum_{i_1}\phi(G_{j_1,j_2,i_1})\varphi_{v,i_1}.$$
\end{proposition}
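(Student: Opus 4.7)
The strategy is to reduce to the zeta-integral factorization already worked out for the $(w_0,w_0)$ case, handle each Iwahori-stabilization coordinate separately via Godement--Jacquet, and then interpolate in the Hida variables using Lemma~\ref{Lemma 4.35}.

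First, I observe that by Corollary~\ref{supportt} and Lemma~\ref{567}, the pullback section lives in a finite-dimensional subspace of $\pi_v$ on which the Iwahori subgroup $\Gamma^{\mathrm{Kling}}_0(\varpi_v)$ acts through a finite quotient, so that $\{\varphi_{v,i_1}\}_{i_1=1}^{n!}$ (as in Definition~\ref{Definition 4.32}) is a basis for this subspace as $\sigma$ runs over the $n!$ orderings of Satake parameters. To identify $F_{\varphi^{\mathrm{sph}}}(f^{(n+1),\prime}_{\mathcal{D}_\phi},\begin{pmatrix}w_{j_1}&\\&w_{j_2}\end{pmatrix},z)$ in this basis, I pair against dual vectors $\tilde{\varphi}_{v,i_1}\in\tilde{\pi}_v$ chosen so that $\langle \varphi_{v,i_1},\tilde{\varphi}_{v,i_1'}\rangle=\mathrm{Vol}(K_{i_1})\cdot\delta_{i_1,i_1'}$ (cf.\ the ``test vectors'' discussion). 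The resulting coefficient $G_{j_1,j_2,i_1}(z)$ unfolds into the zeta integral displayed immediately before Proposition~\ref{notzero}.

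Second, I apply the factorization proposition to write this zeta integral as $\mathrm{Vol}(\Gamma_{r,s})\cdot I_1(z)\cdot I_2(z)$, where now $I_1,I_2$ are formed with the dual-stabilization test vectors $\tilde{\varphi}^{\mathrm{up}}_{v,i_1},\tilde{\varphi}^{\mathrm{low}}_{v,i_1}$ instead of the spherical ones. I then invoke the Godement--Jacquet functional equation (exactly as in \cite[Theorem~4.3.9]{EHLS}, which produced formula~(\ref{**}) for the $w_0$ case) to rewrite each of $I_1,I_2$ as a ratio of standard $L$-factors times the local pairings $\langle \tilde{\varphi}^{\mathrm{up}}_{v,i_1},\varphi^{\mathrm{up}}_v\rangle$ and $\langle \tilde{\varphi}^{\mathrm{low}}_{v,i_1},\varphi^{\mathrm{low}}_v\rangle$, together with a polynomial factor in the Satake parameters coming from integration of $\hat{\Phi}^{(n),\mathrm{up}}_{2,w_{j_1},w_{j_2}}$ and $\hat{\Phi}^{(n),\mathrm{low}}_{2,w_{j_1},w_{j_2}}$. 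Crucially, the matrix coefficient computation in the proof of the factorization proposition reduces these pairings to pairings of Iwahori-invariant vectors in the smaller parabolic induced representations $\pi^{\mathrm{up}}$ and $\pi^{\mathrm{low}}$, which are computable locally.

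Third, I interpolate. By Lemma~\ref{Lemma 4.35}, each stabilization $\varphi_{v,i_1}$ and its dual lift to an element of $\pi_\mathbf{f}$ defined over $\mathrm{Frac}(\mathbb{I})$; the $L$-factors from Godement--Jacquet are polynomial in the Satake parameters of $\mathbf{f}$, and once the cyclotomic variable is absorbed through $\chi_{1,v},\chi_{2,v}$ these expressions naturally lie in $\mathrm{Frac}(\mathbb{I}[[\Gamma_\mathcal{K}]])$. This yields the desired element $G_{j_1,j_2,i_1}$, whose specialization at a Zariski-dense set of arithmetic $\phi$ recovers the claimed formula.

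The principal obstacle is controlling non-vanishing at $\phi_0$. The denominators $L(-z+\tfrac{1}{2},\pi^{\mathrm{up}}\otimes\chi_{1,v})$ and $L(-z+\tfrac{1}{2},\pi^{\mathrm{low}}\otimes\chi_{2,v})$ appearing through Godement--Jacquet, as well as the potential vanishing of the matrix-coefficient pairings, must be ruled out. Here the choice of $v$ is essential: since $v$ was selected so that the $n$ Satake parameters of $\pi_v$ are pairwise distinct and the data are all unramified at $v$, these $L$-factors are units at $\phi_0$ outside an explicit finite set of values of $z$, and the pairings $\langle \tilde{\varphi}^{\mathrm{up}}_{v,i_1},\varphi^{\mathrm{up}}_v\rangle$, $\langle \tilde{\varphi}^{\mathrm{low}}_{v,i_1},\varphi^{\mathrm{low}}_v\rangle$ are non-vanishing by the explicit construction of the test vectors as characteristic functions on $\Gamma_0(\varpi_v)$-cosets; combined with Proposition~\ref{notzero}, this establishes the non-vanishing of $G_{j_1,j_2,i_1}$ at $\phi_0$.
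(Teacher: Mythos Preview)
Your overall skeleton (pair against Iwahori stabilizations, factorize as $I_1\cdot I_2$, apply Godement--Jacquet, interpolate via Lemma~\ref{Lemma 4.35}) matches the paper's, but there is a real gap at the heart of your second step. For $(w_{j_1},w_{j_2})\neq(w_0,w_0)$ the Fourier-transformed Schwartz function $\hat{\Phi}^{(n),\mathrm{up}}_{2,w_{j_1},w_{j_2}}$ is \emph{not} supported on a single Iwahori coset: by Definition~\ref{defSch} the $j_1$-th row has all entries in $\mathcal{O}_{F,v}$ rather than being upper- or lower-triangular mod $\varpi_v$. So the integral after Godement--Jacquet does \emph{not} collapse to a single pairing as in formula~(\ref{**}); your phrase ``together with a polynomial factor in the Satake parameters'' hides exactly the computation that needs to be done, and gives no reason why the result lies in $\mathrm{Frac}(\mathbb{I}[[\Gamma_\mathcal{K}]])$ with the claimed regularity.

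The paper fills this gap by an explicit further decomposition: after a Weyl conjugation $w''$ (chosen to move the ``bad'' row to the bottom), it writes the domain of integration in $(r-1)+1$ block form and splits according to the valuation $t$ of the bottom-right $1\times 1$ block $D'$. For fixed $t$, the remaining compact integral over $A,B',C''$ is handled by Lemma~\ref{Lemma 4.35}, producing a finite $\mathrm{Frac}(\mathbb{I})$-combination $\sum_{i_2} F^{\mathrm{up}}_{j_1,\mathrm{st}_{i_1},\mathrm{st}_{i_2}}\tilde{\varphi}^{\mathrm{up}}_{\mathrm{st}_{i_2}}$ of stabilizations whose denominators are nonzero at $\phi_0$. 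The sum over $t$ then yields a geometric series $\bigl(1-\chi_{1,v}(\varpi_v)\alpha^{-1}_{\mathrm{st}_{i_2}}q_v^{z-1/2}\bigr)^{-1}$, and this is the only additional denominator. Your sketch contains neither this decomposition nor an alternative to it.

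A secondary point: you appear to read ``non-vanishing at $\phi_0$'' as ``value nonzero at $\phi_0$'' and spend your last paragraph arguing that matrix-coefficient pairings and $L$-factors are nonzero, even invoking Proposition~\ref{notzero}. The proposition only asserts that the $G_{j_1,j_2,i_1}$ are \emph{regular} at $\phi_0$ (denominators do not vanish there); this is what the paper checks, and it falls out immediately from Lemma~\ref{Lemma 4.35} together with the explicit geometric-series denominator above. Proposition~\ref{notzero} is a separate statement and is not used here.
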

\begin{remark}
We can also get such descriptions for the pullback section of $f_{sieg,v}$ using (\ref{(15)}) and Lemma \ref{Lemma 4.29}.
\end{remark}
\begin{proof}
To save notations we compute the $I_2$ in the case when $r=4$ and $j_1=2$. The general case is similar. It is equivalent to computing the pullback integrals for the Siegel-Weil section associated to $\Phi^{(n)}_1$ and $\Phi^{(n)}_{2,w_{j_1}, w_{j_2}}$. As in the quantitative results we use the Godement-Jacquet functional equation to evaluate it. We consider the Weyl element $w''=\begin{pmatrix}1&&&\\&&1&\\&&&1\\&1&&\end{pmatrix}$. For notational convenience we define $\hat{\Phi}^{(n),\mathrm{up},\prime}_{2, w_{j_1},w_{j_2}}$ to be the $\hat{\Phi}^{(n),\mathrm{up}}_{2,w_{j_1}, w_{j_2}}$ composed with this conjugation $g\mapsto g^{w''}=(w'')^{-1}g(w'')$.  Thus it is the characteristic function of the set of matrices in $\begin{pmatrix}\mathcal{O}^\times_{F,v}&\varpi_v\mathcal{O}_{F,v}&\varpi_v\mathcal{O}_{F,v}&\varpi_v\mathcal{O}_{F,v}\\ \mathcal{O}_{F,v}
&\mathcal{O}^\times_{F,v}&\varpi_v\mathcal{O}_{F,v}&\mathcal{O}_{F,v}\\ \mathcal{O}_{F,v}&\mathcal{O}_{F,v}&\mathcal{O}^\times_{F,v}&\mathcal{O}_{F,v}\\ \mathcal{O}_{F,v}&\mathcal{O}_{F,v}&\mathcal{O}_{F,v}&\mathcal{O}_{F,v}\end{pmatrix}$.
Write an element of it (block matrices with respect to $(r-1)+1$)
$$\begin{pmatrix}A&B\\C&D\end{pmatrix}=\begin{pmatrix}A&0\\C&D-CA^{-1}B\end{pmatrix}\begin{pmatrix}1&A^{-1}B\\&1
\end{pmatrix}=\begin{pmatrix}1&\\CA^{-1}&1\end{pmatrix}\begin{pmatrix}A&\\&D-CA^{-1}B
\end{pmatrix}\begin{pmatrix}1&A^{-1}B\\&1\end{pmatrix}.$$
Let $B'=A^{-1}B$, $C'=CA^{-1}$, $D'=D-CA^{-1}B$, then
$$\begin{pmatrix}A&B\\C&D\end{pmatrix}=\begin{pmatrix}1&\\C'&1\end{pmatrix}\begin{pmatrix}A&\\&D'\end{pmatrix}
\begin{pmatrix}1&B'\\&1\end{pmatrix}.$$
Here $A$ runs over ${}^t\!\Gamma_0(\varpi_v)$, $B'$ runs over matrices of the form ${}^t\!(\varpi_v,\mathcal{O}_{F,v},\mathcal{O}_{F,v})$, $C'$ runs over matrices whose entries are in $\mathcal{O}_{F,v}$, $D'$ runs over $\mathcal{O}_{F,v}$. We decompose the integrals according to valuation $t$ of $D'$ at $v$. More precisely for a fixed $D'$ with $\mathrm{ord}_vD'=t$, we decompose the above set as
$$\cup_{C'\in \mathcal{O}_{F,v}/\varpi^t_v\mathcal{O}_{F,v}}\begin{pmatrix}1&\\C'&1\end{pmatrix}\begin{pmatrix}1&\\&D'\end{pmatrix}\begin{pmatrix}A&\\C''&1
\end{pmatrix}\begin{pmatrix}1&B'\\&1\end{pmatrix}
$$
where $B'$ is as above and $C''$ runs over $\mathcal{O}_{F,v}$.

We can easily see from Lemma \ref{Lemma 4.35} that we can write
$$\int_A\int_{C''\in M(\mathcal{O}_{F,v})}\int_{B'\in {}^t\!(\varpi_v,\mathcal{O}_{F,v},\mathcal{O}_{F,v})}\tilde\pi^{\mathrm{up}}(\begin{pmatrix}A&\\C''&1\end{pmatrix}^{w''}
\begin{pmatrix}1&B'\\&1\end{pmatrix}^{w''})^{-1}
\tilde\varphi^{\mathrm{up}}_{v,i_1}$$ as
$$\sum^{r!}_{i_2=1} F^{\mathrm{up}}_{j_1,\mathrm{st}_{i_1},\mathrm{st}_{i_2}}\tilde\varphi^{\mathrm{up}}_{\mathrm{st}_{i_2}}$$
where $\tilde\varphi_{\mathrm{st}_{i_2}}$ runs over Iwahori invariant stabilizations of $\tilde{\pi}^{\mathrm{up}}$ with respect to the Borel subgroup ${}^t\!B_r^{w''}$, and $F^{\mathrm{up}}_{j_1,\mathrm{st}_{i_1},\mathrm{st}_{i_2}}$'s are elements in $\mathrm{Frac}\ \mathbb{I}$ whose denominators are non-vanishing at $\phi_0$.

Now we compute the
$$\sum_{C'\in\mathcal{O}_{F,v}/\varpi^t_v\mathcal{O}_{F,v}}\pi(\begin{pmatrix}1&\\C'&1\end{pmatrix})^{w''}
\int_{\varpi^t_v||D'}\int_{C''}\int_{B'}\tilde\pi^{\mathrm{up}}
(\begin{pmatrix}1&\\&D'\end{pmatrix}^{w''}\begin{pmatrix}A&\\C''&1\end{pmatrix}^{w''}\begin{pmatrix}1&B'\\&1\end{pmatrix}^{w''}
\tilde\varphi^{\mathrm{up}}_{v,i_1},$$
and consider the summation over $t$ in the expression for $I_2$, we get

\begin{equation}\label{qualitative}\frac{1}{1-\chi_{1,v}(\varpi_v)\alpha^{-1}_{\mathrm{st}_{i_2}}q_v^{z-\frac{1}{2}}}\cdot\sum F^{\mathrm{up}}_{j_1,\mathrm{st}_{i_1},\mathrm{st}_{i_2}}\tilde\varphi^{\mathrm{up}}_{\mathrm{st}_{i_2}}.
\end{equation}
where $\alpha_{\mathrm{st}_{i_2}}$ is the $U_{v,1}$-eigenvalue of $\tilde\varphi_{\mathrm{st}_{i_2}}$. Pairing with the test vector $\varphi^{\mathrm{up}}_v$, we get the desired property.
\end{proof}

\section{Differential Operators}\label{Diff}
In this section we fix one Archimedean place $v$ and study differential operators at this place. Throughout this section we omit the subscript $v$ for simplicity.
\begin{lemma}
Let $m\geq n$ be two positive integers. Let $S_{r,m,n}$ be the natural algebraic representation of $\mathrm{GL}_m\times\mathrm{GL}_n$ on the space of homogeneous degree $r$ polynomials with variables being the entries of $M_{m\times n}$ ($m$ by $n$ matrices). Then this representation decomposes as direct sums of
$$V_{(a_1,\cdots,a_n,0,\cdots,0)}\boxtimes V_{(a_1,\cdots, a_n)}$$
running over all sequences $a_1\geq \cdots\geq a_n\geq 0$ satisfying $\sum_i a_i=r$. Moreover each terms appears with multiplicity one.
\end{lemma}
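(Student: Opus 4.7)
The plan is to recognize this as an instance of the $(\mathrm{GL}_m, \mathrm{GL}_n)$ Cauchy duality. Fix the natural action $(g,h) \cdot X = g X\, {}^t\! h$ on $M_{m \times n}$, which induces an action on $\mathrm{Sym}^\bullet(M_{m\times n}^*)$ of which $S_{r,m,n}$ is the degree-$r$ homogeneous component. Since $\mathrm{GL}_m \times \mathrm{GL}_n$ is reductive and the representation is finite dimensional and rational, $S_{r,m,n}$ is completely reducible, so it suffices to match both the bi-character and exhibit explicit highest-weight vectors.

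For the character: with respect to the diagonal tori $\mathrm{diag}(x_1,\ldots,x_m)$ and $\mathrm{diag}(y_1,\ldots,y_n)$, the linear coordinate dual to the $(i,j)$ entry is a bi-weight vector of weight $(x_i, y_j)$, hence
$$\sum_{r \geq 0} \mathrm{ch}(S_{r,m,n})\, t^r \;=\; \prod_{i,j}(1 - t x_i y_j)^{-1}.$$
The classical Cauchy identity
$$\prod_{i,j}(1 - x_i y_j)^{-1} \;=\; \sum_\lambda s_\lambda(x_1,\ldots,x_m)\, s_\lambda(y_1,\ldots,y_n),$$
with $\lambda$ ranging over partitions of length $\ell(\lambda) \leq \min(m,n) = n$, then yields the desired character identity after extracting the coefficient of $t^r$ and identifying Schur polynomials with Weyl characters of the irreducible polynomial representations $V_{(a_1,\ldots,a_n,0,\ldots,0)}$ of $\mathrm{GL}_m$ and $V_{(a_1,\ldots,a_n)}$ of $\mathrm{GL}_n$.

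To upgrade this to an equality of representations and pin down multiplicity one, I would exhibit a concrete highest-weight vector. For $\lambda = (a_1,\ldots,a_n)$ with $a_{n+1} := 0$, take
$$f_\lambda \;:=\; \prod_{i=1}^n \Delta_i^{a_i - a_{i+1}},$$
where $\Delta_i$ is the upper-left $i \times i$ minor in the coordinate functions. A direct calculation shows $f_\lambda$ is annihilated by both upper unipotent radicals and has bi-weight $\bigl((a_1,\ldots,a_n,0,\ldots,0),(a_1,\ldots,a_n)\bigr)$, so it generates a copy of $V_{(a_1,\ldots,a_n,0,\ldots,0)} \boxtimes V_{(a_1,\ldots,a_n)}$ inside $S_{r,m,n}$. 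Combined with the character computation and complete reducibility, this forces the claimed decomposition with each summand appearing exactly once.

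The only real subtlety is a bookkeeping one: the partitions must have length at most $\min(m,n) = n$, which is precisely what the Cauchy identity enforces and what necessitates the padding by zeros on the $\mathrm{GL}_m$ side; if one had $m < n$ the roles of the two groups would simply swap. No deeper input is required beyond reductivity, the Cauchy identity, and the highest-weight classification for $\mathrm{GL}$.
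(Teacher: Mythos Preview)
Your argument is correct and is the standard self-contained proof of the $(\mathrm{GL}_m,\mathrm{GL}_n)$ Cauchy decomposition: character identity via the Cauchy formula, plus explicit highest-weight vectors given by products of leading minors, plus complete reducibility. The paper, by contrast, gives no argument at all --- it simply cites \cite[Theorem 12.7]{Shi00} (Shimura's \emph{Arithmeticity in the Theory of Automorphic Forms}). So your approach is not so much a different route as an actual proof where the paper offers only a reference. What your write-up buys is self-containment and transparency about why multiplicity one holds; what the citation buys is brevity and an appeal to an established source. One minor caution: be careful that your convention for the action on coordinate functions matches the weights you claim (depending on whether one uses $(g,h)\cdot f(X)=f(g^{-1}X\,{}^t h^{-1})$ or a twisted variant, the torus weight of $X_{ij}$ can come out as $(x_i,y_j)$ or its inverse), but this is purely a bookkeeping issue and does not affect the substance.
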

\begin{proof}
This is \cite[Theorem 12.7]{Shi00}.
\end{proof}
\begin{lemma}
Let $a_1\geq \cdots \geq a_n$ be a sequence of integers and $V_{(a_1,\cdots,a_n)}$ be the algebraic representation of $\mathrm{GL}_n$ with highest weight $(a_1,\cdots,a_n)$. Then the representation $V_{(a_1,\cdots,a_n)}\otimes V_{(k,0, \cdots,0)}$ can be decomposed as the direct sum of representations with highest weight $V_{a_1+c_1,\cdots, a_j+c_j,\cdots, a_n+c_n})$ where $c_j$ runs over $n$-tuples of non-negative integers whose sum is $k$, and such that for each $1\leq j \leq n$ such that $a_{j}+c_j\geq a_{j+1}+c_{j+1}$.
\end{lemma}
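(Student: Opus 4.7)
The plan is to recognize this statement as Pieri's rule for $\mathrm{GL}_n$. The representation $V_{(k,0,\cdots,0)}$ is the $k$-th symmetric power $\mathrm{Sym}^k(V_{\mathrm{st}})$ of the standard representation; its character is the complete homogeneous symmetric polynomial $h_k(x_1,\cdots,x_n)$, while the character of $V_{(a_1,\cdots,a_n)}$ is the Schur polynomial $s_{(a_1,\cdots,a_n)}(x_1,\cdots,x_n)$. Since we are in characteristic zero, the multiplicities of irreducible summands in a tensor product are read off from the product of characters, so the decomposition is governed by the identity
$$s_{(a_1,\cdots,a_n)}\cdot h_k = \sum_{\mu} s_\mu$$
in the ring of symmetric polynomials in $n$ variables.

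This identity is the classical Pieri rule: the sum runs over partitions $\mu$ of length at most $n$ with $|\mu|-|\lambda|=k$ such that $\mu/\lambda$ is a horizontal strip, namely $\lambda_{j-1}\geq \mu_j\geq \lambda_j$ for every $j\geq 1$, with the convention $\lambda_0=+\infty$. Setting $c_j := \mu_j - \lambda_j = \mu_j - a_j$ recovers the parametrization used in the statement: the $c_j$ are non-negative integers with $\sum_j c_j = k$, the requirement that $(a_j+c_j)_j$ be dominant translates into the stated inequality $a_j+c_j\geq a_{j+1}+c_{j+1}$, and the horizontal-strip upper bound $\mu_j\leq \lambda_{j-1}$ translates into the supplementary constraint $c_j\leq a_{j-1}-a_j$ for $j\geq 2$ (needed to ensure that at most one cell is added in each column).

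For a self-contained derivation of Pieri's rule the cleanest route is via the Jacobi--Trudi identity $s_\lambda=\det(h_{\lambda_i-i+j})_{1\leq i,j\leq n}$ combined with a Laplace expansion against $h_k$, or alternatively via an explicit weight-preserving bijection on semistandard Young tableaux (equivalently, a one-row case of the RSK correspondence). As this is a standard result in classical representation theory, no serious difficulty is expected; the only point requiring care is checking that the parametrization in the statement is to be understood together with the horizontal-strip upper bound, so that one does not pick up spurious summands that would arise from imposing only the dominance condition on $(a_j+c_j)_j$.
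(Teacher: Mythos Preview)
Your proposal is correct and aligns with the paper's own proof, which consists entirely of the citation ``This is a restatement of \cite[Proposition 15.25 (i)]{Fulton}'' --- that reference is precisely Pieri's rule, so you and the paper take the same route. Your additional observation that the lemma as stated omits the horizontal-strip upper bound $a_{j-1}\geq a_j+c_j$ (the interlacing condition $\lambda_{j-1}\geq\mu_j$) is well taken: the dominance condition alone would over-count, though for the sole application in Corollary~\ref{corollary 5.3} (where all but $c_n$ vanish) the missing constraint happens to be automatically satisfied under the hypothesis $a_{n-1}\geq k$.
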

\begin{proof}
This is a restatement of \cite[Proposition 15.25 (i)]{Fulton}.
\end{proof}
The following corollary is immediate from the above lemma.
\begin{corollary}\label{corollary 5.3}
Suppose $a_{n-1}\geq k$. Then the representation $V_{(a_1,\cdots, a_{n-1}, k)}$ appears in $V_{(a_1,\cdots,a_{n-1},0)}\otimes V_{(k,\cdots, 0)}$ and $V_{(a_1,\cdots,a_{n-1},0)}\otimes V^{\otimes k}_{(1,\cdots,0)}$ both with multiplicity one. Moreover for any tuple $(b_1,\cdots,b_{n-1},0)$ with $b_1\geq \cdots \geq b_{n-1}\geq 0$, the $V_{(a_1,\cdots, a_{n-1}, k)}$ does not appear in $V_{(b_1,\cdots,b_{n-1},0)}\otimes V_{(k,\cdots, 0)}$ and $V_{(b_1,\cdots,b_{n-1},0)}\otimes V^{\otimes k}_{(1,\cdots,0)}$ if $(a_1,\cdots, a_{n-1})$ is not $(b_1,\cdots, b_{n-1})$.
\end{corollary}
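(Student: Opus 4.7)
The plan is to read off both statements directly from the Pieri-type decomposition in the preceding lemma, with the hypothesis $a_{n-1}\geq k$ providing precisely the dominance condition needed.

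First I would handle the tensor product with $V_{(k,0,\dots,0)}$. By the previous lemma, $V_{(a_1,\dots,a_{n-1},0)}\otimes V_{(k,0,\dots,0)}$ decomposes as a sum of the $V_{(a_1+c_1,\dots,a_{n-1}+c_{n-1},c_n)}$ where $c_1,\dots,c_n\geq 0$ satisfy $\sum c_j=k$ and the resulting tuple is weakly decreasing. To get $V_{(a_1,\dots,a_{n-1},k)}$ one must take $c_j=0$ for $j<n$ and $c_n=k$; the dominance condition $a_{n-1}+c_{n-1}\geq a_n+c_n$ becomes $a_{n-1}\geq k$, which is the hypothesis. This gives multiplicity one. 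For the ``moreover'' part, if instead one starts from $V_{(b_1,\dots,b_{n-1},0)}\otimes V_{(k,0,\dots,0)}$ and asks that $V_{(a_1,\dots,a_{n-1},k)}$ appear, one needs $b_j+c_j=a_j$ for $1\leq j\leq n-1$ and $c_n=k$. Since each $c_j\geq 0$ and $\sum_{j=1}^{n-1}(a_j-b_j)+k=k$, we conclude $b_j=a_j$ for all $j$.

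Next I would handle $V_{(a_1,\dots,a_{n-1},0)}\otimes V_{(1,0,\dots,0)}^{\otimes k}$ by iterating the Pieri rule with the standard representation: each tensor by $V_{(1,0,\dots,0)}$ decomposes as the sum over all ways of adding a single box to the highest weight. Thus the multiplicity of $V_{(a_1,\dots,a_{n-1},k)}$ equals the number of chains of highest weights
\[
(a_1,\dots,a_{n-1},0)=\lambda^{(0)}\subset\lambda^{(1)}\subset\cdots\subset\lambda^{(k)}=(a_1,\dots,a_{n-1},k)
\]
in which each $\lambda^{(i+1)}$ is obtained from $\lambda^{(i)}$ by adding one box and each intermediate tuple is weakly decreasing. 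Since the first $n-1$ coordinates are fixed, every added box must go in the last row, forcing $\lambda^{(i)}=(a_1,\dots,a_{n-1},i)$; the weakly-decreasing condition at each step reduces to $a_{n-1}\geq i$ for $i\leq k$, which follows from $a_{n-1}\geq k$. Hence there is exactly one such chain, and the multiplicity is one. The corresponding uniqueness assertion for varying $(b_1,\dots,b_{n-1})$ follows either by the same chain-counting argument (which shows the $b_j$'s must be stable at every step and hence equal to the $a_j$'s) or, more quickly, by observing that $V_{(a_1,\dots,a_{n-1},k)}$ already fails to appear in $V_{(b_1,\dots,b_{n-1},0)}\otimes V_{(k,0,\dots,0)}$ unless $b_j=a_j$ for all $j<n$, so it also cannot appear in $V_{(b_1,\dots,b_{n-1},0)}\otimes V_{(1,0,\dots,0)}^{\otimes k}$, since $V_{(k,0,\dots,0)}$ is a summand of $V_{(1,0,\dots,0)}^{\otimes k}$ and any other summand of $V_{(1,0,\dots,0)}^{\otimes k}$ is of the form $V_{(a'_1,\dots,a'_n)}$ with some $a'_i>0$ for $i>1$, which by the previous lemma cannot contribute $V_{(a_1,\dots,a_{n-1},k)}$ without already forcing $b_j=a_j$.

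No serious obstacle is expected: this is pure combinatorics of Pieri's rule, with the hypothesis $a_{n-1}\geq k$ playing exactly the role of guaranteeing the single valid dominance pattern. The only mild subtlety is keeping track of the last assertion regarding $V_{(1,0,\dots,0)}^{\otimes k}$, for which I would prefer the direct chain count above as the cleanest bookkeeping.
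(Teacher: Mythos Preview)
Your proposal is correct and follows the same approach the paper intends: the paper simply declares the corollary ``immediate from the above lemma,'' and your argument spells out exactly the Pieri-rule bookkeeping that justifies this. Your chain-count for the $V_{(1,0,\dots,0)}^{\otimes k}$ case is the right way to handle it; the alternative argument you sketch at the end (pushing through other summands of $V_{(1,0,\dots,0)}^{\otimes k}$) would require a more general Littlewood--Richardson input than the stated lemma provides, so you are right to prefer the direct chain count.
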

\noindent\underline{Klingen Eisenstein series}\\
For a non-negative integer $j$ such that $\frac{r+s+2+j}{2}\leq a_r$ and $\frac{r+s+2+j}{2}\leq b_1$. We define
$$\kappa=r+s-j,\ \ \underline{\kappa}=(\frac{r+s-j}{2},\cdots,\frac{r+s-j}{2};\frac{r+s-j}{2},\cdots,\frac{r+s-j}{2}).$$

Write $a_i'=a_i-\frac{r+s+2+j}{2}$ and $b_j'=b_j-\frac{r+s+2+j}{2}$. These are integers as explained in the Introduction.
We define
$$\underline{k}^{(r+1,s+1)}=(a'_1,\cdots, a'_r, 0; 0, b'_1,\cdots, b'_s)$$
$$\underline{k}^{(r+1,s+1),\prime}=(a'_1+1+j,\cdots, a'_r+1+j, 0; 0, 1+j+b'_1,\cdots, 1+j+b'_s)$$
and
$$\underline{k}^{(r+1,s+1),''}=(a'_1+1+j,\cdots, a'_r+1+j, 1+j; 1+j, 1+j+b'_1,\cdots, 1+j+b'_s).$$
We also define
$$\underline{k}^{(s,r)}=(b'_s+1+j,\cdots, b'_1+1+j;a'_r+1+j,\cdots, a'_1+1+j).$$

We divide the matrix into regions as follows.
$$\begin{pmatrix}\begin{matrix}I_b&&&\\&1&&\\&&I_a&\\&&&I_b\end{matrix}&
\begin{matrix}*&*&*&*\\
*&*&*&*\\
*&*&*&*\\
*&*&*&*\end{matrix}
\\0&\begin{matrix}I_b&&&\\&1&&\\&&I_a&
\\&&&I_b\end{matrix}
\end{pmatrix},$$
and we write the upper right matrix as $(\underline{X}):=\begin{pmatrix}\underline{X}_1&\underline{X}_2\\ \underline{X}_3&\underline{X}_4\end{pmatrix}$ with respect to the partition $((r+1)+s)\times((s+1)+r)$.

We define $\mathrm{Sym}^\bullet((\underline{X}_1)_{1+j},\underline{X}_2, \underline{X}_3)$ to be the set of polynomials involving only terms in $\underline{X}_2$, $\underline{X}_3$ and degree $(1+j)$ terms in $\underline{X}_1$. We similarly define $\mathrm{Sym}^\bullet(\underline{X}_2, \underline{X}_3)$.
We write $$f_{123}(\underline{X}):=\mathrm{Proj}_{((\underline{X}_1)_{1+j}, \underline{X}_2£¬\underline{X}_3)}\det(\underline{X})^{1+j}$$ for taking the terms expressing in $\det(\underline{X})^{1+j}$ involving only terms in $\underline{X}_2$, $\underline{X}_3$ and degree $1+j$ terms in $\underline{X}_1$ (thus not involving terms in $\underline{X}_4$).
Then from Corollary \ref{corollary 5.3} applied with $k=1+j$, we see the $V_{\underline{k}^{(r+1,s+1),''}\boxtimes \underline{k}^{(s,r)}}$ component of $\mathrm{Sym}^\bullet ((\underline{X}_1)_{1+j}, \underline{X}_2£¬\underline{X}_3)$ consists of elements spanned by
$f_{123}(\underline{X})\cdot f_j(\underline{X})$ where $f_j(\underline{X})$ runs over a basis of $V_{\underline{k}^{(r+1,s+1)}\boxtimes \underline{k}^{(s,r)}}$ in $\mathrm{Sym}^\bullet(\underline{X}_2, \underline{X}_3)$. Moreover if $f^{\mathrm{hw}}_{\underline{k}^{(\mathrm{r+1,s+1})}\boxtimes \underline{k}^{(s,r)}}(\underline{X})$ is the highest weight vector there, then
$f_{123}(\underline{X})\cdot f^{\mathrm{hw}}_{\underline{k}^{(\mathrm{r+1,s+1})}}(\underline{X})$ is the highest vector for $V_{\underline{k}^{(r+1,s+1),''}}$.

We write $e_\kappa$ for the standard basis of the one-dimensional representation $V_{\underline{\kappa}}$.
We choose the $f^{\mathrm{hw}}_{\underline{k}^{(\mathrm{r+1,s+1})}\boxtimes \underline{k}^{(s,r)}}(\underline{X})$ to be the polynomial
$$\det(\underline{X}^{1}_2)^{a'_1-a'_2}\det(\underline{X}^{2}_2)^{a'_2-a'_3}\cdots\det(\underline{X}^{r}_2)^{a'_r}\cdot
\det(\underline{X}^{1}_3)^{-b'_1+b'_2}\det(\underline{X}^{2}_3)^{-b'_2+b'_3}\cdots\det(\underline{X}^{r}_3)^{b'_s}, $$
where $\underline{X}^i_j$ are the $i$-th upper-left minors of $\underline{X}_j$.
Denote the $$f^{\mathrm{hw}}_{\underline{k}^{(r+1,s+1),''}\boxtimes \underline{k}^{(s,r)}}=f_{123}(\underline{X})\cdot f^{\mathrm{hw}}_{\underline{k}^{(\mathrm{r+1,s+1})}\boxtimes \underline{k}^{(s,r)}}(\underline{X}).$$

\begin{definition}\label{Definition 5.4}
We define the differential operator $\delta_{r+1,s+1}$ on the space of weight $V^\vee_{\underline{k}^{(r+1,s+1),\prime}+\underline{\kappa}}$ forms by
$$\delta_{r+1,s+1}f= \langle D^{1+j}f,\ f^{\mathrm{hw}}_{\underline{k}^{(r+1,s+1),''}\boxtimes \underline{k}^{(s,r)}}\otimes e_\kappa\rangle$$
where we use the simple notation $D$ to denote the $C^\infty$ or $p$-adic differential operators $\partial$ in Section \ref{Diffe}.
\end{definition}
\begin{proposition}\label{proposition 5.5}
For any $p$-adic automorphic form $f_\kappa$ of scalar weight $\kappa$,
we define
$$\delta_1f_\kappa:=\langle\mathrm{Proj}_{\underline{k}^{(r+1,s+1),''}+\underline{\kappa}}\circ D^{1+j}\circ \mathrm{Proj}_{V^\vee_{\underline{k}^{(r+1,s+1),\prime}}}(\underline{X}_2,\underline{X}_3)\circ D^\mathsf{d} f_\kappa,\ f^{\mathrm{hw}}_{\underline{k}^{(r+1,s+1),''}}\cdot e_\kappa\rangle,$$
where $\mathsf{d}=a'_1+\cdots+a'_r+b'_1+\cdots+b'_s+(1+j)(r+s)$ and
$$\delta_2f_\kappa:= \langle \mathrm{Proj}_{V^\vee_{\underline{k}^{(r+1,s+1),''}+\underline{\kappa}}}\circ D^{\mathsf{d}+1+j}f_\kappa,\ \det(\underline{X})^{1+j}\cdot f^{\mathrm{hw}}_{\underline{k}^{(r+1,s+1)}}e_\kappa\rangle.$$
Then we have the restriction of $\delta_1 f_\kappa-\delta_2f_\kappa$ to $\mathrm{U}(r+1,s+1)\times\mathrm{U}(s,r)$ is killed by the $e^{\mathrm{ord}}$ on $\mathrm{U}(s,r)$.
\end{proposition}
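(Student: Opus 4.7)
The plan is to compare $\delta_1 f_\kappa$ and $\delta_2 f_\kappa$ as projections of the same object $D^{\mathsf{d}+1+j}f_\kappa$, differing only in whether an intermediate $\mathrm{GL}_{r+1}\times\mathrm{GL}_{s+1}$-equivariant projection is inserted after the first $D^{\mathsf{d}}$ applications. The representation-theoretic Corollary~\ref{corollary 5.3} pins down which intermediate weights can possibly contribute to the final target weight $\underline{k}^{(r+1,s+1),''}+\underline{\kappa}$, and this will allow me to isolate the ``extra'' terms appearing in $\delta_2 f_\kappa$ but not in $\delta_1 f_\kappa$. The key claim is that these extra terms, upon pullback to $\mathrm{U}(r+1,s+1)\times\mathrm{U}(s,r)$, are in the image of the $p$-adic Maass-Shimura raising operator on the $\mathrm{U}(s,r)$ factor, hence killed by the ordinary projector.

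First I would decompose $D^{\mathsf{d}} f_\kappa$ into $\mathrm{GL}_{r+1}\times\mathrm{GL}_{s+1}$-isotypic components, writing $D^{\mathsf{d}} f_\kappa = \sum_\mu g_\mu\otimes P_\mu(\underline{X})$ where $\mu$ ranges over irreducible sub-representations appearing in $\mathrm{Sym}^{\mathsf{d}}(\underline{X})$. Then by definition $\delta_1 f_\kappa$ picks out only the $\mu=\underline{k}^{(r+1,s+1),\prime}$ component supported in $\mathrm{Sym}^{\bullet}(\underline{X}_2,\underline{X}_3)$, while $\delta_2 f_\kappa$ receives contributions from every $\mu$ whose tensor product with $V^{\otimes(1+j)}_{(1,0,\dots,0)}$ contains the target weight after the further $D^{1+j}$. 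By Corollary~\ref{corollary 5.3}, any such $\mu$ must agree with $\underline{k}^{(r+1,s+1),''}$ in all but the last coordinate of each factor, and the unique such $\mu$ supported purely in $\mathrm{Sym}^{\bullet}(\underline{X}_2,\underline{X}_3)$ is exactly $\underline{k}^{(r+1,s+1),\prime}$ itself. Hence $\delta_2 f_\kappa-\delta_1 f_\kappa$ is a sum over $\mu$'s whose polynomial representatives $P_\mu(\underline{X})$ must involve at least one variable from either $\underline{X}_1$ or $\underline{X}_4$.

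Next I would analyze the pullback under $\mathrm{U}(r+1,s+1)\times\mathrm{U}(s,r)\hookrightarrow\mathrm{U}(n+1,n+1)$. The block sizes match up so that $\underline{X}_1$ (size $(r+1)\times(s+1)$) and $\underline{X}_4$ (size $s\times r$) correspond to the intrinsic $p^+$-directions of the two unitary factors respectively, while $\underline{X}_2$ and $\underline{X}_3$ are the off-diagonal cross blocks that do not belong to either factor's own Harish-Chandra component. Consequently any derivative hitting a factor of $\underline{X}_4$ in $P_\mu$ becomes, after restriction, a genuine $p$-adic Maass-Shimura operator applied to $f_\kappa|_{\mathrm{U}(s,r)}$. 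The crucial combinatorial step is to show that in pairing with $f^{\mathrm{hw}}_{\underline{k}^{(r+1,s+1),''}}\cdot e_\kappa$, the uniqueness part of Corollary~\ref{corollary 5.3} forces every $\underline{X}_1$-derivative in $\delta_2$ to be matched by a compensating $\underline{X}_4$-derivative on the $\mathrm{U}(s,r)$ side. Thus each leftover contribution necessarily carries at least one nontrivial Maass-Shimura derivative along the $\mathrm{U}(s,r)$ factor.

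The final step is to invoke the annihilation of such contributions by $e^{\mathrm{ord}}$ on $\mathrm{U}(s,r)$: the image of a nontrivial $p$-adic Maass-Shimura raising operator lies outside the ordinary subspace at the prescribed weight $\underline{k}^{(s,r)}$, a standard consequence of the control theorem and the identification of the ordinary part with classical holomorphic ordinary forms developed in Section~3 (cf.\ the analogous statements in \cite{Hsieh CM}). The main obstacle will be making precise, via explicit polynomial identities in $\det(\underline{X})^{1+j}$ versus $f_{123}(\underline{X})\cdot f^{\mathrm{hw}}_{\underline{k}^{(r+1,s+1)}}(\underline{X})$ together with the combinatorics of Corollary~\ref{corollary 5.3}, the bookkeeping that assigns leftover derivative degree between $\underline{X}_1$ and $\underline{X}_4$ contributions. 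Concretely, one needs to show that the difference decomposes as a sum of products of a form on $\mathrm{U}(r+1,s+1)$ with an $\underline{X}_4$-raised form on $\mathrm{U}(s,r)$, and then verify that the pairing really lies in $\ker(e^{\mathrm{ord}})$ rather than merely appearing formally so.
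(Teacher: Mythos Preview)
Your overall strategy---identify the extra terms as those involving $\underline{X}_4$-derivatives, then observe these restrict to images of the Maass--Shimura operator on $\mathrm{U}(s,r)$ and are killed by $e^{\mathrm{ord}}$---is exactly the paper's strategy. However, you are missing the single combinatorial observation that makes the paper's argument a few lines long, and you replace it with a more elaborate representation-theoretic decomposition whose crucial step (your ``matching'' claim) is not actually justified.

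The paper's key point is this: in any monomial appearing in the Leibniz expansion of $\det(\underline{X})$, if no entry comes from $\underline{X}_4$, then \emph{exactly one} entry comes from $\underline{X}_1$. (Reason: $\underline{X}_3$ has only $s$ rows, so if all $s$ of the last rows choose entries in $\underline{X}_3$, they occupy $s$ of the $s+1$ first-block columns, leaving only one column for $\underline{X}_1$.) Raising to the power $1+j$, any monomial in $\det(\underline{X})^{1+j}$ with no $\underline{X}_4$-factor has degree exactly $1+j$ in $\underline{X}_1$, and hence already belongs to $f_{123}(\underline{X})$. Therefore $\det(\underline{X})^{1+j}-f_{123}(\underline{X})$ is a sum of monomials each containing at least one $\underline{X}_4$-factor. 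Since $\delta_2$ pairs with $\det(\underline{X})^{1+j}\cdot f^{\mathrm{hw}}_{\underline{k}^{(r+1,s+1)}}$ and $\delta_1$ (after unwinding the intermediate projection via the paragraph preceding Definition~\ref{Definition 5.4}) pairs with $f_{123}\cdot f^{\mathrm{hw}}_{\underline{k}^{(r+1,s+1)}}=f^{\mathrm{hw}}_{\underline{k}^{(r+1,s+1),''}}$, the difference $\delta_2-\delta_1$ is paired against terms carrying $\underline{X}_4$, which under restriction are Maass--Shimura derivatives on the $\mathrm{U}(s,r)$ factor (by the Leibniz rule for the Gauss--Manin connection on the product).

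Your route instead decomposes $D^{\mathsf{d}}f_\kappa$ by isotypic type $\mu$ and invokes Corollary~\ref{corollary 5.3} to constrain the $\mu$'s. This leaves you with extra terms involving $\underline{X}_1$ \emph{or} $\underline{X}_4$, and you then assert that ``the uniqueness part of Corollary~\ref{corollary 5.3} forces every $\underline{X}_1$-derivative in $\delta_2$ to be matched by a compensating $\underline{X}_4$-derivative.'' Corollary~\ref{corollary 5.3} is a multiplicity-one statement about $\mathrm{GL}$-representations; it says nothing about how $\underline{X}_1$- and $\underline{X}_4$-degrees are linked inside a single highest weight vector. What actually links them is the determinantal combinatorics above, which you never invoke. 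Once you insert that observation, your isotypic decomposition and the appeal to Corollary~\ref{corollary 5.3} become unnecessary: the difference is visibly supported on $\underline{X}_4$-terms and the conclusion follows.
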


\begin{proof}
We first observe that for each term in the expression for $\det \underline{X}$, if there is no factor of this term in region $\underline{X}_4$, then there is at most degree one factor in region $\underline{X}_1$.

The restriction of the difference
$$\det(\underline{X})^{1+j}\cdot\mathrm{Proj}_{V_{\underline{k}}^{(r+1,s+1)}}\mathrm{Sym}^\bullet(\underline{X}_2, \underline{X}_3)e_\kappa-(\mathrm{Proj}_{((\underline{X}_1)_1,\underline{X}_2, \underline{X}_3)}\det(\underline{X})^{1+j})\cdot\mathrm{Proj}_{V_{\underline{k}}^{(r+1,s+1)}}\mathrm{Sym}^\bullet(\underline{X}_2, \underline{X}_3)e_\kappa$$
is an entry of a $p$-adic automorphic form in the image of Maass-Shimura differential operator on $X_{\mathrm{U}(s,r)}$ (because they involve factors in $\underline{X}_4$) which is killed by the $e^{\mathrm{ord}}$ on $\mathrm{U}(s,r)$. (Note that if we have the Gauss-Manin connections
$$\nabla_1:\mathcal{E}_1\rightarrow\mathcal{E}_1\otimes\Omega^1_{X_{\mathrm{U}(r,s)}},$$
$$\nabla_2:\mathcal{E}_1\rightarrow\mathcal{E}_1\otimes\Omega^1_{X_{\mathrm{U}(s,r)}}.$$
Then the Gauss-Manin connection on the product on $X_{\mathrm{U}(r,s)}\times X_{\mathrm{U}(s,r)}$ is given by $\nabla(v_1\otimes v_2)=\nabla_1v_1\otimes v_2+v_1\otimes\nabla_2(v_2)$.
\end{proof}
The above proof also gives the following corollary.
\begin{corollary}
For any automorphic representation $\pi$ of $\mathrm{U}(s,r)$ whose Archimedean components are holomorphic discrete series of weight $\underline{k}^{(s,r)}$, the $\pi$ component of the restriction of $(\delta^{C^\infty}_1-\delta^{C^\infty}_2)f_\kappa$ on $\mathrm{U}(s,r)$ is zero. (Here the superscript $C^\infty$ means taking entries of $C^\infty$ differential operators.)
\end{corollary}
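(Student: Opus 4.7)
The plan is to follow the proof of Proposition \ref{proposition 5.5} verbatim, swapping the $p$-adic Maass-Shimura operator for its $C^\infty$ counterpart and Hida's ordinary projector $e^{\mathrm{ord}}$ for the projection to the holomorphic discrete series $\pi$ of weight $\underline{k}^{(s,r)}$. The combinatorial decomposition of $\det(\underline{X})^{1+j}$ into terms supported on $((\underline{X}_1)_{1+j},\underline{X}_2,\underline{X}_3)$ versus those containing at least one factor from $\underline{X}_4$ is purely algebraic, and so applies equally well in the $C^\infty$ setting. Hence $(\delta_1^{C^\infty}-\delta_2^{C^\infty})f_\kappa$ decomposes as a sum of terms each of which, upon restriction along $\iota\colon \mathrm{U}(r+1,s+1)\times \mathrm{U}(s,r)\hookrightarrow \mathrm{U}(n+1,n+1)$, contains at least one differentiation arising from the $\underline{X}_4$ block.

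Under the explicit form of $\iota$ (via the pullback relation $R\cdot\mathrm{diag}[B(z),B(w)]=B(Z)\mathrm{diag}[\overline{M(w)},N(z)]L^{-1}$ recorded earlier in the paper), the $\underline{X}_4$ region corresponds precisely to the $p^+$ Harish-Chandra factor of the Lie algebra of the $\mathrm{U}(s,r)$ component. Consequently each such term, regarded as an automorphic form on $\mathrm{U}(s,r)$ after pairing against $f^{\mathrm{hw}}_{\underline{k}^{(r+1,s+1),''}}\cdot e_\kappa$, lies in the image of a nontrivial $C^\infty$ Maass-Shimura operator $\partial^{C^\infty}_{\mathrm{U}(s,r)}$.

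To finish I invoke the following standard fact: because the minimal $K_\infty$-type of the holomorphic discrete series $\pi$ is exactly $V_{\underline{k}^{(s,r)}}$, and the action of $p^+$ strictly raises the $K_\infty$-type, each such term has vanishing projection onto the $V_{\underline{k}^{(s,r)}}$-isotypic subspace of $\pi$. Since the $\pi$-component at $K_\infty$-type $V_{\underline{k}^{(s,r)}}$ coincides, by orthogonality of $K_\infty$-types in the Petersson inner product, with the full $\pi$-projection of a form carrying only this single $K_\infty$-type (which is the case here by construction of $\delta_1$ and $\delta_2$ via the pairing with $f^{\mathrm{hw}}_{\underline{k}^{(r+1,s+1),''}}\cdot e_\kappa$), the claim follows.

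The main obstacle I anticipate is verifying the correspondence between the $\underline{X}_4$ block of $\mathrm{U}(n+1,n+1)$ and $p^+_{\mathrm{U}(s,r)}$ under $\iota$ at the level of the full Kodaira-Spencer and Gauss-Manin package rather than just Lie algebras; this requires an explicit but elementary computation unpacking how the Kodaira-Spencer isomorphism for the product $\mathrm{U}(r+1,s+1)\times\mathrm{U}(s,r)$ sits inside that for $\mathrm{U}(n+1,n+1)$. The parenthetical remark after Proposition \ref{proposition 5.5} essentially carries this out in the $p$-adic setting, and the same bookkeeping applies in the $C^\infty$ case with the unit-root splitting replaced by the Hodge splitting.
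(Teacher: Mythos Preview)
Your proposal is correct and follows essentially the same approach as the paper's own proof, which is very brief: the paper simply notes that the difference lies in the image of the Maass--Shimura differential operator on $\mathrm{U}(s,r)$ (as established in the proof of Proposition~\ref{proposition 5.5}) and that the holomorphic vector is the lowest weight vector in the holomorphic discrete series, so such raising operators annihilate the projection. Your write-up unpacks exactly these two points in more detail, including the identification of the $\underline{X}_4$ block with $p^+_{\mathrm{U}(s,r)}$, which the paper leaves implicit.
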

This follows from that the difference is in the image of Maass-Shimura differential operator as in the above proof. Note that the holomorphic vector is the lowest weight in the corresponding holomorphic discrete series representation.\\

\noindent\underline{$p$-adic $L$-functions}\\
The case for $p$-adic $L$-functions is completely similar and easier than the Klingen Eisenstein series case. We define
$$\kappa=r+s-j,\ \ \underline{\kappa}=(\frac{r+s-j}{2},\cdots,\frac{r+s-j}{2};\frac{r+s-j}{2},\cdots,\frac{r+s-j}{2}).$$

Write $a_i'=a_i-\frac{r+s+j}{2}$ and $b_j'=b_j-\frac{r+s+j}{2}$.
We define
$$\underline{k}^{(r,s)}=(a'_1,\cdots, a'_r, ; b'_1,\cdots, b'_s)$$
$$\underline{k}^{(r,s),\prime}=\underline{k}^{(r,s),''}=(a'_1+j,\cdots, a'_r+j ; j+b'_1,\cdots, j+b'_s)$$

We also define
$$\underline{k}^{(s,r)}=(b'_s+j,\cdots, b'_1+j;a'_r+j,\cdots, a'_1+j).$$
We write $f'_{123}(\underline{X}):=\mathrm{Proj}_{( \underline{X}_2£¬\underline{X}_3)}\det(\underline{X})^{1+j}$ for taking the terms expressing in $\det(\underline{X})^{1+j}$ involving only terms in $\underline{X}_2$, $\underline{X}_3$ (thus not involving terms in $\underline{X}_4$ or $\underline{X}_1$. Note the difference here from the case of Klingen Eisenstein series). As before we define $f^{\mathrm{hw}}_{\underline{k}^{(r,s),''}\boxtimes \underline{k}^{(s,r)}}$ and also the differential operator
$$\delta_{r,s}f= \langle Df,\ f^{\mathrm{hw}}_{\underline{k}^{(r,s),''}\boxtimes \underline{k}^{(s,r)}}\otimes e_\kappa\rangle.$$
The following proposition is proved in the same way as Proposition \ref{proposition 5.5}.
\begin{proposition}\label{proposition 5.6}
For any $p$-adic automorphic form $f_\kappa$ of scalar weight $\kappa$,
we define
$$\delta'_1f_\kappa:=\langle \mathrm{Proj}_{V^\vee_{\underline{k}^{(r,s),\prime}}}(\underline{X}_2,\underline{X}_3)\circ D^\mathsf{d} f_\kappa,\ f^{\mathrm{hw}}_{\underline{k}^{(r,s),'}}\cdot e_\kappa\rangle,$$
where $\mathsf{d}=a'_1+\cdots+a'_r+b'_1+\cdots+b'_s+j(r+s)$ and
$$\delta'_2f_\kappa:= \langle \mathrm{Proj}_{V^\vee_{\underline{k}^{(r,s),''}+\underline{\kappa}}}\circ D^{\mathsf{d}}f_\kappa,\ \det(\underline{X})^j\cdot f^{\mathrm{hw}}_{\underline{k}^{(r,s)}}\cdot e_\kappa\rangle.$$
Then we have the restriction of $\delta'_1 f_\kappa-\delta'_2f_\kappa$ to $\mathrm{U}(r,s)\times\mathrm{U}(s,r)$ is killed by the $e^{\mathrm{ord}}$ on $\mathrm{U}(s,r)$.
\end{proposition}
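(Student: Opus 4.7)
The plan is to mirror the proof of Proposition \ref{proposition 5.5}, with the simplification that there is no longer an auxiliary $\underline{X}_1$ column to keep track of (the pullback on $\mathrm{U}(r,s)\times\mathrm{U}(s,r)$ embeds inside $\mathrm{U}(n,n)$ rather than $\mathrm{U}(n+1,n+1)$, so the weight $\underline{k}^{(r,s),''}$ already coincides with $\underline{k}^{(r,s),\prime}$ and no boundary degree-one factor from $\underline{X}_1$ is required). Concretely, I would expand the operator $D^{\mathsf{d}}f_\kappa$ with respect to the block decomposition $\underline{X}=\begin{pmatrix}\underline{X}_1&\underline{X}_2\\ \underline{X}_3&\underline{X}_4\end{pmatrix}$, and note that $\delta'_1 f_\kappa$ is, by construction, obtained by first retaining only the terms of $D^{\mathsf{d}}f_\kappa$ whose polynomial representative lies in $\mathrm{Sym}^\bullet(\underline{X}_2,\underline{X}_3)$ and then pairing against $f^{\mathrm{hw}}_{\underline{k}^{(r,s),'}}\cdot e_\kappa$; while $\delta'_2 f_\kappa$ comes from pairing the full $D^{\mathsf{d}}f_\kappa$ against $\det(\underline{X})^{j}\cdot f^{\mathrm{hw}}_{\underline{k}^{(r,s)}}\cdot e_\kappa$.

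The key algebraic point (exactly as in the proof of Proposition \ref{proposition 5.5}, and using Corollary \ref{corollary 5.3} with $k=j$) is the decomposition
\[
\det(\underline{X})^{j} \;=\; f'_{123}(\underline{X}) \;+\; \bigl(\text{terms involving at least one entry of } \underline{X}_4\bigr),
\]
where $f'_{123}(\underline{X}) = \mathrm{Proj}_{(\underline{X}_2,\underline{X}_3)}\det(\underline{X})^{j}$, and the multiplicity-one projection onto $V_{\underline{k}^{(r,s),''}}\boxtimes V_{\underline{k}^{(s,r)}}$ inside $\mathrm{Sym}^\bullet(\underline{X}_2,\underline{X}_3)\cdot \det(\underline{X})^{j}$ is achieved precisely by multiplying the highest weight vector of $V_{\underline{k}^{(r,s)}}\boxtimes V_{\underline{k}^{(s,r)}}$ in $\mathrm{Sym}^\bullet(\underline{X}_2,\underline{X}_3)$ by $f'_{123}(\underline{X})$. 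Consequently $\delta'_1 f_\kappa$ computes exactly the contribution of $f'_{123}(\underline{X})\cdot f^{\mathrm{hw}}_{\underline{k}^{(r,s)}}\cdot e_\kappa$ to the pairing, while $\delta'_2 f_\kappa$ computes the contribution of $\det(\underline{X})^{j}\cdot f^{\mathrm{hw}}_{\underline{k}^{(r,s)}}\cdot e_\kappa$. Their difference is therefore a sum of pairings against monomials each of which carries at least one factor of an entry of $\underline{X}_4$.

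To finish, I would use the dictionary between the polynomial variables $\underline{X}_1,\ldots,\underline{X}_4$ and the action of the Harish-Chandra decomposition of $\mathfrak{u}(n,n)$ on nearly holomorphic forms: a factor in $\underline{X}_4$ corresponds to applying a Gauss--Manin connection in a direction tangent to the $\mathrm{U}(s,r)$-factor of the pullback $\mathrm{U}(r,s)\times\mathrm{U}(s,r)\hookrightarrow\mathrm{U}(n,n)$. Hence the restriction of $\delta'_1f_\kappa-\delta'_2f_\kappa$ to $\mathrm{U}(r,s)\times\mathrm{U}(s,r)$ lies in the image of the Maass--Shimura operator $\partial$ on $X_{\mathrm{U}(s,r)}$. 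Since $e^{\mathrm{ord}}$ on $\mathrm{U}(s,r)$ annihilates the image of $\partial$ on nearly holomorphic forms (equivalently: the unit-root splitting is Frobenius-equivariant and sends $\underline{X}_4$-derivatives into the kernel of the Frobenius-semisimple ordinary idempotent), the difference is killed by $e^{\mathrm{ord}}$ on $\mathrm{U}(s,r)$, as claimed.

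The main obstacle is essentially bookkeeping rather than a conceptual hurdle: one has to verify that the projector appearing in $\delta'_1$ picks out the same representation-theoretic component of $D^{\mathsf{d}}f_\kappa$ as does pairing with $\det(\underline{X})^{j}\cdot f^{\mathrm{hw}}_{\underline{k}^{(r,s)}}\cdot e_\kappa$, modulo the $\underline{X}_4$-terms. This is a direct consequence of the multiplicity-one statement in Corollary \ref{corollary 5.3}, applied here in the simpler ``$k=j$ without the extra $\underline{X}_1$-factor'' regime; once that is in place, the annihilation by $e^{\mathrm{ord}}$ is a standard property of the unit-root splitting used in defining the $p$-adic operator $\partial$, and the proof is complete.
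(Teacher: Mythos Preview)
Your proposal is correct and follows essentially the same approach as the paper, which simply states that Proposition \ref{proposition 5.6} is ``proved in the same way as Proposition \ref{proposition 5.5}.'' You have correctly identified the simplification in the $\mathrm{U}(n,n)$ case (any term of $\det(\underline{X})$ avoiding $\underline{X}_4$ automatically avoids $\underline{X}_1$ as well, so $f'_{123}$ involves only $\underline{X}_2,\underline{X}_3$), and your argument that the difference lands in the image of the Maass--Shimura operator on the $\mathrm{U}(s,r)$-factor and is therefore killed by $e^{\mathrm{ord}}$ is exactly the content of the paper's proof.
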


\section{Global Computations and $p$-adic Interpolation}
\subsection{Hecke Projector}\label{projector}

\begin{lemma}\label{MIT}
We write $\mathscr{H}_v(K)$ over $\mathbb{C}$ (for $K$ an open compact subgroups of $\mathrm{U}(r,s)(\mathcal{O}_{F,v})$) for the abstract Hecke algebra of $\mathrm{U}(r,s)$ at $v$ defined by actions of double cosets $K\backslash\mathrm{U}(r,s)(F_v)/K$. Let $M_1,\cdots,M_n$ be the irreducible $\mathscr{H}_v(K)$-modules which are pairwise non-isomorphic. Then the image of $\mathscr{H}_\ell(K)$ in $\oplus_i \mathrm{End}_\mathbb{C}M_i$ is surjective.
\end{lemma}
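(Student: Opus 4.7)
The plan is to reduce the statement to the Jacobson density theorem. First I would observe that since $K$ is an open compact subgroup of the reductive $p$-adic group $\mathrm{U}(r,s)(F_v)$, the irreducible $\mathscr{H}_v(K)$-modules arising in practice correspond to the $K$-invariants of irreducible admissible representations of $\mathrm{U}(r,s)(F_v)$; by admissibility each such $K$-invariant space is finite-dimensional over $\mathbb{C}$. Thus without loss of generality each $M_i$ is finite-dimensional.

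Next I would apply Schur's lemma: since each $M_i$ is a finite-dimensional simple module over a $\mathbb{C}$-algebra and $\mathbb{C}$ is algebraically closed, $\mathrm{End}_{\mathscr{H}_v(K)}(M_i)=\mathbb{C}$. Because the $M_i$ are pairwise non-isomorphic, any $\mathscr{H}_v(K)$-homomorphism $M_i\to M_j$ for $i\neq j$ is zero. Hence for the semisimple module $M:=M_1\oplus\cdots\oplus M_n$, the commutant is
$$D:=\mathrm{End}_{\mathscr{H}_v(K)}(M)=\prod_{i=1}^n\mathrm{End}_{\mathscr{H}_v(K)}(M_i)=\mathbb{C}^n,$$
acting on $M$ diagonally with scalars on each $M_i$.

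The Jacobson density theorem then asserts that the image of $\mathscr{H}_v(K)$ in $\mathrm{End}_D(M)$ is dense (in the finite topology). Unwinding the commutant, $\mathrm{End}_D(M)=\bigoplus_i\mathrm{End}_\mathbb{C}(M_i)$, which is finite-dimensional over $\mathbb{C}$; since the finite topology on a finite-dimensional vector space is discrete, density is equivalent to surjectivity. This yields the claim.

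There is no real obstacle here; the only point requiring care is the use of admissibility to guarantee $\dim_\mathbb{C} M_i<\infty$, which is what allows Schur's lemma to identify the endomorphism ring with $\mathbb{C}$ and thus converts Jacobson density into honest surjectivity.
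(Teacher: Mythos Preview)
Your proof is correct and takes essentially the same route as the paper. The paper simply cites the density theorem from Etingof et al.\ (Theorem 7.6) together with the obvious dimension bound $\dim(\text{image})\leq\sum_i(\dim_\mathbb{C}M_i)^2$, while you spell out the same argument via Schur's lemma and Jacobson density; your extra remark on admissibility ensuring $\dim_\mathbb{C}M_i<\infty$ is a point the paper leaves implicit.
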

\begin{proof} This is a standard fact of representation theory of finite dimensional algebras. For example, this can be deduced easily from  \cite[Theorem 7.6]{MIT}, noting that the dimension of the image is less than or equal to $\sum_i(\mathrm{dim}_\mathbb{C}M_i)^2$.
\end{proof}
\begin{lemma}
Suppose $\pi$ is a cuspidal automorphic representation of $\mathrm{U}(r,s)$ whose base change to $\mathrm{GL}(r+s)_{/E}$ is cuspidal. Suppose moreover that the Archimedean components of $\pi$ are cohomological with respect to an algebraic representations $V$ of $\mathrm{U}(r,s)$. Then the Archimedean components of $\pi$ are in the discrete series.
\end{lemma}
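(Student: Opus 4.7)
The plan is to argue locally: the cuspidality of the base change forces $\pi_\infty$ to be tempered, and a tempered cohomological representation of a real reductive group is automatically a discrete series. So there are two local inputs and one global input to assemble.

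First, I would recall the Vogan--Zuckerman classification of cohomological $(\mathfrak{g},K_\infty')$-modules: any irreducible unitary module with nontrivial $(\mathfrak{g},K_\infty')$-cohomology with respect to an algebraic representation $V$ is isomorphic to some Zuckerman module $A_\mathfrak{q}(\lambda)$, where $\mathfrak{q}\subset\mathfrak{g}_\mathbb{C}$ is a $\theta$-stable parabolic subalgebra with Levi $L$, and $\lambda$ is an admissible one-dimensional character on $L$ compatible with the infinitesimal character of $V$. Among these modules, the ones whose Levi $L$ is compact are precisely the discrete series of $\mathrm{U}(r,s)(\mathbb{R})$ of Harish-Chandra parameter determined by $\lambda$ and the highest weight of $V$; moreover, these are exactly the $A_\mathfrak{q}(\lambda)$ that are tempered. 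Hence it is enough to show that every archimedean component $\pi_v$ is tempered.

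Second, I would invoke the results cited in the introduction (Mok and KMSW \cite{KMSW}, and the related work of Moeglin--Waldspurger that the paper explicitly assumes) to attach to $\pi$ a global Arthur parameter $\psi$. Since the base change $\mathrm{BC}(\pi)$ of $\pi$ to $\mathrm{GL}_n(\mathbb{A}_\mathcal{K})$ is cuspidal, the parameter $\psi$ cannot contain any nontrivial Arthur $\mathrm{SL}_2$-factor (such a factor would force $\mathrm{BC}(\pi)$ to be a nontrivial isobaric sum of Speh representations, hence non-cuspidal) and cannot be built out of several strictly smaller conjugate self-dual pieces (which would again make $\mathrm{BC}(\pi)$ an isobaric sum). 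Thus $\psi$ is a simple generic parameter, which means $\pi$ is tempered at every place; in particular $\pi_v$ is tempered at all $v\mid\infty$.

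Combining these two observations: $\pi_v$ is a tempered cohomological $(\mathfrak{g},K_\infty')$-module for $v\mid\infty$, hence by the classification recalled above it is an $A_\mathfrak{q}(\lambda)$ with compact Levi, i.e.\ a discrete series. The only real obstacle is a foundational one, which has already been acknowledged in the introduction: the argument uses the cuspidal-base-change-to-tempered-parameter implication for $\mathrm{U}(r,s)$ over a totally real $F$, together with the required local-global compatibility, which depends on the ongoing stabilization work of Moeglin--Waldspurger. Once this is granted, everything else is formal representation theory of real reductive groups.
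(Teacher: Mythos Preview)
Your proof is correct and follows essentially the same route as the paper: cuspidality of the base change forces the Arthur parameter to be generic, hence $\pi_\infty$ is (essentially) tempered, and a tempered cohomological representation of a real unitary group is a discrete series. The paper's proof is a three-line sketch of exactly this argument; you have simply unpacked the two ingredients (the Vogan--Zuckerman classification for the local step, and the shape of the Arthur parameter for the global step) in more detail.
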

\begin{proof}
Since the base change of $\pi$ is cuspidal, it is well known that this base change is essentially tempered. Therefore the $\pi$ itself is in essentially tempered Arthur packet. But a cohomological and essentially tempered representations must be discrete series. We thus obtain the result.
\end{proof}
\begin{definition}\label{stability}
We write $\mathsf{m}_{r,s}$ for the cardinality of the Weyl group quotient $W_{\mathrm{U}(r,s)(\mathbb{R})}/W_{\mathrm{U}(r)(\mathbb{R})\times\mathrm{U}(s)(\mathbb{R})}$.
\end{definition}
Let $M$ be the space of ordinary cuspidal families on $\mathrm{U}(r,s)$ with some tame level group $K$, localized at the maximal ideal $\mathfrak{m}$ corresponding to the mod-$p$ Galois representation of $\pi$ (which is residually irreducible by our running assumption). As we have seen from Hida theory, this is free of finite rank over the weight algebra. For any regular algebraic cuspidal automorphic representation $\pi$ of $\mathrm{U}(r,s)$ whose residual Galois representation is irreducible, we know its base change to $\mathrm{GL}(r+s)_\mathcal{K}$ must be cuspidal. Thus it corresponds to a tempered and cohomological Arthur packet. The Archimedean Arthur packet of it consists of the set of $\mathsf{m}_{r,s}$ discrete series with the same infinitesimal character.  By \cite[Theorem 1.7.1]{KMSW}, for any cusp automorphic representation $\pi=\pi_\infty\otimes\pi_f$ appearing in this space of global sections of automorphic sheaves, localized at $\mathfrak{m}$, and for each $\pi'_\infty$ in the same tempered Arthur packet as $\pi_\infty$, the multiplicity for $\pi'_\infty\otimes\pi_f$ is exactly one. (These representations are stable in the sense that the $S_\psi$ in \emph{loc.cit.} is trivial, since the base change is cuspidal.) They only contribute to the middle degree cohomology, each with dimension one.

\begin{proposition}\label{classicity}
Let $g$ be a cuspidal ordinary $p$-adic automorphic eigenform whose residual Galois representation is absolutely irreducible. Suppose $g$ has cohomological weight and trivial nebentypus at $p$. Then $g$ is classical (i.e. holomorphic).
\end{proposition}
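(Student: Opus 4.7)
The plan is to combine Hida freeness, the control theorem at sufficiently regular arithmetic weights, and the multiplicity-one statement from \cite{KMSW} quoted just above, to argue by a dimension comparison that every $p$-adic ordinary eigensystem at the cohomological weight $\underline{k}$ of $g$ whose residual Galois representation is $\bar\rho_g$ is captured by a classical holomorphic cusp form.

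First, by the residual irreducibility hypothesis, the eigensystem of $g$ occurs as the specialization at an arithmetic point $\phi_0\in\mathrm{Spec}\,\Lambda^{\mathrm{par}}$ of a unique ordinary Hida family $\mathbf{f}$ with coefficient ring $\mathbb{I}$, a finite normal extension of $\Lambda^{\mathrm{par}}$. Existence comes from the freeness of $\mathcal{V}^{\flat,\mathrm{ord},\mathrm{par}}$ over the weight algebra (see the standard Hida-theoretic results compiled around Proposition~\ref{fes}); the associated big Galois representation $\rho_{\mathbf{f}}:G_\mathcal{K}\to\mathrm{GL}_n(\mathbb{I})$ exists thanks to $\bar\rho_g$ being absolutely irreducible. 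Let $d$ denote the rank of the localization $e^{\mathrm{ord}}\mathcal{V}^{0,\mathrm{par}}_{\mathfrak{m}}$ over $\Lambda^{\mathrm{par}}$, where $\mathfrak{m}$ is the maximal ideal cut out by $\bar\rho_g$.

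Second, pick a large integer $b\geq b_{\underline{k}}$, so that Hida's classicality theorem (the proposition displayed just before Proposition~\ref{fes}) ensures that every $p$-adic ordinary cusp form of weight $\underline{k}'=\underline{k}+b(1,\dots,1,0,\dots,0)$ is classical and holomorphic. Each arithmetic specialization of $\mathbf{f}$ at $\underline{k}'$ therefore corresponds to a regular algebraic cuspidal automorphic representation $\pi_{\underline{k}'}$ of $\mathrm{U}(r,s)$ whose base change to $\mathrm{GL}(n)_\mathcal{K}$ is cuspidal (by residual irreducibility). Combining this with the KMSW multiplicity-one statement (the $S_\psi$-triviality quoted in the paragraph before Proposition~\ref{classicity}), the dimension of classical ordinary holomorphic cusp forms at weight $\underline{k}'$ localized at $\mathfrak{m}$ equals the number of Hecke eigensystems congruent to $\lambda_g$ modulo $\mathfrak{m}$; by Hida freeness this equals $d$.

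Third, I would use the big Hida family structure to transport this count to the cohomological weight $\underline{k}$. Specialization of $\mathbf{f}$ at an arithmetic point is injective on eigensystems after restriction to $\mathrm{Spec}\,\mathbb{I}_{\mathfrak{m}}$, so the number of distinct ordinary Hecke eigensystems at weight $\underline{k}$ with residual system $\bar\lambda_g$ is also $d$. On the other hand, the classical ordinary holomorphic cusp forms of weight $\underline{k}$ with trivial nebentypus at $p$ inject into the space of $p$-adic forms, and KMSW again gives that the $\mathfrak{m}$-part of the classical space has dimension equal to the number of eigensystems (each contributing multiplicity one via the unique holomorphic discrete series in each tempered Arthur packet of cohomological infinitesimal character). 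To identify this count at $\underline{k}$ with the count at $\underline{k}'$, I would use that every classical ordinary holomorphic eigensystem at $\underline{k}$ lifts to a Hida family (by embedding classical forms into $p$-adic forms and invoking Hida freeness), and conversely each Hida family gives an eigensystem at both weights---so both counts equal $d$.

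Finally, the classical ordinary holomorphic $\mathfrak{m}$-part at weight $\underline{k}$ is a subspace of dimension $d$ inside the $p$-adic ordinary $\mathfrak{m}$-part of the same dimension $d$; hence the inclusion is an equality, and $g$ is classical. The main obstacle will be the dimension comparison across weights in the third step: one has to make sure that specialization of the Hida family at the cohomological arithmetic point $\underline{k}$ realizes each big eigensystem as a distinct classical eigensystem, which is where the Arthur multiplicity formula of \cite{KMSW} combined with residual irreducibility is essential. Everything else is a formal consequence of Hida freeness and the classicality at large weights already recorded above.
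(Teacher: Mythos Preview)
Your strategy has a genuine circularity in the third step. You want to show that the dimension of the classical ordinary holomorphic $\mathfrak{m}$-part at weight $\underline{k}$ equals $d$, the rank of the Hida module. The inequality $\leq d$ is fine: classical eigensystems at $\underline{k}$ lift to Hida families, which specialize to classical eigensystems at the regular weight $\underline{k}'$, and there are $d$ of those. But for the reverse inequality you write ``conversely each Hida family gives an eigensystem at both weights.'' A Hida family specializes at $\underline{k}$ to a \emph{$p$-adic} eigensystem, not a priori a classical one; asserting that it is classical is exactly the statement you are trying to prove. So you never establish that the classical dimension at $\underline{k}$ reaches $d$, and the final inclusion argument collapses.

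The paper avoids this by introducing a third interpolating object: Urban's alternating character $\dim^{\mathrm{ord}}_{\mathrm{arith}}$ on the ordinary part of overconvergent arithmetic-group cohomology \cite{Urban}. This function is rigid-analytic and integer-valued, hence locally constant on weight space. The crucial input you are missing is that Urban proves a classicality theorem on the \emph{arithmetic cohomology} side at every cohomological weight (\cite[Corollary~4.3.12]{Urban}), including the non-regular $\phi_0$. Combined with KMSW (each tempered Arthur packet contributes $\mathsf{m}_{r,s}$ discrete series, all occurring with multiplicity one), this gives $\dim^{\mathrm{ord}}_{\mathrm{arith}}(\phi_0)=\mathsf{m}_{r,s}\cdot\dim^{\mathrm{ord}}_{\mathrm{coh},\mathrm{cl},\phi_0}$ without any coherent classicality at $\phi_0$. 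At a nearby Pilloni-regular $\phi$ one has $\dim^{\mathrm{ord}}_{\mathrm{arith}}(\phi)=\mathsf{m}_{r,s}\cdot\dim^{\mathrm{ord}}_{\mathrm{coh}}$ by coherent classicality there, and local constancy then forces $\dim^{\mathrm{ord}}_{\mathrm{coh},\mathrm{cl},\phi_0}=\dim^{\mathrm{ord}}_{\mathrm{coh}}$. In short, the missing idea is to pass through a second $p$-adic family (overconvergent Betti cohomology) that already enjoys a classicality theorem at the weight of interest.
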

\begin{proof}
It is well known that an ordinary $p$-adic automorphic form has to be overconvergent.
If the weight is slightly regular in the sense of the main theorem of \cite{Pilloni}, then the result is a consequence of that theorem.

If we only assume the weight is cohomological, we use an argument of comparing dimension of the ordinary $p$-adic automorphic forms from global sections of coherent automorphic sheaves and from the cohomology of arithmetic groups. Let $\mathrm{dim}^{\mathrm{ord}}_{\mathrm{coh}}$ be the rank of the space of $\Lambda$-adic ordinary cuspidal $p$-adic automorphic forms that we defined using global sections of coherent automorphic sheaves, localized at the maximal ideal $\mathfrak{m}$. We also consider the action of the identity element in the Hecke algebra (regarded as an element of the Hecke algebra at prime to $p$ bad places), and then write $\mathrm{dim}^{\mathrm{ord}}_{\mathrm{arith}}$ as a function of $\phi$ for the character of it acting on the ordinary part of the alternating overconvergent arithmetic cohomology as in \cite{Urban} (see Introduction there). It is by definition a rigid analytic function and only takes integer values, and is thus a locally constant function. We first look at a point $\phi$ satisfying the Pilloni's regular assumption (so that we have the classicality result), then
$$\mathrm{dim}^{\mathrm{ord}}_{\mathrm{arith}}(\phi)=\mathsf{m}_{r,s}\cdot\mathrm{dim}^{\mathrm{ord}}_{\mathrm{coh}}
=\mathsf{m}_{r,s}\cdot\mathrm{dim}^{\mathrm{ord}}_{\mathrm{coh},\mathrm{cl},\phi}$$
where the subscript $\mathrm{cl}$ standards for the subspace of classical forms.
Now we look at $\phi_0$ which is also of cohomological weight, thus we do have classicality result for arithmetic group cohomology side \cite[Corollary 4.3.12]{Urban}. From the paragraph right before this proposition, we also have
$$\mathrm{dim}^{\mathrm{ord}}_{\mathrm{arith}}(\phi_0)=\mathsf{m}_{r,s}\cdot
\mathrm{dim}^{\mathrm{ord}}_{\mathrm{coh},\mathrm{cl},\phi_0}.$$
Taking $\phi$ in a neighbourhood of $\phi_0$, we have
$$\mathrm{dim}^{\mathrm{ord}}_{\mathrm{arith}}(\phi)=\mathrm{dim}^{\mathrm{ord}}_{\mathrm{arith}}(\phi_0).$$
These altogether implies the classicality at the weight $\phi_0$.
\end{proof}

We consider the $\mathbb{C}_p$-coefficient tensor product Hecke algebra of $\prod_v\mathscr{H}_v(K)$'s for all $v$ in $\Sigma\backslash\{p\}$. Then from Lemma \ref{MIT} and Proposition \ref{classicity}, we can find an element $t\in \prod_v\mathscr{H}_v(K)$ so that its action on $\phi_0(M)$ has distinct eigenvalues $\alpha_1$, $\alpha_2$,... $\alpha_{\mathsf{n}}$. Let $\mathbb{I}$ be the coefficient ring of the ordinary Hida families on $\mathrm{U}(r,s)$ which we suppose to be a Noetherian normal domain. Now we consider the action of $t$ on $M\otimes\mathrm{Frac}\ \mathbb{I}$ has distinct eigenvalues $\alpha_{1,M}$, $\alpha_{2,M}$, ... $\alpha_{\mathsf{n},M}$ whose denominators are non-vanishing at $\phi_0$, and their specializations at $\phi_0$ are just the $\alpha_1$, $\alpha_2$,... $\alpha_{\mathsf{n}}$. Then we define the projector
\begin{equation}
\mathrm{Proj}_\mathbf{f}:=\frac{(t-\alpha_{2,M})(t-\alpha_{3,M})\cdots(t-\alpha_{\mathsf{n},M})}
{(\alpha_{1,M}-\alpha_{2,M})(\alpha_{1,M}-\alpha_{3,M})\cdots(\alpha_{1,M}-\alpha_{\mathsf{n},M})}.
\end{equation}
Note that for a Zariski dense set of arithmetic points $\phi$, the eigenvalues for $t$ acting on $\phi(M)$ are pairwise distinct. At these points, the vectors in $\phi(M)$ contained in each automorphic representation must be spanned by eigenvectors for $t$.
\subsection{Interpolation}\label{Inter}

\begin{definition}
We define a family of Eisenstein datum as a quadruple $\mathbf{D}=(\mathbf{f}, \mathbb{I}, \tau_0, \Sigma)$ where $\mathbf{f}$ is a Hida family with normal coefficient ring $\mathbb{I}$ with $\mathbf{f}_{\phi_0}$ an ordinary vector in $\pi$; the $\tau_0$ is a Hecke character of $\mathcal{K}^\times\backslash\mathbb{A}^\times_\mathcal{K}$ of finite order, and $\Sigma$ is a set of primes of $F$ containing all bad primes.

We define the parameter space as $\mathbb{I}[[\Gamma_\mathcal{K}^+]]$. Note that we only include the cyclotomic direction for the Hecke character since the anti-cyclotomic twist direction is essentially absorbed into the weight space for $\mathbb{I}$. Let $\boldsymbol{\tau}=\tau_0\Psi_\mathcal{K}$.
We write $\boldsymbol{\tau}_\phi$ for the composition of $\boldsymbol{\tau}$ with $\phi$. Let $\phi\in\mathrm{Spec}\ \mathbb{I}[[\Gamma_\mathcal{K}]]$ be such that its restriction to $\mathbb{I}$ is arithmetic with weight $(a_{1,\phi}, \cdots, a_{r,\phi}; b_{1,\phi},\cdots, b_{s,\phi})$. Suppose $\boldsymbol{\tau}_\phi$ corresponds to the Hecke character with Archimedean type $(\frac{\kappa_\phi}{2},\frac{\kappa_\phi}{2})$. We say $\phi$ is arithmetic for constructing $p$-adic $L$-function if $\kappa_\phi\geq r+s$, $a_{r,\phi}\geq \frac{\kappa_\phi}{2}$ and $b_{1,\phi}\geq \frac{\kappa_\phi}{2}$; say it is arithmetic for constructing $p$-adic Eisenstein family if $\kappa_\phi\geq r+s+2$, $a_{r,\phi}\geq \frac{\kappa_\phi}{2}$ and $b_{1,\phi}\geq \frac{\kappa_\phi}{2}$. We define the Eisenstein datum $\mathbf{D}_\phi$ at $\phi$ in the sense of Definition \ref{Datum} to be $(\mathbf{f}_\phi, \boldsymbol{\tau}_\phi|\cdot|^{-\frac{\kappa_\phi}{2}}, \kappa_\phi, \Sigma)$

To study functional equations, we also define the ``dual Eisenstein datum'' as follows. Let $\phi\in\mathrm{Spec}\ \mathbb{I}[[\Gamma^+_\mathcal{K}]]$ be a non-arithmetic point such that $\tau_\phi$ is of infinity type $(r+s-j_\phi,r+s-j_\phi)$. (It is not an interpolation point for the Klingen Eisenstein family since they do not correspond to classical weights.) Then we define $\tilde{\mathbf{D}}^{(1)}_\phi$ by $(\pi_{\mathbf{f}_\phi}, \tilde{\tau}^c_\phi|\cdot|^{r+s-j_\phi},r+s+j_\phi,\Sigma)$, and $\tilde{\mathbf{D}}^{(2)}_\phi=(\pi_{\mathbf{f}_\phi}, \tilde{\tau}^c_\phi|\cdot|^{r+s-j_\phi},r+s+2+j_\phi,\Sigma)$. These are arithmetic points and are used for $p$-adic functional equations for $p$-adic $L$-functions and $p$-adic Klingen Eisenstein series respectively. Note that $L(\pi,\tau^c,z)=L(\tilde{\pi},\tau, z)$.

We define a \emph{distinguished} non-arithmetic point $\phi_0$ in which $\mathbf{f}_\phi$ specialize to an ordinary vector in $\pi$ and $\boldsymbol{\tau}_\phi$ is $\tau_0|\cdot|^{\frac{r+s}{2}}$.
\end{definition}

\begin{definition}
We write $\boldsymbol{\xi}_i$'s for the $\mathbb{I}[[\Gamma^+_\mathcal{K}]]$-valued characters interpolating the $\xi_i$'s in Definition \ref{generic} at points $\phi\in\mathrm{Spec}\mathbb{I}[[\Gamma^+_\mathcal{K}]]$ where the $\boldsymbol{\tau}_\phi|_{\mathcal{O}_{\mathcal{K},p}}$ and $\chi_i|_{\mathbb{Z}^\times_p}$'s there are in the generic case, and the $a_{1,\phi}=\cdots=a_{r,\phi}=b_{1,\phi}=\cdots=b_{s,\phi}=\kappa_\phi=0$. We omit the precise formula since it requires introducing unnecessary notations. Their specializations to general weight $(a_1,\cdots,a_r;b_1,\cdots,b_s)$ are related to local Fourier coefficient as in Lemma \ref{ArchiFourier} through the function defined below.
We define a function
\begin{equation}
\Phi_\xi(x)=\left\{\begin{array}{ll} 0&x\not \in \mathfrak{X}, \\ \boldsymbol{\xi}_1/\boldsymbol{\xi}_2(\det C_1)\cdots\boldsymbol{\xi}_{r-1}/\boldsymbol{\xi}_{r}(\det C_{r-1})\boldsymbol{\xi}_{r}(C_{r})&\\ \times\boldsymbol{\xi}_{a+b+2}/\boldsymbol{\xi}_{r+3}(\det B_1)\cdots\boldsymbol{\xi}_{r+s}/\boldsymbol{\xi}_{r+s+1}(\det B_{s-1})\boldsymbol{\xi}_{r+s+1}(\det B_s).& x\in \mathfrak{X}.\end{array}\right.
\end{equation}
where $B$ is a $(r+1)\times r$ or $r\times r$ matrix, $C$ is a $s\times (s+1)$ or $s \times s$ matrix. The $B_i$ and $C_i$ are the upper left $i\times i$ minors of $B$ and $C$, respectively.
\end{definition}

We first give the formula for the $\beta$-th Fourier coefficient of the Siegel Eisenstein series below. These are the Siegel Eisenstein series constructed in previous sections, normalized by the factors $B_\mathcal{D}$ and $B'_\mathcal{D}$ in \cite[Section 5.3.1]{WAN} respectively. We write
$$\beta=\begin{pmatrix}A_\beta& B_\beta\\ C_\beta & D_\beta\end{pmatrix}$$
with respect to the partition $((r+1)+s)\times((s+1)+r)$ or $(r+s)\times(s+r)$ depending on the size.
We write $\mathcal{A}_{\det}(\beta)$ for the element in $\mathbb{I}[[\Gamma^+_\mathcal{K}]]$ interpolating the $(\det\beta)^{\kappa_\phi}|\det\beta|^{\kappa_\phi}_p$ at $\phi$ with $\boldsymbol{\tau}_\phi$ having infinity type $(\kappa_\phi,\kappa_\phi)$.

Let
\begin{align}\label{(66)}
&\boldsymbol{f}_{sieg,\beta}&&=\mathcal{A}_{\det}(\beta)\prod_{v\not\in\Sigma\cup\{v_{\mathrm{aux}}\}}
h_{v,\beta}(\bar{\tau}'(\varpi_v)q_v^{-\kappa_\phi})&\notag\\
&&&\times\prod_{v\in\Sigma,v\nmid p}\mathrm{Vol}(S_{n+1}(\mathcal{O}_{F,v}))e_v(\mathrm{Tr}_{\mathcal{K}_v/F_v}(\frac{\beta_{a+b+2,1}+...+\beta_{a+2b+1,b}}{x_v}))
+\frac{\beta_{b+2,b+2}+...+\beta_{b+1+a,b+1+a}}{y_v\bar{y}_v})&\notag\\
&&&\times\prod_{v|p}\Phi_{\boldsymbol{\xi},v}(\beta)\times f_{sieg,v_{\mathrm{aux}},\beta}&
\end{align}
and

\begin{align}\label{6666}
&\boldsymbol{f}'_{sieg,\beta}&&=\mathcal{A}_{\det}(\beta)\prod_{v\not\in\Sigma\cup\{v_{\mathrm{aux}}\}}
h_{v,\beta}(\bar{\tau}'(\varpi_v)q_v^{-\kappa_\phi})&\notag\\
&&&\times\prod_{v\in\Sigma,v\nmid p}\mathrm{Vol}(S_{n+1}(\mathcal{O}_{F,v}))e_v(\mathrm{Tr}_{\mathcal{K}_v/F_v}(\frac{\beta_{a+b+1,1}+...+\beta_{a+2b,b}}{x_v}))
+\frac{\beta_{b+1,b+1}+...+\beta_{b+a,b+a}}{y_v\bar{y}_v})&\notag\\
&&&\times\prod_{v|p}\Phi_{\boldsymbol{\xi},v}(\beta)\times \hat{\Phi}^{(n)}_{2,w_0,w_0}(\beta)&
\end{align}
where $\hat{\Phi}^{(n)}_{2,w_0,w_0}(\beta)$ is defined in Definition \ref{defSch}.

\begin{proposition}\label{Siegel Measure}
There are $\Lambda_{r,s}[[\Gamma_\mathcal{K}]]$-adic formal Fourier expansions $\mathbf{E}_{\mathbf{D},sieg}$ and $\mathbf{E}_{\mathbf{D},sieg}'$ such that
$$\mathbf{E}_{\mathbf{D},sieg,\phi}=E_{sieg,\mathbf{D}_\phi}(\prod_v f_{sieg,v}, z_\kappa,-),$$
$$\mathbf{E}'_{\mathbf{D},sieg,\phi}=E'_{sieg,\mathbf{D}_\phi}(\prod_v f'_{sieg,v}, z'_\kappa,-)$$
in terms of formal Fourier expansions.
The formal $q$-expansion is given by (\ref{(66)}) and (\ref{6666}) above.
\end{proposition}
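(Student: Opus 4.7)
The plan is to construct the two measures by interpolating the Fourier coefficients of the Siegel Eisenstein series, exploiting that at any arithmetic point $\phi$ the non-degenerate $\beta$-th Fourier coefficient of a classical Siegel Eisenstein series factors as a product of purely local Fourier coefficients of the chosen sections. Since the local sections $f_{sieg,v}$ and $f'_{sieg,v}$ have already been specified in Section \ref{Section Eisenstein Series} at every place, and since each of these local computations has been carried out in the excerpt (Lemmas \ref{Fourier-Archimedean}, \ref{Fourier-Unramified}, \ref{Fourier-Ramified}, \ref{Fourier-p-adic}), the only remaining task is to check that the resulting product of local coefficients, once suitably normalized, defines a formal $q$-expansion with coefficients in $\Lambda_{r,s}[[\Gamma_\mathcal{K}]]$.

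First I would assemble the local pieces in families. At archimedean places, Lemma \ref{Fourier-Archimedean} reduces the $\beta$-th coefficient, after dividing by the universal $\Gamma$-factor $(2\pi i)^{n\kappa}/\prod(\kappa-j-1)!$, to $\det(\beta)^{\kappa_\phi-n}\bar{y}^{\kappa_\phi}e_v(i\mathrm{Tr}(\beta y\bar{y}))$; the variable part $\det(\beta)^{\kappa_\phi}$ is interpolated by $\mathcal{A}_{\det}(\beta)$. At unramified finite places outside $\Sigma\cup\{v_{\mathrm{aux}}\}$, Lemma \ref{Fourier-Unramified} presents the coefficient, up to $L$-factors that will be removed by the normalization, as the polynomial $h_{v,\beta}$ evaluated at $\bar{\boldsymbol{\tau}}'(\varpi_v)q_v^{-\kappa_\phi}$; since $\bar{\boldsymbol{\tau}}'(\varpi_v)q_v^{-\kappa_\phi}$ is a global element of $\mathbb{I}[[\Gamma_\mathcal{K}^+]]$ and $h_{v,\beta}\in\mathbb{Z}[X]$, this interpolates tautologically. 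At ramified non-$p$-adic places, Lemma \ref{Fourier-Ramified} gives a constant (in $z$) exponential factor depending only on the fixed auxiliary data $(x_v,y_v)$, so it interpolates trivially. At $p$-adic places, the Siegel section was engineered via Gauss-sum normalizations so that Lemma \ref{Fourier-p-adic} produces $\Phi_\xi({}^{t}\beta)$ with the $\xi_i$'s arising from $\pi_v$ and $\tau_v$; these characters package into global $\boldsymbol{\xi}_i\in\mathbb{I}[[\Gamma_\mathcal{K}]]^\times$, giving the claimed factor $\Phi_{\boldsymbol{\xi},v}(\beta)$. At $v_{\mathrm{aux}}$ the explicit formula $\mathrm{Vol}(\Gamma)\hat{\Phi}_2({}^{t}\beta)$ for the Godement section coefficient yields the final factor.

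Next I would multiply these local factors together and absorb the unwanted denominator $L$-factors (unramified), $\Gamma$-factors (archimedean), and Gauss-sum and $c_n$-factors (at $p$) into the normalization constants $B_\mathcal{D}$ and $B'_\mathcal{D}$ of \cite[Section 5.3.1]{WAN}. These constants are precisely designed so that the product equals (\ref{(66)}) or (\ref{6666}), and so that the formal $q$-expansion has $\beta$-coefficient in $\Lambda_{r,s}[[\Gamma_\mathcal{K}]]$. To pass from formal $q$-expansions to honest $p$-adic automorphic forms I would invoke the $q$-expansion principle built into Subsection \ref{Subsection FJ}: since at a Zariski dense set of arithmetic $\phi$ the specialization of the formal expansion equals the Fourier expansion of an actual classical Siegel Eisenstein series of appropriate weight, level, and nebentypus, the formal expansion lies in $\mathcal{N}_{\mathrm{ord}}=\mathcal{M}_{\mathrm{ord}}$ and hence defines a $\Lambda_{r,s}[[\Gamma_\mathcal{K}]]$-adic family.

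The main obstacle will be tracking integrality and the absence of poles in the normalization: one must check that the archimedean $\Gamma$-factors are holomorphic and nonzero throughout the interpolation range (guaranteed by $\kappa_\phi\geq n$ via Lemma \ref{Fourier-Archimedean}), that the $p$-adic Gauss sums in $B_\mathcal{D}$ are nonvanishing (this uses the generic condition of Definition \ref{generic} and the ordinarity of $\pi$ at $p$), and that the unramified Euler factors absorbed match precisely those produced by the pullback computation, so that the resulting family is compatible with the Klingen Eisenstein family to be interpolated next. Once this bookkeeping is verified, the two identities in the proposition follow from the factorization of global Fourier coefficients into local ones at each arithmetic $\phi$.
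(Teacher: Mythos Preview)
Your approach is essentially the paper's: the paper's proof consists of a single sentence saying this is ``a formal application of Kummer congruences using our interpolation of the Fourier-expansion (\ref{(66)}), as detailed in \cite[Lemma 3.15]{Hsieh13}.'' What you have written is a careful unpacking of exactly that sentence, going place by place through the local Fourier computations and checking that each factor lies in (or is interpolated by an element of) the Iwasawa algebra.

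One small point of overshoot: the proposition only asserts the existence of \emph{formal} Fourier expansions with the stated specialization property, so your final step invoking the $q$-expansion principle and $\mathcal{N}_{\mathrm{ord}}=\mathcal{M}_{\mathrm{ord}}$ to produce ``honest $p$-adic automorphic forms'' is not needed here. The passage to genuine families is used later (in Theorem \ref{Construction}) after pulling back and applying Hecke projectors, not at the level of the Siegel Eisenstein measure itself.
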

\begin{proof}
This is a formal application of Kummer congruences using our interpolation of the Fourier-expansion (\ref{(66)}), as detailed in \cite[Lemma 3.15]{Hsieh13}.
\end{proof}
We also define the Siegel section used for functional equations
$$f^{\mathrm{fteq}}=\prod_{v|\infty}f_{sieg,v}\prod_{v<\infty}f^{\mathrm{fteq}}_v,$$
$$f^{\mathrm{fteq},\prime}=\prod_{v|\infty}f'_{sieg,v}\prod_{v<\infty}f^{\mathrm{fteq},\prime}_v.$$

\begin{theorem}\label{Construction}
Let $\mathbf{f}$ be an $\mathbb{I}$-coefficient nearly ordinary cuspidal eigenform on $\mathrm{U}(r,s)$ such that the specialization $\mathbf{f}_\phi$ at a Zariski dense set of ``generic'' arithmetic points $\phi$ is classical and generates an irreducible automorphic representation of $\mathrm{U}(r,s)$. Let $\Sigma$ be a finite set of primes containing all primes dividing the any entry of $\zeta$ or the conductor of $\mathbf{f}$ or $\mathcal{K}$. Then
\begin{itemize}
\item[(i)] There is an element $\mathcal{L}_{\mathbf{f},\tau_0}^\Sigma\in\mathbb{I}[[\Gamma_\mathcal{K}]]\otimes_{\mathbb{I}}F_{\mathbb{I}}$ (the $F_{\mathbb{I}}$ is the fraction field of $\mathbb{I}$) whose denominators nonzero at $\phi_0$, such that for any generic arithmetic points $\phi\in\mathrm{Spec}\mathbb{I}[[\Gamma_\mathcal{K}]]$, we have
\begin{align*}
&\phi(\mathcal{L}_{\mathbf{f},\tau_0}^\Sigma)=&&c_\kappa'(z_{\kappa_\phi}')(\frac{(-2)^{-d(a+2b)}
(2\pi i)^{d(a+2b)\kappa_\phi}(2/\pi)^{d(a+2b)(a+2b-1)/2}}{\prod_{j=0}^{a+2b-1}(\kappa_\phi-j-1)\!^d})^{-1}\cdot C_{\mathbf{f}_\phi}^p
&\\
&&&\times \prod_{v|p}(\mathrm{Vol}_{\phi,v}\cdotp^{-ss_2(\frac{1+a+2b}{2})}p^{-\sum_{j=1}^rt_j(\frac{a+2b+1}{2})}
\times |p^{t_1+...+t_r+s\cdot s_2}|^{-\frac{\kappa_\phi}{2}}&\\
&&&\times \prod_{i=r+1}^{r+s}\mathfrak{g}(\chi_i^{-1}\tau_2)\chi_i\tau_2^{-1}(p^{s_2})\prod_{j=1}^r\mathfrak{g}
(\chi_j\tau_1^{-1})
\chi_j^{-1}\tau_1(p^{t_j}))\cdot \frac{L^\Sigma(\tilde{\pi}_{\mathbf{f}_\phi},\bar{\tau}_\phi^c,\frac{\kappa_\phi-r-s+1}{2})}
{\langle\tilde{\varphi}_\phi^{ord},\varphi_\phi\rangle}&
\end{align*}
where
$$\mathrm{\mathrm{Vol}_{\phi,v}}=(\frac{p^{(r+s)(r+s-1)/2}\cdot(p-1)^{r+s}}{(\prod_{i=1}^r
p^{t_i\cdot(r+s-i)})\cdot
(\prod_{i=1}^{s}p^{t_{r+i}(s-i)})\cdot
\prod_{j=1}^{r+s}(p^j-1)}$$
is nothing but the volume of the level group for $\varphi_\phi$ at $v$, the
$\chi_i$'s are defined in Definition \cite[Definition 4.42]{WAN}, $\tau_{\phi,v}=(\tau_1, \tau_2^{-1})$ such that $\tau_i$ has conductor $p^{s_i}$ with $s_2>s_1$. The
$$C_{\mathbf{f}_\phi}^p=\prod_{v\nmid p,v\in\Sigma}\tau(y_v\bar{y}_vx_v)|(y_v\bar y_v)^2x_v\bar{x}_v|_v^{-z_{\kappa_{\phi}'-\frac{a+2b}{2}}}\mathrm{Vol}(\mathfrak{Y}_v)$$
(the $x_v$ and $y_v$ are the $x$ and $y$ in Subsubsection \ref{Ramified Pullback} and $\mathfrak{Y}_v$ is defined in Definition \ref{rfd}.)
The $c'_{\underline{k},\kappa}$ is the nonzero constant defined in Lemma \ref{Archimedean constant} and $\kappa_\phi$ is the weight associated to the arithmetic point $\phi$. The $\varphi_\phi$ and $\tilde{\varphi}_\phi^{ord}$ are the specialization of $\mathbf{f}$ and the $\mathbf{f}^\vee$ provided by the assumption ``DUAL'' and $\mathrm{Proj}_{\mathbf{f}^\vee}$ in \cite[Section 5.2.3]{WAN} (we explain its validity in the proof). Note that when we identify $\mathrm{U}(r,s)$ with $\mathrm{U}(s,r)$ in the obvious way, the Borel subgroups with respect to which the $\mathbf{f}$ and $\mathbf{f}^\vee$ are ordinary are opposite to each other. The $\tau_\phi$ are specializations of the family of CM characters $\boldsymbol{\tau}$. The $p^{t_i}$'s are conductors of some characters defined in Definition \ref{generic}. Note that we have re-written the formulas in \emph{loc.cit} using that $a+b=r$ and $b=s$ there. We also correct some errors in the expression in \emph{loc.cit}.
\item[(ii)] There is a set of formal $q$-expansions $\mathbf{E}_{\mathbf{f},\tau_0}:=\{\sum_\beta a_{[g]}^h(\beta)q^\beta\}_{([g],h)}$ for $$\sum_\beta a_{[g]}^h(\beta)q^\beta\in(\mathbb{I}^{ur}[[\Gamma_\mathcal{K}]]\hat{\otimes}_{\mathbb{Z}_p}
    \mathcal{R}_{[g],\infty})\otimes_{\mathbb{I}^{ur}}F_{\mathbb{I}^{ur}},$$ whose denominators are nonzero at $\phi_0$, where $\mathcal{R}_{[g],\infty}$ is some ring to be defined later in equation (\ref{(5)}), $([g],h)$ are $p$-adic cusp labels, such that for a Zariski dense set of arithmetic points $\phi\in\mathrm{Spec} \mathbb{I}[[\Gamma_\mathcal{K}]]$, $\phi(\mathbf{E}_{\mathbf{f},\tau_0})$ is the Fourier-Jacobi expansion of the holomorphic nearly ordinary Klingen Eisenstein series $E(f_{Kling,\phi},z_{\kappa_\phi},-)$ we construct before. Here $f_{Kling}$ is the tensor product ``Klingen section'' of the local pullback sections $F_\varphi(f_{sieg,v},;z,-)$ in our local computations in Section \ref{Section Eisenstein Series}.
\item[(iii)] The terms $a_{[g]}^t(0)$ are divisible by $\mathcal{L}_{\mathbf{f},\tau_0}^\Sigma\cdot\mathcal{L}_{\bar{\tau}'_0\chi^a_\mathcal{K}}^\Sigma$ where $\mathcal{L}_{\bar{\tau}_0'\chi^a_\mathcal{K}}^\Sigma$ is the $p$-adic $L$-function of the character $\bar{\tau}'_0$ (note that we missed the character $\chi^a_\mathcal{K}$ in \emph{loc.cit.}).
\end{itemize}
\end{theorem}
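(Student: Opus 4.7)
The plan is to build both $\mathcal{L}^\Sigma_{\mathbf{f},\tau_0}$ and $\mathbf{E}_{\mathbf{f},\tau_0}$ as specializations of the same pair of $\Lambda$-adic Siegel Eisenstein measures $\mathbf{E}'_{\mathbf{D},sieg}$ on $\mathrm{GU}(n,n)$ and $\mathbf{E}_{\mathbf{D},sieg}$ on $\mathrm{GU}(n+1,n+1)$ supplied by Proposition \ref{Siegel Measure}. For (i) one pulls back via the doubling embedding $\mathrm{U}(r,s)\times\mathrm{U}(s,r)\hookrightarrow\mathrm{U}(n,n)$ given by $S'$ and pairs with $\mathbf{f}$ on one factor while applying the Hecke projector $\mathrm{Proj}_\mathbf{f}$ of Section \ref{projector} on the other; for (ii) one pulls back via Shimura's embedding $\mathrm{U}(r+1,s+1)\times\mathrm{U}(s,r)\hookrightarrow\mathrm{U}(n+1,n+1)$ given by $S$ and applies $\mathrm{Proj}_\mathbf{f}$ only on the second factor, leaving an $\mathbb{I}^{\mathrm{ur}}[[\Gamma_\mathcal{K}]]$-adic form on $\mathrm{GU}(r+1,s+1)$. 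The projector is legitimate because its denominators are controlled away from $\phi_0$ by the construction in Section \ref{projector}.

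For (i) in detail, the pullback of $\mathbf{E}'_{\mathbf{D},sieg}$ under $S'$ defines an $\mathbb{I}[[\Gamma_\mathcal{K}]]$-adic $p$-adic automorphic form on $\mathrm{U}(r,s)\times\mathrm{U}(s,r)$ by Proposition \ref{Siegel Measure}; dividing $\mathrm{Proj}_{\mathbf{f}^\vee}$ applied to one variable by $\langle\tilde{\varphi}^{\mathrm{ord}},\varphi\rangle$ yields an element of $\mathbb{I}[[\Gamma_\mathcal{K}]]\otimes_{\mathbb{I}} F_\mathbb{I}$ whose denominator is non-vanishing at $\phi_0$. At a generic arithmetic $\phi$ the interpolation formula is assembled from the local pullback identities of Sections 4.4--4.7: the unramified factor provides the ratio $L^\Sigma(\tilde\pi_{\mathbf{f}_\phi},\bar\tau^c_\phi,z+\tfrac{1}{2})/\prod L(2z+n-i,\bar\tau'\chi^i_\mathcal{K})$, the prime-to-$p$ ramified factor gives $C^p_{\mathbf{f}_\phi}$, the $p$-adic factor contributes the modified Euler factors $\prod\mathfrak{g}(\chi_j\tau_1^{-1})\chi_j^{-1}\tau_1(p^{t_j})\cdots$ together with the volume $\mathrm{Vol}_{\phi,v}$, and the Archimedean factor contributes $c'_{\underline{k},\kappa}$ from Lemma \ref{Archimedean constant} and the explicit constant from Lemma \ref{Fourier-Archimedean}. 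The classicality result (Proposition \ref{classicity}) identifies this $p$-adic pairing with the classical Petersson pairing at each such $\phi$, so the product of these local factors indeed recovers the stated formula. Part (ii) is strictly parallel: the pullback of $\mathbf{E}_{\mathbf{D},sieg}$ under $S$, composed with $\mathrm{Proj}_{\mathbf{f}^\vee}$ on the $\mathrm{U}(s,r)$ factor, produces an $\mathbb{I}^{\mathrm{ur}}[[\Gamma_\mathcal{K}]]$-adic form on $\mathrm{GU}(r+1,s+1)$; reading off its formal Fourier-Jacobi expansion via the framework of Subsection \ref{Subsection FJ} gives the sought $\{a^h_{[g]}(\beta)q^\beta\}$ in $\mathbb{I}^{\mathrm{ur}}[[\Gamma_\mathcal{K}]]\hat\otimes\mathcal{R}_{[g],\infty}$, and at arithmetic $\phi$ the second pullback identity of Proposition 4.5 together with the explicit section choices of Section \ref{Section Eisenstein Series} identifies the specialization with the holomorphic nearly ordinary Klingen Eisenstein series $E(f_{\mathrm{Kling},\phi},z_{\kappa_\phi},-)$.

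For (iii), Lemma \ref{4.1.4} gives the constant term of the Klingen Eisenstein series along $P$ as $f_z+A(\rho,f,z)_{-z}$. Unwinding the doubling construction, the $f_z$-piece evaluated at the Klingen section produced by $F_\varphi(f_{sieg,v};z,-)$ is precisely the form of the pullback integral whose $\Lambda$-adic interpolation is $\mathcal{L}^\Sigma_{\mathbf{f},\tau_0}$, up to the normalizing denominators $\prod_{i=0}^{n-1}L(2z+n-i,\bar\tau'\chi^i_\mathcal{K})$ appearing in the unramified pullback (Lemma in Section 4.4). The intertwining piece $A(\rho,f,z)_{-z}$, on the other hand, transforms the pullback section into a section built from $M(z,f_{sieg,v})$, and the ratio $M(z,-)/f_{sieg}$ is precisely a product of character $L$-functions for $\bar\tau'_0\chi^a_\mathcal{K}$ by the local intertwining formulas underlying the definitions of $f^{\mathrm{fteq}}_v$. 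Interpolating these character $L$-values by the Katz-type construction yields the factor $\mathcal{L}^\Sigma_{\bar\tau'_0\chi^a_\mathcal{K}}$, so that the constant-term coefficients $a^t_{[g]}(0)$ are divisible by $\mathcal{L}^\Sigma_{\mathbf{f},\tau_0}\cdot\mathcal{L}^\Sigma_{\bar\tau'_0\chi^a_\mathcal{K}}$ as claimed.

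The main obstacle I expect is the simultaneous reconciliation of two delicate points: first, that the $p$-adic specializations recover the \emph{classical} Klingen Eisenstein series (not just a $p$-adic limit), which requires invoking the Hida control theorem together with the classicality result of Proposition \ref{classicity} and the precise choice of level structure at $p$ dictated by the ordinary sections of Section \ref{p adic}; and second, that the auxiliary-prime computation of Section \ref{Auxi} really forces the Siegel Eisenstein Fourier coefficients to vanish on degenerate $\beta$, which is indispensable both for interpolating the $\Lambda$-adic Siegel measure and for keeping the formal Fourier-Jacobi expansion in the ring $\mathcal{R}_{[g],\infty}$ with only nondegenerate support. Tracking the normalizing constants $B_\mathcal{D},B'_\mathcal{D}$ and the factors $c'_{\underline{k},\kappa},\mathrm{Vol}_{\phi,v}$ through the construction so that the interpolation formula reads exactly as stated is tedious but mechanical once these two conceptual points are in place.
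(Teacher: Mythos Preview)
Your approach is essentially the paper's. The paper's proof is very brief: it cites \cite[Section 5.3]{WAN} for the bulk of the argument and focuses only on the one point that goes beyond \emph{loc.cit.}, namely verifying the ``DUAL'' hypothesis (existence of a dual Hida family $\tilde{\mathbf{f}}$). That verification is exactly your step for (i): pull back $\mathbf{E}'_{\mathbf{D},sieg}$ under $\mathrm{U}(r,s)\times\mathrm{U}(s,r)\hookrightarrow\mathrm{U}(n,n)$ and apply the projector on the $\mathrm{U}(s,r)$ factor, but the paper writes the projector as $\mathrm{Proj}_{\mathbf{f}}\circ e^{\mathrm{ord}}$ and takes the \emph{output} to be the dual family $\tilde{\mathbf{f}}$ itself, rather than assuming $\mathbf{f}^\vee$ already exists. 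The non-triviality of $\tilde{\mathbf{f}}$ is then argued from the non-vanishing of the critical value $L(\tilde{\pi},\bar{\tau}^c\cdot(\omega\circ\mathrm{Nm}),\tfrac{3}{2})$, which follows from the hypothesis that $0$ and $1$ are not Hodge--Tate weights of $M$. You should make this explicit: you invoke $\mathrm{Proj}_{\mathbf{f}^\vee}$ without saying where $\mathbf{f}^\vee$ comes from, and this is precisely the point the paper isolates as the only new content.

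Two small things to tighten. First, include the ordinary idempotent: the paper applies $\mathrm{Proj}_{\tilde{\mathbf{f}}}\circ e^{\mathrm{ord}}$ (and $\mathrm{Proj}_{\mathbf{f}}\circ e^{\mathrm{ord}}$), not the Hecke projector alone; this is what guarantees the resulting family is nearly ordinary and that the classicality and control theorems apply. Second, your discussion of (iii) is more elaborate than what the paper records (it simply defers to \cite{WAN}); your decomposition via Lemma~\ref{4.1.4} is the right idea, but note that the divisibility by $\mathcal{L}^\Sigma_{\mathbf{f},\tau_0}$ comes from the pullback section itself (both terms $f_z$ and $A(\rho,f,z)_{-z}$ carry it), while the factor $\mathcal{L}^\Sigma_{\bar{\tau}'_0\chi^a_\mathcal{K}}$ arises from the normalizing product $\prod_i L(2z+n-i,\bar{\tau}'\chi^i_\mathcal{K})$ in the denominator of the unramified pullback, rather than from the intertwining piece alone.
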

\begin{proof}
This is essentially the main theorem of \cite{WAN} and the proof is given in \cite[Section 5.3]{WAN}. Note that in \cite[Section 5.2.3]{WAN} we made an assumption ``DUAL'', which says that for the given Hida family $\mathbf{f}$, we can find a dual Hida family $\tilde{\mathbf{f}}$ in the sense of \emph{loc.cit}. However here, this assumption can be deduced from the operator $\mathrm{Proj}_{\mathbf{f}}$ defined in Section \ref{projector}: we pullback the $\mathbf{E}'_{\mathbf{D},sieg}$ under
$$\mathrm{U}(r,s)\times\mathrm{U}(s,r)\hookrightarrow \mathrm{U}(r+s,r+s),$$
and apply the Hecke projector $\mathrm{Proj}_{\mathbf{f}}\circ e^{\mathrm{ord}}$ to the $\mathrm{U}(s,r)$-part. By our assumption on $\pi$ and $\tau$ (namely the Hodge-Tate weight of $M$ does not contain $0$ and $1$), the $L(\tilde{\pi},\bar{\tau}^c\cdot(\omega\circ\mathrm{Nm}), \frac{3}{2})$ is a critical value and is thus nonzero. The resulting family on $\mathrm{U}(r,s)$ (restricting to an appropriate subfamily parameterized by $\mathrm{Spec}\ \mathbb{I}$) is the desired $\tilde{\mathbf{f}}$.

Then one constructs the Klingen Eisenstein family by first pullback the $\mathbf{E}_{\mathbf{D},sieg}$ under
$$\mathrm{U}(r+1,s+1)\times\mathrm{U}(s,r)\hookrightarrow \mathrm{U}(r+s+1,r+s+1),$$
and apply the Hecke projector $\mathrm{Proj}_{\tilde{\mathbf{f}}}\circ e^{\mathrm{ord}}$ to the $\mathrm{U}(s,r)$-part.
\end{proof}

\subsection{Functional Equation and Non-vanishing}
\subsection{Functional Equation}\label{Feq}
Before continuing we need the following lemma.
\begin{lemma}
Let $q\in Q_n(\mathbb{A}_F)$ or $Q_{n+1}(\mathbb{A}_F)$ and $\det\beta=0$. Then the $\beta$-th Fourier coefficient for $E_{\mathrm{sieg},\beta}(f^{\mathrm{fteq}},z,q)$ is identically zero as a function of $z$.
\end{lemma}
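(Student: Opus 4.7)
\medskip
\noindent\textbf{Proof plan.}
My plan is to use the global functional equation for the Siegel Eisenstein series to convert the claim into a statement about $E(f_{\mathrm{sieg}}, -z, q)$, and then to extract the vanishing from the specific choice of Siegel section at the auxiliary prime $v_{\mathrm{aux}}$ made in Section \ref{Auxi}. The point is that the auxiliary place was designed precisely so that its local Fourier coefficients vanish on degenerate $\beta$, and this local vanishing should propagate globally through every Bruhat stratum.

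First, I would compare $f^{\mathrm{fteq}}$ and $f_{\mathrm{sieg}}$ place by place. At each $v$, the definitions in Sections 4.4, 4.5 and 4.6 give $f^{\mathrm{fteq}}_{v,z} = \gamma_v(z)\cdot M(-z, f_{\mathrm{sieg},v})_z$, where $\gamma_v(z)$ is the explicit ratio of local $L$-factors (and $\epsilon$-factors at ramified places). Multiplying over $v$, $f^{\mathrm{fteq}}_z$ is a nonzero scalar multiple of the global intertwining operator applied to $f_{\mathrm{sieg}}$ at parameter $-z$, so the standard functional equation
$$E(M(-z,f_{\mathrm{sieg}}), z, g) = \chi^n(\mu(g))\cdot E(f_{\mathrm{sieg}}, -z, g)$$
yields $E(f^{\mathrm{fteq}}, z, q) = c(z)\chi^n(\mu(q))\cdot E(f_{\mathrm{sieg}}, -z, q)$ as meromorphic functions of $z$, with $c(z) = \prod_v\gamma_v(z)$ a product of nonzero meromorphic factors. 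Hence it suffices to prove that $E_\beta(f_{\mathrm{sieg}}, -z, q)$ vanishes identically in $z$ whenever $\det\beta = 0$ and $q \in Q(\mathbb{A}_F)$.

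Next, I would work in the region $\mathrm{Re}(-z) \gg 0$ where the sum defining $E(f_{\mathrm{sieg}}, -z, q)$ converges absolutely, and unfold the $\beta$-th Fourier coefficient using the Bruhat stratification $G_n = \bigsqcup_{0\le r\le n} Q w_r Q$. Each stratum contributes an integral of $f_{\mathrm{sieg}}(-z, w_r n\gamma q)$ against the character $\psi_\beta$ over a certain unipotent subgroup. For every such stratum, the factor at $v_{\mathrm{aux}}$ is (after a change of variables) an integral of the Schwartz functions $\Phi_1^{(n+1)}\cdot\Phi_2^{(n+1)}$ from Definition \ref{defSch} paired against $\psi_\beta$, which by Fourier inversion equals $\hat{\Phi}_2^{(n+1)}$ evaluated on a matrix whose structure is governed by $\beta$ and the rank $r$ of the cell. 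By construction, $\hat{\Phi}_2^{(n+1)}$ is the characteristic function of a set where the relevant sub-blocks lie in $\mathrm{GL}(\mathcal{O}_{F,v_{\mathrm{aux}}})$, so it vanishes once $\det\beta = 0$. Putting the cellular contributions together gives $E_\beta(f_{\mathrm{sieg}}, -z, q) = 0$ throughout the absolute convergence region, and by meromorphic continuation the identity persists for all $z$.

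The main obstacle I foresee is the careful bookkeeping in Step 3 for the small Bruhat cells (with $r < n$), where the Fourier integral is not the naive product of local coefficients and one must check that the support condition at $v_{\mathrm{aux}}$ really forces vanishing for every rank-$r$ contribution. This uses the fact that $q \in Q(\mathbb{A}_F)$ (so that the cellular integrals factor cleanly through the unipotent radical $N_Q$) and the rather restrictive Godement-type support of $f_{\mathrm{sieg}, v_{\mathrm{aux}}}$ coming from the convolution with $\Phi_3$ in \ref{(15)}. Once the local vanishing is secured on each stratum, the global vanishing and its meromorphic continuation follow formally.
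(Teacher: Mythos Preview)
Your approach is correct and essentially identical to the paper's very terse proof: apply the global functional equation to reduce to $E(f_{\mathrm{sieg}},-z,q)$, then invoke the local Fourier-coefficient computation at the auxiliary prime $v_{\mathrm{aux}}$, where the section was designed so that $f_{\mathrm{sieg},v_{\mathrm{aux}},\beta}$ vanishes for degenerate $\beta$. For the lower-cell subtlety you flag, the cleanest resolution (implicit in the paper) is not a Fourier-inversion argument on $\hat\Phi_2^{(n+1)}$ but simply that the Godement section $f^\Phi_{v_{\mathrm{aux}}}$ is supported on the big cell $Qw_nN_Q$---the integrand $\Phi_1(XC)$ forces the lower-left block $C$ to be invertible---so every $r<n$ stratum contributes zero automatically and only the big-cell term (which factors as a product of local coefficients) survives.
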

\begin{proof}
Applying global functional equation for Siegel Eisenstein series, then the lemma follows from our computations of local Fourier coefficient at $v_{\mathrm{aux}}$.
\end{proof}

The following proposition is due to Kudla-Sweet \cite{KS}.
\begin{proposition}\label{KudlaS}
Let  $f_v\in I_n(\chi_v)$.
We have following equation
\begin{align*}
&M(f_v,z)_{-z,\beta}=&&f_{v,z,\beta}\cdot\chi_v(\det\beta)^{-1}|\det\beta|^{-z}_v\gamma(E_v/F_v,\psi_v)^{\frac{n(n-1)}{2}}
\chi_{E/F,v}(\det\beta)^{-1}&\\&&&\times\prod^n_{r=1}\epsilon(z-n+r,\chi_v\chi^r_{E/F,v},\psi_v)^{-1}\frac{L(1-z,(\chi_v\chi^r_{E/F,v})^{-1})}
{L(z,\chi_v\chi_{E/F,v})}.&
\end{align*}
\end{proposition}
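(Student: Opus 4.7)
The plan is to reduce the identity, after unfolding both sides, to a computation of a local Godement--Tate type zeta integral attached to the non-degenerate Hermitian matrix $\beta$, and then to invoke the local functional equation for that zeta integral. Concretely, starting from
$$M(f_v,z)_{-z,\beta}=\int_{S_n(F_v)}\Bigl(\int_{N_{Q_n}(F_v)}f_v(z,w_n r\,n(S))\,dr\Bigr)\psi_v(-\mathrm{tr}(\beta S))\,dS,$$
I would first swap the two integrations, which is permissible in the domain of absolute convergence $\mathrm{Re}(z)>n/2$ and the final equality will then hold by meromorphic continuation in $z$. After writing $w_n r\,n(S)=w_n n(S+r')\cdot($ lower triangular$)$ using the Bruhat decomposition on $\mathrm{GU}_n$, the inner integration collapses into a shift of the Fourier integral, so that the full expression factors as $f_{v,z,\beta}$ times a $\beta$-dependent local factor.

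The second step identifies that factor. Because $\det\beta\neq0$, the change of variables $r\mapsto\beta^{-1}r$ (which introduces $\chi_v(\det\beta)^{-1}|\det\beta|_v^{-z}$ through the behaviour of $f_v(z,\cdot)$ under the Levi of $Q_n$) transforms the remaining integral into a Gaussian integral of the shape
$$\int_{\mathrm{Herm}_n(F_v)}\chi_v(\det r)\,|\det r|_v^{-z-n/2}\,\psi_v(\mathrm{tr}(\beta^{-1}r))\,dr,$$
up to characters of $\det\beta$. This is precisely the archetypal local zeta integral treated by Kudla--Sweet \cite{KS}: its evaluation via the Iwasawa decomposition on $\mathrm{GL}_n(E_v)$ against the standard additive character breaks the $n$-variable integral into a product over $r=1,\ldots,n$ of one-dimensional Tate integrals for the characters $\chi_v\chi_{E/F,v}^{r}$, producing the ratio
$$\prod_{r=1}^n\frac{L(1-z,(\chi_v\chi_{E/F,v}^r)^{-1})}{L(z,\chi_v\chi_{E/F,v}^r)}\cdot\epsilon(z-n+r,\chi_v\chi_{E/F,v}^r,\psi_v)^{-1}.$$

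The third step tracks the remaining Weil-index $\gamma(E_v/F_v,\psi_v)^{n(n-1)/2}$ and the additional character $\chi_{E/F,v}(\det\beta)^{-1}$. These come from comparing the self-dual Haar measure on $\mathrm{Herm}_n(F_v)$ with the product measure used in the iterated Tate integral: the off-diagonal entries take values in $E_v$ and contribute $\gamma(E_v/F_v,\psi_v)$ per pair, giving the exponent $n(n-1)/2$, while the remaining sign twist $\chi_{E/F,v}(\det\beta)^{-1}$ arises from the normalization of the $w_n$-action on the measure on the opposite unipotent.

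The hard part will be tracking all of these local normalization constants simultaneously: self-dual Haar measures, Weil indices for $E_v/F_v$, the choice of uniformizer in the $\epsilon$-factors, and the dependence on whether $\det\beta$ is a norm from $E_v$. A clean way to do this is to first prove the identity up to a scalar $c(z,\beta)$ depending only on the similitude class of $\beta$ (by exploiting the action of the Levi of $Q_n$ on the space of Fourier coefficients), and then pin down $c(z,\beta)$ by testing against a single explicit $f_v$, for instance the spherical section with $\chi_v$ unramified, where both sides are given by the explicit formula in Lemma~\ref{Fourier-Unramified}. This last matching is exactly the case carried out in \cite{KS}, which we may therefore cite directly for the final determination of constants.
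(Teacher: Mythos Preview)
The paper does not give a proof of this proposition at all: immediately after the statement it simply writes ``This is just \cite[Proposition 3.1]{KS1}'' (Ikeda's paper on Hermitian modular forms), crediting the result to Kudla--Sweet \cite{KS}. So your proposal is substantially more than what the paper offers.

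Your sketch is in the right spirit and broadly follows the Kudla--Sweet computation you ultimately cite, so there is no mathematical gap. But for the purposes of this paper the expected ``proof'' is a one-line citation, and your final sentence already contains it. The intermediate steps (swapping integrals, reducing to a Tate-type integral over Hermitian matrices, tracking the Weil index from the off-diagonal entries) are a reasonable outline of how the cited references proceed, but the paper treats the proposition as a black-box input and does not reproduce any of this.
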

This is just \cite[Proposition 3.1]{KS1}. The $\gamma$ is the Weil index as in \emph{loc.cit.}
\begin{corollary}
Let $\phi$ be a non-arithmetic point we defined before with the associated integer $j_\phi\geq 0$. For any finite prime $v$ and any $\beta$ with $\det\beta\not=0$, we have
$$\prod_{v|\infty}(\det\beta)^{1+j_\phi}_v\phi(\boldsymbol{f}_{sieg,\beta})=
f^{\mathrm{fteq}}_{\tilde{\mathbf{D}}^{(2)}_\phi,\beta}.$$
\end{corollary}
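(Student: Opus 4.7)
The plan is to establish the identity by computing the $\beta$-th Fourier coefficient at each place of $F$ and then comparing both sides factor-by-factor, with the Kudla--Sweet functional equation (Proposition \ref{KudlaS}) serving as the principal tool at non-Archimedean places. Since $\det \beta \neq 0$, the local formulas from Lemmas \ref{Fourier-Archimedean}, \ref{Fourier-Unramified}, \ref{Fourier-Ramified}, \ref{Fourier-p-adic} and the computation at $v_{\mathrm{aux}}$ (Definition \ref{defSch} and following) are all applicable without degeneracy issues. The left-hand side $\phi(\boldsymbol{f}_{sieg,\beta})$ is given explicitly by the interpolation formula (\ref{(66)}) specialized at $\phi$, using the arithmetic parameters $\kappa_{\tilde{\mathbf{D}}^{(2)}_\phi} = r+s+2+j_\phi$ and the dual Hecke character $\tilde{\boldsymbol{\tau}}^c_\phi|\cdot|^{r+s-j_\phi}$; on the right-hand side, $f^{\mathrm{fteq}}_{\tilde{\mathbf{D}}^{(2)}_\phi,\beta}$ is computed place by place using the definitions of $f^{\mathrm{fteq}}_v$ and $f^{\mathrm{fteq},\prime}_v$.

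At each non-Archimedean place $v$, I would unravel the definition
$$f^{\mathrm{fteq}}_{v,z} = (\text{explicit ratio of local } L\text{- and }\epsilon\text{-factors})\cdot M(f_{\mathrm{sieg},v},-z)_z,$$
take its $\beta$-th Fourier coefficient, and apply Kudla--Sweet (with $z$ replaced by $-z$) to relate $M(f_{\mathrm{sieg},v},-z)_{z,\beta}$ back to $f_{\mathrm{sieg},v,-z,\beta}$. The local L-factor ratio appearing in the Kudla--Sweet formula cancels precisely with the normalization built into $f^{\mathrm{fteq}}_v$, while the remaining $\chi_v(\det\beta)^{-1}|\det\beta|^{-z}_v$, the Weil index factor, and the quadratic character contribution collectively match the change $\kappa_\phi \leftrightarrow r+s-j_\phi$ at $z = z_{\kappa_\phi}$ under $\tau \mapsto \tilde{\tau}^c$. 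At the auxiliary prime $v_{\mathrm{aux}}$, both sides are straightforward to compare using the Schwartz-function formulation of Definition \ref{defSch}, since the non-degeneracy of $\beta$ makes the Godement-type computation of Lemma \ref{Lemma 4.29} applicable.

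At the Archimedean places, the key input is Lemma \ref{ArchiFourier}, which shows that the differential operator $\delta_{r+1,s+1}$ (or rather its component producing the Siegel section used in the Eisenstein datum) acts on the $\beta$-th Fourier coefficient of $F_{\kappa_\phi}$ by multiplication by a product of minors of $\beta$. When combined with the change of Siegel section on the dual side (where the weight shifts from $\kappa_\phi$ to $r+s+2+j_\phi$), the net Archimedean effect is precisely the factor $\prod_{v|\infty}(\det\beta)^{1+j_\phi}_v$ appearing in the corollary; the remaining gamma-factor contribution from the Archimedean analogue of Kudla--Sweet matches via the functional equation of the global abelian $L$-function of $\tau$ combined with the global functional equation of the Siegel Eisenstein series.

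The main obstacle will be the careful bookkeeping: the normalizations of $f_{\mathrm{sieg},v}$ at primes $v|p$ involve Gauss sums $\mathfrak{g}(\xi_i)$ and powers of $p^{t_i}$ (as in Section \ref{p adic}), and under the substitution $\boldsymbol{\tau} \mapsto \tilde{\boldsymbol{\tau}}^c$ these Gauss sums get conjugated and reindexed. Matching them against the local gamma factors produced by Kudla--Sweet requires careful tracking of which character becomes which under the involution. Fortunately, all these pieces were designed in the construction of the Eisenstein measure in Proposition \ref{Siegel Measure} precisely so that this bookkeeping works out: the $p$-adic and prime-to-$p$ interpolation factors are chosen to interpolate Fourier coefficients covariantly under the dual $\phi \mapsto \tilde{\mathbf{D}}^{(\cdot)}_\phi$, leaving only the Archimedean contribution $\prod_{v|\infty}(\det\beta)^{1+j_\phi}_v$ as the visible discrepancy.
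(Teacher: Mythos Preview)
Your overall approach---place-by-place comparison using Kudla--Sweet at finite places---is exactly what the paper does. However, your treatment of the Archimedean contribution is misdirected. You invoke Lemma \ref{ArchiFourier} and the differential operator $\delta_{r+1,s+1}$ to explain the factor $\prod_{v|\infty}(\det\beta)^{1+j_\phi}$, but no differential operator is applied in this corollary; that connection is made only in the subsequent proof of Theorem \ref{Theorem 1.5}. Here the factor $(\det\beta)^{1+j_\phi}$ is simply inserted by hand on the left, and the task is to check that it makes the two sides match.

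The paper's argument for this is much cleaner than your bookkeeping plan. The Archimedean Fourier coefficient (Lemma \ref{Fourier-Archimedean}) contributes $\det(\beta)^{\kappa-n-1}$ and the $p$-adic one (Lemma \ref{Fourier-p-adic}) contributes $|\det\beta|_p^{2z}$; the paper observes that the combined quantity $\bigl(\prod_{v|\infty}|\det\beta|_v\prod_{v|p}|\det\beta|_v\bigr)$ is a $p$-adic unit, so its power $\kappa-n-1$ interpolates $p$-adically and specializes at the non-arithmetic $\phi$ to the exponent $-1-j_\phi$. Multiplying by $(\det\beta)^{1+j_\phi}$ on the left then exactly matches the Archimedean Fourier coefficient on the right at weight $r+s+2+j_\phi$. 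This single observation replaces your separate Archimedean differential-operator argument and the Gauss-sum bookkeeping at $p$ that you anticipate being delicate.

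For the Weil indices, the paper uses the global product formula $\prod_v \gamma(E_v/F_v,\psi_v)=1$ rather than appealing to a global functional equation; this is simpler and avoids the circularity of invoking the Siegel functional equation to prove a statement that is itself a step toward the $p$-adic functional equation.
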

\begin{proof}
We prove it by combining the previous proposition with our computations of local Fourier coefficients for Siegel sections. Note that $(\prod_{v|\infty}|\det\beta|_v\prod_{v|p}|\det\beta|_v)$ is a $p$-adic unit, and the specialization of the factor $(\prod_{v|\infty}|\det\beta|_v\prod_{v|p}|\det\beta|_v)^{\kappa-n-1}$ appearing in the Archimedean and $p$-adic Fourier coefficient to $\phi$ is given by
$(\prod_{v|\infty}|\det\beta|_v\prod_{v|p}|\det\beta|_v)^{-1-j_\phi}$. The good primes and $\Sigma\backslash\{p\}$ contributions are computed similarly. Note also that the product of the local Weil indices is equal to $1$.
\end{proof}
\noindent\underline{Proof of Theorem \ref{Theorem 1.4} and \ref{Theorem 1.5}}
\begin{proof}
By the effects of differential operators on $q$-expansions (as in the proof of \cite[Proposition 5.3]{EW}, which uses \cite[Theorem 9.2 (4)]{Eischen}) we see that the multiplying by $(\det\beta)^{1+j_\phi}_v$ in the above Corollary is equivalent to applying differential operator $D^{(n+1)(1+j_\phi)}$ and pairing with the $\det(\underline{X})^{1+j_\phi}$ as in Section \ref{Diff}. Applying Definitions \ref{Definition 5.4} and Propositions \ref{proposition 5.5} and \ref{proposition 5.6} to the specialization of the family of scalar valued Siegel Eisenstein series (no differential operators applied) to the weight $\underline{\kappa}$ there (note that this is $p$-adic limit instead of a classical form) , the proof of Theorem \ref{Construction} also gives Theorem \ref{Theorem 1.5}. Theorem \ref{Theorem 1.4} follows similarly (we omit the details). Note that the $\delta_1$ and $\delta_2$ correspond to the left and right side of the equality of Theorem \ref{Theorem 1.5} respectively.
\end{proof}

\subsection{Non-vanishing}\label{Nv}
We first record a proposition which is a key ingredient to study the non-vanishing of the Klingen Eisenstein family at $\phi_0$.
\begin{proposition}\label{formula intertwining}
Suppose our data $(\pi_v,\tau_v)$ comes from the local component at $v$ of a global data. Then there are meromorphic functions $\gamma^{(1)}(\rho_v,z)$ and $\gamma^{(2)}(\rho_v,z)$ such that
$$F'_{\varphi^\vee}(-z,M(z,f_v),g)=\gamma^{(1)}(\rho_v,z)
F'_\varphi(f_v;z,-))_{-z}(g)$$
and
$$F_{\varphi^\vee}(-z,M(z,f_v),g)=\gamma^{(2)}(\rho_v,z)A(\rho_v,z,F_\varphi(f_v;z,-)_{-z}(g).$$
Moreover if $v$ is a good prime then
$$\gamma^{(1)}(\rho_v,z)=\frac{\prod^{n-1}_{i=0}L(2z+i-n+1,\bar{\tau}'_v\chi^i_{\mathcal{K},v})}
{\prod^{n-1}_{i=0}L(-2z+n-i,\tau'_v\chi^i_{\mathcal{K},v})}\frac{L(\pi_v,\tau^c_v,\frac{1}{2}-z)}
{L(\tilde{\pi}_v,\bar{\tau}^c_v,z+\frac{1}{2})},$$
and
\begin{equation}\label{exp}
\gamma^{(1)}(\rho_v, z)=\gamma^{(2)}(\rho_v, z+\frac{1}{2}).
\end{equation}
\end{proposition}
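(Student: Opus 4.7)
The plan is to first establish the existence of the meromorphic constants $\gamma^{(1)}$ and $\gamma^{(2)}$ by a uniqueness-of-intertwiners argument, and then pin them down explicitly by evaluating at spherical sections.

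For equation~(1), I would consider the two maps $f \mapsto F'_{\varphi^\vee}(-z, M(z,f), -)$ and $f \mapsto F'_\varphi(f; z, -)_{-z}$ from $I_n(\tau_v)$ into the space of $\pi_v$-valued functions on $\mathrm{U}(r,s)(F_v)$. Both are $\mathrm{U}(r,s)(F_v)$-equivariant: the left-hand map by combining the intertwining property of $M(z,-)$ with the definition of $F'_{\varphi^\vee}$, the right-hand map by construction of the pullback. By local multiplicity one for the doubling method (Piatetski-Shapiro--Rallis, and in the unitary setting Li), the space of such equivariant maps is one-dimensional for generic $z$; hence the two maps are proportional, and the scalar $\gamma^{(1)}(\rho_v, z)$ is meromorphic in $z$ because each side is rational in $q_v^{-z}$.

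To compute $\gamma^{(1)}$ at an unramified place, evaluate at $f = f_v^{\mathrm{sph}}$. The Gindikin--Karpelevich formula expresses $M(z, f_v^{\mathrm{sph}})$ as a ratio of local $L$-factors times $f_v^{\mathrm{sph},\vee} \in I_n(\bar{\tau}_v^c)$. Inserting the two unramified pullback formulas recalled earlier -- the one for $F'_\varphi(f_v^{\mathrm{sph}}; z, -)$ producing $L(\tilde{\pi}_v, \bar{\tau}_v^c, z + \tfrac{1}{2})$ in the numerator, and the one for the pullback of $f_v^{\mathrm{sph},\vee}$ at parameter $-z$ producing $L(\pi_v, \tau_v^c, \tfrac{1}{2} - z)$ -- and then cancelling the common $L$-factors in the denominators, one arrives at the asserted expression for $\gamma^{(1)}$. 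Equation~(2) is handled along identical lines, with the pullback to $\mathrm{GU}(r+1,s+1)$ replacing that to $\mathrm{GU}(r,s)$ and with the Klingen intertwiner $A(\rho_v, z, -)$ appearing on the right-hand side.

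The shift identity $\gamma^{(1)}(\rho_v, z) = \gamma^{(2)}(\rho_v, z + \tfrac{1}{2})$ is then immediate from direct inspection of the two pullback formulas, whose $L$-arguments differ by a shift of $\tfrac{1}{2}$ ($L(\tilde{\pi}_v, \bar{\tau}_v^c, z + \tfrac{1}{2})$ versus $L(\tilde{\pi}_v, \bar{\tau}_v^c, z + 1)$, reflecting the passage from $\mathrm{U}(n,n)$ to $\mathrm{U}(n+1,n+1)$) together with the analogous shift in the Gindikin--Karpelevich ratio. Since both $\gamma^{(i)}$ are uniquely determined as meromorphic functions by the spherical computation, the identity then extends to all ramification data.

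The main technical point will be the multiplicity-one input for Step~1, particularly in the setting of equation~(2): one needs one-dimensionality of the space of $(\mathrm{U}(r+1,s+1) \times \mathrm{U}(r,s))(F_v)$-equivariant linear maps from $I_n(\tau_v)$ into the Klingen induction of $\mathrm{GU}(r+1,s+1)$ tensored with $\pi_v$, for generic $z$. This follows from a Bruhat-cell analysis of the relevant double coset space $Q_{n+1} \backslash \mathrm{GU}(n+1,n+1) / (\mathrm{GU}(r+1,s+1) \times \mathrm{GU}(r,s))$ combined with local uniqueness on the open orbit, but identifying that open orbit and verifying vanishing on the other cells is the delicate part of the argument.
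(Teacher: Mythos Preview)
Your approach is essentially the same as the paper's. The existence of the $\gamma^{(i)}$ via a multiplicity-one argument is exactly what the cited references \cite[Proposition 4.40]{WAN} and \cite[Proposition 11.13]{SU} do, and your computation at good primes by evaluating on the spherical section matches the paper's use of \cite[Lemma 11.7]{SU} together with the unramified pullback formula (the paper obtains the image of the spherical section under $M(z,-)$ via the $\beta=0$ case of the local Fourier coefficient formula, which is the same as your Gindikin--Karpelevich input).

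One correction is needed in your last paragraph. Your claim that ``the identity then extends to all ramification data'' is not justified: the functions $\gamma^{(i)}(\rho_v,z)$ depend on the local data $(\pi_v,\tau_v)$, so knowing the shift identity at unramified places says nothing about its validity at ramified $v$. In fact the paper explicitly remarks, immediately after the proof, that the authors \emph{expect} \eqref{exp} to hold at bad primes as well but are \emph{unable to prove} it (and do not need it). Fortunately the proposition only asserts the explicit formula and the shift identity under the hypothesis that $v$ is a good prime, so your spherical computation already suffices; simply drop the final sentence claiming the extension to arbitrary ramification.
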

\begin{proof}
The first part is just \cite[Proposition 4.40]{WAN}, which is a formal generalization of \cite[Proposition 11.13]{SU}. The proof of the formula at good primes is just applying \cite[Lemma 11.7]{SU} and the pullback formula at these primes. Note that we apply \emph{loc.cit.} for $\beta=0$ to obtain the formula for $M(-,f^{sph})$ (the Siegel series $h_{v,\beta}$ there for $\beta=0$ is the constant function $1$ by \cite[Proposition 19.2]{Shimura97}.
\end{proof}
In a moment we use this proposition to compute the pullback formula for $f^{\mathrm{fteq}}_v$'s. We expect (\ref{exp}) to be true for bad primes as well, but are unable to prove this (this is not needed).

To prove the next proposition we need some preparations on Casselman-Shahidi's theory of intertwining operators.
\begin{lemma}\label{Irreducibility}
Suppose $\mathrm{U}(r,s)(F_v)$ is quasi-split and $\pi_v$ is tempered and generic. Then $I(\tilde{\rho}_v,\frac{1}{2})$ is reducible if and only if $L(\bar{\tau}'_v\chi^{a+2b}_{\mathcal{K},v},z)$ has a pole at $z=0$. Recall the notation $I(\tilde{\rho}_v,\frac{1}{2})$ is the induced representation in Section \ref{K E S} with the action at $z=\frac{1}{2}$.
\end{lemma}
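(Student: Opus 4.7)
The plan is to apply the Casselman--Shahidi reducibility criterion for standard modules induced from generic tempered data, together with Shahidi's normalization of long intertwining operators. Since $\pi_v$ is tempered generic and $\mathrm{U}(r,s)(F_v)$ is quasi-split, the inducing datum $\tilde{\pi}_v \boxtimes \tau_v^c$ on the Klingen Levi $M_P \simeq \mathrm{Res}_{\mathcal{K}_v/F_v}\mathrm{GL}_1 \times \mathrm{U}(r,s)(F_v)$ is generic tempered. The adjoint action of $M_P$ on $\mathrm{Lie}(N_P)$ decomposes into two irreducible pieces, giving rise to two families of Langlands--Shahidi $L$-factors: a ``Rankin--Selberg'' factor involving $L(s, \tilde{\pi}_v \otimes \bar{\tau}_v^c)$, and a ``character'' factor involving $L(s, \bar{\tau}'_v \chi_{\mathcal{K},v}^{a+2b})$.

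I would write the long intertwining operator as $M(z, \tilde{\rho}_v) = r(z, \tilde{\rho}_v) \cdot N(z, \tilde{\rho}_v)$, where $N$ is Shahidi's normalized operator and $r(z)$ is the explicit product of the above local $L$- and $\epsilon$-factors. The shape of $r(z)$ at good primes can be read off from the expression for $\gamma^{(1)}(\rho_v, z)$ in Proposition \ref{formula intertwining}, and at quasi-split bad primes it can be derived from Shahidi's local coefficient. By Shahidi's theorem, $N(z, \tilde{\rho}_v)$ is holomorphic and non-vanishing on the closed right half-plane $\mathrm{Re}(z) \geq 0$ for generic tempered inducing data. Hence reducibility of the standard module $I(\tilde{\rho}_v, 1/2)$ is controlled entirely by the singular behavior of the scalar $r(z)$ at $z = 1/2$.

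Next, I would examine each factor of $r(z)$ at $z = 1/2$. The Rankin--Selberg $L$-factor $L(s, \tilde{\pi}_v \otimes \bar{\tau}_v^c)$ is both holomorphic and non-zero at every point with $\mathrm{Re}(s) \geq 1/2$: its poles lie only at $\mathrm{Re}(s) \leq 0$ by temperedness, and non-vanishing on the edge of the critical strip follows from standard bounds on tempered Euler factors together with the finite-order assumption on $\tau_v$. So the Rankin--Selberg contribution to $r(z)$ is regular and non-zero at $z = 1/2$. Under the change of variables dictated by the Klingen normalization, the character contribution at $z = 1/2$ reduces to the value of $L(s, \bar{\tau}'_v \chi_{\mathcal{K},v}^{a+2b})$ at $s = 0$. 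Consequently $r(z)$ is singular at $z = 1/2$ if and only if $L(s, \bar{\tau}'_v \chi_{\mathcal{K},v}^{a+2b})$ has a pole at $s = 0$, equivalently iff $\bar{\tau}'_v \chi_{\mathcal{K},v}^{a+2b}$ is trivial.

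The main obstacle will be rigorously pinning down the normalizing factor $r(z, \tilde{\rho}_v)$ at bad quasi-split primes, where Proposition \ref{formula intertwining} as stated covers only the good case. This requires re-deriving Shahidi's normalization via the local coefficient method for the Klingen parabolic of $\mathrm{U}(r+1, s+1)$ and verifying that the two-factor structure (Rankin--Selberg piece plus character piece) persists up to entire, non-vanishing $\epsilon$-factors. A secondary technical point is to systematically rule out any spurious cancellation between numerator and denominator $L$-factors in $r(z)$ at $z = 1/2$, which again rests on temperedness to ensure that only the single character $L$-factor above governs the singularity.
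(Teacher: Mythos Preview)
Your proposal is correct and takes essentially the same approach as the paper: the paper's entire proof is the single sentence ``This follows from \cite[Proposition 5.3]{CS},'' and what you have written is precisely an unpacking of how the Casselman--Shahidi reducibility criterion for standard modules with generic tempered inducing data applies to the Klingen parabolic, isolating the character $L$-factor as the one governing reducibility. Your elaboration on the two adjoint pieces and the harmlessness of the Rankin--Selberg factor is exactly the mechanism behind that citation; the only caution is that the precise link between ``singularity of the scalar $r(z)$'' and reducibility is more delicate than you phrase it (Casselman--Shahidi's statement is in terms of each $L(0,\sigma,r_i)$ separately, not the product), but you already flag this and arrive at the correct conclusion.
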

\begin{proof}
This follows from \cite[Proposition 5.3]{CS}.
\end{proof}
\begin{lemma}
We can replace $\pi$ by a cuspidal automorphic representation (which we still denote as $\pi$) with the same Galois representation as that of $\pi$, which is the holomorphic discrete series at all Archimedean places, and is generic when $\mathrm{U}(r,s)(F_v)$ is quasi-split.
\end{lemma}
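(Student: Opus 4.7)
The plan is to invoke the Arthur multiplicity formula of \cite{KMSW} (whose use is already assumed elsewhere in the paper) together with the known existence of distinguished members in tempered $L$-packets. First, by hypothesis (Irred) the residual Galois representation $\bar{\rho}_\pi$ is absolutely irreducible, which forces the base change $\mathrm{BC}(\pi)$ to $\mathrm{GL}(n)_{/\mathcal{K}}$ to be cuspidal. Consequently the global Arthur parameter $\psi$ attached to $\pi$ is tempered and simple, and the associated group $\mathcal{S}_\psi$ is trivial. By the main theorem of \cite{KMSW}, every irreducible admissible representation of the form $\pi'=\bigotimes'_v \pi'_v$, with each $\pi'_v$ chosen in the local $L$-packet $\Pi(\phi_v)$ determined by $\psi$, appears in the cuspidal discrete spectrum of $\mathrm{U}(r,s)_{/F}$ with multiplicity one.

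Next I would choose the local members carefully. At every archimedean place $v$, the local $L$-packet of $\pi_v$ is a discrete-series $L$-packet (because $\psi$ is tempered and the Hodge–Tate weights are regular), hence has $\mathsf{m}_{r,s}$ members, one of which is the holomorphic discrete series of the given weight with respect to the complex structure fixed by $\Sigma_\infty$; let $\pi'_v$ be that member. At every finite place $v$ where $\mathrm{U}(r,s)(F_v)$ is quasi-split, the local $L$-packet is tempered, and by the Shahidi genericity conjecture (proven for quasi-split unitary groups via the work of Cogdell–Kim–Piatetski-Shapiro–Shahidi and its extensions) it contains a unique $\psi_v$-generic member; let $\pi'_v$ be that member. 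At all remaining places (in particular at the $p$-adic places, where $p$ splits completely in $\mathcal{K}$ so $\mathrm{U}(r,s)(F_v)\cong \mathrm{GL}_n(F_v)$ and the unramified tempered $L$-packet is a singleton), let $\pi'_v=\pi_v$. By the multiplicity formula above, $\pi':=\bigotimes'_v\pi'_v$ is cuspidal automorphic.

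Finally I would verify that $\pi'$ has all the desired properties. By construction $\pi'$ lies in the same global $L$-packet as $\pi$, so its base change to $\mathrm{GL}(n)_{/\mathcal{K}}$ coincides with $\mathrm{BC}(\pi)$; hence the attached Galois representation $\rho_{\pi'}$ is isomorphic to $\rho_\pi$ (and in particular the motive $M$, the Hodge–Tate weights, and the ordinarity at $p$ are unchanged, since $\pi_v'=\pi_v$ for $v\mid p$). The archimedean local components are holomorphic discrete series by choice, and the quasi-split non-archimedean components are generic by choice.

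The main obstacle I anticipate is not conceptual but of a citation nature: the multiplicity-one statement of \cite{KMSW} in the generality required here relies on ongoing work of Mo\oe glin–Waldspurger on the stabilization of the twisted trace formula, a caveat already flagged in the introduction, and the Shahidi genericity statement at ramified quasi-split primes must be invoked in the form that holds for all tempered $L$-packets of $\mathrm{U}(r,s)(F_v)$. Once these ingredients are granted, the construction is immediate; the only mild subtlety worth a line of checking is that the local components at $v\mid p$ are genuinely unchanged, which follows from the fact that at a split $p$-adic place the local $L$-packet of an unramified tempered representation is a singleton.
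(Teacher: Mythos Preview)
Your proposal is correct and follows essentially the same approach as the paper: both invoke the Arthur multiplicity formula of \cite{KMSW} together with the triviality of $S_\psi$ (from cuspidality of the base change, via (Irred)) to swap local components freely within the global packet, choosing holomorphic discrete series at infinity and generic members at quasi-split finite places. The only noteworthy difference is in the citation for the existence of a generic member in each tempered $L$-packet at quasi-split places: the paper cites \cite[Corollary 9.2.4]{Mok} (the tempered packet conjecture for quasi-split unitary groups), whereas you attribute this to Cogdell--Kim--Piatetski-Shapiro--Shahidi; Mok's reference is the more precise one here, since CKPSS concerns functorial lifting rather than the internal structure of $L$-packets.
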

\begin{proof}
This follows from \cite[Theorem 1.7.1]{KMSW} and the tempered packet conjecture proved in \cite[Corollary 9.2.4]{Mok} (which says that any tempered Arthur packet for the quasi-split unitary group contains a generic element). Note that by our assumption that the base change of $\pi$ is cuspidal (from (Irred)), the $\pi$ has stable parameter in the sense that the $S_\psi$ in \emph{loc.cit} is trivial.
\end{proof}

\begin{lemma}\label{poles}
Suppose $\tilde{\pi}_v$ is generic.
Suppose $L(\tilde{\pi}_v,\bar{\tau}^c_v,z)$ has poles at $z=-\frac{1}{2}$. Then $A(\rho_v,z)$ has a pole at $z=-\frac{1}{2}$ with at least the same multiplicity.
\end{lemma}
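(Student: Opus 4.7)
The plan is to combine Shahidi's normalization of the intertwining operator with the Casselman--Shahidi analytic theory of normalized intertwining operators. I would first write
\[
A(\rho_v, z) \;=\; r(\rho_v, z)\, N(\rho_v, z),
\]
where $r(\rho_v, z)$ is Shahidi's normalizing factor attached to the adjoint action of $^L\!M_P$ on $^L\!\mathfrak{n}_P$ (with $P \subset \mathrm{U}(r+1, s+1)$ the Klingen parabolic and Levi $M_P \simeq \mathrm{Res}_{\mathcal{K}/F}\mathrm{GL}_1 \times \mathrm{U}(r,s)$), and $N(\rho_v, z)$ is the normalized intertwining operator. The adjoint decomposition produces one ``standard'' summand whose $L$-factor contribution to $r(\rho_v, z)$ is
\[
\frac{L(\tilde{\pi}_v,\bar{\tau}^c_v,z)}{\epsilon(z,\tilde{\pi}_v,\bar{\tau}^c_v,\psi_v)\, L(\tilde{\pi}_v,\bar{\tau}^c_v,1+z)},
\]
together with several Hecke-character summands contributing factors of the form $L(az + b_j, \bar{\tau}'_v \chi^j_{\mathcal{K},v})$, the shifts $b_j$ being determined by the root system. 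This is consistent with the explicit good-prime identity in Proposition~\ref{formula intertwining} after the relation $\gamma^{(2)}(\rho_v, z) = \gamma^{(1)}(\rho_v, z - \tfrac{1}{2})$.

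Since the lemma preceding the statement has arranged $\pi_v$ to be tempered, and $\tilde{\pi}_v$ is generic by hypothesis, the Casselman--Shahidi theorem of \cite{CS} guarantees that $N(\rho_v, z)$ is holomorphic and non-vanishing in a neighborhood of $z = -\tfrac{1}{2}$. Consequently the pole behavior of $A(\rho_v, z)$ at $z = -\tfrac{1}{2}$ is controlled entirely by the meromorphic factor $r(\rho_v, z)$.

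A final bookkeeping finishes the proof. At $z = -\tfrac{1}{2}$, the $\epsilon$-factor is entire and non-vanishing; the denominator $L(\tilde{\pi}_v, \bar{\tau}^c_v, \tfrac{1}{2})$ is finite and non-zero by temperedness of $\pi_v$; and the Hecke $L$-factors $L(az + b_j, \bar{\tau}'_v \chi^j_{\mathcal{K},v})$ in $r(\rho_v, z)$, once the shifts $b_j$ from the unitary-group root system are taken into account, are evaluated at arguments lying in the half-plane of absolute convergence and are therefore holomorphic and non-vanishing. The only factor of $r(\rho_v, z)$ that can produce a pole at $z = -\tfrac{1}{2}$ is thus the numerator $L(\tilde{\pi}_v, \bar{\tau}^c_v, z)$, which by hypothesis has a pole of order $k$ there; combined with the non-vanishing of $N(\rho_v, -\tfrac{1}{2})$ this yields the desired pole of $A(\rho_v, z)$ of order at least $k$.

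The main obstacle is precisely the second step: one needs the holomorphy and non-vanishing of the normalized operator at the specific point $z = -\tfrac{1}{2}$, not merely on the open absolutely convergent range. This is the deep content of \cite{CS}, and it is also why the quasi-split hypothesis~(QS) is essential in the main theorem---without (QS) one cannot replace $\pi$ by a tempered generic representative within its local $L$-packet, and the Casselman--Shahidi input is not directly available.
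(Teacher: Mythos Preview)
Your overall strategy---write $A(\rho_v,z)=r(\rho_v,z)N(\rho_v,z)$ and track the zeros/poles of $r$---is the same as the paper's, but there is a genuine gap in the bookkeeping step.

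The claim that the Hecke-character factors of $r(\rho_v,z)$ land in the region of absolute convergence at $z=-\tfrac12$ is not correct. For the Klingen parabolic of $\mathrm{U}(r+1,s+1)$ with Levi $\mathrm{Res}_{\mathcal{K}/F}\mathrm{GL}_1\times\mathrm{U}(r,s)$, the adjoint action has \emph{two} graded pieces: the standard piece giving $L(\tilde\pi_v,\bar\tau^c_v,z)$ and a one-dimensional piece giving a character $L$-factor of the shape $L(\bar\tau'_v\chi^{a+2b}_{\mathcal{K},v},2z)$. At $z=-\tfrac12$ this second factor is evaluated at $0$ (and the companion factor at $-1$), which is certainly not in any half-plane of convergence. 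When $L(\bar\tau'_v\chi^{a+2b}_{\mathcal{K},v},0)$ has a pole, this contributes a \emph{zero} to $r(\rho_v,z)$ at $z=-\tfrac12$, potentially cancelling the pole coming from $L(\tilde\pi_v,\bar\tau^c_v,z)$. Your argument does not survive this case.

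This is exactly why the paper splits into two cases according to whether $L(\bar\tau'_v\chi^{a+2b}_{\mathcal{K},v},z)$ has a pole at $z=0$. In the first case the paper's argument is essentially yours. In the second case the paper invokes Lemma~\ref{Irreducibility} (which is \cite[Proposition~5.3]{CS}) to see that $I(\tilde\rho_v,\tfrac12)$ is reducible, so $A(\tilde\rho_v,\tfrac12)$ has nontrivial kernel; combined with the composition identity $\mathcal{N}(\tilde\rho_v,\tfrac12)\circ\mathcal{N}(\rho_v,-\tfrac12)=\mathrm{id}$ from \cite{Mok}, this forces the pole of $A(\rho_v,z)$ at $-\tfrac12$.

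A secondary issue: your appeal to \cite{CS} for ``holomorphic and non-vanishing at $z=-\tfrac12$'' is too optimistic. Casselman--Shahidi gives holomorphy of $N$ on the closed positive chamber $\mathrm{Re}(z)\ge 0$ for tempered generic inducing data; it says nothing directly about $z=-\tfrac12$. The paper transfers information from $z=\tfrac12$ to $z=-\tfrac12$ via the composition law for normalized operators, not by invoking \cite{CS} at the negative point.
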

\begin{proof}
Suppose first that $L(\bar{\tau}'_v\chi^{a+2b}_{\mathcal{K},v},z)$ does not have a pole at $z=0$. Note that the normalization factor (\ref{normalizationfactor}) at $z=-\frac{1}{2}$ has a zero, and is holomorphic at $z=\frac{1}{2}$. Moreover we have $A(\tilde{\rho}_v, z)$ is holomorphic at $z=\frac{1}{2}$. Since the normalized intertwining operator $\mathcal{N}$ (see \cite{Zhang}) with the normalization factor
\begin{equation}\label{normalizationfactor}
\frac{L(\tilde{\pi}_v,\bar{\tau}^c_v,z+1)L(\bar{\tau}'_v\chi^a_{\mathcal{K},v}, 2z+1)}
{L(\tilde{\pi}_v,\bar{\tau}^c_v, z)L(\bar{\tau}'_v\chi^a_{\mathcal{K},v},2z)}
\end{equation}

satisfies (\cite[Proposition 3.3.1]{Mok})
$$\mathcal{N}(\tilde{\rho}_v,\frac{1}{2})\circ\mathcal{N}(\rho_v, -\frac{1}{2})=\mathrm{id}.$$
We see that $A(\rho_v, z)$ must have a pole at $z=-\frac{1}{2}$ with at least the same multiplicity with that of $L(\tilde{\pi}_v,\bar{\tau}^c_v,z)$.

Then suppose that $L(\bar{\tau}'_v\chi^{a+2b}_{\mathcal{K},v},z)$ has a pole. Then by Lemma \ref{Irreducibility}, $I(\tilde{\rho}_v, \frac{1}{2})$ is reducible, and thus $A(\tilde{\rho}_v, \frac{1}{2})$ kills a non-trivial subrepresentation of it. Noting that the normalization factor is nonzero and holomorphic at $z=\frac{1}{2}$. We see again $A(\rho_v, -\frac{1}{2})$ must have a pole with at least the same multiplicity of $L(\tilde{\pi}_v,\bar{\tau}^c_v,z)$.
\end{proof}

\begin{lemma}\label{generates}
The $F(f_{x,y,v},-\frac{1}{2})$'s in Definition \ref{xyv} for different choices of $x$, $y$'s with $\mathrm{ord}_v(x)\gg 0$ and $\mathrm{ord}_v(y)\gg 0$ generate the $I(\rho_v,-\frac{1}{2})$ as $\mathrm{U}(r,s)(F_v)$-representation.
\end{lemma}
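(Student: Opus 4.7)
The plan is to show that the smallest $\mathrm{U}(r,s)(F_v)$-submodule of $I(\rho_v,-\tfrac{1}{2})$ containing all the pullback sections $F_\varphi(f_{x,y,v};-\tfrac{1}{2},\cdot)$, as $x,y$ vary with $\mathrm{ord}_v(x),\mathrm{ord}_v(y)\gg 0$ and $\varphi$ varies over $\pi_v$, is all of $I(\rho_v,-\tfrac{1}{2})$. Since every vector in $I(\rho_v,-\tfrac{1}{2})$ is smooth (hence fixed by some open compact), the task reduces to producing every element of $I(\rho_v,-\tfrac{1}{2})^K$, for each sufficiently small open compact $K\subset\mathrm{GU}(r+1,s+1)(F_v)$, inside this submodule.

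The first step is to recall from the earlier ramified local computation that $F_\varphi(f_{x,y,v};z,\cdot)$ is supported on a single $(P,K_v^{(2)})$-double coset $P(F_v)wK_v^{(2)}$ with $w$ a fixed Weyl element intertwining $P$ with its opposite parabolic, is right-$K_v^{(2)}$-invariant, and has value at $w$ proportional (with nonzero proportionality) to $\pi(\text{stuff})\varphi$. As $\mathrm{ord}_v(x),\mathrm{ord}_v(y)\to\infty$ the group $K_v^{(2)}$ shrinks to $\{1\}$, so for any prescribed small open compact $K$ one obtains, by choosing $x,y$ with suitable valuations, sections supported on $P(F_v)wK$ realizing any designated vector of $\pi_v$ as value at $w$. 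The space $I(\rho_v,-\tfrac{1}{2})^K$ is finite-dimensional with basis indexed by the $(P,K)$-double cosets $P(F_v)\backslash G(F_v)/K$ (a basis element is supported on one such coset $Pg_0 K$ and takes a designated value at $g_0$), so it suffices to build each basis element in the submodule.

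For a representative $g_0$ in the open Bruhat cell $P(F_v)wN_P^-(F_v)$, write $g_0=pwn$ with $n\in N_P^-(F_v)$; the right translate of $F_\varphi(f_{x,y,v};-\tfrac{1}{2},\cdot)$ by $u\in\mathrm{U}(r,s)(F_v)$ is supported on $P(F_v)wK_v^{(2)}u^{-1}$. Using that $w^{-1}\mathrm{U}(r,s)(F_v)w$ lies in the Levi of the opposite parabolic and acts by conjugation on $N_P^-(F_v)$ with orbits covering a dense open subset, one selects $u$ (of bounded denominator) so that $Pg_0K=PwK_v^{(2)}u^{-1}$ for some $K_v^{(2)}\subset K$; varying $\varphi$ then fixes an arbitrary value at $g_0$, while refining $K_v^{(2)}$ forces the section to vanish on other representatives of $P(F_v)\backslash G(F_v)/K$. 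For $g_0$ in lower-dimensional Bruhat cells one iterates the procedure on smaller cells, using that lower cells can be reached in the closure of $\mathrm{U}(r,s)(F_v)$-orbits of upper cells.

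The hard part is the geometric verification in the previous step: namely, that the twisted conjugation action $u\cdot n=wuw^{-1}\,n$ of $\mathrm{U}(r,s)(F_v)$ on $N_P^-(F_v)$ has orbits which, combined with the shrinking compact subgroups $K_v^{(2)}$, exhaust $N_P^-(F_v)$ at the level of $K$-orbits, so that every $(P,K)$-double coset in the open cell is reachable from $PwK_v^{(2)}$ by $\mathrm{U}(r,s)(F_v)$-translation. This is a concrete Bruhat/orbit calculation in the Klingen parabolic of $\mathrm{GU}(r+1,s+1)$ which, although elementary in nature, is the principal technical content of the argument.
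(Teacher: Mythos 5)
Your proposal identifies the right structural skeleton — shrinking supports $P(F_v)wK_v^{(2)}$, values at $w$ spanning $\pi_v$, and then the need to reach every $(P,K)$-double coset — and it matches the paper's first sentence ("any Klingen section supported in the big open cell is generated"). But the step you flag as "the principal technical content" and do not carry out is in fact where the argument breaks.

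You propose to reach the other double cosets in the open cell by right-translating by $u\in\mathrm{U}(r,s)(F_v)$ alone, and you assert that $w^{-1}\mathrm{U}(r,s)(F_v)w$ acts on $N_P^-(F_v)$ by conjugation with orbits covering a dense open set. This is false. The abelianization $N_P/Z(N_P)$ of the Klingen unipotent radical is the standard representation of $\mathrm{U}(r,s)$ over $\mathcal{K}_v$ (up to a twist), on which the Hermitian form is a $\mathrm{U}(r,s)(F_v)$-invariant taking a continuum of values. So the conjugation orbits are the level sets of this invariant (roughly spheres), none of which is dense, and letting $K_v^{(2)}$ shrink does not rescue this: fixing $K$, there are infinitely many $K$-orbits in $N_P^-(F_v)$ with pairwise distinct Hermitian-invariant classes, and $\mathrm{U}(r,s)$-conjugation cannot move between them. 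So you cannot build an arbitrary $K$-invariant section supported in the open cell from $\mathrm{U}(r,s)(F_v)$-translates alone, let alone reach lower-dimensional cells.

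The paper's own proof sidesteps this: it applies "translations of these sections by Weyl elements," which are elements of $\mathrm{GU}(r+1,s+1)(F_v)$ not lying in $\mathrm{U}(r,s)(F_v)$. This strongly suggests that the group in the lemma statement is a typo and should read $\mathrm{U}(r+1,s+1)(F_v)$ (or $\mathrm{GU}(r+1,s+1)(F_v)$), which is also what the application in Proposition~\ref{proposition 6.17} actually requires (one needs the intertwining operator, a $\mathrm{GU}(r+1,s+1)$-map, to be nonzero on the generated submodule). With the larger group the argument is essentially trivial: right translation by arbitrary $g$ moves the shrinking support $PwK_v^{(2)}$ onto any double coset $Pg_0K$, since one can write $g_0 = pwg$, so $N_P$-translates already sweep out the big cell and Weyl-element (or general) translates reach the smaller cells. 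You should either prove the $\mathrm{U}(r+1,s+1)$ statement directly, or, if you insist on generation by the subgroup $\mathrm{U}(r,s)(F_v)$, produce an argument that does not rely on a density claim which contradicts the existence of the Hermitian invariant.
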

\begin{proof}
The proof is straightforward by noting that any Klingen section supported in the big open cell is generated by these sections $F(f_{x,y,v},-\frac{1}{2})$'s. Then we apply translations of these sections by Weyl elements and they do generated $I(\rho_v,-\frac{1}{2})$.
\end{proof}
To prove the non-vanishing result, we need to compute the pullback of the Siegel section $f^{\mathrm{fteq}}_{sieg}$.
\begin{proposition}\label{proposition 6.17}
We have the $F(f^{\mathrm{fteq}}_{\tilde{\mathbf{D}}^{(2)}_{\phi_0}},-,z)|_{z=\frac{1}{2}}$ is not the zero section.
\end{proposition}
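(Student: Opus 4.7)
The plan is to combine Proposition \ref{formula intertwining} (the pullback-intertwining compatibility), the definition of $f^{\mathrm{fteq}}_v$ as $M(f_{\mathrm{sieg},v},-z)_z$ times a precise ratio of local $L$-factors, and the Casselman--Shahidi input (Lemmas \ref{Irreducibility}--\ref{generates}) to reduce the non-vanishing to a calculation at good primes, which is explicit. First, I would apply Proposition \ref{formula intertwining} place by place to obtain the schematic identity
$$F_{\varphi^\vee}(f^{\mathrm{fteq}}_v;z,-) \;=\; N_v(z)\cdot \gamma^{(2)}(\rho_v,z)\cdot A(\rho_v,z)\bigl(F_\varphi(f_{\mathrm{sieg},v};-z,-)_z\bigr),$$
where $N_v(z)$ is the ratio of $L$-factors appearing in the definition of $f^{\mathrm{fteq}}_v$. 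At unramified places this product simplifies: the $L$-factors of $\gamma^{(2)}$ that come from the Langlands--Shahidi normalizations cancel against $N_v(z)$, leaving an expression in terms of $L(\tilde{\pi}_v,\bar{\tau}_v^c,\cdot)$ only, and by the interpolation formula for $\mathcal{L}^\Sigma_{\mathbf{D}_\phi}$ this specializes at $z=\tfrac12$ to a nonzero quantity (the assumptions on Hodge--Tate weights ensure the relevant values are critical and hence nonzero).

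Second, I would handle the finite set $\Sigma\setminus\{p,v_{\mathrm{aux}}\}$ of ramified non-$p$-adic bad primes by the Skinner--Urban trick of comparing the global functional equation for the Klingen Eisenstein series with that for the Siegel Eisenstein series, as was recalled in the introduction: this identifies the pullback of $f^{\mathrm{fteq}}_v$ at $z=\tfrac12$ with an explicit translate of the classical pullback section $F_\varphi(f_{\mathrm{sieg},v};z,-)|_{z=-1/2}$ (times an explicit ratio of local $\gamma$-factors), bypassing direct computation of the intertwining operator at bad primes. At the $p$-adic places the assertion follows from the ordinary calculations of Section \ref{p adic}, and at $v_{\mathrm{aux}}$ the auxiliary-prime analysis (culminating in Proposition \ref{notzero}) gives non-vanishing; at the Archimedean places Lemma \ref{Archimedean constant} gives the nonzero constant $c'_{\underline{k},\kappa}$.

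Third, the main obstacle is the possibility that $L(\tilde{\pi}_v,\bar{\tau}_v^c,z)$ has a pole at $z=-\tfrac12$, in which case $\gamma^{(2)}(\rho_v,z)$ has a pole and one must verify that $A(\rho_v,z)$ contributes a zero of exactly the right order so that the ``hidden'' normalized section does not vanish identically. This is precisely where assumption (QS) enters. Under (QS), $\mathrm{U}(r,s)(F_v)$ is quasi-split at every such bad $v$, so we may (by the second Lemma following Proposition \ref{formula intertwining}) replace $\pi$ by its generic Arthur-packet representative without changing the Galois representation, and apply the Casselman--Shahidi theory: Lemma \ref{Irreducibility} determines reducibility of $I(\tilde{\rho}_v,\tfrac12)$, and Lemma \ref{poles} shows that $A(\rho_v,-\tfrac12)$ has a pole of at least the same multiplicity as $L(\tilde{\pi}_v,\bar{\tau}_v^c,z)$ at $z=-\tfrac12$. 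The sharper statement that the two multiplicities are \emph{equal} follows from the fact that the normalized intertwining operator $\mathcal{N}(\rho_v,\cdot)$ is holomorphic and satisfies $\mathcal{N}(\tilde{\rho}_v,\tfrac12)\circ\mathcal{N}(\rho_v,-\tfrac12)=\mathrm{id}$ on the full induced representation.

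Finally, I would assemble: by Lemma \ref{generates}, the sections $F(f_{x,y,v};-\tfrac12)$ span $I(\rho_v,-\tfrac12)$ as a representation of $\mathrm{U}(r,s)(F_v)$, so it suffices to find one choice of auxiliary $(x,y)$-data such that the product of local nonzero contributions assembled above does not cancel. Since the good-prime, Archimedean, $p$-adic, and $v_{\mathrm{aux}}$ contributions are all explicit nonzero quantities, and the bad-prime contribution is a nonzero multiple of a section lying outside the kernel of $A(\rho_v,-\tfrac12)$ by the multiplicity-matching above, the global pullback section $F(f^{\mathrm{fteq}}_{\tilde{\mathbf{D}}^{(2)}_{\phi_0}},-,z)|_{z=1/2}$ is not identically zero, completing the argument. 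The technical heart of the proof is thus the matching of multiplicities of poles and zeros at bad places, which rests squarely on (QS) and Casselman--Shahidi.
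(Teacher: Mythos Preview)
Your overall strategy matches the paper's: invoke Proposition \ref{formula intertwining} to express $F(f^{\mathrm{fteq}}_v,-,z)$ via $\gamma^{(2)}(\rho_v,\cdot)$ and $A(\rho_v,\cdot)$ applied to the ordinary pullback sections; handle good primes, $p$, $v_{\mathrm{aux}}$, and $\infty$ by the explicit computations already available; and in the bad-prime pole case appeal to Lemmas \ref{poles} and \ref{generates} under (QS). Two points require correction.

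First, you have the pole/zero bookkeeping reversed. When $L(\tilde\pi_v,\bar\tau^c_v,z)$ has a pole at $z=-\tfrac12$, the factor $\gamma^{(2)}(\rho_v,-\tfrac12)$ carries that $L$-value in its \emph{denominator} (see the good-prime formula for $\gamma^{(1)}$ in Proposition \ref{formula intertwining} together with $\gamma^{(1)}(\rho_v,z)=\gamma^{(2)}(\rho_v,z+\tfrac12)$), so $\gamma^{(2)}$ acquires a \emph{zero}, and it is $A(\rho_v,-\tfrac12)$ that supplies a compensating \emph{pole}---exactly what Lemma \ref{poles} asserts. The paper's wording ``these poles are cancelled by poles provided by Lemma \ref{poles}'' refers to the pole of $L_v$ sitting in a denominator of the final displayed formula being cancelled by the pole of $A$. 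The paper does not argue, and does not need, your sharper claim of equality of multiplicities; ``at least'' from Lemma \ref{poles}, combined with Lemma \ref{generates} (the $F(f_{x,y,v},-\tfrac12)$ generate $I(\rho_v,-\tfrac12)$), already shows the product cannot vanish identically.

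Second, your handling of $v\in\Sigma\setminus\{p\}$ glosses over a step that is genuinely needed. The global comparison $\prod_v\gamma^{(1)}(\rho_v,z-\tfrac12)=\prod_v\gamma^{(2)}(\rho_v,z)=1$ is indeed the starting point, but by itself it does not isolate the product over $\Sigma$. The paper's device is to temporarily replace $\tau$ by an auxiliary character $\tau''$ having the \emph{same Archimedean type} but ramified at every prime of $\Sigma$; for such $\tau''$ one can invoke \cite[Proposition 11.17]{SU} at each finite place to get $\gamma^{(1)}(\rho_v,z-\tfrac12)=\gamma^{(2)}(\rho_v,z)$ up to a nonzero constant independent of $z$. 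Since the Archimedean $\gamma$'s depend only on the infinity type, this pins down $\prod_{v\mid\infty}\gamma^{(1)}/\gamma^{(2)}$ as a nonzero constant, and then the global identity (now for the original $\tau$) together with the explicit good-prime formula yields $\prod_{v\in\Sigma}\gamma^{(1)}(\rho_v,z-\tfrac12)=\prod_{v\in\Sigma}\gamma^{(2)}(\rho_v,z)$. This is what converts the bad-prime contribution into the explicit expression the paper displays before specializing at $z=\tfrac12$; you should make this $\tau''$-trick explicit rather than subsuming it under ``the Skinner--Urban trick.''
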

\begin{proof}
For simplicity we omit the subscripts $\phi_0$ through this proof. We first treat the pullback formula at the prime $v_{\mathrm{aux}}$. By our computations the local $\beta$-th Fourier coefficient for $f_{sieg,v_{\mathrm{aux}}}$ is nonzero only when $\beta$ is an element in $\mathrm{GL}_{r+s+1}(\mathcal{O}_{F,v_{\mathrm{aux}}})$. From Proposition \ref{KudlaS} (note that the characters there are all unramified at $v_{\mathrm{aux}}$), we see that if we replace $f^{\mathrm{fteq}}_{v_{\mathrm{aux}}}$ by $f_{sieg,v_{\mathrm{aux}}}$, then the resulting Siegel Eisenstein series has the same Fourier coefficient for all $\beta$ as $F(f^{\mathrm{fteq}},-,z)$ (as forms parameterized by $z$). So we can use the replaced section to compute $F(f^{\mathrm{fteq}},-,z)$. Note also we have proved in Proposition \ref{notzero} that the pullback section of $f^{\mathrm{fteq}}_{v_{\mathrm{aux}}}$ is nonzero.

Next note that by comparing the global functional equation for Siegel and Klingen Eisenstein series, we see
$$\prod_{v}\gamma^{(1)}(\rho_v,z-\frac{1}{2})=\prod_{v}\gamma^{(2)}(\rho_v,z)=1.$$

We first claim that
$$\prod_{v|\infty}\gamma^{(1)}(\rho_v,z-\frac{1}{2})=C_\infty\prod_{v|\infty}\gamma^{(2)}(\rho_v,z)$$
where $C_\infty$ is a nonzero constant independent of $z$. (It seems likely one can prove this $C_\infty=1$ by more refined computations, but we do not need this.)
We prove it by a simple trick. We first take another character $\tau''$ with the same Archimedean type as $\tau$ and is ramified at all primes in $\Sigma$. Then we replace the $\tau$ in our Eisenstein datum by $\tau''$ and compute the pullback sections. As in \cite[Proposition 11.17]{SU}, we see that
$$\gamma^{(1)}(\rho_v,z-\frac{1}{2})=\gamma^{(2)}(\rho_v,z)$$
up to a multiplying by a nonzero constant independent of $z$, for all non-Archimedean primes $v$, which implies the claim, by noting that $\tau$ and $\tau''$ have the same Archimedean types. (The proof of \cite[Proposition 11.17]{SU} uses \cite[Lemma 11.10]{SU}, which needs our assumption that $\tau''$ is ramified at all primes in $\Sigma$. In our situation, although we allow the $v$ to be ramified in $\mathcal{K}$, the proof there still works. The double coset in the proof of \emph{loc.cit.} is valid with the $\ell$ in $K_{\mathbb{Q}_n}(\ell)$ there replaced by the uniformizer of $\mathcal{O}_{\mathcal{K},v}$. Note also the small error in \emph{loc.cit.} that the $(\ell^u)$ there should be the conductor of $\chi\chi^c$ instead of that of $\bar{\chi}^c$.)

We have from the formulas for unramified pullback sections,
$$\prod_{v\not\in\Sigma}\gamma^{(1)}(\rho_v,z-\frac{1}{2})=\prod_{v\not\in\Sigma}\gamma^{(2)}(\rho_v,z),$$
and also
$$\prod_{v|\infty}\gamma^{(1)}(\rho_v,z-\frac{1}{2})=\prod_{v|\infty}\gamma^{(2)}(\rho_v,z).$$
Then it follows that
$$\prod_{v\in\Sigma}\gamma^{(1)}(\rho_v,z-\frac{1}{2})=\prod_{v\in\Sigma}\gamma^{(2)}(\rho_v,z).$$
We find from Proposition \ref{formula intertwining} that the value of $F(f^{\mathrm{fteq}}, g, z)$ at $g=\prod_{v\nmid p}1_v\prod_{v|p}(ww_{Borel})_v$
is given by
$$\prod_{v\in\Sigma}\epsilon(\tilde{\pi}_v,\bar{\tau}^c_v,-z)\frac{L(\tilde{\pi},\bar{\tau}^c,z+1)}
{\prod_{v\in\Sigma}
L_v(\pi,\tau^c,-z)}\prod_{v\in\Sigma}\frac{L(\bar{\tau}'_v\chi^a_{\mathcal{K},v}, 2z+1)}{L(\tau'_v\chi^a_{\mathcal{K},v},-2z)} L^\Sigma(\xi,z+\frac{3}{2})\otimes_{v\in\Sigma\backslash\{v|p\}}
C_v\varphi_v\otimes_{v|\infty}C_v(z)\varphi_v|_{z=\frac{1}{2}}.$$
The $C_v$'s and $C_v(\frac{1}{2})$'s are nonzero from our previous local computations of the local pullback sections and their image under the intertwining operators $A(\rho,z,-)$ when applying Proposition \ref{formula intertwining}. The above expression is clearly nonzero if $\prod_{v\in\Sigma}
L_v(\pi,\tau^c,-z)$ does not have poles. If they do have poles, then we apply Lemma \ref{poles} and \ref{generates}, we can still conclude that $F(f^{\mathrm{fteq}}, g, \frac{1}{2})$ is not zero (these poles are cancelled by poles provided by Lemma \ref{poles}).

\end{proof}

We need only the following lemma to conclude that the specialization of our Klingen Eisenstein family to $\phi_0$ is nonzero.
\begin{lemma}
The $E^{p-\mathrm{adic}}_{\mathrm{Kling},\tilde{\mathcal{D}}_\phi}$ is the $p$-adic avatar of $E^{C^\infty}_{\mathrm{Kling},\tilde{\mathcal{D}}_\phi}$.
\end{lemma}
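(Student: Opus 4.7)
\noindent The plan is to compare two constructions of the same Klingen Eisenstein series attached to the arithmetic datum $\tilde{\mathcal{D}}_\phi$: on one hand, the $p$-adic object $E^{p-\mathrm{adic}}_{\mathrm{Kling},\tilde{\mathcal{D}}_\phi}$ arises as the specialization of the $p$-adic Klingen family from Theorem \ref{Construction}, namely the pullback of the Siegel Eisenstein measure $\mathbf{E}_{\mathbf{D},sieg}$ across $\mathrm{U}(r+1,s+1)\times\mathrm{U}(s,r)\hookrightarrow\mathrm{U}(r+s+1,r+s+1)$ followed by $\mathrm{Proj}_{\tilde{\mathbf{f}}}\circ e^{\mathrm{ord}}$ on the $\mathrm{U}(s,r)$ factor; on the other hand, $E^{C^\infty}_{\mathrm{Kling},\tilde{\mathcal{D}}_\phi}$ is the classical nearly-holomorphic Klingen Eisenstein series built by the same recipe, but using the Siegel section $f^{\mathrm{fteq}}_{\mathrm{sieg}}$ of classical weight and the $C^\infty$ Maass-Shimura differential operators from Section \ref{Diffe}. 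The lemma asserts that feeding in the same Siegel datum through the $p$-adic vs.\ $C^\infty$ pipelines yields objects that match under the standard $p$-adic avatar map, and thus one may transfer the non-vanishing proved classically in Proposition \ref{proposition 6.17} to the $p$-adic side.

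\noindent The first step is to recall the comparison machinery of Section \ref{Diffe}: both the $C^\infty$ differential operator $\partial$ and its $p$-adic counterpart $D$ are constructed from the Gauss-Manin connection iterated with the Kodaira-Spencer morphism; they differ only in the splitting of the Hodge filtration used to project onto $\underline{\omega}^{\otimes\bullet}$—the Hodge splitting over $\mathbb{C}$ versus the unit-root splitting over the ordinary locus. A foundational result of Katz, extended to unitary Shimura varieties by Eischen (and used throughout \cite{EHLS}), states that on holomorphic sections the two operators produce forms whose $q$-expansions (or Serre-Tate expansions at ordinary CM points) coincide after composing with the $p$-adic avatar identification. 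Applied fiber-by-fiber to the Siegel Eisenstein section $f^{\mathrm{fteq}}_{\mathrm{sieg},\tilde{\mathcal{D}}_\phi}$, which at the arithmetic specialization $\tilde{\mathcal{D}}_\phi$ is genuinely holomorphic of classical weight, this produces matching $p$-adic and $C^\infty$ nearly holomorphic forms on $\mathrm{GU}(n+1,n+1)$ before pullback.

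\noindent Second, I would propagate this matching through the pullback to $\mathrm{U}(r+1,s+1)\times\mathrm{U}(s,r)$ and apply Proposition \ref{proposition 5.5}. The content of that proposition is precisely that the discrepancy between the two natural ways of distributing the differential operator across the pullback embedding—namely $\delta_1$ and $\delta_2$—lies in the image of a Gauss-Manin operator on the $\mathrm{U}(s,r)$ factor, hence is annihilated by the ordinary idempotent $e^{\mathrm{ord}}$ on $\mathrm{U}(s,r)$. The same input shows that the difference between the $p$-adic and $C^\infty$ differential operators on the $\mathrm{U}(s,r)$ side (which is anti-holomorphic, so unit-root on one side and zero on the other) is killed by $e^{\mathrm{ord}}$. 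Since our construction always ends with $\mathrm{Proj}_{\tilde{\mathbf{f}}}\circ e^{\mathrm{ord}}$, both pipelines collapse to the same element of the $\tilde{\mathbf{f}}$-isotypic component on $\mathrm{U}(s,r)$, leaving a form on $\mathrm{U}(r+1,s+1)$ whose $p$-adic and $C^\infty$ incarnations agree by the comparison of the first step.

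\noindent The main obstacle is the careful bookkeeping of how the Harish-Chandra $p^+$-action on $\mathrm{U}(n+1,n+1)$ decomposes under the pullback into a $p^+$-action on $\mathrm{U}(r+1,s+1)$, a $p^+$-action on $\mathrm{U}(s,r)$, and ``cross'' terms involving both factors; one must show that only the first of these survives after $\mathrm{Proj}_{\tilde{\mathbf{f}}}\circ e^{\mathrm{ord}}$, and that on this surviving piece the $p$-adic and $C^\infty$ operators agree on holomorphic forms. This is essentially the content of the proof of Proposition \ref{proposition 5.5} (with an entirely parallel proof in the $C^\infty$ setting) combined with the injectivity of the Fourier-Jacobi expansion on $\mathrm{U}(r+1,s+1)$ noted after equation (\ref{DefineFJ}). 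Once both sides are seen to have the same Fourier-Jacobi expansion at a fixed ordinary cusp, the $q$-expansion principle underlying Proposition \ref{fes} identifies them as $p$-adic avatars of each other.
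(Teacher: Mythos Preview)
Your approach is correct but takes a different route from the paper. The paper's proof is shorter and rests on a single representation-theoretic input rather than the differential-operator comparison machinery you invoke. Specifically, the paper restricts the (classical, holomorphic) Siegel Eisenstein series to $\mathrm{U}(r+1,s+1)\times\mathrm{U}(s,r)$ and decomposes along irreducible automorphic representations of $\mathrm{U}(s,r)$. The key observation is then the classicality result of Proposition \ref{classicity}: any ordinary $p$-adic cuspidal form of cohomological weight $\underline{k}$ on $\mathrm{U}(s,r)$ is already classical (holomorphic). Hence any summand whose $\mathrm{U}(s,r)$-component is \emph{not} in the holomorphic discrete series of weight $\underline{k}$ must have its $p$-adic avatar killed by $e^{\mathrm{ord}}$; otherwise $e^{\mathrm{ord}}$ would produce a non-classical ordinary form of that weight. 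What survives on both sides is exactly the holomorphic discrete series isotypic piece, and the Hecke projector $\mathrm{Proj}_{\tilde{\mathbf{f}}}$ then matches the two constructions.

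By contrast, you argue via the Katz--Eischen comparison of $p$-adic and $C^\infty$ Maass--Shimura operators on holomorphic forms, combined with Proposition \ref{proposition 5.5} to control the cross terms in the pullback, and finally the $q$-expansion principle. This works, but it requires tracking the differential-operator calculus across the embedding and appealing to injectivity of Fourier--Jacobi expansions. The paper's argument sidesteps all of that: it never needs to compare the two differential operators directly, because classicality on the small group forces the ordinary projection to land in the holomorphic range where no comparison is needed. What your approach buys is perhaps a more ``hands-on'' verification at the level of expansions; what the paper's approach buys is brevity and independence from the finer structure of Section \ref{Diff}.
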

\begin{proof}
Look at the construction of $E^{C^\infty}_{\mathrm{Kling},\tilde{\mathcal{D}}_\phi}$ and $E^{p-\mathrm{adic}}_{\mathrm{Kling},\tilde{\mathcal{D}}_\phi}$. We restrict the Siegel Eisenstein series to $\mathrm{U}(r+1,s+1)\times\mathrm{U}(s,r)$, and decompose with respect to the restriction on $\mathrm{U}(s,r)$. Recall the classicality result that any ordinary $p$-adic automorphic form of weight $\underline{k}$ must be classical (holomorphic). So if we write out the restriction on $\mathrm{U}(s,r)$ with respect to irreducible automorphic representations, then for any term whose restriction to $\mathrm{U}(s,r)$ is not in the holomorphic discrete series of weight $\underline{k}$, its $p$-adic avatar must be killed by the ordinary projector on $\mathrm{U}(s,r)$. Now the lemma follows easily by applying appropriate Hecke operators.
\end{proof}
From the computation of Fourier-coefficient for the Siegel Eisenstein series and the pullback formula, we see $E^{C^\infty}_{\mathrm{Kling},\tilde{\mathbf{D}}^{(2)}_{\phi_0}}$ does not have a pole. Moreover its constant term has two terms: the pullback Klingen section and its image under the intertwining operator (see \cite[Lemma 9.2]{SU}). We have computed that Klingen section term is nonzero. By looking at the Archimedean component, we see that the constant term of $E^{C^\infty}_{\mathrm{Kling},\tilde{\mathbf{D}}^{(2)}_{\phi_0}}$, and thus $E^{C^\infty}_{\mathrm{Kling},\tilde{\mathbf{D}}^{(2)}_{\phi_0}}$ it self must be nonzero. We thus have the following Proposition from the above lemma.
\begin{proposition}\label{P-property}
The $E^{p-\mathrm{adic}}_{\mathrm{Kling},\mathbf{D}_{\phi_0}}$ is nonzero.
\end{proposition}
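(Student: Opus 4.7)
The plan is to reduce the non-vanishing at the non-arithmetic point $\phi_0$ to the non-vanishing of an honest classical Klingen Eisenstein series through the $p$-adic functional equation, and then prove the classical non-vanishing by computing constant terms. Concretely, I would apply Theorem~\ref{Theorem 1.5} with $\phi=\phi_0$ to get
\[
\delta^{1+j_{\phi_0}}_{r+1,s+1}E^{p\text{-adic}}_{\mathrm{Kling},\mathbf{D}_{\phi_0},f_{\mathrm{sieg}}}
= E^{p\text{-adic}}_{\mathrm{Kling},\tilde{\mathbf{D}}^{(2)}_{\phi_0},f^{\mathrm{fteq}}_{\mathrm{sieg}}}.
\]
Since the point $\tilde{\mathbf{D}}^{(2)}_{\phi_0}$ is arithmetic (with classical weight), the right hand side is the $p$-adic avatar of the classical $C^\infty$ Klingen Eisenstein series $E^{C^\infty}_{\mathrm{Kling},\tilde{\mathbf{D}}^{(2)}_{\phi_0},f^{\mathrm{fteq}}_{\mathrm{sieg}}}$ by the lemma preceding the proposition. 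Consequently, if I can show this classical Klingen Eisenstein series is nonzero, then the image of $E^{p\text{-adic}}_{\mathrm{Kling},\mathbf{D}_{\phi_0}}$ under $\delta^{1+j_{\phi_0}}_{r+1,s+1}$ is nonzero, and in particular $E^{p\text{-adic}}_{\mathrm{Kling},\mathbf{D}_{\phi_0}}$ itself must be nonzero.

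To check $E^{C^\infty}_{\mathrm{Kling},\tilde{\mathbf{D}}^{(2)}_{\phi_0},f^{\mathrm{fteq}}_{\mathrm{sieg}}}\neq 0$, I would compute its constant term along the Klingen parabolic $P$. By Lemma~\ref{4.1.4} this constant term has two pieces: the pullback Klingen section $F(f^{\mathrm{fteq}}_{\tilde{\mathbf{D}}^{(2)}_{\phi_0}},-,z)|_{z=1/2}$ and its image under the intertwining operator $A(\rho,z,-)_{-z}$. Proposition~\ref{proposition 6.17} already gives that the pullback Klingen section itself is a nonzero element of $I(\rho,-\tfrac{1}{2})$; the two pieces of the constant term transform differently under $M_P(\mathbb{A}_F)$ (they carry the characters $\tau|\cdot|^{-z}$ and $\bar\tau^c|\cdot|^{z}$ respectively, which at $z=\tfrac{1}{2}$ are still distinct because of the ordinariness and Hodge-Tate weight hypotheses), so they cannot cancel against each other. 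Thus the constant term is nonzero, forcing the whole Klingen Eisenstein series to be nonzero.

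The remaining link is that $\delta^{1+j_{\phi_0}}_{r+1,s+1}$ applied to a zero form is zero, so non-vanishing of the image forces non-vanishing of $E^{p\text{-adic}}_{\mathrm{Kling},\mathbf{D}_{\phi_0}}$. The main obstacle, I expect, lies in the analysis at bad primes in the case where $L_v(\tilde\pi,\bar\tau^c,-\tfrac{1}{2})$ has a pole: there the pullback intertwining operator also has a pole, and one must invoke the Casselman--Shahidi results (Lemmas~\ref{Irreducibility} and~\ref{poles}) together with the fact that the sections $F(f_{x,y,v},\cdot,-\tfrac{1}{2})$ generate $I(\rho_v,-\tfrac{1}{2})$ (Lemma~\ref{generates}) to show the poles actually cancel the $L$-factor poles and leave a nonzero limit; this is precisely the point of the assumption (QS), which guarantees the quasi-split hypothesis needed to apply Casselman--Shahidi. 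Once these local matters are handled as in Proposition~\ref{proposition 6.17}, the remaining argument is essentially bookkeeping of the Hecke projector and $e^{\mathrm{ord}}$ as in the proof of Theorem~\ref{Construction}.
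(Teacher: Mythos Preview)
Your proposal is correct and follows essentially the same route as the paper: apply Theorem~\ref{Theorem 1.5} at $\phi_0$, use the preceding lemma to identify the right-hand side with the $p$-adic avatar of the classical series $E^{C^\infty}_{\mathrm{Kling},\tilde{\mathbf{D}}^{(2)}_{\phi_0}}$, and then use Proposition~\ref{proposition 6.17} together with the constant-term computation along $P$ to see that this classical series is nonzero. The only small deviation is in how you argue that the two pieces of the constant term cannot cancel: you separate them by the distinct $M_P$-characters $\tau|\cdot|^{-z}$ versus $\bar\tau^c|\cdot|^{z}$, while the paper phrases this as ``by looking at the Archimedean component''; both are valid ways of distinguishing $f_z$ from $A(\rho,z,f)_{-z}$ and lead to the same conclusion.
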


\section{Proof of Bloch-Kato Conjecture}
Now we prove the main theorem on Selmer groups. This is similar to previous work (e.g. \cite{SU}) on the ``lattice construction''. One difference is in the following Proposition \ref{noCAP}. \cite{SU} used a modularity lifting result to deduce that there is no CAP (i.e. cusp forms with the same Galois representation as Klingen Eisenstein series) at sufficiently regular weight, while modularity results for general unitary group seems require lots of assumptions. Here instead we use result of Shin \cite{Shin1} on the description of base change lift of cusp forms on unitary groups.

Let $K_\mathbf{D}$ be an open compact subgroup of $\mathrm{GU}(r+1,s+1)(\mathbb{A}_F)$ maximal at $p$ and all primes outside $\Sigma$ such that the Klingen-Eisenstein series we construct is invariant under $K^{(p)}_\mathbf{D}$. We consider the ring $\mathbb{T}_\mathbf{D}$ of reduced Hecke algebras acting on the space of ${\Lambda}''_\mathbf{D}$-adic nearly ordinary cuspidal forms with level group $K_\mathbf{D}$. It is generated by the Hecke operators at primes outside $\Sigma$, together with the $\mathrm{U}_p$-operator and then taking the maximal reduced quotient.

Suppose the Fourier-Jacobi coefficient $\mathrm{FJ}_{\beta,\theta,g}$ of $\phi_0(\mathrm{E}_{\mathrm{Kling}})$ is nonzero. This is possible by Proposition \ref{P-property} and the injectivity of the Fourier-Jacobi expansion map. We consider the $\mathbb{I}[[\Gamma^+_\mathcal{K}]]$-valued functional on the space of $\mathbb{I}[[\Gamma^+_\mathcal{K}]]$-valued forms on $\mathrm{U}(r+1,s+1)$ given by $\mathrm{FJ}_{\beta,\theta,g}$.
\begin{definition}
We define the ideal $I_\mathbf{D}$ of $\mathbb{T}_\mathbf{D}$ to be generated by $\{t-\lambda(t)\}_t$ for $t$'s in the abstract Hecke algebra and $\lambda(t)$ is the Hecke eigenvalue of $t$ on $\mathbf{E}_{\mathbf{D},Kling}$. Then it is easy to see that the structure map ${\Lambda}_\mathbf{D}\rightarrow \mathbb{T}_\mathbf{D}/I_\mathbf{D}$ is surjective. Suppose the inverse image of $I_\mathbf{D}$ in ${\Lambda}_\mathbf{D}$ is $\mathcal{E}_\mathbf{D}$. We call it the Eisenstein ideal. It measures the congruences between the Hecke eigenvalues of cusp forms and Klingen-Eisenstein series.
\end{definition}
Now we specify the $\chi$. Note that by (\ref{Klingen Galois}), the Selmer group constructed at the point $\phi_0$ is for the Galois representation $\rho_{\pi_{\mathbf{f}_{\phi_0}}}\otimes\chi^{-1}$ where $\chi$ is the $\bar{\boldsymbol{\tau}}^c_{\phi_0}\cdot\epsilon^{\frac{\kappa_{\phi_0}-r-s}{2}+\frac{\kappa_{\phi_0}}{2}}$.
\begin{lemma}\label{8.1}
Let $P$ be a height $1$ prime contained in the prime of $\mathbb{I}[[\Gamma^+_\mathcal{K}]]$ corresponding to $\phi_0$. Then
$$\mathrm{ord}_P(\mathcal{L}^\Sigma_{\chi'\chi^a_\mathcal{K},F}\mathcal{L}_{\mathbf{f},\xi,\mathcal{K}}^\Sigma)\leq \mathrm{ord}_P(\mathcal{E}_\mathbf{D}).$$
\end{lemma}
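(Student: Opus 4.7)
The plan is to manufacture a cuspidal $\Lambda_\mathbf{D}$-adic form $\tilde{\mathbf{F}}$ that is congruent to the Klingen Eisenstein family $\mathbf{E}_{\mathbf{D},\mathrm{Kling}}$ modulo a high power of $P$, and then to read off $\mathcal{E}_\mathbf{D} \subseteq (L)$ locally at $P$ from that congruence. Throughout, write $L := \mathcal{L}^\Sigma_{\chi'\chi^a_\mathcal{K},F} \cdot \mathcal{L}^\Sigma_{\mathbf{f},\xi,\mathcal{K}}$, set $R := (\Lambda_\mathbf{D})_P$ for the discrete valuation localization, and put $n := v_P(L)$.

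First I apply the Fundamental Exact Sequence of Proposition \ref{fes},
$$ 0 \longrightarrow e^{\mathrm{ord}}V^{0,\mathrm{par}}_\mathbf{D} \longrightarrow e^{\mathrm{ord}}V^{1,\mathrm{par}}_\mathbf{D} \xrightarrow{\ \partial\ } \bigoplus_{V,\,\mathrm{rk}\,V=1} \Lambda_V \otimes e^{\mathrm{ord}} V^0_{V,\mathbf{D}} \longrightarrow 0, $$
together with Theorem \ref{Construction}(iii), which identifies $\partial(\mathbf{E}_{\mathbf{D},\mathrm{Kling}})$ with $L \cdot \mathbf{f}$ (up to a unit) in each codimension-one boundary component. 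All three terms are free $\Lambda_\mathbf{D}$-modules of finite rank by Hida theory, so the FES stays short exact after tensoring with $R / P^n R$; since $\partial(\mathbf{E}_{\mathbf{D},\mathrm{Kling}})$ vanishes in that quotient, exactness supplies
$$ \tilde{\mathbf{F}} \in e^{\mathrm{ord}}V^{0,\mathrm{par}}_\mathbf{D} \otimes R, \qquad \tilde{\mathbf{F}} \equiv \mathbf{E}_{\mathbf{D},\mathrm{Kling}} \pmod{P^n \cdot e^{\mathrm{ord}}V^{1,\mathrm{par}}_\mathbf{D} \otimes R}. $$
The Fourier-Jacobi non-vanishing of Proposition \ref{P-property}, combined with injectivity of Fourier-Jacobi expansion (Subsection \ref{Subsection FJ}), allows me to choose $(\beta, \theta, g)$ with $\beta \neq 0$ so that $\mathrm{FJ}_{\beta,\theta,g}(\mathbf{E}_{\mathbf{D},\mathrm{Kling}})$ is nonzero at $\phi_0$, and hence nonzero modulo $P$ (since $P$ is contained in the prime of $\phi_0$). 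The congruence then shows that $\tilde{\mathbf{F}}$ has some $\Lambda_\mathbf{D}$-coordinate which is a unit in $R$.

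Given $y \in \mathcal{E}_\mathbf{D}$, write the image $\iota(y) \in \mathbb{T}_\mathbf{D}$ as $\sum c_i(t_i - \lambda(t_i)) \in I_\mathbf{D}$. Using that $\mathbf{E}_{\mathbf{D},\mathrm{Kling}}$ is a $\lambda$-eigenvector,
$$ (t_i - \lambda(t_i))\tilde{\mathbf{F}} \equiv (t_i - \lambda(t_i))\mathbf{E}_{\mathbf{D},\mathrm{Kling}} = 0 \pmod{P^n e^{\mathrm{ord}}V^{1,\mathrm{par}}_\mathbf{D} \otimes R}. $$
Flatness of the boundary quotient (free over $\Lambda_\mathbf{D}$ in the FES) yields $V^{0,\mathrm{par}}_\mathbf{D} \cap P^n V^{1,\mathrm{par}}_\mathbf{D} = P^n V^{0,\mathrm{par}}_\mathbf{D}$ after localization at $P$, so $y\tilde{\mathbf{F}} = \iota(y)\tilde{\mathbf{F}} \in P^n e^{\mathrm{ord}}V^{0,\mathrm{par}}_\mathbf{D} \otimes R$. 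Because $\tilde{\mathbf{F}}$ has a unit coordinate, this forces $y \in P^n R$. Hence $\mathcal{E}_\mathbf{D} \subseteq P^n R$, yielding the claimed $v_P(\mathcal{E}_\mathbf{D}) \geq n = v_P(L)$.

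The main difficulty will lie in the Fourier-Jacobi step: Proposition \ref{P-property} provides non-vanishing only at the single point $\phi_0$, whereas what is needed is non-vanishing modulo the height-one prime $P$ below $\phi_0$; this uses the hypothesis that $P$ sits below $\phi_0$ and a careful check that the FJ functional commutes with localization. A secondary technicality is the exactness of the FES after tensoring with $R/P^n R$, which reduces to the Hida-theoretic freeness of every term in the sequence.
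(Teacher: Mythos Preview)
Your argument is correct and follows essentially the same strategy as the paper: use the fundamental exact sequence together with the divisibility of the constant terms by $L$ to produce a cuspidal family congruent to $\mathbf{E}_{\mathbf{D},\mathrm{Kling}}$ modulo $P^n$, invoke the Fourier--Jacobi nonvanishing at $\phi_0$ (hence modulo $P$) to see this cuspidal family is ``nondegenerate'' at $P$, and deduce $\mathcal{E}_\mathbf{D}\subseteq P^n$ locally. The only difference is packaging: the paper defines a single $\Lambda_{\mathbf{D},P}$-linear map $\mu:\mathbb{T}_\mathbf{D}\to \Lambda_{\mathbf{D},P}/P^n$ by $\mu(t)=\ell(t\cdot\mathbf{H})/\ell(\mathbf{H})$ (with $\ell=\mathrm{FJ}_{\beta,\theta,g}$) and checks directly that $I_\mathbf{D}\subseteq\ker\mu$, which sidesteps your intermediate step $V^{0}\cap P^nV^{1}=P^nV^{0}$; your module-theoretic route recovers the same conclusion but needs that flatness observation explicitly.
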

\begin{proof}
Suppose $t:=\mathrm{ord}_P(\mathcal{L}^\Sigma_{\chi'\chi^a_\mathcal{K},F}\mathcal{L}^{\Sigma}_{\mathbf{f},\mathcal{K},\xi})>0$. By the fundamental exact sequence Theorem \ref{fes} there is an $\mathbf{H}=\mathbf{E}_{\mathbf{D},Kling}-\mathcal{L}^\Sigma_{\chi'\chi^a_\mathcal{K},F}
\mathcal{L}^{\Sigma}_{\mathbf{f},\xi,\mathcal{K}}F$ for some $\Lambda_\mathbf{D}$-adic form $F$ such that $\mathbf{H}$ is a cuspidal family. Recall we have constructed a $\mathbb{I}[[\Gamma^+_\mathcal{K}]]$-valued functional $\ell:=\mathrm{FJ}_{\beta, \theta,g}$ on the space of $\Lambda_\mathbf{D}$-adic forms, which maps $\mathbf{H}$ to an element outside $P$, thanks to our assumption that $P$ is contained in the prime corresponding to $\phi_0$. By our assumption on $P$ we have proved that $\ell(\mathbf{H})\not\equiv 0(\mathrm{mod}P)$. Consider the $\Lambda_\mathbf{D}$-linear map:
$$\mu: \mathbb{T}_\mathbf{D}\rightarrow \Lambda''_{\mathbf{D},P}/P^r\Lambda_{\mathbf{D},P}$$
given by:
$\mu(t)=\ell(t.\mathbf{H})/\ell(\mathbf{H})$ for $t$ in the Hecke algebra. Then:
$$\ell(t.\mathbf{H})\equiv \ell(t\mathbf{E}_\mathbf{D})\equiv \lambda(t)\ell(\mathbf{E}_\mathbf{D})\equiv \lambda(t)\ell(\mathbf{H})(\mathrm{mod}P^t),$$
so $I_\mathbf{D}$ is contained in the kernel of $\mu$. Thus it induces:
$\Lambda_{\mathbf{D},P}/\mathcal{E}_\mathbf{D}\Lambda_{\mathbf{D},P}\twoheadrightarrow
\Lambda_{\mathbf{D},P}/P^t\Lambda_{\mathbf{D},P}$
which proves the lemma.
\end{proof}

We then state a result on lattice construction proved in \cite[Proposition 4.17]{SU}.
\begin{proposition}
Let $X^\Sigma_{\chi'\chi^a_\mathcal{K},F}$ be the dual Iwasawa Selmer group of the Hecke character $\chi'\chi^a_\mathcal{K}$ of $F^\times\backslash \mathbb{A}^\times_F$. This is a finitely generated module over $\mathcal{O}_L[[\Gamma^+_\mathcal{K}]]$, which we naturally regard as a module over $\Lambda_\mathbf{D}$. Suppose $P$ is a height one prime of $\mathbb{I}[[\Gamma^+_\mathcal{K}]]$ such that
$$\mathrm{ord}_P\mathrm{char}(X^\Sigma_{\chi'\chi^a_\mathcal{K},F})<\mathrm{ord}_P \mathcal{E}_\mathbf{D}.$$
Then we have
$$\mathrm{ord}_P\mathrm{char}(X^\Sigma_{\mathbf{f},\chi,\mathcal{K}})\geq 1.$$
\end{proposition}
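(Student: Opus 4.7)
The plan is to carry out a Ribet-Wiles style lattice construction on the cuspidal Hida-Hecke algebra $\mathbb{T}_\mathbf{D}$, using the congruences with the Klingen Eisenstein family provided by Lemma \ref{8.1} and the assumption on $P$. First, localize at $P$: let $A := \Lambda_{\mathbf{D},P}$, $\mathfrak{J} := I_\mathbf{D}\cdot\mathbb{T}_{\mathbf{D}, P}$, and $\mathfrak{a}_P := \mathcal{E}_\mathbf{D}\cdot A$. By hypothesis $t := \mathrm{ord}_P(\mathcal{E}_\mathbf{D}) > \mathrm{ord}_P\mathrm{char}(X^\Sigma_{\chi'\chi^a_\mathcal{K},F})$. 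Using (Irred), construct a $\mathbb{T}_{\mathbf{D},P}$-valued Galois representation $\rho_\mathbb{T}: G_\mathcal{K}\to \mathrm{GL}_{n+2}(\mathbb{T}_{\mathbf{D},P})$ deforming the residual representation associated to the Klingen Eisenstein series, so that modulo $\mathfrak{J}$ it becomes the reducible representation
\[
\bar{\rho}_{\mathrm{Eis}} \;\equiv\; \rho_{\mathbf{f}} \;\oplus\; \tau|\cdot|^{-\frac{\kappa-r-s-2}{2}} \;\oplus\; \bar{\tau}^c|\cdot|^{\frac{\kappa-r-s}{2}}
\]
as in (\ref{Klingen Galois}), and that specializes at generic classical cuspidal points to the Galois representations of the corresponding cusp forms.

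Next, apply the lattice construction of Urban (as in \cite[\S 4.3]{SU}) to $\rho_\mathbb{T}$ inside $A/\mathfrak{a}_P$. Because the residual representation has three Jordan-H\"older constituents, the construction produces extension classes in
\[
\mathrm{Ext}^1_{G_\mathcal{K}}\bigl(\tau|\cdot|^{-\frac{\kappa-r-s-2}{2}},\, \rho_{\mathbf{f}}\bigr) \quad\text{and}\quad \mathrm{Ext}^1_{G_\mathcal{K}}\bigl(\bar{\tau}^c|\cdot|^{\frac{\kappa-r-s}{2}},\, \rho_{\mathbf{f}}\bigr),
\]
together with possible ``character-character'' extensions between the two one-dimensional pieces. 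The character-character piece is exactly what is controlled by the Dirichlet-type Selmer group $X^\Sigma_{\chi'\chi^a_\mathcal{K},F}$: any cohomology class produced there lives in a module whose $P$-order is at most $\mathrm{ord}_P\mathrm{char}(X^\Sigma_{\chi'\chi^a_\mathcal{K},F}) < t$. Since the total depth of the construction is $t$, a dimension count forces at least one non-trivial extension to land in $\mathrm{Ext}^1_{G_\mathcal{K}}(\tau|\cdot|^{-\frac{\kappa-r-s-2}{2}}, \rho_{\mathbf{f}})$; twisting by the appropriate character, this gives a nonzero class in $H^1(\mathcal{K}^\Sigma/\mathcal{K}, V_{\mathbf{f}}\otimes\chi^{-1})$.

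The final step is to verify the local Greenberg Selmer conditions at every $v\in\Sigma$. At primes $v\nmid p$, the extension classes in the lattice construction are unramified outside $\Sigma$ by construction, and one uses the $\Sigma$-imprimitivity of $X^\Sigma$ to absorb the possibly ramified behaviour at $v\in\Sigma\setminus\{v\mid p\}$. At primes above $p$, the crucial input is the nearly-ordinary shape of $\rho_\mathbf{f}$ together with the ordinary structure at $p$ of the Klingen Eisenstein family: the ordinary filtration on $\rho_\mathbb{T}$ forces the extension class to have image in $H^1(\mathcal{K}_v, V^+_{\mathbf{f}}\otimes\chi^{-1})$, which is exactly the local Selmer condition at $v\mid p$ in the definition of $X^\Sigma_{\mathbf{f},\chi,\mathcal{K}}$. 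Therefore the constructed class lies in the Pontryagin dual of $X^\Sigma_{\mathbf{f},\chi,\mathcal{K}}$ and is nonzero modulo $P$, yielding $\mathrm{ord}_P\mathrm{char}(X^\Sigma_{\mathbf{f},\chi,\mathcal{K}})\geq 1$.

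The main obstacle is the control of local conditions at $v\mid p$: one must ensure that the ordinary filtration on $\rho_\mathbb{T}$ exists in the desired shape (using the ordinarity hypothesis on $\mathbf{f}$ together with the ordinary specialization of the Klingen Eisenstein family) and that the resulting extension lies in the correct $r$-dimensional local submodule $T^+_{v_0}$ respectively $s$-dimensional $T^+_{\bar v_0}$. This is a standard but delicate use of nearly-ordinary deformation theory, entirely analogous to \cite[\S 4]{SU}, and works uniformly in our generality because $\pi$ is unramified and ordinary at all primes above $p$.
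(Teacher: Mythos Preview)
Your proposal is correct and follows essentially the same approach as the paper: the paper does not give an independent proof of this proposition but simply records it as ``a result on lattice construction proved in \cite[Proposition 4.17]{SU},'' and your sketch is a faithful outline of that Ribet--Wiles/Urban lattice argument. One small point worth flagging is that the paper isolates separately (in Proposition~\ref{noCAP}) the fact that the cuspidal Galois representation $\mathbf{R}_\mathbf{J}$ cannot split as a sum involving a one-dimensional piece with the Eisenstein characters, which is an input to the lattice construction that your sketch implicitly assumes when you write down the three Jordan--H\"older constituents; you may want to make that dependence explicit.
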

In this case we do not exclude the possibility that there are common divisors between $\mathrm{char}(X^\Sigma_{\mathbf{f},\chi,\mathcal{K}})$ and $\mathrm{char}(X^\Sigma_{\chi'\chi^a_\mathcal{K},F})$, which causes complication in constructing elements of the Selmer groups. Nevertheless one can still prove weaker result that the order is positive, in the case when $P$ is a divisor of $\mathcal{L}^\Sigma_{\mathbf{f},\chi,\mathcal{K}}$. This is enough for our purpose.

We also need the following proposition, which is the analogue of \cite[Theorem 7.5]{SU}.
\begin{proposition}\label{noCAP}
Let $\mathfrak{m}_{\mathrm{Kling}}$ be the maximal ideal of the Hecke algebra corresponding to the Klingen Eisenstein family we construct. Let $\mathbf{J}$ be an irreducible component of $T_{\mathbf{D},\mathfrak{m}_{\mathrm{Kling}}}$. Let $R_\mathbf{J}$ be the corresponding semi-simple Galois representation defined over the total ring of fractions of $\mathbf{J}$. Then either (1) $\mathbf{R}_\mathbf{J}$ is irreducible, or (2) $\mathbf{R}_\mathbf{J}=R_1+R_2$ for $R_1$ has the same residual character as $\mathbf{f}$, and $R$ is a two dimensional irreducible representation.
\end{proposition}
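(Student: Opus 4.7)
The plan is to analyze all possible decompositions of $R_\mathbf{J}$ using the residual structure together with the description of base change of cuspidal automorphic representations on unitary groups due to Shin \cite{Shin1} and KMSW \cite{KMSW}. By construction the residual pseudo-representation $\bar R_\mathbf{J}$ equals $\mathrm{tr}(\bar\rho_\pi \oplus \bar\chi_1 \oplus \bar\chi_2)$, where $\chi_1 = \tau|\cdot|^{-(\kappa-r-s-2)/2}$ and $\chi_2 = \bar\tau^c|\cdot|^{(\kappa-r-s)/2}$ are the two characters appearing in (\ref{Klingen Galois}). Since $\bar\rho_\pi$ is absolutely irreducible of dimension $n$ by (Irred), in any semisimple decomposition $R_\mathbf{J} = \oplus_i R_i$ the residual trace of each $R_i$ must be a subsum of $\{\bar\rho_\pi, \bar\chi_1, \bar\chi_2\}$, with $\bar\rho_\pi$ lying in a single summand. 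Thus $R_\mathbf{J}$ has at most three irreducible constituents, and the only decomposition types a priori compatible with the residual structure are: (a) $R_\mathbf{J}$ irreducible; (b) $R_\mathbf{J} = R_1 \oplus R_2$ with $R_1$ of residual trace $\bar\rho_\pi$ and $R_2$ of residual trace $\bar\chi_1 \oplus \bar\chi_2$; (c) $R_\mathbf{J} = R_1 \oplus \chi'$ with $R_1$ of residual trace $\bar\rho_\pi \oplus \bar\chi_j$ and $\chi'$ a deformation of the other $\bar\chi_i$; (d) $R_\mathbf{J} = R_1 \oplus \chi_1' \oplus \chi_2'$ with $R_1$ an $n$-dimensional deformation of $\bar\rho_\pi$.

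The next step is to show that cases (c) and (d) cannot occur, and that the $R_2$ in case (b) must be irreducible. I use the following key input: for a Zariski dense set of arithmetic specializations $\phi \in \mathrm{Spec}\,\mathbf{J}$, the corresponding cuspidal automorphic representation $\sigma_\phi$ of $\mathrm{U}(r+1,s+1)$ has a weak base change $\mathrm{BC}(\sigma_\phi)$ to $\mathrm{GL}_{n+2}(\mathcal{K})$ which is an isobaric sum $\Pi_1 \boxplus \cdots \boxplus \Pi_k$ of conjugate self-dual discrete automorphic representations (by \cite{KMSW} and the description in \cite{Shin1}). Consequently $R_{\mathbf{J},\phi} \simeq \oplus_i \rho_{\Pi_i}$ with each $\rho_{\Pi_i}$ irreducible and conjugate self-dual up to the appropriate Tate twist. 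In particular any 1-dimensional irreducible constituent of $R_{\mathbf{J},\phi}$ must be a conjugate self-dual Hecke character of $\mathcal{K}$.

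Suppose $R_\mathbf{J}$ had a 1-dim irreducible factor $\chi'$, necessarily deforming $\bar\chi_1$ or $\bar\chi_2$. By the previous paragraph, $\chi'_\phi$ is conjugate self-dual at a Zariski dense set of $\phi$, and hence $\chi'$ itself is conjugate self-dual over the fraction field of $\mathbf{J}$. However, $\chi'$ is parametrized by a character of $\Gamma_\mathcal{K} \simeq \mathbb{Z}_p^{d+1}$, and the conjugate self-dual locus $\{\chi' \chi'^c = \epsilon^m\}$ is a proper closed subscheme of codimension one (it kills the cyclotomic direction). On the other hand, $\mathbf{J}$ has full dimension $\dim\Lambda_\mathbf{D}$ via the surjection $\Lambda_\mathbf{D} \twoheadrightarrow \mathbb{T}_\mathbf{D}/I_\mathbf{D}$ and Hida's control theorem, and the Galois character carried by it varies genuinely along the cyclotomic direction (because the Klingen Eisenstein character $\chi_i$ itself does). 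This contradiction rules out cases (c) and (d), and it also rules out any further splitting of the 2-dim $R_2$ in case (b). Case (b) is then forced to be $R_\mathbf{J} = R_1 \oplus R_2$ with $R_1$ of dimension $n$, residual trace $\bar\rho_\pi$, and irreducible (since $\bar\rho_\pi$ is), and with $R_2$ 2-dimensional and irreducible. That $R_1$ has the same residual character as $\rho_\pi$ (and in fact comes from the Hida family of $\mathbf{f}$) follows from comparing ordinary filtrations at primes above $p$: the upper-triangular shape of $R_\mathbf{J}$ at $p$ matches that of the Klingen Eisenstein Galois representation, which forces $R_1$ to be a nearly ordinary $n$-dimensional deformation of $\bar\rho_\pi$, hence attached to the Hida family of $\mathbf{f}$ by uniqueness.

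The main obstacle is the dimension-count ruling out 1-dim factors: one must rigorously establish both that $\mathbf{J}$ is a genuinely $(\dim\mathbb{I}+1)$-parameter deformation and that its ``cyclotomic'' direction is nonconstant as a Galois deformation, so that an identical conjugate self-dual constraint on a character factor is impossible. This uses that the constant term of the Klingen Eisenstein family involves $\mathcal{L}^\Sigma_{\bar\tau'_0\chi^a_\mathcal{K}}$ varying along $\Gamma_\mathcal{K}^+$ (Theorem~\ref{Construction}(iii)) and that $\bar\tau'_0\chi^a_\mathcal{K}$ is not identically conjugate self-dual along this direction. The remaining checks — absolute irreducibility of $\bar\rho_\pi$ preserving $R_1$ as an irreducible block, and uniqueness of the ordinary $n$-dimensional lift — are standard consequences of (Irred) and Hida theory.
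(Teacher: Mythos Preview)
Your approach and the paper's coincide at the core: both rule out one-dimensional summands of $R_\mathbf{J}$ by invoking Shin's description of the base change, which forces each isobaric piece (hence each irreducible Galois constituent at a regular specialization) to be conjugate self-dual, and then arguing this is incompatible with the available data. The paper works at a single sufficiently regular arithmetic specialization $\phi$: a one-dimensional piece $R_{2,\phi}$ would have Archimedean type $(k_{R_2,1},k_{R_2,2})$ with $k_{R_2,1}+k_{R_2,2}=0$ by conjugate self-duality, and this is contradicted using the residual representation and the Hodge--Tate weights of the Klingen Eisenstein Galois representation. You instead run a dimension count over the family.

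The gap in your version is the sentence ``the Galois character carried by it varies genuinely along the cyclotomic direction (because the Klingen Eisenstein character $\chi_i$ itself does)''. All you know a priori is $\bar\chi'=\bar\chi_i$, a \emph{residual} statement; it does not by itself force $\chi'$ to move with $\Gamma_\mathcal{K}^+$, since $\chi'$ is determined by the Hecke eigenvalues on the cuspidal component $\mathbf{J}$, not by the Eisenstein series. What actually makes this work is the near-ordinary structure at $p$: the filtration on $R_\mathbf{J}|_{G_{v_0}}$ has graded characters whose restrictions to inertia are dictated by the weight character of the family, and the residual identity $\bar\chi'=\bar\chi_i$ (together with the distinctness of the residual diagonal characters) pins $\chi'|_{I_{v_0}}$ into the slot whose Hodge--Tate weight is that of $\chi_i$. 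As $\kappa_\phi$ varies along $\Gamma_\mathcal{K}^+$ this weight genuinely moves, contradicting the conjugate self-dual constraint, which fixes $\chi'|_{\Gamma_\mathcal{K}^+}$ up to finite ambiguity. This is exactly the content of the paper's Archimedean-weight contradiction, just rephrased in families; once you insert this step your argument goes through. A minor point: your final claim that $R_1$ is attached to the Hida family of $\mathbf{f}$ by ``uniqueness of the ordinary $n$-dimensional lift'' overshoots what the proposition asserts (only that $R_1$ has the same residual as $\mathbf{f}$, which is immediate from $\bar R_1=\bar\rho_\pi$); uniqueness of ordinary deformations is neither needed nor available in this generality.
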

\begin{proof}
The proof of \cite[Theorem 7.5]{SU} made use of the result of Harris on the non-existence of CAP forms on in the absolute convergent range of Klingen Eisenstein series, and modularity lifting results for $\mathrm{GL}_2$. However we argue differently since we do not have a satisfying modularity lifting results for general unitary groups. We first prove $\mathbf{R}_\mathbf{J}$ is not a sum of three irreducible representations. Otherwise suppose it is $R_1\oplus R_2\oplus R_3$ where $R_2$ and $R_3$ are one dimensional. We specialize to an arithmetic point $\phi$ which corresponds to regular discrete series at $\infty$, and apply \cite[Theorem A.1 (v)]{Shin1}. Suppose $R_{2,\phi}$ corresponds to one of the isobaric summands $\Pi_i$ in \emph{loc.cit.}. Let $k_{R_2,1}$ and $k_{R_2,2}$ be the Archimedean type of $R_{2,\phi}$. Then by the conjugate self-duality we have $k_{R_2,1}+k_{R_2,2}=0$. This gives a contradiction by considering the residual representation. So it has to be the case that the base change at $\phi$ is an isobaric sum $\Pi_1\oplus\Pi_2$, in which one of them (say $\Pi_2$) is two dimensional, or is just the $\Pi_1$ (only one summand). By by our assumption of regularity of weight, the $\Pi_2$ is cuspidal and tempered. Then the Galois representation of $\Pi_2$ cannot be a sum of two crystalline characters, as our previous consideration of residual representation and Archimedean weights, a contradiction. If the base change is just $\Pi_1$, then it is cuspidal and tempered by the regularity of weight. But as before the Galois representation cannot have some crystalline character as a summand.

The case when $\mathbf{R}_\mathbf{J}$ is $R_1\oplus R_2$ where $R_i$ are irreducible and $R_2$ is one dimensional can be excluded similarly.
\end{proof}

\begin{theorem}
Suppose $L_\mathcal{K}(\tilde{\pi}_f,\chi, \frac{1}{2})=0$, then the corank of the Selmer group for $\tilde{\rho}_\pi\otimes\chi^{-1}$ is positive.
\end{theorem}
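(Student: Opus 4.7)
The plan is to deduce the bound on the Selmer group from an Eisenstein congruence forced by the vanishing of the central $L$-value, following the general Ribet-style template but using the Klingen Eisenstein family of Theorem \ref{Construction} in place of a classical Eisenstein series. First, the interpolation formula of Theorem \ref{Construction}(i), together with Theorem \ref{Theorem 1.4} and the $p$-adic functional equation extending the interpolation to the left of the centre, identifies $\phi_0(\mathcal{L}^\Sigma_{\mathbf{f},\tau_0})$ with $L^\Sigma_\mathcal{K}(\tilde\pi,\chi,\tfrac{1}{2})$ up to a nonzero algebraic factor and invertible $p$-adic Euler factors at $\phi_0$. The hypothesis $L_\mathcal{K}(\tilde\pi_f,\chi,\tfrac{1}{2})=0$ therefore forces the height-one prime $P\subset\mathbb{I}[[\Gamma_\mathcal{K}]]$ cut out by $\phi_0$ to divide $\mathcal{L}^\Sigma_{\mathbf{f},\chi,\mathcal{K}}$, so that $\mathrm{ord}_P\bigl(\mathcal{L}^\Sigma_{\mathbf{f},\chi,\mathcal{K}}\bigr)\ge 1$.

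Second, I would invoke Proposition \ref{P-property}: the specialization $\phi_0(\mathbf{E}_{\mathrm{Kling}})$ is nonzero, and by the injectivity of the Fourier-Jacobi expansion map on forms of prescribed nebentypus at $p$ there is some cusp label $[g]$ and some coefficient $\mathrm{FJ}_{\beta,\theta,g}(\mathbf{E}_{\mathrm{Kling}})$ not in $P$. This is exactly what powers Lemma \ref{8.1}, giving
\[
\mathrm{ord}_P\bigl(\mathcal{L}^\Sigma_{\chi'\chi^a_\mathcal{K},F}\cdot\mathcal{L}^\Sigma_{\mathbf{f},\chi,\mathcal{K}}\bigr)\le\mathrm{ord}_P(\mathcal{E}_{\mathbf{D}}),
\]
so that the Eisenstein ideal $\mathcal{E}_{\mathbf{D}}$ is itself divisible by $P$. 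Standard Hida-theoretic machinery then produces a cuspidal component of the Hecke algebra carrying a family $\mathbf{H}$ congruent modulo $P$ to $\mathbf{E}_{\mathrm{Kling}}$ on tame Hecke eigenvalues. Proposition \ref{noCAP} applies to the associated pseudo-representation: the alternative $\mathbf{R}_\mathbf{J}=R_1\oplus R_2$ with $R_2$ two-dimensional is ruled out by the combination of (Irred) and the regularity of Hodge-Tate weights (the Jordan-Hölder characters of such an $R_2$ would have incompatible Hodge-Tate weights with the characters appearing in \eqref{Klingen Galois}), so $\mathbf{R}_\mathbf{J}$ is genuinely irreducible of rank $n+2$. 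A Ribet-style lattice argument applied to the three Jordan-Hölder pieces $\rho_\pi$, $\tau|\cdot|^{-(\kappa-r-s-2)/2}$, $\bar\tau^c|\cdot|^{(\kappa-r-s)/2}$ of the residual Eisenstein Galois representation then yields nontrivial extension classes with Greenberg-type local conditions at primes above $p$.

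The main obstacle, as emphasised in the comparison-to-literature section, is that over a totally real field $F\ne\mathbb{Q}$ the auxiliary Selmer group $X^\Sigma_{\chi'\chi^a_\mathcal{K},F}$ is generally of positive rank (by Kummer theory the relevant $H^1$ contains the $p$-adic completion of $\mathcal{O}_F^\times$), so the clean implication of the lattice-construction proposition, which requires the strict inequality $\mathrm{ord}_P\mathrm{char}(X^\Sigma_{\chi'\chi^a_\mathcal{K},F})<\mathrm{ord}_P\mathcal{E}_{\mathbf{D}}$, is not automatic, and the extension classes could in principle be absorbed into the character-side Selmer group. The resolution, as hinted in the remark following the lattice-construction proposition, is to exploit that the $P$-divisibility of $\mathcal{E}_{\mathbf{D}}$ in our situation originates specifically from $\mathcal{L}^\Sigma_{\mathbf{f},\chi,\mathcal{K}}$, and not merely from $\mathcal{L}^\Sigma_{\chi'\chi^a_\mathcal{K},F}$. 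Tracking the Hecke/Galois-module type of the extension class in the residually irreducible component $\mathbf{R}_\mathbf{J}$, one argues that the piece of the extension pairing $\rho_\pi$ with $\chi^{-1}$ cannot be killed by subtracting a class coming purely from the character side without forcing $P\mid\mathcal{L}^\Sigma_{\chi'\chi^a_\mathcal{K},F}$ in a manner incompatible with the independence of the two $p$-adic $L$-functions at $\phi_0$. This yields $\mathrm{ord}_P\mathrm{char}(X^\Sigma_{\mathbf{f},\chi,\mathcal{K}})\ge 1$, whence by standard specialization $X^\Sigma_{\pi,\chi,\mathcal{K}}$ has positive $\mathcal{O}_L$-rank, giving the claimed positive corank of $\mathrm{Sel}_{p^\infty}(M)$.
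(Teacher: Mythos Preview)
Your overall architecture is right and matches the paper: vanishing of the central value forces $P\mid\mathcal{L}^\Sigma_{\mathbf{f},\chi,\mathcal{K}}$, Proposition \ref{P-property} feeds Lemma \ref{8.1}, and then the lattice-construction proposition yields $\mathrm{ord}_P\mathrm{char}(X^\Sigma_{\mathbf{f},\chi,\mathcal{K}})\ge 1$, after which a control theorem finishes. But there is a genuine gap at the step where you try to verify the hypothesis $\mathrm{ord}_P\mathrm{char}(X^\Sigma_{\chi'\chi^a_\mathcal{K},F})<\mathrm{ord}_P\mathcal{E}_\mathbf{D}$ of the lattice-construction proposition.

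The paper does not resolve this by ``tracking the Hecke/Galois-module type of the extension class'' or by any independence-of-$L$-functions argument; your paragraph here is not a proof. The missing ingredient is the Iwasawa main conjecture for Hecke characters over totally real fields proved by Wiles, which gives
\[
\mathrm{ord}_P\mathrm{char}(X^\Sigma_{\chi'\chi^a_\mathcal{K},F})\le \mathrm{ord}_P\mathcal{L}^\Sigma_{\chi'\chi^a_\mathcal{K},F}.
\]
Combining this with Lemma \ref{8.1} and with $\mathrm{ord}_P\mathcal{L}^\Sigma_{\mathbf{f},\chi,\mathcal{K}}\ge 1$ yields
\[
\mathrm{ord}_P\mathrm{char}(X^\Sigma_{\chi'\chi^a_\mathcal{K},F})\le \mathrm{ord}_P\mathcal{L}^\Sigma_{\chi'\chi^a_\mathcal{K},F}\le \mathrm{ord}_P\mathcal{E}_\mathbf{D}-\mathrm{ord}_P\mathcal{L}^\Sigma_{\mathbf{f},\chi,\mathcal{K}}<\mathrm{ord}_P\mathcal{E}_\mathbf{D},
\]
which is exactly the strict inequality needed. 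Without Wiles you have no control whatsoever on $\mathrm{ord}_P\mathrm{char}(X^\Sigma_{\chi'\chi^a_\mathcal{K},F})$, and nothing in your sketch prevents all the congruence from being absorbed on the character side.

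Two smaller points. First, $\phi_0$ does not ``cut out'' a height-one prime of $\mathbb{I}[[\Gamma^+_\mathcal{K}]]$; the kernel of $\phi_0$ has larger height. The correct statement, as in the paper, is that there exists a height-one prime $P$ contained in $\ker\phi_0$ which divides $\mathcal{L}^\Sigma_{\mathbf{f},\chi,\mathcal{K}}$. Second, you should not try to rule out case (2) of Proposition \ref{noCAP}; the paper does not do this, and the lattice construction of \cite[Proposition 4.17]{SU} is designed to handle both possibilities (1) and (2). The role of Proposition \ref{noCAP} is only to exclude the three-summand and other bad decompositions.
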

\begin{proof}
It is easy to see that it is enough to prove it for $\Sigma$-primitive Selmer groups. By the Iwasawa main conjecture for Hecke characters proved by Wiles \cite{Wiles}, we see the characteristic ideal for $\mathrm{char}(X^\Sigma_{\chi'\chi^a_\mathcal{K},F})$ is bounded by the $p$-adic $L$-function $\mathcal{L}^\Sigma_{\chi'\chi^a_\mathcal{K}, F}$. By our assumption, there is a height one prime $P$ of $\mathbb{I}[[\Gamma^+_\mathcal{K}]]$ contained in the primes corresponding to $\phi_0$, such that the order of $\mathcal{L}^\Sigma_{\mathbf{f},\chi,\mathcal{K}}$ at $P$ is positive. By our discussion above, we see that
$$\mathrm{ord}_P\mathrm{char}(X^\Sigma_{\mathbf{f},\chi,\mathcal{K}})\geq 1.$$
Specializing to $\phi_0$ and applying the control theorem of Selmer groups \cite[Proposition 3.7, 3.10]{SU}, this implies the corank of the Selmer group at $\phi_0$ is positive, which proves the theorem.
\end{proof}

\end{document}